\documentclass{amsart}

\usepackage{amssymb}
\usepackage{tikz-cd}
\usepackage{array,booktabs}
\usepackage{microtype}
\usepackage{enumitem}
\usepackage{xcolor}
\usepackage[bookmarksnumbered=true,bookmarksopen=true,bookmarksopenlevel=2,bookmarksdepth=3]{hyperref}
\definecolor{AMSDarkBlue}{RGB}{0,51,102}
\hypersetup{
  colorlinks=true,
  linkcolor=AMSDarkBlue,
  citecolor=AMSDarkBlue,
  urlcolor=AMSDarkBlue,
  breaklinks=true,
  pdfpagemode=UseOutlines,
  pdftitle={Exceptional covers of the projective line with genus-one Galois closure: arithmetic forms over finite fields},
  pdfauthor={Xiang Fan}
}
\numberwithin{equation}{section}

\newtheorem{theorem}{Theorem}[section]
\newtheorem{proposition}[theorem]{Proposition}
\newtheorem{lemma}[theorem]{Lemma}
\newtheorem{corollary}[theorem]{Corollary}
\newtheorem{definition}[theorem]{Definition}
\theoremstyle{remark}
\newtheorem{remark}[theorem]{Remark}
\newtheorem{example}[theorem]{Example}
\newtheorem*{theoremA}{Theorem A}
\newtheorem*{theoremB}{Theorem B}
\newtheorem*{theoremC}{Theorem C}

\title[Genus-one exceptional covers]{Exceptional covers of the projective line with genus-one Galois closure: arithmetic forms over finite fields}
\author{Xiang Fan}
\address{School of Mathematics, Sun Yat-sen University, Guangzhou 510275, China}

\subjclass[2020]{Primary 14G15; Secondary 14H30, 14H25, 14H52, 11T06}
\keywords{exceptional covers, arithmetic forms, genus-one Galois closures,
exceptional rational functions, elliptic isogenies, Galois descent,
arithmetic monodromy, finite fields}

\begin{document}

\begin{abstract}
Let $k=\mathbb F_q$.  We classify, up to two-sided $k$-M\"obius
equivalence, all separable indecomposable tame exceptional maps
$\mathbf P^1_k\to\mathbf P^1_k$ whose geometric Galois closure has
genus one.  The resulting arithmetic fixed-field forms are encoded by
Frobenius-stable affine elliptic quotient data recovered from the cover;
we prove a converse and an exact equivalence criterion.  The same data
determine branch arithmetic, geometric and arithmetic monodromy, the
constant field, and behavior over every finite extension.  In particular,
over each $\mathbb F_{q^r}$, permutation is equivalent to exceptionality.
We obtain explicit rank-two support formulas, necessary and sufficient
occurrence criteria, and exact two-sided class counts in every tame
signature, including the surviving cases in characteristics $2$ and $3$.
In characteristic greater than $3$, tameness is automatic.  A sharp
$3$-adic obstruction arises only for a stronger cubic self-endomorphism
realization problem, not for the fixed-field classification itself.
\end{abstract}

\maketitle

\section{Introduction}
\label{sec:intro}

\subsection{The fixed-field problem}

Exceptional covers lie at the intersection of finite-field arithmetic,
monodromy, and the geometry of curves.  Let $k=\mathbb F_q$.  A rational map
$f:\mathbf P^1_k\to\mathbf P^1_k$ is \emph{exceptional} if it induces a
bijection on $\mathbf P^1(\mathbb F_{q^r})$ for infinitely many positive
integers $r$.  We study the genus-one case: the geometric Galois closure of
$f$ is assumed to have genus one, while $f$ itself is separable and
indecomposable.

Over $\bar k$, the genus-one Galois-closure curve becomes an elliptic
curve after choosing an origin.  In the tame affine cases the lower cover is
obtained from an isogeny by quotienting source and target by cyclic groups of
automorphisms.  Over the prescribed field $k$, however, this geometric picture
does not by itself determine the form of the cover.  The
cyclic point stabilizer need only descend as a Frobenius-stable \emph{affine}
subgroup, and its translation component can carry arithmetic information
which disappears after base change to $\bar k$.

The purpose of this paper is to identify and classify these arithmetic
forms.  We show that every cover in the tame genus-one setting is encoded by
a Frobenius-stable affine elliptic quotient datum
$\mathcal D=(E,\mathcal N,H)$.  Here $E/k$ is an elliptic form of the
geometric Galois closure, $\mathcal N$ is the Frobenius-stable translation
kernel, and $H$ is a cyclic affine point stabilizer.  Its translation part is
recorded by a shift class $\mathfrak s_E(H)$.  The same datum separates three
layers of the problem: fixed-field forms and branch arithmetic; monodromy,
constant fields, and exact permutation--exceptionality support under base
change; and finally occurrence and enumeration for prescribed parameters.

Choosing source and target coordinates identifies the resulting covers with
rational functions.  We write
\[
 f_1\sim_k f_2
 \quad\Longleftrightarrow\quad
 f_1=\mu\circ f_2\circ\eta
 \quad\text{for some }\mu,\eta\in\operatorname{PGL}_2(k),
\]
and classify the corresponding two-sided $k$-M\"obius classes.  Let
$\mathbf{x}$ be transcendental over $k$, put $\mathbf t=f(\mathbf x)$, and
let $\bar\Omega$ be the Galois closure of
$\bar k(\mathbf x)/\bar k(\mathbf t)$.  We write
\[
 G_f:=\operatorname{Gal}(\bar\Omega/\bar k(\mathbf t))
\]
and call the cover \emph{Euclidean} when the smooth projective curve with
function field $\bar\Omega$ has genus one.  The term is used below only as a
compact name for this genus-one Galois-closure condition.

\subsection{Main results}

The tame Euclidean reduction yields a common mechanism.  For an odd prime
$\ell$ one has
\[
 \deg f=\ell^e,\qquad e\in\{1,2\},
\]
and the geometric monodromy has a regular normal subgroup
$N\cong C_\ell^e$ with cyclic complement $C_m$.  We call $e=1$ and $e=2$
the rank-one and rank-two cases.  Roman $N$ denotes this regular normal
subgroup; calligraphic $\mathcal N$ denotes the corresponding subgroup of
points on the descended elliptic curve.  For an elliptic curve $E/k$, write
$\mathrm O_E$ for its origin and $\sigma_q$ for arithmetic $q$-Frobenius.

The fixed-field mechanism is summarized by the diagram
\[
\begin{tikzcd}
 E \arrow[r,"\varphi"] \arrow[d,"{/H}"'] & E'=E/\mathcal N \arrow[d,"{/H'}"]\\
 \mathbf P^1 \arrow[r,"f"'] & \mathbf P^1.
\end{tikzcd}
\]
Here $\mathcal N$ is Frobenius-stable, $H$ is a Frobenius-stable cyclic
affine subgroup, and $H'$ is its induced action on $E'$.  A Frobenius section
descends the genus-one closure; the regular subgroup becomes translations,
and the affine stabilizer contributes the shift class.  Together, the cyclic
linear action and the induced Frobenius action on $\mathcal N$ supply the
finite arithmetic readout.  The formal definition and lower-map construction
are given in Section~\ref{sec:reduction-descent}.

\begin{theoremA}[Arithmetic classification of fixed-field forms]
Let $f\in k(X)$ be separable, indecomposable, tame, and exceptional, and
suppose that its geometric Galois closure has genus one.  Then $f$ is represented, up to two-sided $k$-M\"obius equivalence, by a
Frobenius-stable affine elliptic quotient datum
$\mathcal D=(E,\mathcal N,H)$.  Conversely, every such datum gives a
separable lower cover of degree $\ell^e$ with geometric monodromy
$T_{\mathcal N}\rtimes H$, where $T_{\mathcal N}$ is the translation
subgroup attached to $\mathcal N$, and with the ramification signature
determined by $m$.  Two data determine the same two-sided class precisely when a marked geometric isomorphism preserves
the translation kernel and cyclic linear action and is compatible with both
Frobenius descent and the shift class.  In rank two, the translation kernel is
$E[\ell](\bar k)$ and the quotient isogeny is $[\ell]$ up to a $k$-isomorphism
of the target.
\end{theoremA}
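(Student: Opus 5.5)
The plan is to pass to the geometric Galois closure, identify the monodromy with affine elliptic symmetries, and then descend to $k$ using Frobenius.

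\emph{Geometric normal form.} Over $\bar k$, let $\bar X$ be the genus-one curve with function field $\bar\Omega$. Choose an origin, so that $\bar X$ becomes an elliptic curve $\bar E$. Then $G_f$ acts faithfully on $\bar E$ by automorphisms of the form $P\mapsto \alpha(P)+t$, with $\alpha\in\operatorname{Aut}(\bar E,\mathrm O)$. Since the quotient $\bar E/G_f\cong\mathbf P^1$ is tamely branched, Riemann--Hurwitz in the form $0=|G_f|(-2+\sum(1-1/e_i))$ forces the Euclidean signatures $(2,2,2,2)$, $(3,3,3)$, $(2,4,4)$, $(2,3,6)$. The translations in $G_f$ form a normal subgroup $T$. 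Primitivity of the degree-$\deg f$ action (indecomposability) forces the regular normal subgroup $N$ to be a minimal normal subgroup contained in $T$. An abelian minimal normal subgroup is elementary abelian, so $N\cong C_\ell^e$. Because $N$ sits inside $\bar E[\ell]\cong C_\ell^2$ (tame case, $\ell\ne p$), we get $e\le 2$. The point stabilizer is a complement acting irreducibly, hence is cyclic $C_m$ with $m\in\{2,3,4,6\}$, and $\ell$ is odd because irreducibility excludes $\ell=2$ in these signatures. This yields the square $\bar E\to\bar E/N$ with the vertical quotients by $H$ and $H'$.

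\emph{Descent to $k$.} Exceptionality together with the arithmetic monodromy group $A\supseteq G_f$ gives $A/G_f=\operatorname{Gal}(k'/k)$. Lifting $\sigma_q$ to an element of $A$, which exists by the fixed-point-free coset criterion, gives a semilinear action on $\bar\Omega$. To produce a $k$-form I would modify this lift by an element of $G_f$ so that it fixes a point of $\bar X$; this yields an elliptic curve $E/k$. Descending a point is the step I expect to be the main obstacle, and I would try to obtain it from Lang's theorem on the torsor of origins. The normal subgroup $N$ is characteristic in $A$, so the subgroup $\mathcal N$ is Frobenius-stable, and $E'=E/\mathcal N$ is defined over $k$. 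The point stabilizer $H$ is Frobenius-stable only as a subgroup, and its fixed point may fail to be rational. Writing $H=\{P\mapsto\zeta P+s_\zeta\}$ then produces the shift class $\mathfrak s_E(H)$ in the appropriate quotient of $E(\bar k)$ by $(1-\zeta)E$. The map $f$ is recovered as the induced map $E/H\to E'/H'$ after $k$-isomorphisms of both projective lines with $\mathbf P^1_k$, and this gives the first assertion.

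\emph{Converse.} Given a datum $\mathcal D$, the quotients $E/H$ and $E'/H'$ are genus-zero curves over $k$. Each is split because it carries a rational point, namely the image of a $k$-rational branch point or of the identity fibre, so both are $\mathbf P^1_k$. The induced map has degree $|\mathcal N|=\ell^e$ and is separable because $\ell\ne p$. Its Galois closure is $\bar E\to\bar E/(T_{\mathcal N}\rtimes H)$, so the geometric monodromy is $T_{\mathcal N}\rtimes H$ with signature read off from $m$.

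\emph{Equivalence criterion and rank two.} A two-sided equivalence $\mu\circ f_2\circ\eta$ lifts to the Galois closures as an isomorphism of the covers $\bar E_1\to\bar E_2$ normalizing the monodromy. Such an isomorphism is affine; it carries $\mathcal N_1$ to $\mathcal N_2$ and the linear part of $H_1$ to that of $H_2$. Because $\mu$ and $\eta$ are defined over $k$, it commutes with the Frobenius sections up to monodromy, and this forces compatibility of the shift classes. Conversely, any such marked isomorphism descends to $k$-Möbius maps $\mu$ and $\eta$ on the quotients. In rank two we have $|\mathcal N|=\ell^2=|\bar E[\ell]|$ and $\mathcal N\subseteq\bar E[\ell]$, so $\mathcal N=E[\ell](\bar k)$. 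Hence $E'=E/E[\ell]$, and $[\ell]$ identifies $E'$ with $E$ over $k$, so $\varphi$ agrees with $[\ell]$ up to that $k$-isomorphism of the target.
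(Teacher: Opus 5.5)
Your overall architecture (Galois closure as an elliptic curve, translation kernel, Frobenius descent, lifting equivalences to normal closures) matches the paper's. The group-theoretic reduction, however, does not go through as written.
\begin{itemize}
\item \emph{Primitivity.} Indecomposability over $k$ gives primitivity only of the arithmetic group $A_f$. The geometric group $G_f$ is usually imprimitive: for $m=2$, $e=2$ the stabilizer $\{\pm1\}$ preserves every line of $E[\ell]$. You therefore have to show that the translation subgroup is normalized by the Frobenius lifts; it is $\{1\}$ together with the fixed-point-free elements of $G_f$. Primitivity of $A_f$ then makes it transitive, hence regular and minimal normal. As written, the regular $N$ is simply assumed.
\item \emph{$\ell\ne p$.} Tameness does not give $\ell\ne p$: rank one with $\ell=p$ (ordinary curve, reduced $p$-line) is allowed and occurs. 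What forces $\ell\ne p$ in rank two is that reduced $p$-torsion has at most $p$ points, and your rank-two paragraph needs exactly that. In the converse, separability holds because $E/H\to E/(T_{\mathcal N}\rtimes H)$ lies under a Galois cover, not because $\ell\ne p$.
\item \emph{$\ell$ odd.} Irreducibility does not exclude $\ell=2$. In characteristic $>3$, an order-three $\beta$ on a $j=0$ curve acts irreducibly and semiregularly on $E[2]$, giving a degree-$4$ cover with monodromy $A_4$ and type $(3,3,3)$. Only exceptionality kills it: a Frobenius normalizing $\langle\beta\rangle$ in $\operatorname{GL}_2(\mathbb F_2)\cong S_3$ either lies in $\langle\beta\rangle$ or is a transposition fixing a nonzero vector.
\item \emph{Semiregularity.} You never prove that $L_E(H)$ acts semiregularly on $\mathcal N\setminus\{0\}$. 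This is required in Definition~\ref{def:datum}, and it is the source of core-freeness (Lemma~\ref{lem:core-free-affine-complement}), hence of your claim that $E$ is the Galois closure in the converse and in the equivalence step. It is not automatic: $T_{E[3]}\rtimes\langle\beta\rangle$, with $\beta$ of order three acting unipotently on $E[3]$, is a genuine geometric $(3,3,3)$ cover of degree $9$ with genus-one Galois closure. You must deduce semiregularity from irreducibility of the arithmetic stabilizer $M$, using that each $\mathcal N\cap E[1-\beta^j]$ is $M$-stable.
\end{itemize}
The paper gets all of this at once from GMS in Lemma~\ref{lem:gms-reduction}.

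The arithmetic core is also not in place.
\begin{itemize}
\item \emph{Descent.} What must be arranged is not that the Frobenius lift fix a point of $\bar X$. Every lift gives a genus-one $k$-form, which has a rational point by Hasse--Weil, so your ``main obstacle'' is not one. What matters is that the lift lie in the arithmetic stabilizer of $\mathbf x$, so that the section $B$ normalizes $H$ and $(\bar\Omega^H)^B=k(\mathbf x)$ (Proposition~\ref{prop:stable-quotients}). Modifying the lift by elements of $G_f$ outside $H$ destroys exactly this.
\item \emph{Shift class.} Your shift class is vacuous: $1-\zeta$ is surjective on $E(\bar k)$, so $E(\bar k)/(1-\zeta)E(\bar k)=0$. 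The correct invariant is $[\sigma_q(P_H)-P_H]\in\Delta(H)/(\sigma_q-1)\Delta(H)$ with $\Delta(H)=E[1-\beta]$.
\item \emph{Necessity.} You need the discrepancy $({}^{\sigma_q}\widetilde\alpha)\widetilde\alpha^{-1}$ to lie in $H_2$ itself, not merely in the monodromy group. This follows from the torsor statement for lifts of the $k$-rational source isomorphism, or from $N_{T_{\mathcal N_2}\rtimes H_2}(H_2)=H_2$. Only then does it fix $\operatorname{Fix}(H_2)$ pointwise and give both $({}^{\sigma_q}\alpha)\alpha^{-1}\in L_{E_2}$ and $\sigma_q(P_2)-P_2=\alpha(\sigma_qP_1-P_1)$.
\item \emph{Sufficiency.} You assert this in one sentence, but it is the real content of the criterion. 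An origin-preserving $\alpha$ does not conjugate $H_1$ to $H_2$. You must use the shift condition to replace $P_2$ by some $P_2'\in\operatorname{Fix}(H_2)$ with $\sigma_q(P_2')-P_2'=\alpha(\sigma_qP_1-P_1)$, and put $\widetilde\alpha=\tau_{P_2'-\alpha(P_1)}\alpha$. Then check ${}^{\sigma_q}\widetilde\alpha=h_\alpha\widetilde\alpha$ with $h_\alpha=\tau_{P_2'}({}^{\sigma_q}\alpha)\alpha^{-1}\tau_{-P_2'}\in H_2$, using that $L_{E_2}$ fixes $\Delta_2$ pointwise. Only then do the source and target quotient isomorphisms descend to $k$.
\end{itemize}
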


The precise datum and the exact four-condition equivalence criterion are stated in
Theorem~\ref{thm:intro-forms}.  This distinction is important: the
classification concerns arithmetic forms of the lower cover, not merely the
existence of an elliptic model after extension of scalars.

The underlying geometric reduction leaves exactly four possibilities:
\begin{equation}
\label{eq:four-types-intro}
\begin{array}{c|c|c}
 m&\text{ramification type}&\text{degree condition}\\ \hline
 2&(2,2,2,2)&\ell^e\geqslant5,\\
 6&(2,3,6)&\ell^e\equiv1\pmod6,\\
 3&(3,3,3)&\ell^e\equiv1\pmod6,\\
 4&(2,4,4)&\ell^e\equiv1\pmod4.
\end{array}
\end{equation}
Thus the four signatures are geometric realizations of the same fixed-field
mechanism rather than separate classification problems.

Once $\mathcal D$ is fixed, write
$L_E=L_E(H)=\langle\beta\rangle$ for the cyclic linear image of $H$, let
$\mathcal F_E$ be the $q$-power Frobenius endomorphism of $E$, and put
$\mathcal F_{\mathcal N}:=\mathcal F_E|_{\mathcal N}$.  Write $G_g$ and
$A_g$ for the geometric and arithmetic monodromy groups of an associated lower
map $g$.  The finite linear pair $(\beta,\mathcal F_{\mathcal N})$ is the
arithmetic readout of the datum.

\begin{theoremB}[Monodromy, constant fields, and exact extension support]
For an associated lower map $g$, indecomposability and exceptionality are
determined by the action of $\langle\beta,\mathcal F_{\mathcal N}\rangle$ on
$\mathcal N$.  Moreover
\[
 G_g\cong\mathcal N\rtimes\langle\beta\rangle,
 \qquad
 A_g\cong\mathcal N\rtimes\langle\beta,\mathcal F_{\mathcal N}\rangle.
\]
For every $r\geqslant1$,
\[
\begin{aligned}
&g:\mathbf P^1(\mathbb F_{q^r})\longrightarrow
       \mathbf P^1(\mathbb F_{q^r})\text{ is bijective}\\
&\qquad\Longleftrightarrow\
g/\mathbb F_{q^r}\text{ is exceptional}\\
&\qquad\Longleftrightarrow\
\ker(\mathcal F_{\mathcal N}^r-\beta^j)|_{\mathcal N}=0
\quad(0\leqslant j<m).
\end{aligned}
\]
Thus permutation and exceptionality have exactly the same extension support,
with no accidental permutation degrees.  If
\[
 d_g=\min\{\nu\geqslant1:\mathcal F_{\mathcal N}^\nu\in\langle\beta\rangle\},
\]
then the constant field of the arithmetic Galois closure is
$\mathbb F_{q^{d_g}}$.  In rank two the common support has closed formulas in
both the split and nonsplit cases.
\end{theoremB}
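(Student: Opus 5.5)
We will work directly with the datum $\mathcal D=(E,\mathcal N,H)$ and the diagram in the introduction. The plan is to compute the monodromy groups first and then read off permutation and exceptionality through the Galois-theoretic characterization of exceptional covers.

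\textbf{Monodromy groups.} Over $\bar k$, the function field of $E$ is the Galois closure of $g$. The group $\operatorname{Gal}(\bar k(E)/\bar k(E'/H'))$ is generated by translations by $\mathcal N$ together with the lifts of the affine maps in $H$. Since $H$ meets the translations trivially and the translation part of $H$ can be absorbed by conjugation, this group is $T_{\mathcal N}\rtimes\langle\beta\rangle$. This gives $G_g\cong\mathcal N\rtimes\langle\beta\rangle$, with the point stabilizer equal to the complement $H\cong C_m$.

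For the arithmetic group, we use the fact that $\mathcal N$ and $H$ are Frobenius-stable, so $q$-Frobenius $\mathcal F_E$ descends through both quotients. Hence $A_g$ is generated by $G_g$ and a Frobenius lift fixing the chosen base point. Such a lift acts on $\mathcal N$ by $\mathcal F_{\mathcal N}$, after correcting by a translation that records the shift class. This yields $A_g\cong\mathcal N\rtimes\langle\beta,\mathcal F_{\mathcal N}\rangle$, where $\langle\beta,\mathcal F_{\mathcal N}\rangle$ is viewed inside $\operatorname{GL}(\mathcal N)$; note that $\mathcal F_E$ commutes with $\beta$ up to the Frobenius twist.

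\textbf{Constant field, indecomposability, exceptionality.} The constant field of the arithmetic closure is the fixed field of $G_g$ in the cyclic quotient $A_g/G_g$, so its degree is the order of $\mathcal F_{\mathcal N}$ modulo $\langle\beta\rangle$. That order is $d_g$.

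Indecomposability is primitivity of the affine group, which holds exactly when $\mathcal N$ is irreducible under the linear part. Exceptionality follows from the standard criterion: every element of the coset $G_g\sigma$ must fix exactly one point, or equivalently, the only common orbit of $G_g$ and $A_g$ on pairs is the diagonal. For an affine element $x\mapsto\gamma x+t$ with $\gamma=\mathcal F_{\mathcal N}^r\beta^{-j}$, the number of fixed points is either $0$ or $|\ker(\gamma-1)|$.

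\textbf{Extension support.} This is the step we expect to be the main obstacle. Over $\mathbb F_{q^r}$, the relevant coset is $G_g\mathcal F^r$, whose elements are the affine maps $x\mapsto \beta^{j}\mathcal F_{\mathcal N}^r x+t$ with $t\in\mathcal N$ arbitrary. Every such map has a unique fixed point exactly when $\beta^j\mathcal F_{\mathcal N}^r-1$ is invertible for all $j$. Reindexing $j\mapsto -j$ gives the stated kernel condition, so the exceptionality criterion over $\mathbb F_{q^r}$ is that kernel condition.

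For permutation we count points. By Chebotarev-free counting via Burnside over a finite field (the Lang--Weil error vanishes here), the number of $\mathbb F_{q^r}$-points on a fibre over a rational branch-free point equals the number of fixed points of the Frobenius element. The difficulty is that permutation must be checked at every point, including branch points. We therefore argue the converse directly. Suppose the kernel condition fails for some $j$. Then some coset element has no fixed point, since a noninvertible $\gamma-1$ together with $t\notin\operatorname{im}(\gamma-1)$ has no solution. By Chebotarev with exact genus-one point counts on the twisted curve $E/\mathbb F_{q^r}$, namely $|\#E^{\rm tw}(\mathbb F_{q^r})|>0$ via the Hasse bound applied to the twist by that coset element, we obtain an unramified rational point whose fibre has no rational points. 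Hence $g$ is not surjective, and the permutation property fails.

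In the other direction, exceptionality implies permutation by the standard fact for tame covers, and that fact also handles the branch points. This closes the equivalence, shows the supports coincide, and proves the claim that there are no accidental permutation degrees.
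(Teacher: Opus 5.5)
\textbf{The gap is the claim that the twisted point is unramified.}

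Your monodromy, constant-field and exceptionality steps are fine. The criterion ``every element of the Frobenius coset has a unique fixed point'', applied to $x\mapsto\beta^j\mathcal F_{\mathcal N}^rx+t$, is equivalent to the paper's orbit argument. Exceptional $\Rightarrow$ permutation is the standard Guralnick--Tucker--Zieve theorem.

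The problem is in the converse. The Hasse bound only tells you that the set $S=\{P\in E(\bar k):\tilde aP=P\}$ is nonempty, where $S$ is the set of rational points of the twist by your fixed-point-free coset element $\tilde a$. You then assert that such a point lies over an \emph{unramified} rational point of the target, but nothing you wrote gives this. A priori every point of $S$ could lie over a branch point. There the rational points of the fibre correspond to Frobenius-stable inertia orbits of sheets, not to fixed sheets of $\tilde a$. Forcing an unramified point by counting $|S|$ against the ramification locus would need a largeness hypothesis on $q^r$ relative to $\ell^e$. That is exactly the Weil-bound qualification the statement excludes.

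The missing idea is that a fixed-point-free $\tilde a$ cannot fix a ramified point at all. Suppose $\tilde aP=P$ with inertia $I_P\neq 1$. Then $\tilde a$ lies in the decomposition group of $P$, so it normalizes $I_P$. The linear parts of $I_P$ are nontrivial powers of $\beta$, so semiregularity on $\mathcal N\setminus\{0\}$ makes $I_P$ fix exactly one sheet. Hence $\tilde a$ fixes that sheet, a contradiction. With this added, every point of $S$ is unramified, nonemptiness of $S$ suffices, and your argument closes.

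\textbf{Comparison with the paper, and an omission.}

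With that repair your route genuinely differs from the paper's, which uses neither Chebotarev nor the theorem that exceptional covers permute rational points. Lemmas~\ref{lem:affine-sector-mass} and~\ref{lem:sector-permutation} instead proceed as follows.
\begin{enumerate}
\item They split points into affine sectors $\mathcal S_{r,\gamma}(E)=\{P:\mathcal F_E^rP=\gamma P\}$ for $\gamma\in H$.
\item They show that $\varphi$ maps source sectors into target sectors with $K_{r,\gamma}$-torsor fibres.
\item When all $K_{r,\gamma}$ vanish, bijectivity follows from the mass identity $\sum_\gamma m_Q(\gamma)=m$.
\item Otherwise non-injectivity follows: $K_{r,\gamma}$ would act freely on a sector fibre of size dividing $m$, while $\ell\nmid m$.
\end{enumerate}
Branch fibres are absorbed by the multiplicities rather than avoided. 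Both routes spend the same structural input ($\ell\nmid m$, i.e.\ semiregularity) at the decisive step. Yours shows more generally that a genus-one Galois closure plus ``each nontrivial inertia group fixes a unique sheet'' forces permutation $\Leftrightarrow$ exceptionality. The paper's argument is self-contained and yields the sector multiplicities.

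Finally, you do not address the last sentence of the statement. The closed rank-two formulas still have to be extracted from the kernel criterion. In the split case this is done on the two $\beta$-eigenlines (Theorem~\ref{thm:split-support}). In the nonsplit case it comes from $\operatorname{tr}(\beta^{-j}\mathcal F_{\mathcal N}^r)=0$ for odd $r$ and $\mathcal F_{\mathcal N}^2=-qI$ (Theorem~\ref{thm:nonsplit-support}).
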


The precise statement, including the branch criterion and the exact
permutation--exceptionality support theorem, is
Theorem~\ref{thm:intro-arithmetic} and Section~\ref{sec:converse}.  Thus the
fixed-field datum controls not only the form of the cover but also its
arithmetic monodromy and its complete behavior under finite extension of
constants.

The surviving tame signatures in small characteristic are
\begin{equation}
\label{eq:characteristic-range-intro}
\begin{array}{c|c}
 \operatorname{char}k&\text{signatures}\\ \hline
 >3&(2,2,2,2),\ (3,3,3),\ (2,4,4),\ (2,3,6),\\
 3&(2,2,2,2),\ (2,4,4),\\
 2&(3,3,3).
\end{array}
\end{equation}
In characteristic greater than $3$, tameness is automatic from the genus-one
hypothesis.  In characteristics $2$ and $3$, the common descent mechanism
depends on the normalizer of the marked cyclic action rather than on the full
automorphism group of the elliptic curve.  This marked-normalizer principle
is what allows the tame theory to remain uniform despite the enlarged
special-$j$ automorphism groups.

\begin{corollary}[Characteristic range]
\label{cor:intro-characteristic-range}
Under the hypotheses of Theorem~\ref{thm:intro-forms}, the tame signatures are
exactly those in \eqref{eq:characteristic-range-intro}.  If
$\operatorname{char}k>3$, the tameness hypothesis is redundant.
\end{corollary}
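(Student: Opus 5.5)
The plan is to combine the four-signature list \eqref{eq:four-types-intro}, which the geometric reduction underlying Theorem~\ref{thm:intro-forms} provides, with two independent checks for each signature $(2,2,2,2)$, $(3,3,3)$, $(2,4,4)$, $(2,3,6)$ and each characteristic $p$: first, whether tameness forces $p\nmid m$, and second, whether the degree condition $\ell^e\geqslant5$, $\ell^e\equiv1\pmod 6$, or $\ell^e\equiv1\pmod4$ can be met by an odd prime $\ell$ for which the isogeny datum is separable.

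Tameness means every ramification index is prime to $p$. In the $H$-quotient picture, the inertia groups over the branch points of $f$ have orders given by the signature entries. For $p=3$ this excludes $(3,3,3)$ and $(2,3,6)$, leaving $(2,2,2,2)$ and $(2,4,4)$. For $p=2$ every signature containing an even entry is excluded, leaving only $(3,3,3)$.

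Next I would show that the surviving cases actually occur, which is what the word \emph{exactly} requires. The necessary conditions are an odd prime $\ell\neq p$ (for separability of $\varphi$, so $\mathcal N$ is étale), an elliptic curve $E$ with an automorphism of order $m$ fixing $\mathrm O_E$, and the degree congruence. In characteristic $3$, $j=1728$ is supersingular, yet $\operatorname{Aut}(E)$ still contains an element of order $4$. I would use the marked-normalizer principle from Section~\ref{sec:reduction-descent}, so that only the normalizer of the cyclic order-$4$ subgroup matters. Combined with the existence criteria stated later, this gives $(2,4,4)$ with, say, $\ell=5$; the case $(2,2,2,2)$ with $\ell\geqslant5$ is clear. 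In characteristic $2$, the curve with $j=0$ has an automorphism of order $3$, and $\ell=7$ gives $(3,3,3)$. When $p>3$, all four signatures occur: any $\ell\neq p$ satisfying the congruence, together with $j=0$ or $1728$, works. Here I will assume the Theorem~A converse, and I expect the remaining work to be a routine Frobenius-stability choice over some $k$.

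For the redundancy claim, I would take a genus-one Galois closure and suppose there is wild ramification with $p>3$. The Galois closure is an elliptic curve $\bar E$, and every inertia group is a point stabilizer in a subgroup of $\operatorname{Aut}(\bar E)$. The stabilizer of a point is contained in $\operatorname{Aut}(\bar E,\mathrm O)$ up to conjugation, and that group has order dividing $24$. For $p>3$ its order is $2,4$ or $6$, so it is prime to $p$, and every ramification is tame. The main obstacle is the occurrence step in characteristics $2$ and $3$: one must check that the enlarged automorphism groups at special $j$ do not break the descent to $\mathbb F_q$. I would handle this by appealing directly to the marked-normalizer descent statement.
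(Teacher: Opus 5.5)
Your exclusion step and your redundancy step are exactly the paper's proof. The exclusion step is the Remark after Lemma~\ref{lem:gms-reduction}: tameness forces $\operatorname{char}k\nmid m$ on the four GMS rows. The redundancy step is Lemma~\ref{lem:automatic-tameness}: each inertia group is a point stabilizer, hence a subgroup of $\operatorname{Aut}(C,P)$, which has order $2$, $4$ or $6$ when $p>3$. The paper's argument stops there. In the paper, ``exactly'' means that these are the rows surviving the tameness constraint. Realizability of each surviving row is not argued in this corollary; if wanted, it can be read off from Theorem~\ref{thm:master-occurrence}.

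Your occurrence paragraph is therefore optional, but as written it contains false claims.
\begin{enumerate}
\item[(i)] $\ell\ne p$ is not necessary for separability. In rank one the paper allows $\ell=p$, using the reduced $p$-line of an ordinary curve. Only rank two forces $\ell\ne p$ (Theorem~\ref{thm:translation-kernel}).
\item[(ii)] $\ell=5$ never gives a rank-one $(2,4,4)$ cover in characteristic $3$. Rank one needs $q\equiv1\pmod4$, so $q=3^{2b}$. The curve is the supersingular curve with $j=0=1728$. Its $q$-Frobenius is $c\,[(-3)^b]$ with $c\in N_U(L_E)=L_E$ by Lemma~\ref{lem:char3-C4-normalizer}. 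Hence every multiplier on a $\beta$-eigenline of $E[5]$ satisfies $\lambda^4=81^b\equiv1\pmod5$, i.e.\ $\mathfrak B_4(q,5)$ always holds. Use rank two instead (e.g.\ $q=3$, degree $25$, nonsplit with $5\nmid q+1$), or take $(q,\ell)=(9,13)$ in rank one.
\item[(iii)] For $p>3$, your claim that any admissible $\ell\ne p$ works ignores the exceptionality test. It fails in rank one by the same computation, e.g.\ $(p,\ell)=(7,5)$ for $(2,4,4)$, since every $j=1728$ curve there is supersingular and $\lambda^4=q^2\equiv1\pmod5$. The nonsplit rank-two rows, where occurrence is just $\ell\nmid q+1$, give a uniform route.
\end{enumerate}

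The obstacle you single out is also misplaced. For existence you only need to write down one Euclidean quotient datum over $k$ (e.g.\ on $y^2=x^3-x$ over $\mathbb F_3$) and apply Theorems~\ref{thm:converse} and~\ref{thm:exceptionality}. Descent through the enlarged automorphism groups and the marked normalizers matter for the classification and the counts, not for producing examples.
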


The fixed-field classification has a further arithmetic closure.  For each
admissible quadruple $(q,\ell,m,e)$ one can ask whether the corresponding
cover occurs and, if it does, how many two-sided $k$-M\"obius classes occur.

\begin{theoremC}[Occurrence and enumeration]
For every admissible $(q,\ell,m,e)$ in the surviving tame signatures, there is
an exact necessary and sufficient occurrence criterion, and every occurring
parameter has an exact two-sided class count.  In the three CM signatures the
answers reduce to cyclotomic congruences and finite-step CM Frobenius data.  In
the involutive signature they reduce to the finite Waterhouse trace set and an
exact finite residue-degree-weighted class-number sum with the required
special-$j$ corrections.  The tame cases in characteristics $2$ and $3$
are included.
\end{theoremC}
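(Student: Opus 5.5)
The statement to be proved is Theorem~C. It splits into a counting problem over the elliptic data of Theorem~A, combined with the support criterion of Theorem~B. The plan is to go signature by signature, using the list \eqref{eq:four-types-intro} and the characteristic range \eqref{eq:characteristic-range-intro}.

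\emph{Step 1: reduce occurrence to elliptic data.} By Theorem~A, a two-sided $k$-M\"obius class with parameters $(q,\ell,m,e)$ is the same thing as an equivalence class of Frobenius-stable affine elliptic quotient data $\mathcal D=(E,\mathcal N,H)$. By Theorem~B, such a datum yields an indecomposable exceptional cover exactly when two conditions hold on the pair $(\beta,\mathcal F_{\mathcal N})$. First, $\langle\beta,\mathcal F_{\mathcal N}\rangle$ must act irreducibly on $\mathcal N$. Second, $\ker(\mathcal F_{\mathcal N}^r-\beta^j)=0$ must hold for all $j$ and for infinitely many $r$. So occurrence becomes the question of whether some $E/k$, some Frobenius-stable $\mathcal N$ of order $\ell^e$, and some cyclic affine $H$ with $\lvert L_E(H)\rvert=m$ meet these linear conditions. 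The count is then the number of such data modulo the four-condition equivalence.

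\emph{Step 2: the CM signatures $m\in\{3,4,6\}$.} Here $\beta$ is an automorphism of order $m$, so $j(E)$ equals $0$ or $1728$ and $\operatorname{End}$ contains $\mathbb Z[\zeta_m]$.

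\begin{itemize}
\item Tameness forces $\beta$ to be defined compatibly with Frobenius. Through the marked-normalizer principle, this means $\mathcal F_E$ commutes with $\beta$ up to the normalizer, so $\mathcal F_E$ is an element $\pi\in\mathbb Z[\zeta_m]$ of norm $q$ (ordinary case), or a supersingular Frobenius.
\item Irreducibility of $\langle\beta,\pi\rangle$ on $\mathcal N$ is the key constraint. In rank one, $\mathcal N=\ker\lambda$ for a prime $\lambda\mid\ell$ of $\mathbb Z[\zeta_m]$, and $\ell$ must split, giving $\ell\equiv1\pmod m$. In rank two, $\mathcal N=E[\ell]$, and $\beta$ acts irreducibly exactly when $\ell$ is inert, giving $\ell^2\equiv1\pmod m$.
\item The exceptionality condition says $\pi^r\not\equiv\zeta^j$ modulo the relevant prime(s) for infinitely many $r$. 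This is a condition on the residue of $\pi$ in the finite group $(\mathbb Z[\zeta_m]/\ell)^\times/\langle\zeta_m\rangle$, which gives the ``cyclotomic congruences''.
\end{itemize}

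To count, enumerate the twists of $E$ (the sextic, quartic, or cubic twist classes, whose Frobenius elements are the $\zeta\pi$). Then enumerate the choices of $\mathcal N$, and finally the shift classes $\mathfrak s_E(H)$. The latter form a torsor under a finite group of $k$-rational fixed points of $\beta$ on $E$, modulo the normalizer action. All of this is finite CM Frobenius data.

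\emph{Step 3: the involutive signature $m=2$.} Here $\beta=-1$ exists on every $E$. Occurrence and count run over isogeny classes, which correspond to traces $t$ in Waterhouse's set. The steps are:

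\begin{itemize}
\item For each admissible $t$, the condition on $\mathcal F_{\mathcal N}$ depends only on the characteristic polynomial $X^2-tX+q$ modulo $\ell$, together with the choice of $\mathcal N$. In rank two, $\mathcal N=E[\ell]$ is automatic.
\item One then counts the $k$-isomorphism classes of $E$ in the isogeny class satisfying the kernel condition. By Deuring and Waterhouse this is a weighted sum of class numbers $H(t^2-4q)$. The weight keeps track of which orders contain $(\pi-\zeta)/\ell$-type elements, i.e.\ residue-degree weights.
\item Finally, divide by $\operatorname{Aut}$ and correct at $j=0,1728$ (and at the supersingular special $j$ in characteristics $2,3$), where the normalizer of $\langle-1\rangle$ is larger.
\end{itemize}

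\emph{Main obstacle.} The hardest step is making the equivalence count exact: matching the four-condition criterion of Theorem~A to orbit counting. The shift classes and the enlarged normalizers at special $j$ in characteristics $2$ and $3$ create stabilizers that must be handled by Burnside-type counting, and the small-characteristic cases need a separate check that tameness survives. The approach I would try first is to treat each special-$j$ curve as an explicit finite group action on the triples $(\mathcal N,\beta,\mathfrak s)$ and tabulate the orbits directly.
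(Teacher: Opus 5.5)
Your overall plan follows the paper's route: good kernels on map-level marked forms, then kernel/shift orbits under the four-condition criterion, and Waterhouse--Schoof data for $m=2$. However, the CM step has a genuine gap.

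You take $\mathcal F_E=\pi\in\mathbb Z[\zeta_m]$ and conclude that rank two is irreducible exactly when $\ell$ is inert. That covers only the split case $q\equiv1\pmod m$. Since $\mathcal F_{\mathcal N}\beta\mathcal F_{\mathcal N}^{-1}=\beta^q$, for $q\equiv-1\pmod m$ Frobenius \emph{inverts} $\beta$. It is then not in $\mathbb Z[\zeta_m]$, and your residue test in $(\mathbb Z[\zeta_m]/\ell)^\times/\langle\zeta_m\rangle$ does not apply.

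\begin{itemize}
\item In this nonsplit row, rank one is impossible. On a line, Frobenius is a scalar and commutes with $\beta$, which forces $q\equiv1\pmod m$. Your rank-one criterion also needs this condition, not just $\ell\equiv1\pmod m$.
\item In rank two, $\langle\beta,\mathcal F_{\mathcal N}\rangle$ is irreducible for every $\ell\ne p$, split or inert, because Frobenius exchanges the two $\beta$-eigenlines.
\item Each $\beta^{-j}\mathcal F_{\mathcal N}$ has trace $0$ and determinant $q$, so $\det(\mathcal F_{\mathcal N}-\beta^j)=1+q$ for all $j$. Occurrence there is therefore exactly $\ell\ne p$ and $\ell\nmid q+1$, with $3,2,1$ classes for $m=3,4,6$.
\end{itemize}

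Your criterion would wrongly exclude $(q,\ell,m,e)=(17,7,3,2)$, which occurs with three classes even though $7$ splits in $\mathbb Z[\zeta_3]$. Also, inertness is $\ell\equiv-1\pmod m$; the condition $\ell^2\equiv1\pmod m$ you wrote is vacuous.

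In the split case, you must twist by the marked normalizer $U^\sharp$ modulo $L_E$. Twisting by $\mu_m$ alone is not enough, and twisting by the full automorphism group in characteristics $2$ and $3$ is too much. For $m=3$ the residual $[-1]$ is exactly what makes rank-one occurrence automatic. For $m=4,6$ one has $U^\sharp=L_E$, so the obstruction $\mathfrak B_m(q,\ell)$ cannot be escaped.

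In the involutive row your count leaves out the shift classes. Each good kernel on $E$ carries $|E[2](k)|\in\{1,2,4\}$ shift classes, and for even $t$ this number is not determined by the trace. The paper therefore needs the order-refined counts $C_q(t;2,1)$. In rank one it also needs the joint level-$2\ell$ counts $C_q(t;2\ell,\widetilde\lambda)$. The reason is that full rational $2$-torsion can occur on the same curves where $\ell$-Frobenius is scalar, and scalar Frobenius raises the number of good lines from $g_\ell(t)$ to $g_\ell(t)+\ell$.

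So trace-level class numbers $H(t^2-4q)$ are not enough. You need Schoof's per-order counts $f_p(R)h(R)$, summed over the orders with $\pi_t-a\in nR$. The residue-degree weight is $f_p(R)$; it is a separate ingredient from the order condition to which you gave that name. Finally, characteristic $2$ has no tame involutive row, so the only small-characteristic correction for $m=2$ is the characteristic-$3$ supersingular $S_3$ Burnside term.
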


This closes the classification arithmetically: the fixed-field forms lead to
finite marked Frobenius data, from which occurrence and enumeration become
finite arithmetic problems.  The exact occurrence and counting statements are
given in Theorems~\ref{thm:master-occurrence} and~\ref{thm:master-enumeration}.
Table~\ref{tab:arithmetic-roadmap} records how these arithmetic problems close
by signature and rank.  The degree conditions in
\eqref{eq:four-types-intro} are understood; in the support column, ``split''
and ``nonsplit'' refer to the two rank-two Frobenius types treated in
Section~\ref{sec:rank-two-support}.

\begin{table}[t]
\centering
\small
\setlength{\tabcolsep}{1.5pt}
\begin{tabular}{@{}>{\raggedright\arraybackslash}p{1.55cm}
>{\raggedright\arraybackslash}p{1.25cm}c
>{\raggedright\arraybackslash}p{2.0cm}
>{\raggedright\arraybackslash}p{3.15cm}
>{\raggedright\arraybackslash}p{3.3cm}@{}}
\toprule
Signature & Tame char. & Rank & Common support & Occurrence engine & Exact-count engine\\
\midrule
$(2,2,2,2)$ & $\ne2$ & 1 & $d_g\nmid r$ & Waterhouse trace + good Frobenius eigenline; reduced line when $\ell=\operatorname{char}k$ & Waterhouse/Schoof order-refined sum + special-$j$ Burnside\\
$(2,2,2,2)$ & $\ne2$ & 2 & $d_g\nmid r$ & $\ell\ne\operatorname{char}k$; Waterhouse trace with irreducible $E[\ell]$ & same global Waterhouse/Schoof--Burnside formula\\
$(3,3,3)$ & $\ne3$ & 1 & $d_g\nmid r$ & $q\equiv1\pmod3$, $\ell\equiv1\pmod3$ & three-step Frobenius multiplicities + marked shift weights\\
$(3,3,3)$ & $\ne3$ & 2 & split: $d_g\nmid r$; nonsplit: parity-residue law & split cyclotomic condition; nonsplit $\ell\nmid q+1$ & split three-step formula; nonsplit exactly $3$ classes\\
$(2,4,4)$ & $\ne2$ & 1 & $d_g\nmid r$ & split cyclotomic condition + finite $q^4$-Frobenius test & four-step Frobenius multiplicity; shift factor $2$\\
$(2,4,4)$ & $\ne2$ & 2 & split: $d_g\nmid r$; nonsplit: parity-residue law & split finite $q^4$-Frobenius test; nonsplit $\ell\nmid q+1$ & exactly $2$ classes after occurrence\\
$(2,3,6)$ & $>3$ & 1 & $d_g\nmid r$ & split cyclotomic condition + finite $q^6$-Frobenius test & six-step Frobenius multiplicity; shift factor $1$\\
$(2,3,6)$ & $>3$ & 2 & split: $d_g\nmid r$; nonsplit: parity-residue law & split finite $q^6$-Frobenius test; nonsplit $\ell\nmid q+1$ & exactly $1$ class after occurrence\\
\bottomrule
\end{tabular}
\caption{Roadmap for support, occurrence, and enumeration.}
\label{tab:arithmetic-roadmap}
\end{table}

The table is a roadmap rather than a replacement for the exact statements:
the support formulas are proved in Section~\ref{sec:converse}, occurrence in
Section~\ref{sec:occurrence}, and the literal two-sided class counts in
Section~\ref{sec:CM-counts}.  The $3$-adic condition discussed next belongs
only to the stronger same-$C_3$ realization problem and does not enter the
fixed-field occurrence or class counts.

One sharper phenomenon must be kept separate from the classification itself.
In the cubic signature $(3,3,3)$, the fixed-field classification is complete
with no additional obstruction.  A stronger
requirement is that, after two-sided $k$-M\"obius equivalence, the elliptic
map be a $k$-self-endomorphism and the same Frobenius-stable affine $C_3$ act
on source and target; we call this a \emph{same-$C_3$ self-endomorphism
realization} (Definition~\ref{def:same-c3}).  Such a realization can fail only
in the nonsplit rank-two, nonzero-shift case.  In characteristic different
from $3$ it holds exactly when
\[
 v_3(\operatorname{sgn}_3(\ell)\ell-1)\geqslant v_3(q+1),
\]
where $\operatorname{sgn}_3(\ell)\ell\equiv1\pmod3$.  The degree-$49$ example
over $\mathbb F_{17}$ shows that this boundary is genuine.  This condition is
therefore a boundary for the stronger realization problem, not an exception
to the fixed-field classification.

\subsection{Relation to previous work}

The geometric and group-theoretic possibilities are well established.
Fried formulated the rational-function Schur problem as an
extension-of-constants problem and, in prime degree, isolated the four tame
Euclidean ramification patterns and realized them by elliptic isogeny
quotients \cite[Theorem~2.1, Lemma~2.1, and Theorem~2.2]{Fried78}.
Fried--Guralnick--Saxl developed the arithmetic framework for exceptional
covers \cite{FGS93}.  Guralnick--Tucker--Zieve proved that an exceptional
cover over a finite field is bijective on rational points over that field,
whereas their general converses from injectivity or surjectivity require
explicit largeness hypotheses.  They also exhibited nonexceptional rational
maps that are nevertheless bijective over the ground field
\cite[Theorems~1--2 and Example~5.2]{GTZ07}.  Guralnick--M\"uller--Saxl
(GMS) placed the Euclidean cases in a broader affine classification; their
characteristic-zero results include the corresponding elliptic isogeny and
endomorphism normal forms
\cite[Lemma~4.4, Theorem~4.14(b), and Theorems~6.5--6.6]{GMS03}.  Pakovich
gives a general complex elliptic-quotient description for rational functions
with genus-one normalization \cite[Theorem~1.2 and Theorem~5.1]{Pak18}.

These geometric results do not by themselves classify forms over a prescribed
finite field.  The cyclic stabilizer may descend affinely, and the resulting
shift is invisible over $\bar k$.  M\"uller isolated a related descent issue
in the number-field setting and treated the $(2,2,2,2)$ series under a
rational-branch hypothesis \cite[\S5]{Muller99}.  The present paper supplies
the prescribed-field classification in all tame Euclidean signatures and in
both possible ranks, together with the exact equivalence relation and the
arithmetic consequences of the finite linear pair
$(\beta,\mathcal F_{\mathcal N})$ on the translation kernel.  Related
finite-field isogeny constructions occur in Bisson--Tibouchi
\cite[Theorem~2.1 and \S\S3--4]{BT18}.  Exact permutation criteria for
origin-preserving Latt\`es families were obtained by K\"u\c{c}\"uksakall\i{}
for CM isogeny quotients \cite[Corollary~2.8]{Kuc14} and, for multiplication
maps, by Bell et al. \cite[Corollary~2.6]{Bell22}.
Panraksa--Samart--Sriwongsa study a different arithmetic quantifier for
Latt\`es maps over $\mathbb Q$, varying the reduction prime \cite{PSS26}.

A companion paper studies the complementary forward problem for a prescribed
Frobenius-stable equivariant elliptic-isogeny datum in which the finite group
acts faithfully by origin-preserving automorphisms.  It proves an exact
all-extension kernel criterion, a weighted Frobenius-sector identity valid
above ramified quotient fibres, an affine monodromy description, and a purely
periodic permutation-support theorem
\cite[Theorems~1.3--1.4, Theorem~2.4, and Proposition~4.2]{Fan26}.  The
present paper addresses the inverse fixed-field problem: starting from a tame
indecomposable exceptional cover with genus-one geometric Galois closure, it
reconstructs the elliptic quotient datum, whose cyclic point stabilizer may
descend affinely with a nonzero shift, and determines its fixed-field forms,
shift classes, exact two-sided equivalence, branch arithmetic, occurrence, and
enumeration.  The nonzero affine shifts occurring here lie outside the
prescribed origin-preserving setup of \cite{Fan26}.
Section~\ref{sec:converse} therefore establishes the required affine
finite-extension criterion self-containedly, and the classification and
counting results do not depend on the companion paper.

The non-CM involutive row of our enumeration naturally meets the finite-field
elliptic-curve strata of Waterhouse \cite{Waterhouse69} and Schoof's
corrected fixed-order counts \cite{Schoof87}.

\subsection{Strategy of proof and organization}

The proofs are organized around one mechanism.  A Frobenius section in the
arithmetic point stabilizer descends the genus-one Galois closure to the
prescribed field.  The regular normal subgroup is then forced to act by
translations, producing a Frobenius-stable elliptic kernel.  The cyclic point
stabilizer descends only affinely, and its fixed-point torsor gives the shift
class.  The exact equivalence theorem records how this shift interacts with
marked geometric isomorphisms.

Once the datum is in place, exceptionality, actual permutation behavior over
every finite extension, constant fields, and base change reduce to the same
finite linear datum $(\beta,\mathcal F_{\mathcal N})$ on the translation
kernel.  An affine sector argument shows that the permutation and exceptionality
supports coincide, so no Weil-bound qualification remains in this family.  The
signature-specific elliptic models and marked normalizers then make the
occurrence and orbit counts explicit.  Only the stronger cubic same-$C_3$ realization requires an
additional saturation argument, leading to the $3$-adic boundary above.

Sections~\ref{sec:reduction-descent}--\ref{sec:converse} develop the common
descent, classification, and arithmetic mechanism.  Sections~\ref{sec:236-244}
and~\ref{sec:333} give the elliptic realizations, marked small-characteristic
normalizers, and the cubic realization boundary.  Section~\ref{sec:occurrence}
solves existence for prescribed parameters, and Section~\ref{sec:CM-counts}
completes the arithmetic closure by enumeration.  Section~\ref{sec:outlook}
records the remaining perspective and natural extensions.

\subsection{Notation and conventions}

Throughout the rest of the paper, $p=\operatorname{char}k$.  In the elliptic
realization, $H$ denotes the cyclic affine point stabilizer and
\[
 L_E(H):=\operatorname{im}\bigl(
 H\longrightarrow\operatorname{Aut}_{\bar k}(E,\mathrm O_E)
 \bigr)
\]
its origin-preserving linear image; when the datum is fixed, we abbreviate this
to $L_E$.  Let $\beta$ be a generator of $L_E(H)$ and write
\[
 \Delta(H):=E(\bar k)^{L_E(H)}=E[1-\beta].
\]
The affine descent of $H$ is measured by the shift class
$\mathfrak s_E(H)$, defined formally in Section~\ref{sec:affine}.  We write
$\mathcal F_E:E\to E$ for the $q$-power Frobenius endomorphism and, for a
Frobenius-stable point kernel $\mathcal N$, put
$\mathcal F_{\mathcal N}:=\mathcal F_E|_{\mathcal N}$.  Although
$\mathcal F_E$ is not an isomorphism of curves, its induced map on
$E(\bar k)$ is bijective; $\mathcal F_E^{-1}$ always denotes the inverse of
this group automorphism on geometric points.  We retain $\sigma_q$ for
arithmetic Galois Frobenius.  The integer $d_g$ is the order of the coset of
$\mathcal F_{\mathcal N}$ modulo $\langle\beta\rangle$; the precise
arithmetic definitions are given in Section~\ref{sec:converse}.  We use
\emph{fixed-field form} for a form over the prescribed base field $k$;
fixed fields of subgroups are always described explicitly as such.

\section{From exceptional covers to elliptic quotient data}
\label{sec:reduction-descent}

\subsection{The Euclidean reduction}
\label{sec:gms}

The first reduction removes a hypothesis that is redundant in characteristic
greater than $3$.

\begin{lemma}[Automatic tameness in characteristic greater than $3$]
\label{lem:automatic-tameness}
Let $k$ have characteristic $p>3$, let $f\in k(X)$ be separable, and suppose
that the smooth projective curve with function field the geometric Galois
closure of $\bar k(X)/\bar k(f(X))$ has genus one.  Then the geometric Galois
cover is tame.
\end{lemma}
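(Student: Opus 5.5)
Let $\bar X$ be the genus-one curve with function field $\bar\Omega$ and $G=G_f$ the Galois group of $\bar X\to\mathbf P^1_{\bar k}$, a finite group of automorphisms of $\bar X$. The plan is to show that wild ramification is impossible for such a group acting on a genus-one curve when $p>3$. The key input is the structure of $\operatorname{Aut}(\bar X)$: after choosing an origin, $\bar X$ is an elliptic curve, and every automorphism is a translation composed with an origin-preserving automorphism. Hence $G$ sits in an exact sequence $1\to T\to G\to L\to 1$, where $T\subset \bar X(\bar k)$ is a finite group of translations and $L\subset\operatorname{Aut}(\bar X,\mathrm O)$. For $p>3$, the group $\operatorname{Aut}(\bar X,\mathrm O)$ is cyclic of order $2$, $4$ or $6$, so $|L|$ is prime to $p$.

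Next I examine inertia groups. Let $P\in\bar X$ and let $I_P\subset G$ be its inertia (stabilizer) group. A nontrivial translation has no fixed points, so $I_P\cap T=1$, and $I_P$ injects into $L$. Thus $|I_P|$ divides $|L|\in\{1,2,3,4,6\}$ and is prime to $p$. Consequently every inertia group is tame, with trivial wild part $I_P^{(1)}$, and $\bar X\to\bar X/G\cong\mathbf P^1$ is tamely ramified. The tameness of $\bar k(X)/\bar k(f(X))$ follows because ramification indices in a subextension divide those of the Galois closure.

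An alternative check, or a replacement if one prefers to avoid the automorphism structure, is Riemann--Hurwitz with differents: $0=2g-2=|G|(-2)+\sum_P d_P$ with $d_P\ge e_P-1$, and $d_P>e_P-1$ exactly at wild points. The automorphism argument is cleaner, so I will use it.

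The main step needing care is the claim that a point stabilizer embeds into $\operatorname{Aut}(\bar X,\mathrm O)$. To see this, conjugate by the translation taking $P$ to $\mathrm O$. Then $I_P$ becomes a subgroup of $\operatorname{Aut}(\bar X,\mathrm O)$, whose order is at most $6$ with all prime factors in $\{2,3\}$ when $p>3$, using the standard $j$-invariant description of automorphism groups. Separability of $f$ ensures $\bar\Omega/\bar k(\mathbf t)$ is separable, so it is a genuine Galois cover and $G$ acts faithfully. No other obstacle is expected.
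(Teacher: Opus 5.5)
Your argument is correct and is essentially the paper's proof: each inertia group is a point stabilizer, which embeds in $\operatorname{Aut}(\bar X,\mathrm O)$ (after moving $P$ to the origin, or equivalently via $I_P\cap T=1$), and for $p>3$ that group has order $2$, $4$, or $6$, so $p\nmid|I_P|$. The translation/linear-part exact sequence only repackages the same embedding, and the Riemann--Hurwitz aside is not needed; as sketched, it would not by itself exclude wild ramification.
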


\begin{proof}
Let $C/\bar k$ be the genus-one Galois-closure curve, and let $P\in C(\bar k)$.
The inertia group $I_P$ is the stabilizer of $P$ in the geometric Galois
group.  Taking $P$ as the origin makes $C$ an elliptic curve and identifies
$I_P$ with a finite subgroup of $\operatorname{Aut}(C,P)$.  In characteristic
$p>3$, the origin-preserving automorphism group of an elliptic curve has order
$2$, $4$, or $6$ \cite[Theorem~III.10.1]{Silverman09}.  Hence $p\nmid |I_P|$
for every $P$.  Thus every inertia group is prime to $p$, which is exactly
tameness of the Galois cover.
\end{proof}

The Guralnick--M\"uller--Saxl classification supplies the remaining
geometric reduction \cite{GMS03}.  Let $\Omega/k(\mathbf{t})$ be the
arithmetic Galois closure of $k(\mathbf{x})/k(\mathbf{t})$, where $\mathbf{t}=f(\mathbf{x})$, and put
\[
 A_f:=\operatorname{Gal}(\Omega/k(\mathbf{t})),\qquad
 k_\Omega:=\Omega\cap\bar k,\qquad
 G_f:=\operatorname{Gal}(\Omega/k_\Omega(\mathbf{t})).
\]
After extension of constants, this group identifies with the geometric monodromy
group $G_f$ defined above.

\begin{lemma}[Tame Euclidean reduction]
\label{lem:gms-reduction}
Let $k=\mathbb{F}_q$ and let $f\in k(X)$ be separable, indecomposable, tame, and
exceptional.  Suppose that the geometric Galois closure has genus one.  Then
there are an odd prime $\ell$ and $e\in\{1,2\}$ such that
\[
 \deg f=\ell^e,
\]
and
\[
 G_f=N\rtimes H,
 \qquad
 N\cong C_\ell^e,
\]
where $N$ acts regularly and $H$ is cyclic and semiregular on
$N\setminus\{0\}$.  The pair $(m,\text{ramification type})$ is one of the
four rows in \eqref{eq:four-types-intro}, and necessarily
$\operatorname{char}k\nmid m$.
\end{lemma}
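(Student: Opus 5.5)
We will prove the reduction by passing to $\bar k$ and reading off the structure of $G_f$ from the genus-one Galois closure, using exceptionality and indecomposability only at the points indicated below.

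\textbf{Step 1: the translation subgroup.} Let $C/\bar k$ be the genus-one Galois-closure curve and fix an origin. Then $G_f$ acts faithfully on $C$, and each element has the form $P\mapsto \alpha(P)+t$ with $\alpha\in\operatorname{Aut}(C,\mathrm O)$. Let $N\trianglelefteq G_f$ be the subgroup of translations; it is the kernel of the linear-part homomorphism, so $G_f/N$ is a finite subgroup of $\operatorname{Aut}(C,\mathrm O)$.

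\textbf{Step 2: the quotient is cyclic of order $m\in\{2,3,4,6\}$.} Since $C/G_f\cong\mathbf P^1$ has genus zero, $G_f\neq N$. Tameness makes every inertia group cyclic of order prime to $p$. Inertia groups at points fixed by non-translations inject into $G_f/N$. By Riemann--Hurwitz,
\[
0=\sum_P\Bigl(1-\tfrac1{e_P}\Bigr)-2
\]
on the quotient, which leaves exactly the Euclidean signatures $(2,2,2,2)$, $(3,3,3)$, $(2,4,4)$ and $(2,3,6)$. In each case the linear parts of the inertia generators generate a cyclic group $C_m$, with $m$ equal to the largest ramification index. Each inertia order divides $m$ and is prime to $p$, so $p\nmid m$.

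\textbf{Step 3: primitivity and the regular subgroup.} Let $U$ be the point stabilizer, which has index $\deg f$. Indecomposability makes $G_f$ primitive on $G_f/U$. The abelian normal subgroup $N$ is nontrivial: otherwise $G_f$ would embed in $\operatorname{Aut}(C,\mathrm O)$ and $C/G_f$ would be an elliptic curve modulo a linear group, contradicting the ramification count. A nontrivial abelian normal subgroup of a primitive group acts regularly and is elementary abelian, so $N\cong C_\ell^e$ with $\deg f=\ell^e$. Moreover $U\cap N=1$ and $G_f=N\rtimes U$ with $U\cong G_f/N\cong C_m$; this $U$ is the $H$ of the statement.

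Since $H\cong C_m$ embeds in $\operatorname{Aut}(C,\mathrm O)$ and acts on $N\subset C[\ell]$ through these linear automorphisms, every nonidentity element of $H$ acts without nonzero fixed points on $N$, because $1-\alpha$ is an isogeny with small kernel. Precisely, $\ker(1-\alpha)$ has order $4$, $3$, $2$ or $1$ for $\alpha$ of order $2$, $3$, $4$ or $6$ respectively. Hence $H$ is semiregular on $N\setminus\{0\}$ as soon as $\ell\nmid$ these orders.

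\textbf{Step 4: $\ell$ odd, the bound on $e$, and the degree conditions.} Here exceptionality enters. We rule out $\ell=2$, and $\ell=3$ in the $m\in\{3,6\}$ cases, by showing that $N$ would then meet $\ker(1-\alpha)$. The resulting fixed points give $H$-invariant or Frobenius-invariant subspaces. Combined with the Fried--Guralnick--Saxl criterion (exceptionality is equivalent to $A_f$ having no fixed point of the Frobenius coset on pairs), this forces either decomposability or a nontrivial rational fixed point of the diagonal action, so the cover is not exceptional. We defer to \cite{GMS03} for the complete affine list, which gives $e\le 2$ because $N\subseteq C[\ell]\cong C_\ell^2$ when $\ell\neq p$, and $N\subseteq C_\ell$ when $\ell=p$.

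The degree conditions come from semiregularity: $m\mid \ell^e-1$. For $m=2$ one additionally excludes $\ell^e=3$, since that degree-$3$ cover is a decomposable or non-Euclidean case. This $\ell^e=3$ exclusion, together with the exceptionality argument excluding small $\ell$, is the step I expect to be the main obstacle. I would handle it by citing the corresponding statements in \cite[Theorem~4.14]{GMS03} and \cite{Fried78}, rather than reproving them.
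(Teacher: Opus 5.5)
\textbf{The main gap is in Step~3.} You claim that indecomposability makes $G_f$ primitive. Indecomposability of $f$ over $k$ only makes the \emph{arithmetic} monodromy group $A_f$ primitive. The geometric group can be imprimitive, and it is in cases this lemma must cover.

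Take $m=2$, $e=2$. Then $G_f=N\rtimes\{\pm1\}$ with $N=E[\ell]\cong C_\ell^2$, and every line of $N$ gives a block system. Yet such $f$ are indecomposable over $k$ whenever Frobenius acts irreducibly on $E[\ell]$ (Theorem~\ref{thm:exceptionality}, Proposition~\ref{thm:C2-occurrence}(ii)). So your step proves too much. A primitive affine group needs an irreducible point stabilizer, and $\{\pm1\}$ is never irreducible on $\mathbb F_\ell^2$, so you would have excluded rank two in the $(2,2,2,2)$ row.

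The repair is to run your primitive-group argument in $A_f$. For that you must first show $N\trianglelefteq A_f$. One way: $N\setminus\{1\}$ is exactly the set of elements of $G_f$ acting on $C$ without fixed points (Lemma~\ref{lem:fp-free-translation}), and conjugation by the Frobenius-semilinear arithmetic automorphisms preserves this property. Then $N$ is a nontrivial abelian normal subgroup of a primitive group. Hence it is transitive, hence regular, hence equal to any minimal normal subgroup of $A_f$ it contains, hence elementary abelian. The paper's proof likewise works with $A_f$: it gets the regular $N$ from GMS Lemma~4.4, notes that $N$ is characteristic in $G_f$ and so normal in $A_f$, and uses primitivity of $A_f$, not of $G_f$.

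\textbf{Several supporting steps also need correcting.}

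Your argument that $N\neq1$ does not work. A quotient of an elliptic curve by a linear cyclic group does have Euclidean signature, so there is no clash with the ramification count. The real reason: if $N=1$ then $G_f$ is cyclic, so the core-free point stabilizer is trivial, and the source curve of $f$ would be $C$ itself, of genus one.

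In Step~2, cyclicity of $G_f/N$ is automatic only for $p>3$. In characteristics $2$ and $3$, $\operatorname{Aut}(C,\mathrm O)$ can be nonabelian, and you need tameness. Every non-translation has a fixed point, so it lies in an inertia group and has order prime to $p$. The subgroups of $\operatorname{SL}_2(\mathbb F_3)$ of odd order, and of $C_3\rtimes C_4$ of order prime to $3$, are cyclic.

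In Step~4, your stated reason for excluding $\ell^e=3$ is false. That cover ($G_f\cong S_3$, type $(2,2,2,2)$) is indecomposable and Euclidean. It is excluded because the geometric point stabilizer is transitive on the other two sheets; that orbit is then also an arithmetic-stabilizer orbit, so $f$ is not exceptional.

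The remaining exclusions are short once $A_f$ is used:
\begin{itemize}
\item For $\ell=2$ and $m$ even, $\beta^{m/2}=[-1]$ is trivial on $N\subseteq C[2]$. So $h^{m/2}$ is central in $G_f$, contradicting core-freeness of $H$.
\item For $\ell=2$, $m=3$, either $C[2](\bar k)=0$ (characteristic $2$, where $j=0$ is supersingular), or $N=C[2]$ and $G_f\cong A_4$ in degree $4$. The latter is non-exceptional by the same transitivity argument.
\item For $\ell=3$, $m\in\{3,6\}$, the order-three element $\beta^{m/3}$ fixes a nonzero point of the $3$-group $N$. The subgroup $N\cap\ker(1-\beta^{m/3})$ is normalized by $A_f$. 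If it is proper, this contradicts primitivity of $A_f$; if it equals $N$, it contradicts core-freeness.
\end{itemize}
Deferring these to GMS, as the paper does, is acceptable. But they rest on $A_f$-primitivity and exceptionality, not on primitivity of $G_f$.
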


\begin{proof}
Indecomposability of $f$ over $k$ makes the arithmetic monodromy group $A_f$
primitive in its natural degree-$\deg f$ action.  For the branch-cycle input,
GMS invoke Grothendieck's existence theorem \cite[\S2.8]{GMS03}.  They cite
SGA~1, Exp.~XIII, Cor.~2.12 \cite{SGA1} for the tame positive-characteristic
analogue of the characteristic-zero branch-cycle description.  Thus the
geometric inertia generators form a genus-zero system in the natural action
of $G_f$.  We use this transfer only for the group-theoretic candidate
reduction.  The finite-field exceptional-pair dictionary in the same section
identifies exceptionality of $f$ with exceptionality of the pair $(A_f,G_f)$,
while $A_f/G_f\cong\operatorname{Gal}(k_\Omega/k)$ is cyclic.

If an inertia generator has order $d$, then its index in the regular action is
\[
 |G_f|\left(1-\frac1d\right).
\]
Tame Riemann--Hurwitz for the geometric Galois closure therefore identifies
the regular genus of the GMS tuple with the genus of the top geometric curve,
namely one.  Applying \cite[Lemma~4.4]{GMS03} gives
\[
 \deg f=\ell^e,\qquad e\in\{1,2\},
\]
with $\ell$ odd, together with
\[
 G_f=N\rtimes H,
 \qquad N\cong C_\ell^e
\]
regular and $H$ cyclic.  The proof of that lemma also gives semiregularity of
$H$ on $N\setminus\{0\}$ and $\ell^e\equiv1\pmod{|H|}$, so
$\ell\nmid|H|$.  Hence $N$ is the unique Sylow-$\ell$ subgroup of $G_f$, is characteristic in
$G_f$, and is therefore normal in $A_f$.  Since $N$ is regular and $A_f$ is
primitive, no nonzero proper subgroup of $N$ can be $A_f$-invariant: its
orbits would be a nontrivial block system.  Thus $N$ is a regular elementary
abelian minimal normal subgroup of $A_f$, so $A_f$ is an affine primitive group.
The affine hypotheses of \cite[Theorem~4.14(b)]{GMS03} are therefore
automatic.  Since $A_f/G_f$ is cyclic and $(A_f,G_f)$ is exceptional, the auxiliary
exceptional group in that theorem may be taken to be $A_f$.  We then obtain the
four possibilities in \eqref{eq:four-types-intro}.  Since the
cover is tame, any row with $\operatorname{char}k\mid m$ is excluded.
\end{proof}

\begin{remark}
Lemma~\ref{lem:gms-reduction} itself does not require
$\operatorname{char}k>3$.  For $p>3$, Lemma~\ref{lem:automatic-tameness}
shows that its tameness hypothesis is automatic in the present genus-one
setting.  In characteristic $3$, tameness removes the rows $(3,3,3)$ and
$(2,3,6)$; in characteristic $2$, it removes every row with even inertia and
leaves only $(3,3,3)$.  All subsequent fixed-field descent arguments use only
the surviving prime-to-characteristic complement.
\end{remark}

For the covers considered in this paper, Lemma~\ref{lem:gms-reduction}
applies directly.  When $\operatorname{char}k>3$,
Lemma~\ref{lem:automatic-tameness} shows that the tameness assumption is
redundant.

\subsection{Descent of the genus-one Galois closure}
\label{sec:descent}

We next descend the genus-one Galois closure and its marked point stabilizer.
Let $k=\mathbb{F}_q$, let $f\in k(X)$ be separable, and write $\sigma_q$ for
arithmetic Frobenius.  Let
$\bar\Omega/\bar k(\mathbf{t})$ be the geometric Galois closure of
$\bar k(\mathbf{x})/\bar k(\mathbf{t})$ and put
\[
 G_f=\operatorname{Gal}(\bar\Omega/\bar k(\mathbf{t})),
 \qquad
 H=\operatorname{Gal}(\bar\Omega/\bar k(\mathbf{x})).
\]
Define
\[
 \widetilde G:=\operatorname{Aut}_{k(\mathbf{t})}(\bar\Omega),
 \qquad
 \widetilde H:=\{g\in\widetilde G:g(\mathbf{x})=\mathbf{x}\}.
\]

\begin{lemma}[Arithmetic exact sequences]
\label{lem:arithmetic-exact}
Restriction to the constants gives exact sequences
\begin{equation}
1\to G_f\to\widetilde G\to\Gamma_k\to1,
\qquad
1\to H\to\widetilde H\to\Gamma_k\to1,
\label{eq:exact-sequences}
\end{equation}
where $\Gamma_k=\operatorname{Gal}(\bar k/k)\cong\widehat{\mathbb Z}$.
\end{lemma}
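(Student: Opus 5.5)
The plan is to realize $\widetilde G$ through the extension $\bar\Omega/k(\mathbf t)$ and read off both sequences from the constant field. First, $\bar\Omega=\bar k\cdot\Omega$, where $\Omega$ is the arithmetic Galois closure of $k(\mathbf x)/k(\mathbf t)$. Indeed, $\bar k\Omega$ is the compositum of the $\bar k$-conjugates of $\bar k(\mathbf x)$ over $\bar k(\mathbf t)$, and since the minimal polynomial of $\mathbf x$ over $k(\mathbf t)$ stays irreducible over $\bar k(\mathbf t)$, this compositum is the geometric Galois closure. Hence $\bar\Omega/k(\mathbf t)$ is a Galois extension, being the compositum of the Galois extensions $\Omega/k(\mathbf t)$ and $\bar k(\mathbf t)/k(\mathbf t)$. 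Then $\widetilde G=\operatorname{Aut}_{k(\mathbf t)}(\bar\Omega)$ is its profinite Galois group.

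Every $g\in\widetilde G$ preserves the algebraic closure of $k$ inside $\bar\Omega$, which is $\bar k$ because $\bar\Omega$ is a function field of a curve over $\bar k$. So restriction gives a homomorphism $\widetilde G\to\operatorname{Gal}(\bar k/k)=\Gamma_k$. Its kernel consists of the automorphisms fixing $\bar k(\mathbf t)$, which is exactly $G_f$. Surjectivity follows from Galois theory: $\bar k(\mathbf t)$ is an intermediate field Galois over $k(\mathbf t)$ with group $\Gamma_k$, since $k(\mathbf t)\otimes_k\bar k=\bar k(\mathbf t)$ because $\mathbf t$ is transcendental. Restriction from the full Galois group onto a Galois subextension is surjective. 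This gives the first sequence.

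For the second sequence, note that $\widetilde H=\operatorname{Aut}_{k(\mathbf x)}(\bar\Omega)$, because fixing $\mathbf x$ and $k(\mathbf t)$ is the same as fixing $k(\mathbf x)\ni\mathbf t$. Moreover, $\bar\Omega/k(\mathbf x)$ is Galois as a subextension of the Galois extension $\bar\Omega/k(\mathbf t)$. The argument then repeats with $k(\mathbf x)$ in place of $k(\mathbf t)$. The kernel of restriction to $\bar k$ is $\operatorname{Gal}(\bar\Omega/\bar k(\mathbf x))=H$. Surjectivity again follows because $\bar k(\mathbf x)=k(\mathbf x)\otimes_k\bar k$ is Galois over $k(\mathbf x)$ with group $\Gamma_k$. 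Separability of $f$ ensures that all the function-field extensions involved are separable, so Galois theory applies.

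The only point requiring care is identifying the field of constants of $\bar\Omega$ with $\bar k$, together with the regularity of $k(\mathbf t)/k$ and $k(\mathbf x)/k$. Both are standard. After that, $\Gamma_k\cong\widehat{\mathbb Z}$ is the usual description of the absolute Galois group of a finite field.
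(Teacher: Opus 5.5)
Your proof is correct and is essentially the paper's argument: both obtain surjectivity by extending a constant-field automorphism to $\bar\Omega$. The paper lifts the coefficientwise action of $\sigma$ on $\bar k(\mathbf x)$ to the normal closure, tacitly using that $\bar\Omega/k(\mathbf t)$ is normal, which you justify explicitly via $\bar\Omega=\bar k\Omega$; for the second sequence you identify $\widetilde H=\operatorname{Gal}(\bar\Omega/k(\mathbf x))$ and rerun the argument over $k(\mathbf x)$, whereas the paper puts a lift into $\widetilde H$ using transitivity of $G_f$ on the conjugates of $\mathbf x$ (its coefficientwise lift already fixes $\mathbf x$), which comes to the same thing.
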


\begin{proof}
The kernels are $G_f$ and $H$ by definition.  To prove surjectivity, let
$\sigma\in\Gamma_k$.  Since $f\in k(X)$, coefficientwise action of $\sigma$ on
$\bar k(\mathbf{x})$ fixes $\mathbf{x}$ and $\mathbf{t}=f(\mathbf{x})$.
This action extends to the normal closure $\bar\Omega$, so $\sigma$ has a lift
to $\widetilde G$.  If $\widetilde g$ is any lift, then
$\widetilde g(\mathbf{x})$ is another geometric conjugate of $\mathbf{x}$ over
$\bar k(\mathbf{t})$.  Transitivity of $G_f$ on these conjugates gives
$g\in G_f$ with
$g\widetilde g(\mathbf{x})=\mathbf{x}$.  Thus $g\widetilde g\in\widetilde H$ is
a lift of the same $\sigma$.
\end{proof}

\begin{lemma}[Procyclic point-stabilizer splitting]
\label{lem:procyclic-splitting}
There is a closed subgroup $B\subset\widetilde H$ such that restriction gives
$B\cong\Gamma_k$.  Moreover
\[
 \widetilde G=G_f\rtimes B,
 \qquad
 \widetilde H=H\rtimes B.
\]
\end{lemma}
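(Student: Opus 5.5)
The plan is to use the fact that $\Gamma_k\cong\widehat{\mathbb Z}$ is procyclic and free on the topological generator $\sigma_q$ in the category of profinite groups. It therefore suffices to find one element of $\widetilde H$ lifting $\sigma_q$ whose closed cyclic subgroup maps isomorphically onto $\Gamma_k$. By Lemma~\ref{lem:arithmetic-exact} there is some $\widetilde\phi\in\widetilde H$ restricting to $\sigma_q$ on $\bar k$. The difficulty is that $\overline{\langle\widetilde\phi\rangle}$ may meet $H$ nontrivially, because some power $\widetilde\phi^n$ could lie in the finite group $H$.

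First I would equip $\widetilde G$ with its natural profinite topology: stabilizers of elements of $\bar\Omega$, or equivalently $\widetilde G=\varprojlim\operatorname{Aut}_{k(\mathbf t)}(\Omega_n)$ over $\Omega_n=\mathbb F_{q^n}\Omega$. In this topology the sequences \eqref{eq:exact-sequences} are exact sequences of profinite groups, and $G_f$, $H$ are finite and open-kernel.

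Next I would construct the splitting. Let $h$ be the order of $H$, and write $\widehat{\mathbb Z}=\mathbb Z_{(h)}\times\mathbb Z_{(h')}$, where the first factor is $\prod_{p\mid h}\mathbb Z_p$ and the second collects the remaining primes. The closed subgroup $C=\overline{\langle\widetilde\phi\rangle}$ is procyclic and surjects onto $\Gamma_k$ with finite kernel $C\cap H$. A procyclic group is a quotient of $\widehat{\mathbb Z}$, so $C\cong\prod_p C_p$ with each $C_p$ a quotient of $\mathbb Z_p$. The map onto $\Gamma_k=\prod_p\mathbb Z_p$ is surjective, and $\mathbb Z_p$ is not a proper quotient of itself modulo anything finite except trivially. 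From this, each $C_p\cong\mathbb Z_p$ and the finite kernel must be $0$, because $\mathbb Z_p$ has no nontrivial finite subgroups. Hence $C\cap H=1$ automatically, so $B:=C$ works.

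The main point to get right is this argument, and I would phrase it cleanly. $C$ is abelian procyclic, and the kernel $K=C\cap H$ is a finite subgroup of $C$. A procyclic group $\prod_p C_p$ surjecting onto $\mathbb Z_p$ on each $p$-part forces $C_p$ infinite, hence $C_p\cong\mathbb Z_p$, which is torsion-free. So $K$, being torsion, is trivial. Then $B\to\Gamma_k$ is a continuous bijection of compact Hausdorff groups, hence an isomorphism.

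Finally, $B\subset\widetilde H$ meets $H$ and $G_f$ trivially, since $B\cap G_f$ maps to $1$ in $\Gamma_k$. Also $B$ maps onto $\Gamma_k$, so $\widetilde G=G_f B$ and $\widetilde H=HB$ by the exact sequences. Normality of $G_f$ and $H$ gives the semidirect products $\widetilde G=G_f\rtimes B$ and $\widetilde H=H\rtimes B$. I expect the only real obstacle to be topological bookkeeping: confirming that $\widetilde G$ is profinite with $\widetilde\phi$ generating a closed procyclic subgroup, i.e.\ the compactness of $\operatorname{Aut}_{k(\mathbf t)}(\bar\Omega)$ as an inverse limit of finite groups.
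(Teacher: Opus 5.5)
Your proposal is correct and follows the paper's route: lift $\sigma_q$ to $\widetilde\phi\in\widetilde H$ via Lemma~\ref{lem:arithmetic-exact}, take $B=\overline{\langle\widetilde\phi\rangle}$, and obtain both semidirect decompositions from $B\cap G_f=1$ together with the exact sequences. The paper checks injectivity of $B\to\Gamma_k$ more directly than your Sylow/torsion-freeness argument, by noting that the continuous map $\widehat{\mathbb Z}\to\widetilde H$, $1\mapsto\widetilde\phi$, composed with restriction is the identity of $\widehat{\mathbb Z}\cong\Gamma_k$; also, your stated worry that a power $\widetilde\phi^n$ might lie in $H$ is vacuous, since $\widetilde\phi^n$ restricts to $\sigma_q^n\neq1$ for $n\neq0$, so only limit points of the powers ever needed checking.
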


\begin{proof}
Choose a lift $\widetilde F\in\widetilde H$ of $\sigma_q$.  The homomorphism
$\mathbb Z\to\widetilde H$ sending $1$ to $\widetilde F$ is continuous for the
profinite topology: modulo every open normal subgroup the image of
$\widetilde F$ has finite order.  It therefore extends uniquely to a continuous
homomorphism $\widehat{\mathbb Z}\to\widetilde H$.  Composing this map with restriction to $\Gamma_k$ gives the identity; hence
its image $B$ is a closed section.  The decompositions follow from Lemma~\ref{lem:arithmetic-exact} and
$B\cap G_f=1$.
\end{proof}

Put
\[
 K:=\bar\Omega^B.
\]

\begin{proposition}[Descent of stable quotients]
\label{prop:stable-quotients}
One has
\begin{equation}
 K\cap\bar k=k,
 \qquad
 K\bar k=\bar\Omega,
 \qquad
 [K:k(\mathbf{t})]=|G_f|.
\label{eq:K-descent}
\end{equation}
If $J\le G_f$ is normalized by $B$, then
\[
 K_J:=(\bar\Omega^J)^B=\bar\Omega^{JB}
\]
is a regular $k$-form of $\bar\Omega^J$, with
\[
 [K_J:k(\mathbf{t})]=[G_f:J].
\]
In particular,
\begin{equation}
 (\bar\Omega^H)^B=k(\mathbf{x}).
\label{eq:H-descent}
\end{equation}
\end{proposition}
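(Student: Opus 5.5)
We work with the infinite Galois theory of $\bar\Omega/k(\mathbf t)$, using the decomposition $\widetilde G=G_f\rtimes B$ from Lemma~\ref{lem:procyclic-splitting}. The extension $\bar\Omega/k(\mathbf t)$ is Galois with group $\widetilde G$. It is the compositum of the finite Galois extension $\bar\Omega/\bar k(\mathbf t)$ with $\bar k(\mathbf t)/k(\mathbf t)$, and it is normal because $\bar\Omega$ is stable under every $k(\mathbf t)$-embedding: such an embedding preserves $\bar k$, and $\bar\Omega$ is a splitting field over $\bar k(\mathbf t)$ of a polynomial defined over $k(\mathbf t)$. The group $\widetilde G$ carries the Krull topology, and $B$ is closed. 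The Galois correspondence for closed subgroups therefore gives $\operatorname{Gal}(\bar\Omega/\bar\Omega^{B'})=B'$ for every closed $B'\le\widetilde G$.

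\emph{Step 1: the three identities for $K$.}
\begin{itemize}
\item Since $B\cong\Gamma_k$ via restriction, $K\cap\bar k=\bar k^{\Gamma_k}=k$.
\item The compositum $K\bar k$ corresponds to $B\cap\operatorname{Gal}(\bar\Omega/\bar k(\mathbf t))=B\cap G_f=1$, so $K\bar k=\bar\Omega$.
\item The subgroup $B$ has index $|G_f|$ in $\widetilde G$, because $\widetilde G=G_f B$ with $G_f\cap B=1$. The closed-subgroup correspondence then gives $[K:k(\mathbf t)]=[\widetilde G:B]=|G_f|$.
\end{itemize}
Alternatively, one can observe that $K$ and $\bar k(\mathbf t)$ are linearly disjoint over $k(\mathbf t)$, since $\bar k(\mathbf t)/k(\mathbf t)$ is Galois and $K\cap\bar k(\mathbf t)=k(\mathbf t)$. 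It follows that $[K:k(\mathbf t)]=[K\bar k:\bar k(\mathbf t)]=|G_f|$.

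\emph{Step 2: the general $J$.} Suppose $B$ normalizes $J$. Then $JB$ is a closed subgroup: it is the preimage of a subgroup under $\widetilde G\to\widetilde G/G_f$ intersected suitably, or more simply it is a finite union of cosets $jB$. Taking invariants stepwise gives $(\bar\Omega^J)^B=\bar\Omega^{JB}$, and $B$ acts on $\bar\Omega^J$ precisely because it normalizes $J$.

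Repeating Step 1 with $JB$ in place of $B$:
\begin{itemize}
\item $JB\cap G_f=J$, using $B\cap G_f=1$ and $J\le G_f$.
\item $JB$ still surjects onto $\Gamma_k$.
\end{itemize}
Hence $K_J\cap\bar k=k$, and $K_J\bar k=\bar\Omega^{JB\cap G_f}=\bar\Omega^J$. The degree is $[\widetilde G:JB]=[G_fB:JB]=[G_f:J]$. Regularity follows because $K_J\cap\bar k=k$ and $K_J\bar k=\bar\Omega^J$, so $K_J$ is a $k$-form of the function field $\bar\Omega^J$ and $k$ is algebraically closed in it.

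\emph{Step 3: the special case \eqref{eq:H-descent}.} Apply Step 2 with $J=H$; this is allowed since $B\subset\widetilde H$ normalizes $H$ by the exact sequence of Lemma~\ref{lem:arithmetic-exact}. Every element of $\widetilde H$, in particular every element of $HB=\widetilde H$, fixes $\mathbf x$ and $k$, so $k(\mathbf x)\subseteq\bar\Omega^{HB}$. Both fields have degree $[G_f:H]=\deg f$ over $k(\mathbf t)$, because $f$ is separable of degree $\deg f$. The inclusion is therefore an equality.

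The main obstacle is applying Galois theory correctly in the infinite setting. This requires care on two points: $B$ must be closed, so that $\bar\Omega^B$ recovers $B$ and the index formula holds, and $JB$ must be a closed subgroup. Closedness of $B$ is supplied by Lemma~\ref{lem:procyclic-splitting}, and $JB$ is closed as a finite union of translates of $B$.
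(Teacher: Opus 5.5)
Your proposal is correct and follows essentially the same route as the paper: the infinite Galois correspondence for $\widetilde G=G_f\rtimes B$, together with $B\cap G_f=1$, $JB\cap G_f=J$, and the index computations $[\widetilde G:B]=|G_f|$ and $[\widetilde G:JB]=[G_f:J]$. The differences are minor. You get $K_J\cap\bar k=k$ from surjectivity of $JB\to\Gamma_k$, where the paper uses $K_J\subseteq K$. You also justify $\bar\Omega^{\widetilde H}=k(\mathbf x)$ by a degree count, where the paper simply notes that $\widetilde H$ fixes precisely $k(\mathbf x)$.
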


\begin{proof}
The subgroups of $\widetilde G$ that fix $K$ and $\bar k(\mathbf{t})$ are
$B$ and $G_f$, respectively.  Since $B\cap G_f=1$, their compositum is $\bar\Omega$, which proves the
second equality in \eqref{eq:K-descent}; the first follows because
$B\to\Gamma_k$ is an isomorphism.  Finally,
$[\widetilde G:B]=|G_f|$, so Galois correspondence gives
$[K:k(\mathbf{t})]=|G_f|$.

If $J$ is $B$-stable, then $JB$ is a subgroup and
\[
 (\bar\Omega^J)^B=\bar\Omega^{JB}.
\]
Moreover, $(JB)\cap G_f=J$, so extension of constants to $\bar k$ recovers
$\bar\Omega^J$.  Since $JB\supseteq B$, one has $K_J\subseteq K$, and hence
$K_J\cap\bar k=k$.  Also
$[K_J:k(\mathbf{t})]=[\widetilde G:JB]=[G_f:J]$.  For $J=H$, the group $HB=\widetilde H$ fixes precisely $k(\mathbf{x})$,
giving \eqref{eq:H-descent}.
\end{proof}

\begin{remark}
For $|H|>2$, the formula \eqref{eq:H-descent} cannot in general be replaced
by $K^H=k(\mathbf{x})$.  The section $B$ need only normalize $H$, and the individual
geometric automorphisms in $H$ need not preserve $K$.  The canonical descended
object is the cyclic subgroup together with its quotient.
\end{remark}

Assume now that the geometric Galois closure has genus one.  Let $C_f/k$ be
the smooth projective curve with function field $K$; it has genus one.  The
Hasse--Weil bound \cite[Theorem~5.2.3]{Stichtenoth09} gives
\[
 |C_f(k)|\geqslant q+1-2\sqrt q>0.
\]
Choose $\mathrm O_E\in C_f(k)$ and write
\[
 E:=(C_f,\mathrm O_E).
\]
Thus $E/k$ is an elliptic curve with geometric function field $\bar\Omega$.

\subsection{The regular subgroup is a translation kernel}

On the descended genus-one curve, the regular normal subgroup $N$ has the
Euclidean GMS form
\[
 G_f=N\rtimes H,
 \qquad
 N\cong C_\ell^e,
 \qquad
 H\cong C_m.
\]
Here Roman $N$ denotes the regular subgroup of the geometric monodromy group;
calligraphic $\mathcal N$ will denote its point kernel on $E$.  Since
$\ell^e\equiv1\pmod m$, one has $\ell\nmid m$.  The subgroup $N$ is
the unique Sylow-$\ell$ subgroup of $G_f$, hence characteristic in $G_f$ and
therefore normalized by $B$.

\begin{lemma}[Fixed-point freeness]
\label{lem:N-free}
Every nonidentity element of $N$ acts fixed-point freely on $E_{\bar k}$.
\end{lemma}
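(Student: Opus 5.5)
We argue through inertia groups: an element of $G_f$ fixes a point of $E_{\bar k}$ exactly when it lies in the inertia group of that point. The claim therefore reduces to showing that no inertia group meets $N$ nontrivially.

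Let $1\neq n\in N$, and suppose $n$ fixes some $P\in E(\bar k)$. Then $n$ lies in the inertia group $I_P\le G_f$, the stabilizer of $P$.

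Since the geometric Galois cover is tame (by hypothesis, or by Lemma~\ref{lem:automatic-tameness} when $p>3$), $I_P$ is cyclic. It is generated by an inertia generator, which is a conjugate of a power of one of the branch-cycle elements in the genus-zero tuple from the proof of Lemma~\ref{lem:gms-reduction}.

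Those branch-cycle elements have orders given by the ramification type in \eqref{eq:four-types-intro}, namely $2,3,4,6$. Each of these orders divides $m$, so $|I_P|$ divides $m$. Since $\ell^e\equiv1\pmod m$, we have $\ell\nmid m$, and hence $\ell\nmid|I_P|$. But $n$ has order $\ell$ and lies in $I_P$, a contradiction.

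The only point needing care is the claim that the inertia orders divide $m$. In the GMS description $G_f=N\rtimes H$ with $H\cong C_m$, and the inertia generators are the elements of order $m_i\in\{2,3,4,6\}$ appearing in the signature $(2,2,2,2)$, $(2,3,6)$, $(3,3,3)$ or $(2,4,4)$. Each such $m_i$ divides $m$, as read off from the table.

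An alternative route avoids the table. The inertia group $I_P$ embeds in $\operatorname{Aut}(E_{\bar k},P)$, whose order divides $24$. An element of order $\ell\ge5$ is then impossible. For $\ell=3$, however, this route requires excluding order-$3$ automorphisms when $j=0$, so the Sylow argument above is preferable.

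A cleaner form of that argument runs as follows. Every inertia group is cyclic of order prime to $\ell$: its order is one of the $m_i$, and $\ell^e\equiv 1\pmod{m_i}$ because $m_i\mid m$. Such a group meets the Sylow-$\ell$ subgroup $N$ trivially. Consequently every nonidentity element of $N$ has trivial stabilizer at every point, i.e.\ it acts fixed-point freely on $E_{\bar k}$.

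Equivalently, one may use Riemann--Hurwitz for the tower $E\to E/N$. Since $|G_f|=\ell^e m$ and the inertia orders are prime to $\ell$, the quotient map $E\to E/N$ is unramified. Its target therefore also has genus one, in agreement with the tame genus count in Lemma~\ref{lem:gms-reduction}.
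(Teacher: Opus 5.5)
Your argument is correct and is essentially the paper's proof: you identify the stabilizer of $P$ in $G_f$ with a geometric inertia group, observe that every inertia order in the four Euclidean signatures divides $m$ while $\ell\nmid m$, and conclude $I_P\cap N=1$. The extra remarks are not needed: the $\operatorname{Aut}(E_{\bar k},P)$ bound (which, as you note, does not handle $\ell=3$) and the Riemann--Hurwitz reformulation can be dropped.
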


\begin{proof}
For $P\in E(\bar k)$, the stabilizer $G_P$ is a geometric inertia group of the
top Galois cover.  Every nontrivial inertia order in the four Euclidean
signatures divides $m$, whereas $N$ is an $\ell$-group and $\ell\nmid m$.
Hence $G_P\cap N=1$ for every $P$.
\end{proof}

\begin{lemma}[Fixed-point-free elliptic automorphisms]
\label{lem:fp-free-translation}
Let $\alpha\in\operatorname{Aut}(E_{\bar k})$.  If $\alpha$ has no geometric fixed point,
then $\alpha$ is a nontrivial translation.
\end{lemma}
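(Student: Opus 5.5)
Fix an origin $\mathrm O$ on $E_{\bar k}$ and use the standard decomposition of automorphisms of the genus-one curve $E_{\bar k}$. Every $\alpha\in\operatorname{Aut}(E_{\bar k})$, as an automorphism of the underlying curve, can be written uniquely as
\[
 \alpha=\tau_Q\circ\gamma,\qquad Q:=\alpha(\mathrm O),\quad \gamma:=\tau_{-Q}\circ\alpha\in\operatorname{Aut}(E_{\bar k},\mathrm O),
\]
where $\tau_Q$ is translation by $Q$. By \cite[III.4.8]{Silverman09}, $\gamma$ fixes $\mathrm O$ and is therefore a group automorphism. Hence $\alpha(P)=\gamma(P)+Q$ for all $P\in E(\bar k)$.

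A fixed point of $\alpha$ is a solution $P$ of
\[
 (1-\gamma)(P)=Q .
\]
Suppose $\gamma\neq 1$. Then $1-\gamma$ is a nonzero element of $\operatorname{End}(E_{\bar k})$, because $\operatorname{End}$ is a ring and $\gamma\ne1$. A nonzero isogeny is surjective on $\bar k$-points, since it is a nonconstant morphism of curves. So some $P$ satisfies $(1-\gamma)(P)=Q$, and $\alpha$ has a fixed point. This contradicts the hypothesis, so $\gamma=1$ and $\alpha=\tau_Q$. Finally $Q\neq\mathrm O$, since otherwise $\alpha$ is the identity and fixes every point.

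The only point needing care is that $1-\gamma$ is genuinely an isogeny rather than the zero map. This holds because $\gamma\ne\mathrm{id}$ as endomorphisms, and any nonzero endomorphism is a nonconstant morphism. This argument is characteristic-free, so it also covers the enlarged automorphism groups at $j=0,1728$ in characteristics $2$ and $3$. No tameness is needed here.

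Combined with Lemma~\ref{lem:N-free}, this shows that every nonidentity element of $N$ acts on $E_{\bar k}$ as a nontrivial translation.
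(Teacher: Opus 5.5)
Your proof is correct and follows the paper's argument: you decompose $\alpha=\tau_Q\gamma$ with $\gamma$ origin-preserving, and if $\gamma\neq1$ you use surjectivity of the nonzero endomorphism $1-\gamma$ to solve $(1-\gamma)P=Q$, which produces a fixed point. You also state explicitly that $Q\neq\mathrm O$, a step the paper leaves implicit.
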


\begin{proof}
Put $Q=\alpha(\mathrm O_E)$ and
$\alpha_0=\tau_{-Q}\alpha$.  Then $\alpha_0$ fixes the origin and is therefore a group
automorphism \cite[Theorem~III.4.8]{Silverman09}; thus
$\alpha(P)=\alpha_0(P)+Q$.  If $\alpha_0\ne1$, then $1-\alpha_0$ is a nonzero endomorphism.  It
is a nonconstant morphism of complete curves and hence is surjective
\cite[Theorem~II.2.3]{Silverman09}.  Therefore $(1-\alpha_0)P=Q$ has a solution,
which is a fixed point of $\alpha$.  Hence $\alpha_0=1$.
\end{proof}

\begin{theorem}[Translation kernel]
\label{thm:translation-kernel}
There is a finite Frobenius-stable subgroup
\[
 \mathcal{N}\subset E(\bar k),
 \qquad
 \mathcal{N}\cong C_\ell^e,
\]
whose associated subgroup scheme is reduced, such that
\[
 N=T_{\mathcal{N}}.
\]
The quotient by $N$ descends to a separable $k$-isogeny
\[
 \varphi:E\longrightarrow E':=E/\mathcal{N}
\]
of degree $\ell^e$.
If $e=2$, then $\operatorname{char}k\ne\ell$,
$\mathcal{N}=E[\ell](\bar k)$, and $\varphi=\theta\circ[\ell]$ for a
$k$-isomorphism $\theta:E\to E'$.
\end{theorem}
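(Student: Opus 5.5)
By Lemma~\ref{lem:N-free}, every nonidentity element of $N$ acts on $E_{\bar k}$ without fixed points. Lemma~\ref{lem:fp-free-translation} then shows that each such element is a nontrivial translation $\tau_Q$. Set
\[
 \mathcal N:=\{Q\in E(\bar k):\tau_Q\in N\}.
\]
The map $Q\mapsto\tau_Q$ is an injective homomorphism $E(\bar k)\to\operatorname{Aut}(E_{\bar k})$, so $\mathcal N$ is a subgroup isomorphic to $N\cong C_\ell^e$, and $N=T_{\mathcal N}$.

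For Frobenius stability, recall that $E$ is the curve with function field $K=\bar\Omega^B$. Its geometric automorphisms therefore carry a $\Gamma_k$-action given by conjugation by $B$, and this action is compatible with $\sigma_q\circ\tau_Q\circ\sigma_q^{-1}=\tau_{\sigma_q(Q)}$. This compatibility holds because the origin $\mathrm O_E$ is $k$-rational, so translations transform under $\sigma_q$ through the Galois action on points. Since $N$ is normalized by $B$, as noted before Lemma~\ref{lem:N-free}, the set $\mathcal N$ is $\sigma_q$-stable. A finite Galois-stable subgroup of $E(\bar k)$ defines a reduced étale subgroup scheme over $k$. By the standard quotient construction (Silverman, Prop.~III.4.12 together with Galois descent of the quotient), there is a separable isogeny $\varphi:E\to E'=E/\mathcal N$ defined over $k$ with kernel $\mathcal N$ and degree $|\mathcal N|=\ell^e$. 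Equivalently, by Proposition~\ref{prop:stable-quotients} with $J=N$, the fixed field $K_N$ is a regular $k$-form of $\bar\Omega^N$. This is the function field of $E'$, and $\varphi$ is the inclusion $K_N\subset K$ with the origin mapped to the $k$-rational point $\varphi(\mathrm O_E)$.

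For $e=2$, the group $\mathcal N\cong C_\ell^2$ is reduced. If $\ell=p$, then $E[p](\bar k)$ has order at most $p$, which is impossible, so $\ell\neq p$. Since $E[\ell](\bar k)\cong C_\ell^2$ has order $\ell^2$, it follows that $\mathcal N=E[\ell](\bar k)$. Now $[\ell]:E\to E$ is a separable $k$-isogeny with the same kernel as $\varphi$. By the universal property of separable quotients (Silverman, Cor.~III.4.11), there is an isomorphism $\theta:E\to E'$ with $\varphi=\theta\circ[\ell]$. Uniqueness of this factorization forces $\theta$ to commute with $\sigma_q$, because both $\varphi$ and $[\ell]$ do. Hence $\theta$ is defined over $k$.

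The main obstacle is the Frobenius-stability step. One must check carefully that the arithmetic action of $B$ on $\bar\Omega$ agrees with the $\sigma_q$-action on $E_{\bar k}$ induced by the $k$-structure $K$, so that conjugating the translation $\tau_Q$ by the generator of $B$ gives $\tau_{\sigma_q(Q)}$ and not an affine twist. Here we use that $\mathrm O_E$ was chosen in $C_f(k)$ and that $\sigma_q$ fixes it, so the twist vanishes. The remaining steps are routine.
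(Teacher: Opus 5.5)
Your proposal is correct and follows essentially the same route as the paper: Lemmas~\ref{lem:N-free} and~\ref{lem:fp-free-translation} turn $N$ into a group of translations $T_{\mathcal N}$, conjugation by $B$ together with $\mathrm O_E\in E(k)$ gives Frobenius stability, Proposition~\ref{prop:stable-quotients} and the standard quotient theorem give the separable $k$-isogeny of degree $\ell^e$, and in rank two the bound on reduced $p$-torsion together with the universal property relative to $[\ell]$ produces $\theta$. Your extra justification that $\theta$ is $k$-rational is valid: it uses ${}^{\sigma_q}\theta\circ[\ell]=\theta\circ[\ell]$ and the surjectivity of $[\ell]$, and it simply makes explicit a point the paper leaves to the universal property.
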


\begin{proof}
By Lemmas~\ref{lem:N-free} and~\ref{lem:fp-free-translation}, every element
of $N$ is a translation.  Hence
\[
 N\longrightarrow E(\bar k),\qquad \tau_P\longmapsto P
\]
is an injective homomorphism; let $\mathcal{N}$ be its image.  Since $B$ normalizes
$N$ and $\mathrm O_E\in E(k)$,
\[
 {}^{\sigma_q}\tau_P=\tau_{\sigma_q(P)},
\]
so $\mathcal{N}$ is Frobenius-stable.

Proposition~\ref{prop:stable-quotients} applied to $N$ gives a $k$-morphism
$\varphi:E\to E'$ whose geometric form is the free quotient by
$T_{\mathcal{N}}$.  The kernel is reduced, so the quotient is separable.  The standard quotient
theorem for elliptic curves gives the $k$-isogeny structure
\cite[Proposition~III.4.12 and Remark~III.4.13.2]{Silverman09}.  Its degree is
$|\mathcal{N}|=\ell^e$.

If $e=2$, then $\mathcal{N}\cong C_\ell^2\subset E[\ell](\bar k)$.  In
characteristic $\ell$, the reduced geometric $\ell$-torsion has at most
$\ell$ points \cite[Corollary~III.6.4(c)]{Silverman09}; hence
$\operatorname{char}k\ne\ell$.  Both $\mathcal{N}$ and $E[\ell](\bar k)$ then
have $\ell^2$ points, so they coincide.  The two separable isogenies
$\varphi$ and $[\ell]$ have the same kernel, and the universal property of the
quotient gives a unique $k$-isomorphism $\theta:E\xrightarrow{\sim}E'$ with
$\varphi=\theta\circ[\ell]$.
\end{proof}

\begin{proposition}[The induced complement]
\label{prop:induced-complement}
The action of $H$ on $E_{\bar k}$ induces a Frobenius-stable cyclic subgroup
$H'\subset\operatorname{Aut}(E'_{\bar k})$ of order $m$ such that
\[
 \varphi h=h'\varphi\qquad(h\in H).
\]
Moreover, the descended quotients have function fields $k(\mathbf{x})$ and $k(\mathbf{t})$,
respectively.  Hence, after identifying both quotient curves with $\mathbf{P}^1_k$,
the induced lower map is two-sided $k$-M\"obius equivalent to the original
$f$.
\end{proposition}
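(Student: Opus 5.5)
Since $N$ is normal in $G_f$, conjugation by $H$ preserves $N=T_{\mathcal N}$. Hence each $h\in H$ maps $\mathcal N$-cosets to $\mathcal N$-cosets, and I would obtain $h'$ from the universal property of the quotient. Concretely, $\varphi\circ h$ is constant on fibres of $\varphi$, because $h\tau_P h^{-1}=\tau_{P'}$ with $P'\in\mathcal N$. Since $\varphi$ is separable with reduced kernel, $\bar k(E')=\bar k(E)^{N}$. Thus $h'$ can be defined directly on function fields as the restriction of $h$ to $\bar\Omega^N$, which is $h$-stable because $h$ normalizes $N$. This gives an automorphism $h'$ of $E'_{\bar k}$ with $\varphi h=h'\varphi$, and $h\mapsto h'$ is the restriction homomorphism $H\to\operatorname{Aut}(\bar\Omega^N)$.

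Its kernel is $H\cap N=1$, since $G_f=N\rtimes H$ acts faithfully on $\bar\Omega$ and $G_f/N\cong H$ acts faithfully on $\bar\Omega^N$. Therefore $H'$ is cyclic of order $m$. For Frobenius-stability I would use the section $B$ from Lemma~\ref{lem:procyclic-splitting}. $B$ lies in $\widetilde H$ and normalizes both $H$ and $N$, so conjugation by the generator $\widetilde F$ preserves $H$. Restricting to $\bar\Omega^N$, this conjugation becomes the $\sigma_q$-action on $\operatorname{Aut}(E'_{\bar k})$ attached to the $k$-structure $K_N=(\bar\Omega^N)^B$ given by Proposition~\ref{prop:stable-quotients}. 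Consequently ${}^{\sigma_q}H'=H'$. The point to check is that this $k$-structure is the same one used to define $E'$ in Theorem~\ref{thm:translation-kernel}. It is, by construction, since $E'$ is the curve with function field $K_N$.

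For the function fields, I would apply Proposition~\ref{prop:stable-quotients} to the subgroups $H$ and $G_f$, both normalized by $B$. Equation~\eqref{eq:H-descent} gives $(\bar\Omega^H)^B=k(\mathbf x)$, and $G_fB=\widetilde G$ fixes exactly $k(\mathbf t)$. The quotient $E/H$ therefore has function field $k(\mathbf x)$, where ``quotient'' means the curve with function field $K_H=\bar\Omega^{HB}$, as in the remark after that proposition.

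For the target I would argue as follows.
\begin{itemize}
\item The subgroup $NH=G_f$ contains $N$.
\item So $\bar\Omega^{G_f}=(\bar\Omega^N)^{H'}$.
\item Hence the quotient of $E'$ by $H'$, descended via $B$, has function field $\bar\Omega^{G_fB}=k(\mathbf t)$.
\end{itemize}
Both function fields are rational over $k$, being $k(\mathbf x)$ and $k(\mathbf t)$. Under the inclusions $k(\mathbf t)\subset k(\mathbf x)\subset K$ and $k(\mathbf t)\subset K_N\subset K$, the map $E\to E'\to E'/H'$ corresponds to the field inclusion $k(\mathbf t)\subset K$. This factors through $k(\mathbf x)$ via $\mathbf t=f(\mathbf x)$, so the lower map is $\mathbf x\mapsto f(\mathbf x)$ in these coordinates.

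Any other choice of identifications of the two quotient curves with $\mathbf P^1_k$ differs by elements of $\operatorname{PGL}_2(k)$ on source and target. This yields the two-sided $k$-M\"obius equivalence.

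The main obstacle is bookkeeping rather than depth. One must ensure that the descended $E/H$ is meaningful even though $H$ itself need not preserve $K$. I would handle this by working throughout with the fields $\bar\Omega^{JB}$, not with $K^H$, so that the commutative square is a statement about inclusions of subfields of $\bar\Omega$ fixed by $\widetilde H$, $NB$, and $\widetilde G$.
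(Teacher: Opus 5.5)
Your proposal is correct and follows the paper's proof essentially step by step. You let $H$ act on $E'=E/N$ because it normalizes $N$, get faithfulness from $H\cap N=1$, get Frobenius-stability from $B$ normalizing both $H$ and $N$, and obtain the two function fields from Proposition~\ref{prop:stable-quotients} applied to $H$ and $G_f=N\rtimes H$. The only cosmetic difference is that you read off rationality of the two descended quotients directly from their identification with $k(\mathbf x)$ and $k(\mathbf t)$, whereas the paper additionally invokes a $k$-point and Stichtenoth's criterion; your shortcut is valid.
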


\begin{proof}
Because $H$ normalizes $N=T_{\mathcal{N}}$, it permutes the $N$-orbits and hence
acts on the geometric quotient $E'=E/N$.  If an element $h\in H$ induces the
identity on $E'$, then $\varphi h=\varphi$; since $\varphi$ is the geometric
Galois quotient with deck group $N$, this forces $h\in N$.  As
$H\cap N=1$, the induced action of $H$ is faithful, so $H'\cong H\cong C_m$.
Frobenius-stability follows from that of $H$ and $N$ and from the fact that
$\varphi$ is defined over $k$.

For the source quotient, Proposition~\ref{prop:stable-quotients} gives
$(\bar\Omega^H)^B=k(\mathbf{x})$.  Geometrically,
$E'/H'=E/(N\rtimes H)=E/G_f$, and its descended function field is
\[
 (\bar\Omega^{G_f})^B=(\bar k(\mathbf{t}))^B=k(\mathbf{t}).
\]
Both quotient curves are geometrically rational and contain the image of a
$k$-point; hence they are $k$-isomorphic to $\mathbf{P}^1$
\cite[Proposition~1.6.3]{Stichtenoth09}.  The resulting extension of function
fields is therefore exactly $k(\mathbf{x})/k(\mathbf{t})$, up to the two chosen $k$-rational
coordinates on the quotient lines.
\end{proof}

The preceding construction motivates the abstract object used from this point
onward.  We now package precisely the properties needed independently of the
original cover.

\begin{definition}[Euclidean quotient datum]
\label{def:datum}
A \emph{Euclidean quotient datum} over $k$ is a triple
$\mathcal D=(E,\mathcal N,H)$ consisting of an elliptic curve $E/k$, a finite
Frobenius-stable subgroup $\mathcal N\subset E(\bar k)$ isomorphic to
$C_\ell^e$ for an odd prime $\ell$ and $e\in\{1,2\}$, whose associated finite
subgroup scheme is reduced, and a Frobenius-stable cyclic affine subgroup
$H\subset\operatorname{Aut}(E_{\bar k})$ of order
$m\in\{2,3,4,6\}$ with $\operatorname{char}k\nmid m$.  Let
\[
 L_E(H):=\operatorname{im}\!\left(
 H\longrightarrow\operatorname{Aut}(E_{\bar k},\mathrm O_E)
 \right)
\]
denote the image of the linear-part homomorphism.  We require this
homomorphism to be faithful, $L_E(H)\mathcal N=\mathcal N$ (equivalently,
$L_E(H)$ normalizes $T_{\mathcal N}=\{\tau_P:P\in\mathcal N\}$), and
$L_E(H)$ to act semiregularly on $\mathcal N\setminus\{0\}$.
\end{definition}

\subsection{Affine linearization and the shift class}
\label{sec:affine}

From this point to the end of the section, let
$\mathcal D=(E,\mathcal N,H)$ be a Euclidean quotient datum.  Write
$H=\langle h\rangle\cong C_m$ and, uniquely,
\[
 h(P)=\beta(P)+Q,
 \qquad
 \beta\in\operatorname{Aut}(E_{\bar k},\mathrm O_E).
\]

\begin{proposition}[Geometric linearization]
\label{prop:linearization}
The automorphism $h$ has a geometric fixed point $P_H$, and
\begin{equation}
 Q=(1-\beta)P_H,
 \qquad
 h=\tau_{P_H}\beta\tau_{-P_H}.
\label{eq:h-linearized}
\end{equation}
In particular, $\operatorname{ord}(\beta)=m$.
\end{proposition}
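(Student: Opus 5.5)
The plan is to use faithfulness of the linear-part homomorphism together with the surjectivity argument already used in the proof of Lemma~\ref{lem:fp-free-translation}. The first step is to show that $\beta\ne1$. The linear-part map $H\to\operatorname{Aut}(E_{\bar k},\mathrm O_E)$, $h^i\mapsto\beta^i$, is a homomorphism, because $h^i(P)=\beta^iP+(1+\beta+\cdots+\beta^{i-1})Q$. By Definition~\ref{def:datum} it is faithful, and $H\cong C_m$ with $m\geqslant2$. Hence $\beta\ne1$, and more precisely $\operatorname{ord}(\beta)=\operatorname{ord}(h)=m$. This already gives the final assertion.

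Next, since $\beta\ne1$, the endomorphism $1-\beta$ of $E_{\bar k}$ is nonzero. As a nonconstant morphism of complete curves it is surjective \cite[Theorem~II.2.3]{Silverman09}. We may therefore choose $P_H\in E(\bar k)$ with $(1-\beta)P_H=Q$. Then
\[
h(P_H)=\beta P_H+Q=\beta P_H+(1-\beta)P_H=P_H,
\]
so $P_H$ is a geometric fixed point of $h$, and $Q=(1-\beta)P_H$.

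Finally, we verify the conjugation formula directly. For every $P$,
\[
\tau_{P_H}\beta\tau_{-P_H}(P)=\beta(P-P_H)+P_H=\beta P+(1-\beta)P_H=\beta P+Q=h(P).
\]
This uses only that $\beta$ is a group automorphism \cite[Theorem~III.4.8]{Silverman09}.

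There is no serious obstacle. The only point requiring care is that $\beta\ne1$; this rests entirely on the faithfulness hypothesis built into Definition~\ref{def:datum}. Without that hypothesis, $h$ could be a nontrivial translation, which has no fixed point. Note also that $P_H$ is determined only up to $\Delta(H)=E[1-\beta]$. This ambiguity does not affect the statement, but it is the source of the shift class introduced later.
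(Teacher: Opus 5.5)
Your proposal is correct and is essentially the paper's argument: the paper argues by contradiction that a fixed-point-free $h$ would be a nontrivial translation by Lemma~\ref{lem:fp-free-translation}, contradicting faithfulness, and you have simply inlined that lemma's surjectivity-of-$1-\beta$ step and solved $(1-\beta)P_H=Q$ directly. The only other difference is minor: you read off $\operatorname{ord}(\beta)=m$ from injectivity of the linear-part map, whereas the paper gets it from the conjugacy $h=\tau_{P_H}\beta\tau_{-P_H}$. Both are fine.
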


\begin{proof}
If $h$ were fixed-point free, Lemma~\ref{lem:fp-free-translation} would make
it a nontrivial translation, whose linear part is the identity.  This
contradicts faithfulness of the linear-part map in
Definition~\ref{def:datum}.  Thus $h(P_H)=P_H$ for some $P_H$, which yields
\eqref{eq:h-linearized}; conjugate automorphisms have the same order.
\end{proof}

\begin{remark}
\label{rem:faithful-linear-part}
For the data recovered from a Euclidean cover earlier in this section, the
conditions in Definition~\ref{def:datum} are automatic.  Under
$N=T_{\mathcal N}$ one has
\[
 h\tau_P h^{-1}=\tau_{\beta(P)}\qquad(P\in\mathcal N),
\]
so the semiregular conjugation action of $H$ on $N\setminus\{0\}$ from
Lemma~\ref{lem:gms-reduction} is exactly the action of $L_E(H)$ on
$\mathcal N\setminus\{0\}$.  In particular, $L_E(H)$ preserves
$\mathcal N$ and acts semiregularly there, and the linear-part map is
faithful: a nonidentity element in its kernel would act trivially on
$N\setminus\{0\}$.  Together with Theorem~\ref{thm:translation-kernel}, this
verifies Definition~\ref{def:datum} for the recovered triple.  More
generally, faithfulness excludes nontrivial pure translations; together with
Proposition~\ref{prop:linearization}, it identifies $H$, up to geometric
translation conjugacy, with its linear image $L_E(H)$, and hence
$E/H\cong E/L_E(H)\cong\mathbf P^1_{\bar k}$.
\end{remark}

For a Euclidean quotient datum $\mathcal D=(E,\mathcal N,H)$, let
\[
 \varphi_{\mathcal D}:E\longrightarrow E'_{\mathcal D}:=E/\mathcal N
\]
be the quotient isogeny.  Since $L_E(H)$ preserves $\mathcal N$, the group
$H$ normalizes $T_{\mathcal N}$ and induces a Frobenius-stable cyclic affine
group $H'_{\mathcal D}\subset\operatorname{Aut}((E'_{\mathcal D})_{\bar k})$
with $\varphi_{\mathcal D}H=H'_{\mathcal D}\varphi_{\mathcal D}$.  This
induced action is faithful: an element of $H$ acting trivially on
$E'_{\mathcal D}$ lies in $T_{\mathcal N}$, and hence is trivial by
faithfulness of the linear-part map.  If $P_H$ is the fixed point supplied by
Proposition~\ref{prop:linearization}, then $\varphi_{\mathcal D}(P_H)$ is
fixed by the induced generator of $H'_{\mathcal D}$.  Thus both cyclic affine
actions are geometrically translation-conjugate to nontrivial linear cyclic
actions, so both quotient curves are geometrically rational.

Writing $\pi_H$ and $\pi_{H'_{\mathcal D}}$ for the quotient maps, let
$\bar g_{\mathcal D}$ be the unique morphism making
\[
\begin{tikzcd}
 E \arrow[r,"\varphi_{\mathcal D}"] \arrow[d,"\pi_H"'] & E'_{\mathcal D} \arrow[d,"\pi_{H'_{\mathcal D}}"]\\
 E/H \arrow[r,"\bar g_{\mathcal D}"'] & E'_{\mathcal D}/H'_{\mathcal D}
\end{tikzcd}
\]
commute.  The diagram is defined over $k$.  Each lower quotient contains the
image of a $k$-rational origin, hence both genus-zero curves are
$k$-isomorphic to $\mathbf P^1$ \cite[Proposition~1.6.3]{Stichtenoth09}.
Choosing $k$-coordinates gives $g_{\mathcal D}\in k(X)$; its two-sided class
\[
 \mathcal M(\mathcal D):=[g_{\mathcal D}]_{\sim_k}
\]
is independent of the chosen coordinates.  We call $\bar g_{\mathcal D}$ the
canonical lower morphism and $g_{\mathcal D}$ an associated lower rational
map.  For the datum recovered above, Proposition~\ref{prop:induced-complement}
identifies this canonical lower morphism with the original lower extension,
so $g_{\mathcal D}\sim_k f$.  This proves the existence assertion announced
in Theorem~A.

For the surviving tame rows one has
\begin{equation}
 |E[1-\beta]|=
 \begin{cases}
 4,&m=2,\\
 3,&m=3,\\
 2,&m=4,\\
 1,&m=6.
 \end{cases}
\label{eq:fixed-counts}
\end{equation}
Indeed, for $m=2$ one has $1-\beta=[2]$; for $m=3$,
$(1-\beta)(1-\beta^2)=[3]$ and the two factors have equal degree; for $m=4$,
$(1-\beta)(1+\beta)=[2]$ with equal degrees; and for $m=6$,
$\beta^2-\beta+1=0$, whence $\beta(1-\beta)=1$.  The corresponding degrees are $4,3,2,1$.  In each surviving tame row they are
prime to the characteristic, so the endomorphisms are separable and their
geometric kernels have exactly these cardinalities.  When
$\operatorname{char}k>3$, the usual automorphism classification further gives
$\boldsymbol j(E)$ arbitrary for $m=2$, $\boldsymbol j(E)=0$ for $m=3,6$, and
$\boldsymbol j(E)=1728$ for $m=4$ \cite[Theorem~III.10.1]{Silverman09}.

\begin{proposition}[Ramification of the cyclic quotient]
\label{prop:cyclic-signature}
The quotient $E\to E/H$ has ramification type
\[
\begin{array}{c|c}
 m&\text{type}\\ \hline
 2&(2,2,2,2),\\
 3&(3,3,3),\\
 4&(2,4,4),\\
 6&(2,3,6).
\end{array}
\]
\end{proposition}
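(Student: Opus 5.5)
By Proposition~\ref{prop:linearization}, $h=\tau_{P_H}\beta\tau_{-P_H}$ with $\operatorname{ord}(\beta)=m$. Translation by $P_H$ is an isomorphism of curves, so I will first reduce to the case $H=L_E(H)=\langle\beta\rangle$, which works geometrically over $\bar k$. For that linear quotient I will read off the branch data from the fixed points of the powers of $\beta$. The quotient $E\to E/\langle\beta\rangle$ is tame since $\operatorname{char}k\nmid m$, and its target $E/\langle\beta\rangle$ is $\mathbf P^1$ by Remark~\ref{rem:faithful-linear-part}. So the branch points correspond to the orbits of points with nontrivial stabilizer. A point with stabilizer of order $d$ contributes an inertia index $d$, and the ramification type is the multiset of these $d$ over the branch points.

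To find the stabilizers, I note that a point $P$ has stabilizer containing $\beta^j$ exactly when $P\in E[1-\beta^j]$. I will apply the fixed-point counts \eqref{eq:fixed-counts} to the generator $\beta^j$ of each subgroup of $\langle\beta\rangle$. This gives $|E[1-\gamma]|=4,3,2,1$ according as $\gamma$ has order $2,3,4,6$. Since $E[1-\beta^{m/d}]$ depends only on the subgroup of order $d$, the number of points fixed by the subgroup of order $d$ is $4,3,2,1$ for $d=2,3,4,6$.

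Then I will do an inclusion--exclusion over the subgroups, case by case.
\begin{itemize}
\item For $m=2$, there are $4$ fixed points, each forming its own orbit with stabilizer of order $2$. This gives $(2,2,2,2)$.
\item For $m=3$, there are $3$ fixed points, giving $(3,3,3)$.
\item For $m=4$, the $2$ points fixed by $\beta$ have stabilizer $C_4$. The points fixed by $\beta^2$ number $4$; the $2$ that are not fixed by $\beta$ have stabilizer exactly $C_2$ and form one orbit of size $2$. This gives $(4,4,2)$.
\item For $m=6$, only $\mathrm O_E$ has stabilizer $C_6$. The $3$ points fixed by $\beta^2$ leave $2$ with stabilizer exactly $C_3$, forming one orbit of size $2$. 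The $4$ points fixed by $\beta^3$ leave $3$ with stabilizer $C_2$, forming one orbit of size $3$. This gives $(6,3,2)$.
\end{itemize}

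As a consistency check, I will verify each case with Riemann--Hurwitz: $0=2\cdot 1-2=m(-2)+\sum(m/d)(d-1)$. For example, when $m=6$ the sum is $5+2\cdot2+3\cdot1=12$.

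The main point needing care is that $|E[1-\gamma]|$ depends only on the order of $\gamma$, even in characteristic $2$ or $3$, where $\operatorname{Aut}(E,\mathrm O_E)$ is larger. This is handled by the degree computation preceding \eqref{eq:fixed-counts}, which uses only the minimal polynomial relation of $\gamma$ of order $d$ together with separability (guaranteed by $p\nmid m$). I therefore expect this step to go through directly by citing that argument applied to $\gamma$ in place of $\beta$.
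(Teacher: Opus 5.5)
Your proposal is correct and follows the paper's proof essentially verbatim: you reduce by translation conjugacy (Proposition~\ref{prop:linearization}) to $\langle\beta\rangle$, then read off the stabilizers from the fixed-point counts of $\beta$, $\beta^2$, $\beta^3$, removing points already counted. You also state explicitly that the counts in \eqref{eq:fixed-counts} apply to each power $\beta^{m/d}$, which the paper uses tacitly (e.g.\ ``its square is $[-1]$ and fixes four points''), and your Riemann--Hurwitz check is a sound addition.
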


\begin{proof}
By Proposition~\ref{prop:linearization}, translation conjugacy reduces the
calculation to $H=\langle\beta\rangle$.  For $m=2$, the four points of
$E[2]$ are the four points with stabilizer $C_2$.  For $m=3$, the generator
has three fixed points, giving three branch points of index $3$.  For $m=4$,
the generator has two fixed points, giving two index-$4$ branch points; its
square is $[-1]$ and fixes four points, and the two remaining points form one
$H$-orbit with stabilizer $C_2$, giving the index-$2$ branch point.  For
$m=6$, the generator has one fixed point, its square has three fixed points,
and its cube $[-1]$ has four fixed points.  Removing points already counted,
the two additional order-three fixed points form one orbit with stabilizer
$C_3$, and the three additional order-two fixed points form one orbit with
stabilizer $C_2$.  This gives $(2,3,6)$.
\end{proof}

\subsubsection{Cyclotomic Frobenius}

Frobenius-stability of $H$ means
\[
 {}^{\sigma_q}h=h^j
\]
for a unique $j\in(\mathbb Z/m\mathbb Z)^\times$.

\begin{theorem}[Prime-to-characteristic cyclotomic action]
\label{thm:cyclotomic}
For every Euclidean quotient datum one has
\[
 j\equiv q\pmod m,
\]
so
\[
 {}^{\sigma_q}h=h^q,
 \qquad
 {}^{\sigma_q}\beta=\beta^q.
\]
Consequently, a chosen generator is defined over
$\mathbb{F}_{q^{\operatorname{ord}_m(q)}}$.
\end{theorem}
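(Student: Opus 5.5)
We reduce the statement to a computation with the linear part $\beta$ on a finite torsion subgroup on which it acts by a primitive $m$-th root of unity. There we compare the Frobenius twist ${}^{\sigma_q}\beta$ with the action of the $q$-power Frobenius endomorphism $\mathcal F_E$, which commutes with every endomorphism defined over $k$ and conjugates geometric endomorphisms by the arithmetic twist.

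\emph{Step 1: reduce from $h$ to $\beta$.} Since $\mathrm O_E\in E(k)$, the twist of $h=\tau_Q\beta$ is ${}^{\sigma_q}h=\tau_{\sigma_q(Q)}\,{}^{\sigma_q}\beta$. The linear part of ${}^{\sigma_q}h$ is therefore ${}^{\sigma_q}\beta$. On the other hand, the linear part of $h^j$ is $\beta^j$, because the linear-part map is a homomorphism. By faithfulness of the linear-part map (Definition~\ref{def:datum}), ${}^{\sigma_q}h=h^j$ is equivalent to ${}^{\sigma_q}\beta=\beta^j$. So it suffices to show ${}^{\sigma_q}\beta=\beta^q$.

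\emph{Step 2: the key identity.} For any $\alpha\in\operatorname{End}(E_{\bar k})$ one has $\mathcal F_E\circ\alpha={}^{\sigma_q}\alpha\circ\mathcal F_E$. This holds because $\mathcal F_E$ raises coordinates to the $q$-th power, and $(\alpha(P))^{(q)}=({}^{\sigma_q}\alpha)(P^{(q)})$. Now choose a subgroup on which $\beta$ acts through a faithful character. Since $p\nmid m$, the ring $\mathbb Z[\beta]\subset\operatorname{End}(E_{\bar k})$ is $\mathbb Z[\zeta_m]$, embedded via $\beta\mapsto\zeta_m$; for $m=2$ this is just $\beta=[-1]$, where the claim is trivial. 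Pick a prime $r\neq p$ with $r\equiv1\pmod m$, or any prime $r\nmid pm$ after extending to a suitable torsion level. Then $E[r]$ is a free rank-one $\mathbb Z[\beta]/r$-module, and it decomposes into eigenlines of $\beta$ with eigenvalues $\zeta,\zeta^{-1}$ in $\mathbb F_r$.

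\emph{Step 3: the main obstacle.} The obstacle is extracting the exponent from the relation ${}^{\sigma_q}\beta\,\mathcal F_E=\mathcal F_E\,\beta$. The cleanest route is the invariant differential. Since $p\nmid m$, the map $\beta\mapsto\beta^*\omega/\omega\in\bar k^\times$ is an injective homomorphism $\operatorname{Aut}(E_{\bar k},\mathrm O_E)\to\mu_m(\bar k)$ on $L_E$; it is injective because automorphisms acting trivially on differentials are separable endomorphisms of degree one that are $[1]$ by \cite[Corollary~III.5.6]{Silverman09}-type arguments. Take $\omega$ defined over $k$. Then $({}^{\sigma_q}\beta)^*\omega=\sigma_q(\beta^*\omega/\omega)\,\omega=(\beta^*\omega/\omega)^q\,\omega=(\beta^q)^*\omega$, since $\sigma_q$ acts on $\mu_m(\bar k)$ by the $q$-th power. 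Injectivity then gives ${}^{\sigma_q}\beta=\beta^q$, and hence $j\equiv q\pmod m$ by Step~1. This argument avoids Frobenius on torsion altogether. The one point to check carefully is that $\omega\mapsto\beta^*\omega$ genuinely detects $\beta$ on $L_E$: a nontrivial $\beta\in L_E$ of order dividing $m$ with $\beta^*\omega=\omega$ would make $1-\beta$ inseparable, forcing $p\mid\deg(1-\beta)$. But by \eqref{eq:fixed-counts} and its analogues for powers of $\beta$, these degrees are products of $2$ and $3$ dividing $m$-related integers, all prime to $p$ in the surviving rows.

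\emph{Step 4: the field of definition.} Iterating ${}^{\sigma_q}h=h^q$ gives ${}^{\sigma_q^{\nu}}h=h^{q^\nu}$, which equals $h$ exactly when $q^\nu\equiv1\pmod m$. Hence $h$, and likewise $\beta$, is fixed by $\sigma_q^{\operatorname{ord}_m(q)}$. Therefore $h$ is defined over $\mathbb F_{q^{\operatorname{ord}_m(q)}}$.
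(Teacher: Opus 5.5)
Your proof is correct and is essentially the paper's argument: compare linear parts, use that $\langle\beta\rangle$ acts faithfully on the (co)tangent line at $\mathrm O_E$ through $m$th roots of unity, and observe that $\sigma_q$ raises that character value to its $q$th power. The paper proves faithfulness by iterating a local expansion $\gamma(t)=t+c_rt^r+\cdots$. You instead use that $\gamma^*\omega=\omega$ would make $1-\gamma$ inseparable, although $\deg(1-\gamma)\in\{4,3,2,1\}$ is prime to $p$. That is equally valid, because injectivity is needed only on $L_E$ and ${}^{\sigma_q}\beta\in L_E$ by Frobenius-stability.

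Two cleanups. Drop your first justification of injectivity, namely that an automorphism fixing $\omega$ must be $[1]$: it is false in characteristics $2$ and $3$ (e.g.\ $[-1]^*\omega=-\omega=\omega$ in characteristic $2$). Also drop the torsion discussion in Step~2, which your final argument never uses.
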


\begin{proof}
Comparing linear parts gives ${}^{\sigma_q}\beta=\beta^j$.  Since
$\operatorname{char}k\nmid m$, the tangent character of
$\langle\beta\rangle$ is faithful.  Indeed, if a nontrivial element $\gamma$
of order $n$ prime to the characteristic had derivative $1$, then in a local
parameter $t$ at the origin one could write
\[
 \gamma(t)=t+c_rt^r+O(t^{r+1}),
 \qquad c_r\ne0,
\]
with $r>1$.  Iteration gives
$\gamma^n(t)=t+n c_rt^r+O(t^{r+1})$, contradicting $\gamma^n=1$ because
$n$ is invertible in $k$.  Thus $\beta$ acts on the tangent space by a
primitive $m$th root $\zeta_m$.  Frobenius conjugation sends this eigenvalue
to $\zeta_m^q$, while $\beta^j$ acts by $\zeta_m^j$.  Hence
$\zeta_m^q=\zeta_m^j$ and $j\equiv q\pmod m$.
\end{proof}

\subsubsection{The arithmetic shift class}

Recall that $L_E(H)$ denotes the linear image of $H$.  By
Proposition~\ref{prop:linearization}, the linear-part map identifies $H$ with
a cyclic origin-preserving group of order $m$.  Put
\[
 \Delta(H):=E(\bar k)^{L_E(H)}.
\]
If $\beta$ is any generator of $L_E(H)$, then
\[
 \Delta(H)=E[1-\beta];
\]
in particular the subgroup is independent of the chosen generator.  If $P_H$
is a common fixed point of $H$, then
\[
 \operatorname{Fix}(H)=P_H+\Delta(H).
\]
Since Frobenius normalizes $H$, one has
$\sigma_q(P_H)-P_H\in\Delta(H)$, and changing $P_H$ changes this difference by a
coboundary.

\begin{definition}
The \emph{shift class} (or \emph{shift-descent class}) of $H$ is
\begin{equation}
 \mathfrak{s}_E(H):=[\sigma_q(P_H)-P_H]
 \in \Delta(H)/(\sigma_q-1)\Delta(H).
\label{eq:shift-class}
\end{equation}
\end{definition}

\begin{proposition}
\label{prop:shift-class}
The class \eqref{eq:shift-class} vanishes if and only if $H$ has a common fixed
point in $E(k)$, equivalently if and only if a $k$-rational translation
conjugates $H$ to $L_E(H)$.  For $m=6$, the class always vanishes.
\end{proposition}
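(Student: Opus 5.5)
The plan is to work entirely with the torsor $\operatorname{Fix}(H)=P_H+\Delta(H)$, on which $\sigma_q$ acts because Frobenius normalizes $H$. I would prove the three assertions in order: the fixed-point criterion, the translation reformulation, and the case $m=6$.

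\textbf{Fixed-point criterion.} First I check the forward direction. If $H$ has a common fixed point $P\in E(k)$, I take $P_H=P$. Then $\sigma_q(P_H)-P_H=0$, so the class vanishes; this is legitimate because the class is independent of the choice of $P_H$ up to coboundary, as recorded before the definition. Conversely, suppose $\sigma_q(P_H)-P_H=(\sigma_q-1)D$ with $D\in\Delta(H)$. Put $P'=P_H-D$. This point lies in $P_H+\Delta(H)=\operatorname{Fix}(H)$ and satisfies $\sigma_q(P')=P'$. Since $\mathrm O_E\in E(k)$, the group law is defined over $k$, and $\sigma_q$ acts on geometric points compatibly with addition. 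Hence $P'\in E(k)$ is a rational common fixed point.

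\textbf{Translation reformulation.} Given a rational common fixed point $P$, I use \eqref{eq:h-linearized} with $P_H=P$, which gives $h=\tau_P\beta\tau_{-P}$. So the $k$-rational translation $\tau_P$ conjugates $H$ to $L_E(H)$. Conversely, suppose $\tau_R H\tau_{-R}=L_E(H)$ with $R\in E(k)$. Every element of $L_E(H)$ fixes $\mathrm O_E$, so every element of $H$ fixes $-R$ (matching the sign convention $\tau_{-R}(\mathrm O_E)=-R$). Thus $-R\in E(k)$ is a rational common fixed point. One point needs a small check: conjugation by a translation preserves linear parts, so the conjugate of $H$ indeed has linear image $L_E(H)$.

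\textbf{The case $m=6$.} By \eqref{eq:fixed-counts}, $\Delta(H)=E[1-\beta]$ is trivial when $m=6$, since $\beta(1-\beta)=1$. Therefore the quotient $\Delta(H)/(\sigma_q-1)\Delta(H)$ is zero, and the class vanishes automatically.

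I do not expect a real obstacle. The only delicate point is the descent step: a $\sigma_q$-fixed geometric point is $k$-rational. This holds because $k$ is perfect and $\sigma_q$ topologically generates $\Gamma_k$.
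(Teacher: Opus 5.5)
Your proof is correct and follows the paper's route. The class is well defined modulo coboundaries; adjusting $P_H$ by an element of $\Delta(H)$ gives a $\sigma_q$-fixed, hence $k$-rational, point of $\operatorname{Fix}(H)=P_H+\Delta(H)$; and $\Delta(H)=0$ when $m=6$. You also spell out the translation reformulation via \eqref{eq:h-linearized}, which the paper leaves implicit.
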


\begin{proof}
If $P_H'=P_H+T$ with $T\in\Delta(H)$, then
\[
 \sigma_q(P_H')-P_H'=\sigma_q(P_H)-P_H+(\sigma_q-1)T,
\]
so the class is well defined.  It vanishes exactly when one can choose
$T\in\Delta(H)$ with $P_H+T\in E(k)$.  For $m=6$,
$\Delta(H)=0$ by \eqref{eq:fixed-counts}.
\end{proof}

We next identify the shift class on the branch locus of the lower rational
map.  For brevity write
\[
 \varphi=\varphi_{\mathcal D},\qquad
 E'=E'_{\mathcal D},\qquad
 H'=H'_{\mathcal D}.
\]
Since $|\mathcal{N}|=\ell^e$ is coprime to $|\Delta(H)|$---by the degree
conditions in \eqref{eq:four-types-intro} and
$|\Delta(H)|\in\{4,3,2,1\}$---the restriction of $\varphi$ to $\Delta(H)$ is
injective.  Equivariance sends it into $\Delta(H')$, and the two fixed-point
groups have the same cardinality by \eqref{eq:fixed-counts}.  Thus
\[
 \varphi:\Delta(H)\xrightarrow{\sim}\Delta(H')
\]
is a Frobenius-equivariant isomorphism.  It carries the fixed-point torsor of
$H$ to that of $H'$ and therefore carries $\mathfrak s_E(H)$ to
$\mathfrak s_{E'}(H')$.  In particular, the two classes vanish
simultaneously.

\begin{theorem}[Rational branch criterion]
\label{thm:rational-branches}
Let $\mathcal D=(E,\mathcal N,H)$ be a Euclidean quotient datum, let
$g=g_{\mathcal D}\in k(X)$ be an associated lower rational map, and let
$\operatorname{Br}_d(g)$ denote the branch points whose
inertia group in the geometric Galois closure has order $d$.  Then:
\begin{enumerate}[label=\textup{(\roman*)}]
\item for type $(2,2,2,2)$,
\[
 \operatorname{Br}(g)\cap\mathbf{P}^1(k)\ne\varnothing
 \quad\Longleftrightarrow\quad
 \mathfrak s_E(H)=0;
\]
\item for type $(3,3,3)$,
\[
 \operatorname{Br}(g)\cap\mathbf{P}^1(k)\ne\varnothing
 \quad\Longleftrightarrow\quad
 \mathfrak s_E(H)=0;
\]
\item for type $(2,4,4)$, the unique point of
$\operatorname{Br}_2(g)$ belongs to $\mathbf{P}^1(k)$, and
\[
 \operatorname{Br}_4(g)\subset\mathbf{P}^1(k)
 \quad\Longleftrightarrow\quad
 \mathfrak s_E(H)=0.
\]
If the class is nonzero, the two order-$4$ branch points form a single
Frobenius orbit of length $2$;
\item for type $(2,3,6)$, all three branch points belong to $\mathbf{P}^1(k)$.
\end{enumerate}
\end{theorem}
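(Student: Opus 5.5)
The plan is to compute the branch locus of $g$ on the target side $E'/H'$ and to read off its Frobenius action from the fixed-point torsors of the subgroups of $H'$. The lower map $\bar g_{\mathcal D}$ is the quotient $E'/H'$ composed with $\varphi$. The geometric Galois closure of $g$ is $E\to E/(T_{\mathcal N}\rtimes H)=E'/H'$, so the branch points of $g$ are the images under $\pi_{H'}$ of the points of $E'$ with nontrivial stabilizer in $H'$. The inertia order of such a branch point equals the order of that stabilizer. This follows because $\varphi$ is étale and $T_{\mathcal N}$ acts freely.

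By Proposition~\ref{prop:linearization} applied to $H'$, the generator $h'$ fixes some $P_{H'}$. Moreover, for a divisor $d>1$ of $m$, the points with stabilizer containing $\langle h'^{m/d}\rangle$ form the coset $P_{H'}+E'[1-\beta'^{m/d}]$. So $\operatorname{Br}_d(g)$ is the image under $\pi_{H'}$ of the set of points whose stabilizer is exactly of order $d$. Since $\pi_{H'}$ is defined over $k$, Frobenius on $\operatorname{Br}_d(g)$ is induced by $\sigma_q$ on these cosets. A point of $\operatorname{Br}_d(g)$ is $k$-rational iff its fibre, an $H'$-orbit, is $\sigma_q$-stable. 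I will invoke the isomorphism $\varphi:\Delta(H)\to\Delta(H')$ carrying $\mathfrak s_E(H)$ to $\mathfrak s_{E'}(H')$, so it suffices to work with the shift class of $H'$.

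The case analysis follows the proof of Proposition~\ref{prop:cyclic-signature}, transported to $E'$.

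For type $(2,2,2,2)$, the orbits of maximal stabilizer are singletons: each branch point pulls back to one fixed point in $P_{H'}+\Delta(H')$. A branch point is rational iff some fixed point of $H'$ is $\sigma_q$-fixed. By Proposition~\ref{prop:shift-class}, this happens iff $\mathfrak s_{E'}(H')=0$.

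For type $(3,3,3)$, the argument is identical, with $\Delta(H')$ of order $3$.

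For type $(2,4,4)$, the two order-$4$ branch points again correspond to the two points of $P_{H'}+\Delta(H')$. These are all rational iff at least one is, iff the shift class vanishes. This is because $\sigma_q$ acts on this $2$-element torsor either trivially or by the nontrivial swap, and the swap happens exactly when the class is nonzero, since $\Delta(H')\cong C_2$ has trivial Frobenius action and so $(\sigma_q-1)\Delta=0$.

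The order-$2$ branch point is unique, hence Frobenius-stable, hence rational. This holds because $\sigma_q$ normalizes $H'$ and $\pi_{H'}$ is defined over $k$, so $\sigma_q$ permutes $\operatorname{Br}_2(g)$.

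For type $(2,3,6)$, each $\operatorname{Br}_d(g)$ with $d\in\{2,3,6\}$ is a singleton, so the same uniqueness argument makes all three branch points rational. This is consistent with Proposition~\ref{prop:shift-class}.

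The main obstacle is the precise identification of inertia groups of $g$ with stabilizers in $H'$, together with the exact-stabilizer bookkeeping in cases (i)--(ii). In those cases I must check that distinct fixed points of $H'$ lie in distinct $H'$-orbits, so that rationality of the branch point equals rationality of the fixed point. This holds because $H'$ fixes each of them.

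A subtlety to address in case (i) is that rationality of one orbit gives a $\sigma_q$-fixed point of $\operatorname{Fix}(H')$, which is exactly the vanishing criterion of Proposition~\ref{prop:shift-class}. I would also note that Frobenius commutes with $\pi_{H'}$ because the quotient descends via Proposition~\ref{prop:stable-quotients}.
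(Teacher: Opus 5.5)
Your proposal is correct and follows essentially the same route as the paper. Both arguments identify $\operatorname{Br}(g)$ with the branch locus of $E'\to E'/H'$ using that $\varphi$ is unramified, transport the shift class along $\varphi:\Delta(H)\xrightarrow{\sim}\Delta(H')$, and read rationality of branch points from the Frobenius action on the fixed-point torsors, with uniqueness of inertia order handling $\operatorname{Br}_2$ in type $(2,4,4)$ and all of type $(2,3,6)$. The one small difference is how you show that every top branch point stays branched in the degree-$\ell^e$ lower cover: the paper checks this directly by semiregularity of $L_E(H)$ on $\mathcal N\setminus\{0\}$, while you deduce it from $E$ being the Galois closure (core-freeness of $H$), which rests on the same semiregularity.
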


\begin{proof}
Because the isogeny $E\to E'$ is separable, it is unramified.  Thus the branch
locus of the top Galois cover
\[
 E\longrightarrow E'/H'
\]
is exactly the branch locus of the cyclic quotient $E'\to E'/H'$.  Every one
of these top branch points remains branched in the lower degree-$|\mathcal{N}|$
cover.  Indeed, let
$I'=\langle (h')^j\rangle\le H'$ be the stabilizer of a point above such a
branch point in the cyclic quotient, and let
$I=\langle h^j\rangle\le H$ be the corresponding subgroup under
$H\xrightarrow{\sim}H'$.  The inertia subgroup in the top
$T_{\mathcal{N}}\rtimes H$-cover is conjugate to $I$, and its linear part is
$\langle\beta^j\rangle$.  Semiregularity of $L_E(H)$ on
$\mathcal{N}\setminus\{0\}$ implies that, in the lower coset action
\[
 (T_{\mathcal{N}}\rtimes H)/H\cong\mathcal{N},
\]
this inertia subgroup has one fixed point and all other orbits have length
$|I|>1$.  Hence the lower map is ramified over the same branch point.  It
follows that $\operatorname{Br}(g)$ is precisely the branch locus of
$E'\to E'/H'$.

In types $(2,2,2,2)$ and $(3,3,3)$, every branch point of this quotient is
the image of a unique common fixed point of $H'$.  Since the quotient map is
defined over $k$, such a branch point is $k$-rational exactly when its unique
point above it is $k$-rational.  Proposition~\ref{prop:shift-class}
and the Frobenius-equivariant identification of the two shift classes then
give \textup{(i)} and \textup{(ii)}.

In type $(2,4,4)$, the two inertia-$4$ branch points likewise correspond
bijectively to the two common fixed points of $H'$.  Here
$\Delta(H')\cong C_2$, so once the fixed-point torsor has one $k$-point it
has two: the nonzero element of $\Delta(H')$ is fixed by Frobenius.  Thus the
two inertia-$4$ branch points are both $k$-rational exactly when the shift
class vanishes.  If it does not vanish, Frobenius has no fixed point on this
two-element set and therefore interchanges its two elements.
The remaining branch point is the unique branch point with inertia order $2$;
Frobenius preserves inertia order, so this point is $k$-rational.

For type $(2,3,6)$, the three inertia orders $2,3,6$ are pairwise distinct.
Frobenius preserves inertia order, hence fixes each of the three branch points.
This also agrees with $\Delta(H)=0$ and the automatic vanishing of the shift
class.
\end{proof}

\section{Affine descent and classification of fixed-field forms}
\label{sec:forms}

Section~\ref{sec:reduction-descent} recovers Euclidean quotient data and
constructs their associated lower maps.  We now determine exactly when two
such data define the same two-sided $k$-M\"obius class, separating geometric
equivalence of the marked data from the affine shift that survives over $k$.

\begin{theorem}[Fixed-field forms]
\label{thm:intro-forms}
Let $k=\mathbb F_q$ be arbitrary, and let
$f\in k(X)$ be separable, indecomposable, tame, and exceptional.  Suppose
that the geometric Galois closure of $f$ has genus one.
\begin{enumerate}[label=\textup{(\roman*)}]
\item There exists a Euclidean quotient datum
$\mathcal D=(E,\mathcal N,H)$ over $k$ such that
\[
 g_{\mathcal D}\sim_k f.
\]
Under the resulting realization of the geometric Galois closure on $E$, the
regular normal subgroup is
\[
 N=T_{\mathcal N}.
\]
If $e=2$, then $\operatorname{char}k\ne\ell$,
$\mathcal N=E[\ell](\bar k)$, and
$\varphi_{\mathcal D}=\theta\circ[\ell]$ for a $k$-isomorphism
$\theta:E\xrightarrow{\sim}E'_{\mathcal D}$.

\item Conversely, for every Euclidean quotient datum
$\mathcal D=(E,\mathcal N,H)$ over $k$, an associated lower rational map
$g_{\mathcal D}$ is separable of degree $\ell^e$, has geometric monodromy
$T_{\mathcal N}\rtimes H$, and has the Euclidean ramification type determined
by $m$.

\item Let $\mathcal D_i=(E_i,\mathcal N_i,H_i)$, $i=1,2$, be Euclidean
quotient data of the same degree and type.  Write
$L_{E_i}=L_{E_i}(H_i)$ and $\Delta_i=\Delta(H_i)$, and let
$\mathcal F_{E_i}$ denote $q$-power Frobenius on $E_i(\bar k)$.  Then
\[
 \mathcal M(\mathcal D_1)=\mathcal M(\mathcal D_2)
\]
if and only if there is an origin-preserving geometric isomorphism
$\alpha:E_{1,\bar k}\xrightarrow{\sim}E_{2,\bar k}$.  Define its Frobenius
conjugate by
\[
 {}^{\sigma_q}\alpha:=\mathcal F_{E_2}\circ\alpha\circ\mathcal F_{E_1}^{-1}.
\]
The required conditions are
\begin{enumerate}[label=\textup{(\alph*)}]
\item $\alpha(\mathcal N_1)=\mathcal N_2$;
\item $\alpha L_{E_1}\alpha^{-1}=L_{E_2}$;
\item $({}^{\sigma_q}\alpha)\alpha^{-1}\in L_{E_2}$;
\item
\[
 \alpha_*\mathfrak s_{E_1}(H_1)=\mathfrak s_{E_2}(H_2)
 \quad\text{in }\Delta_2/(\sigma_q-1)\Delta_2.
\]
\end{enumerate}
Here conditions \textup{(b)}--\textup{(c)} make
$\alpha|_{\Delta_1}:\Delta_1\to\Delta_2$ Frobenius-equivariant, and
$\alpha_*$ denotes the induced map
\[
 \Delta_1/(\sigma_q-1)\Delta_1
 \longrightarrow
 \Delta_2/(\sigma_q-1)\Delta_2.
\]
\end{enumerate}
\end{theorem}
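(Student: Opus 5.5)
Part (i) is essentially assembled from Section~\ref{sec:reduction-descent}, so I will only collect it. Lemma~\ref{lem:gms-reduction} gives $G_f=N\rtimes H$ with $N\cong C_\ell^e$. Lemma~\ref{lem:procyclic-splitting} and Proposition~\ref{prop:stable-quotients} descend the Galois closure to $E/k$. Theorem~\ref{thm:translation-kernel} gives $N=T_{\mathcal N}$, together with the rank-two assertions $\operatorname{char}k\ne\ell$, $\mathcal N=E[\ell](\bar k)$ and $\varphi=\theta\circ[\ell]$. Remark~\ref{rem:faithful-linear-part} checks the axioms of Definition~\ref{def:datum}, and Proposition~\ref{prop:induced-complement} identifies $g_{\mathcal D}\sim_k f$.

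For part (ii) I start from an arbitrary datum and use the commutative square defining $\bar g_{\mathcal D}$. The composite $E\to E'/H'$ is Galois with group $T_{\mathcal N}\rtimes H$. This holds because $T_{\mathcal N}$ and $H$ generate a group whose quotient of $E$ equals $E'/H'$, and the group has order $\ell^e m$ since $H\cap T_{\mathcal N}=1$ by faithfulness. Hence $\deg\bar g=\ell^e m/m=\ell^e$. Separability follows because $\varphi$ has reduced kernel and $\operatorname{char}k\nmid m$. The geometric monodromy is the image of $T_{\mathcal N}\rtimes H$ acting on cosets of $H$. This action is faithful, since $H$ has no nontrivial normal subgroup contained in every conjugate: a normal subgroup inside $H$ would commute with $T_{\mathcal N}$, contradicting semiregularity. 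So the monodromy is $T_{\mathcal N}\rtimes H$. The ramification type is read off from the branch-locus argument in the proof of Theorem~\ref{thm:rational-branches} combined with Proposition~\ref{prop:cyclic-signature} applied to $H'$.

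For part (iii), the easy direction is sufficiency. Given $\alpha$ satisfying (a)–(d), I first use (d) to choose fixed points $P_1$ and $P_2=\alpha(P_1)+T$ with $T\in\Delta_2$, adjusting $T$ so that the modified map $\tilde\alpha=\tau_{T}\circ\alpha$ (conjugated appropriately by the fixed points) carries $H_1$ to $H_2$; this uses (b). The test is then whether $({}^{\sigma_q}\tilde\alpha)\tilde\alpha^{-1}$ lies in $H_2$. Condition (c) controls the linear part and (d) kills the translation defect. I expect to twist by an element of $L_{E_2}$, which exists exactly by (c), so that $\tilde\alpha$ commutes with Frobenius modulo $H_2$. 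It therefore descends to a $k$-isomorphism $E_1/H_1\cong E_2/H_2$. Using (a), $\tilde\alpha$ induces $E_1'\to E_2'$ intertwining $H_1'$ and $H_2'$, with Frobenius compatibility modulo $H_2'$, and hence a $k$-isomorphism of the targets. These two isomorphisms are the Möbius maps $\eta$ and $\mu$.

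The converse is the main obstacle. Given $\mu\circ g_1\circ\eta=g_2$ over $k$, the two geometric Galois closures are identified over the function field level. The $k$-forms of the top curve come from sections $B_i$, so I would use the uniqueness of the genus-one Galois closure up to $G$-conjugacy. This gives an isomorphism $\psi:E_{1,\bar k}\to E_{2,\bar k}$ conjugating $T_{\mathcal N_1}\rtimes H_1$ to $T_{\mathcal N_2}\rtimes H_2$ and sending $H_1$ to a conjugate of $H_2$. After composing with an element of the group I can arrange $\psi H_1\psi^{-1}=H_2$. The map $\psi$ intertwines the arithmetic actions only up to the difference of the sections: ${}^{\sigma}\psi\,\psi^{-1}$ lies in the normalizer of $H_2$ in $T_{\mathcal N_2}\rtimes H_2$, which is $H_2$ because $N$ acts semiregularly. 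I then take $\alpha$ to be the linear part of $\psi$, namely $\tau_{-\psi(\mathrm O)}\psi$. From this, (a) holds because translations are preserved, (b) holds by taking linear parts, and (c) holds by taking linear parts of ${}^{\sigma}\psi\,\psi^{-1}\in H_2$. For (d), I write ${}^{\sigma}\psi\,\psi^{-1}=h_2$ and evaluate at $\psi(P_{H_1})$, a fixed point of $H_2$. The defect ${}^{\sigma}\psi\psi^{-1}$ fixes the torsor $\operatorname{Fix}(H_2)$ setwise, and computing $\sigma_q(\psi P_{H_1})-\psi P_{H_1}$ gives $\alpha_*$ of the $H_1$ shift plus a term in $(\sigma_q-1)\Delta_2$. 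The delicate points are this bookkeeping and the use of $\mathcal F_{E}^{-1}$ in defining ${}^{\sigma_q}\alpha$, and I would check them carefully.
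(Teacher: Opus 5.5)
Your treatment of (i), (ii) and the necessity half of (iii) matches the paper's. You lift the equivalence to the geometric Galois closures, show that the defect ${}^{\sigma_q}\psi\,\psi^{-1}$ lies in $H_2$, and then read off (a)--(c) from linear parts and (d) from a fixed point. Two small repairs are needed there.
\begin{enumerate}
\item The paper gets ${}^{\sigma_q}\psi\,\psi^{-1}\in H_2$ at once from the torsor clause of Lemma~\ref{lem:normal-closure-lifting}, because $\psi$ and ${}^{\sigma_q}\psi$ lie over the same $k$-rational source isomorphism. Your normalizer route also works, but $N_{T_{\mathcal N_2}\rtimes H_2}(H_2)=H_2$ comes from semiregularity of $L_{E_2}$ on $\mathcal N_2\setminus\{0\}$, not from semiregularity of $N$.
\item For (d) you need the defect to fix $\operatorname{Fix}(H_2)$ pointwise, which it does since it lies in $H_2$. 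For $P_2=\psi(P_1)$ this gives the exact equality $\sigma_q(P_2)-P_2=\alpha(\sigma_qP_1-P_1)$. Setwise invariance alone would only control the shift modulo $\Delta_2$, not modulo $(\sigma_q-1)\Delta_2$.
\end{enumerate}

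The sufficiency direction has a genuine error in its key construction. You cannot in general take $P_2=\alpha(P_1)+T$ with $T\in\Delta_2$, nor $\tilde\alpha=\tau_T\alpha$. The point $\alpha(P_1)$ is the fixed point of $\alpha H_1\alpha^{-1}=\tau_{\alpha(P_1)}L_{E_2}\tau_{-\alpha(P_1)}$, and this group is carried to $H_2$ only by translation by some arbitrary geometric point. For example, let $R\in E_1(k)\setminus\Delta_1$. Then $\mathcal D_2=(E_1,\mathcal N_1,\tau_RH_1\tau_{-R})$ satisfies (a)--(d) with $\alpha=\mathrm{id}$. Yet $\operatorname{Fix}(H_2)=\operatorname{Fix}(H_1)+R$ is disjoint from $\alpha(P_1)+\Delta_1=\operatorname{Fix}(H_1)$.

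The correct use of (d) is only to move the target fixed point inside its own $\Delta_2$-torsor, as the paper does:
\begin{enumerate}
\item Choose $P_i\in\operatorname{Fix}(H_i)$ and put $s_1=\sigma_qP_1-P_1$.
\item Replace $P_2$ by $P_2'\in P_2+\Delta_2$ with $\sigma_q(P_2')-P_2'=\alpha(s_1)$.
\item Put $\tilde\alpha=\tau_Q\alpha$ with $Q=P_2'-\alpha(P_1)$. Then (a) and (b) give $\tilde\alpha T_{\mathcal N_1}\tilde\alpha^{-1}=T_{\mathcal N_2}$ and $\tilde\alpha H_1\tilde\alpha^{-1}=H_2$.
\end{enumerate}
Your vague ``twist by an element of $L_{E_2}$'' must then be replaced by the explicit descent identity
\[
 {}^{\sigma_q}\tilde\alpha=h_\alpha\tilde\alpha,
 \qquad
 h_\alpha:=\tau_{P_2'}\,({}^{\sigma_q}\alpha)\alpha^{-1}\,\tau_{-P_2'}\in H_2,
\]
which is where (c) enters. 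Comparing translation parts reduces this identity to $({}^{\sigma_q}\alpha)\alpha^{-1}(\alpha(s_1))=\alpha(s_1)$. That holds because $\alpha(s_1)\in\Delta_2$ and $({}^{\sigma_q}\alpha)\alpha^{-1}\in L_{E_2}$ fixes $\Delta_2$ pointwise. This cancellation is the actual content of ``(d) kills the translation defect'', and no modification of $\tilde\alpha$ is needed. With this identity in hand, your final step goes through as in the paper: descend $\tilde\alpha$ to $E_1/H_1\to E_2/H_2$, and, via (a), descend the induced $E_1'\to E_2'$ to the target quotients.
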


Part \textup{(i)} of Theorem~\ref{thm:intro-forms} was proved in
Section~\ref{sec:reduction-descent}; part \textup{(iii)} is proved below, and
the converse in part \textup{(ii)} is established in Section~\ref{sec:converse}.

\begin{lemma}[Core-free affine complement]
\label{lem:core-free-affine-complement}
Let $\mathcal D=(E,\mathcal N,H)$ be a Euclidean quotient datum and put
\[
 G_{\mathcal D}:=\langle T_{\mathcal N},H\rangle.
\]
Then
\[
 G_{\mathcal D}=T_{\mathcal N}\rtimes H,
 \qquad
 \operatorname{core}_{G_{\mathcal D}}(H)=1.
\]
Moreover, the map
\[
 \mathcal N\longrightarrow G_{\mathcal D}/H,
 \qquad P\longmapsto\tau_PH,
\]
is a bijection.  Under this identification $T_{\mathcal N}$ acts regularly by
translations, while $H$ acts through its linear image $L_E(H)$.
\end{lemma}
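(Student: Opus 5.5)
The plan is to work inside $\operatorname{Aut}(E_{\bar k})=E(\bar k)\rtimes\operatorname{Aut}(E_{\bar k},\mathrm O_E)$, where every affine map has the form $P\mapsto\gamma(P)+Q$. First, to see that $H$ normalizes $T_{\mathcal N}$, I will use Proposition~\ref{prop:linearization} to write the generator as $h=\tau_{P_H}\beta\tau_{-P_H}$. A direct computation then gives
\[
 h\tau_P h^{-1}=\tau_{\beta(P)}\qquad(P\in E(\bar k)).
\]
Because $L_E(H)\mathcal N=\mathcal N$ by Definition~\ref{def:datum}, conjugation by $h$ preserves $T_{\mathcal N}$. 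It follows that $G_{\mathcal D}=T_{\mathcal N}H$.

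Second, I will show $T_{\mathcal N}\cap H=1$. Any element of the intersection is a translation, so its linear part is trivial. Since the linear-part map on $H$ is faithful, that element is trivial. This gives the semidirect product decomposition $G_{\mathcal D}=T_{\mathcal N}\rtimes H$ and $|G_{\mathcal D}|=\ell^e m$.

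Third, the core. Write $C=\operatorname{core}_{G_{\mathcal D}}(H)$, so $C\le H$ and $C$ is normal in $G_{\mathcal D}$. For $c\in C$ and $P\in\mathcal N$, the commutator $[\tau_P,c]$ lies in $T_{\mathcal N}$, because $T_{\mathcal N}$ is normal, and also in $C\le H$, because $C$ is normal. Hence $[\tau_P,c]\in T_{\mathcal N}\cap H=1$. If $c$ has linear part $\gamma$, the conjugation formula above gives $c\tau_Pc^{-1}=\tau_{\gamma(P)}$, so $\gamma$ fixes every point of $\mathcal N$. Semiregularity of $L_E(H)$ on $\mathcal N\setminus\{0\}$, together with $\mathcal N\ne0$, then forces $\gamma=1$, and faithfulness gives $c=1$.

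Fourth, the coset identification. The map $P\mapsto\tau_PH$ is surjective because $G_{\mathcal D}=T_{\mathcal N}H$. It is injective because $\tau_P H=\tau_{P'}H$ implies $\tau_{P-P'}\in H\cap T_{\mathcal N}=1$. Under this bijection, left multiplication by $\tau_{P'}$ sends $\tau_PH$ to $\tau_{P+P'}H$, so $T_{\mathcal N}$ acts regularly by translations. For $h\in H$ with linear part $\gamma$,
\[
 h\tau_PH=(h\tau_Ph^{-1})hH=\tau_{\gamma(P)}H,
\]
so $H$ acts through $L_E(H)$.

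I expect no real obstacle here. The only point that needs care is the conjugation formula for a general element of $H$, which may have a nonzero translation part. The translation parts cancel in the conjugate, as the expression $\tau_{P_H}\beta\tau_{-P_H}$ shows, so only the linear part acts.
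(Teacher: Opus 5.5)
Your proof is correct and follows the paper's argument: normalization comes from the linear image preserving $\mathcal N$, $H\cap T_{\mathcal N}=1$ comes from faithfulness, and cosets have unique translation representatives. The conjugation formula $h\tau_Ph^{-1}=\tau_{\beta(P)}$ in fact holds for any affine map with linear part $\beta$, so the appeal to Proposition~\ref{prop:linearization} is unnecessary.

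The only variation is in the core step. The paper shows directly that $H\cap\tau_PH\tau_{-P}=1$ for a single nonzero $P\in\mathcal N$, which is a slightly stronger, Frobenius-complement-type statement. You instead use that $\operatorname{core}_{G_{\mathcal D}}(H)$ and $T_{\mathcal N}$ are normal with trivial intersection and hence commute. Both reduce to semiregularity of $L_E(H)$ on $\mathcal N\setminus\{0\}$.
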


\begin{proof}
The group $H$ normalizes $T_{\mathcal N}$ because its linear image preserves
$\mathcal N$, and faithfulness of the linear-part map gives
$H\cap T_{\mathcal N}=1$.  Hence
$G_{\mathcal D}=T_{\mathcal N}\rtimes H$.

Choose $0\ne P\in\mathcal N$.  If
$x\in H\cap\tau_PH\tau_{-P}$, write
$x=h^j=\tau_Ph^a\tau_{-P}$ for a generator $h$ of $H$, with linear part
$\beta$.  Comparing linear parts gives $\beta^j=\beta^a$, hence
$a\equiv j\pmod m$ by faithfulness.  Thus $h^j$ commutes with $\tau_P$, so
$\beta^j(P)=P$.  Semiregularity of $L_E(H)$ on
$\mathcal N\setminus\{0\}$ then forces $h^j=1$.  Therefore
$H\cap\tau_PH\tau_{-P}=1$, and the core of $H$ is trivial.

Every coset has a unique representative $\tau_P$, giving the stated
bijection.  Finally,
$\tau_Q(\tau_PH)=\tau_{P+Q}H$, while
$h\tau_Ph^{-1}=\tau_{\beta(P)}$, which proves the asserted sheet action.
\end{proof}

\begin{lemma}[Lifting isomorphisms to normal closures]
\label{lem:normal-closure-lifting}
Let $K_i\subset L_i$ be finite separable field extensions, let
$\Omega_i/K_i$ be the normal closure of $L_i/K_i$, and suppose that there are
compatible field isomorphisms
\[
 \kappa:K_1\xrightarrow{\sim}K_2,
 \qquad
 \lambda:L_1\xrightarrow{\sim}L_2,
 \qquad
 \lambda|_{K_1}=\kappa.
\]
Then $\lambda$ extends to an isomorphism
$\widetilde\lambda:\Omega_1\xrightarrow{\sim}\Omega_2$.  If
$H_i=\operatorname{Gal}(\Omega_i/L_i)$, then
\[
 \widetilde\lambda H_1\widetilde\lambda^{-1}=H_2.
\]
For fixed $\lambda$, the set of such extensions is a torsor under $H_2$ by
postcomposition.  Equivalently, if $\widetilde\lambda_1$ and
$\widetilde\lambda_2$ induce the same map on $L_1$, then
\[
 \widetilde\lambda_2\widetilde\lambda_1^{-1}\in H_2,
\]
and every element of $H_2$ arises in this way.  Dually, compatible
isomorphisms of finite separable covers lift to their normal closures, with
the same target-stabilizer ambiguity.
\end{lemma}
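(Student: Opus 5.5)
Work inside a fixed algebraic closure $\Sigma$ of $\Omega_2$. Transport the field structures through $\lambda$: regard $L_2$ as an $L_1$-algebra via $\lambda$, and hence $\Sigma$ as a field extension of $L_1$ and of $K_1$. The plan has four steps.

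\emph{Existence.} By the standard extension theorem for algebraic extensions, $\lambda\colon L_1\to L_2\subset\Sigma$ extends to an embedding $\widetilde\lambda\colon\Omega_1\to\Sigma$, because $\Omega_1/L_1$ is algebraic. The field $\Omega_1$ is generated over $K_1$ by the $K_1$-conjugates of a primitive element $\theta$ of $L_1/K_1$, whose minimal polynomial we call $P$. Since $\widetilde\lambda|_{K_1}=\kappa$, the embedding carries these roots onto the roots in $\Sigma$ of $\kappa(P)$. The polynomial $\kappa(P)$ is the minimal polynomial of $\lambda(\theta)$, which is a primitive element of $L_2/K_2$. Hence $\widetilde\lambda(\Omega_1)$ is the field generated over $K_2$ by the roots of $\kappa(P)$, i.e. the normal closure of $L_2/K_2$ inside $\Sigma$, which is $\Omega_2$. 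So $\widetilde\lambda$ is an isomorphism onto $\Omega_2$.

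\emph{Conjugation of stabilizers.} Take $h\in H_1$. Then $\widetilde\lambda h\widetilde\lambda^{-1}$ is an automorphism of $\Omega_2$. It fixes $L_2$ pointwise because $\widetilde\lambda^{-1}$ maps $L_2$ onto $L_1$, where $h$ acts trivially. This gives the inclusion $\widetilde\lambda H_1\widetilde\lambda^{-1}\subseteq H_2$. Applying the same argument to $\widetilde\lambda^{-1}$ gives the reverse inclusion.

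\emph{Torsor statement.} If $\widetilde\lambda_1$ and $\widetilde\lambda_2$ both restrict to $\lambda$ on $L_1$, then $\widetilde\lambda_2\widetilde\lambda_1^{-1}$ is an automorphism of $\Omega_2$ fixing $L_2$, so it lies in $H_2$. Conversely, for every $h_2\in H_2$, the composite $h_2\widetilde\lambda$ again restricts to $\lambda$. The action of $H_2$ by postcomposition is free, since $h_2\widetilde\lambda=\widetilde\lambda$ forces $h_2=1$. These two facts together give the torsor structure.

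\emph{Dual statement.} The assertion about covers is the same result after translating through the anti-equivalence between finite separable covers of smooth projective curves and finite separable extensions of their function fields. Under this dictionary, Galois closures of covers correspond to normal closures of extensions.

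The only step needing care is the existence step: the image of $\widetilde\lambda$ must be exactly $\Omega_2$, not merely a subfield of $\Sigma$ abstractly isomorphic to it. The primitive-element argument handles this.
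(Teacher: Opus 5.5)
Your proposal is correct and follows essentially the same route as the paper. Both arguments extend $\lambda$ into an algebraic closure and identify the image of $\Omega_1$ with the field generated over $K_2$ by the conjugates of $L_2$, hence with $\Omega_2$; you make that step precise with a primitive element, where the paper speaks of the compositum of conjugates. The stabilizer conjugation and the $H_2$-torsor structure then follow as in the paper, except that the paper deduces $\widetilde\lambda H_1\widetilde\lambda^{-1}=H_2$ from the fixed-field identities $\Omega_i^{H_i}=L_i$, which is equivalent to your elementwise check.
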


\begin{proof}
Identify $K_1$ with $K_2$ through $\kappa$.  The embedding $\lambda$ extends
to an isomorphism of algebraic closures.  Since $\Omega_i$ is the compositum
of the conjugates of $L_i$ over $K_i$, the extension carries $\Omega_1$ onto
$\Omega_2$.  The fixed-field identities $\Omega_i^{H_i}=L_i$ then give
$\widetilde\lambda H_1\widetilde\lambda^{-1}=H_2$.  Finally, two
extensions of the same $\lambda$ differ by an automorphism of $\Omega_2$
fixing $L_2$, hence by an element of $H_2$; conversely, postcomposition by an
element of $H_2$ preserves the restriction to $L_1$.
\end{proof}

It remains only to prove the exact equivalence criterion in
Theorem~\ref{thm:intro-forms}\textup{(iii)}.

\begin{proof}[Proof of Theorem~\ref{thm:intro-forms}\textup{(iii)}]

\noindent\emph{Necessity.}  Assume
\[
 \mathcal M(\mathcal D_1)=\mathcal M(\mathcal D_2).
\]
By Lemma~\ref{lem:core-free-affine-complement}, each $H_i$ is core-free in
$T_{\mathcal N_i}\rtimes H_i$.  Since the normal closure of the intermediate
cover corresponds to the core of its point stabilizer, $E_i$ is the geometric
Galois closure of the corresponding lower cover.

Choose a representative two-sided equivalence
$g_{\mathcal D_1}=\mu\circ g_{\mathcal D_2}\circ\eta$.  The source
transformation $\eta$, together with the target identification induced by
$\mu$, gives compatible isomorphisms of the target and source function
fields.  By Lemma~\ref{lem:normal-closure-lifting}, these lift, dually, to a
geometric isomorphism
\[
 \widetilde\alpha:E_{1,\bar k}\xrightarrow{\sim}E_{2,\bar k}
\]
with
\[
 \widetilde\alpha H_1\widetilde\alpha^{-1}=H_2.
\]  Since $T_{\mathcal{N}_i}$ is the unique Sylow-$\ell$ subgroup of
$T_{\mathcal{N}_i}\rtimes H_i$, it follows that
\[
 \widetilde\alpha T_{\mathcal{N}_1}\widetilde\alpha^{-1}=T_{\mathcal{N}_2}.
\]
Write uniquely
\[
 \widetilde\alpha=\tau_Q\alpha,
\]
with $\alpha(\mathrm O_{E_1})=\mathrm O_{E_2}$.  Conjugating translations gives
$\widetilde\alpha\tau_P\widetilde\alpha^{-1}=\tau_{\alpha(P)}$, hence \textup{(a)}.  Comparing linear
parts in $\widetilde\alpha H_1\widetilde\alpha^{-1}=H_2$ gives \textup{(b)}.

Let $\pi_i:E_i\to E_i/H_i$ be the descended source quotient maps.  The
lower source isomorphism is defined over $k$, so
${}^{\sigma_q}\widetilde\alpha$ and $\widetilde\alpha$ induce the
same source intermediate-field isomorphism.  The torsor assertion in
Lemma~\ref{lem:normal-closure-lifting} therefore gives
\[
 h_\sigma:=({}^{\sigma_q}\widetilde\alpha)\widetilde\alpha^{-1}\in H_2,
\]
not merely in its normalizer.  Taking linear parts yields
\[
 ({}^{\sigma_q}\alpha)\alpha^{-1}\in L_{E_2},
\]
which is \textup{(c)}.

Choose $P_1\in\operatorname{Fix}(H_1)$ and put
$P_2=\widetilde\alpha(P_1)\in\operatorname{Fix}(H_2)$.  Since
${}^{\sigma_q}\widetilde\alpha=h_\sigma\widetilde\alpha$ and every element of $H_2$ fixes every point of
$\operatorname{Fix}(H_2)$, we obtain
\[
 \sigma_q(P_2)=\widetilde\alpha(\sigma_qP_1).
\]
The translation part of $\widetilde\alpha$ cancels in differences, hence
\[
 \sigma_q(P_2)-P_2
 =\alpha(\sigma_qP_1-P_1),
\]
which gives \textup{(d)}.

It remains to note that \textup{(c)} makes the restriction of $\alpha$ to
the fixed-point group Galois-equivariant.  Indeed, if $P_\Delta\in\Delta_1$, then
$\alpha(P_\Delta)\in\Delta_2$ by \textup{(b)}, while
$({}^{\sigma_q}\alpha)\alpha^{-1}\in L_{E_2}$ fixes $\Delta_2$ pointwise.  Hence
\[
 \sigma_q(\alpha(P_\Delta))
 =({}^{\sigma_q}\alpha)(\sigma_qP_\Delta)
 =\alpha(\sigma_qP_\Delta).
\]

\medskip\noindent\emph{Sufficiency.}  Assume \textup{(a)}--\textup{(d)}.  Choose
$P_i\in\operatorname{Fix}(H_i)$ and set
\[
 s_i:=\sigma_q(P_i)-P_i\in\Delta_i.
\]
Condition \textup{(d)} gives $Q_\Delta\in\Delta_2$ such that
\[
 s_2-\alpha(s_1)=(\sigma_q-1)Q_\Delta.
\]
Replace $P_2$ by $P_2':=P_2-Q_\Delta$.  Since $Q_\Delta\in\Delta_2$, the point $P_2'$
is still fixed by $H_2$, and now
\begin{equation}
 \sigma_q(P_2')-P_2'=\alpha(\sigma_qP_1-P_1).
\label{eq:twisted-cocycle-match}
\end{equation}
Put
\[
 Q:=P_2'-\alpha(P_1),
 \qquad
 \widetilde\alpha:=\tau_Q\alpha.
\]
Then \textup{(a)}--\textup{(b)} give
\[
 \widetilde\alpha T_{\mathcal{N}_1}\widetilde\alpha^{-1}=T_{\mathcal{N}_2},
 \qquad
 \widetilde\alpha H_1\widetilde\alpha^{-1}=H_2.
\]
Define
\[
 h_\alpha:=\tau_{P_2'}({}^{\sigma_q}\alpha)\alpha^{-1}\tau_{-P_2'}\in H_2.
\]
We claim that
\begin{equation}
 {}^{\sigma_q}\widetilde\alpha=h_\alpha\widetilde\alpha.
\label{eq:twisted-descent-identity}
\end{equation}
Indeed,
\[
 {}^{\sigma_q}\widetilde\alpha=\tau_{\sigma_qQ}\,({}^{\sigma_q}\alpha),
\]
whereas the translation part of $h_\alpha\widetilde\alpha$ is
\[
 P_2'-({}^{\sigma_q}\alpha)\alpha^{-1}(P_2')+({}^{\sigma_q}\alpha)\alpha^{-1}(Q)
=P_2'-({}^{\sigma_q}\alpha)(P_1).
\]
Using \eqref{eq:twisted-cocycle-match}, we compute
\[
 \sigma_qQ
 =P_2'-({}^{\sigma_q}\alpha)(P_1)
  +\alpha(s_1)-({}^{\sigma_q}\alpha)\alpha^{-1}(\alpha(s_1)).
\]
Now $\alpha(s_1)\in\Delta_2$, and
$({}^{\sigma_q}\alpha)\alpha^{-1}\in L_{E_2}$ fixes $\Delta_2$ pointwise.  Hence the last two terms cancel, proving
\eqref{eq:twisted-descent-identity}.

Since $h_\alpha\in H_2$, equation \eqref{eq:twisted-descent-identity} shows that
$\widetilde\alpha$ induces a $k$-defined isomorphism
\[
 E_1/H_1\xrightarrow{\sim}E_2/H_2.
\]
Condition \textup{(a)} gives a unique induced isomorphism
\[
 \bar\alpha:E_1/\mathcal N_1\xrightarrow{\sim}E_2/\mathcal N_2
\]
satisfying
\[
 \varphi_{\mathcal D_2}\circ\widetilde\alpha
 =\bar\alpha\circ\varphi_{\mathcal D_1}.
\]
Since $\widetilde\alpha H_1\widetilde\alpha^{-1}=H_2$, we also have
$\bar\alpha H_1'\bar\alpha^{-1}=H_2'$.  Passing
\eqref{eq:twisted-descent-identity} to the quotient by
$T_{\mathcal N_2}$ gives
\[
 {}^{\sigma_q}\bar\alpha=h_\alpha'\bar\alpha
\]
for the image $h_\alpha'\in H_2'$.  Hence $\bar\alpha$ induces a $k$-defined
isomorphism of the target quotients.  Together with the already descended
source quotient isomorphism, the displayed quotient identity makes the
induced square commute.  Thus the two lower maps are two-sided
$k$-M\"obius equivalent, proving
$\mathcal M(\mathcal D_1)=\mathcal M(\mathcal D_2)$.
\end{proof}

\begin{remark}
For $m=2$, condition \textup{(c)} becomes
${}^{\sigma_q}\alpha=\pm\alpha$, while $\Delta(H)=E[2]$; thus the theorem
specializes to the familiar signed-descent pattern for shifted involutions.
For $m=6$, $\Delta(H)=0$ and condition \textup{(d)} disappears.  The
orders $3$ and $4$ leave fixed-point torsors with structure groups $C_3$ and
$C_2$, respectively; the shift class lives on these torsors.
\end{remark}

\subsection{Marked descent and finite orbit spaces}
\label{sec:marked-orbits}

The equivalence theorem packages all later form and shift counts into one
finite orbit problem.  Fix a geometric marked pair $(E,L_E)$, where
$L_E\leqslant\operatorname{Aut}(E_{\bar k},\mathrm O_E)$ is one of the cyclic linear
parts above, and put
\[
 U^\sharp:=N_{\operatorname{Aut}(E_{\bar k},\mathrm O_E)}(L_E),
 \qquad
 W:=U^\sharp/L_E.
\]
Choose a reference descent and let $F_0$ denote its semilinear arithmetic
Frobenius.  Conjugation by $F_0$ induces an automorphism $\theta$ of $W$.
For $c\in U^\sharp$ write $F_c=cF_0$ and let $\bar c$ denote its image in
$W$.  On $W$ use Frobenius-twisted conjugacy
\[
 \bar c'\sim_\theta\bar c
 \quad\Longleftrightarrow\quad
 \bar c'=\bar u\,\bar c\,\theta(\bar u)^{-1}
 \quad\text{for some }\bar u\in W.
\]
For a representative $c$ define its twisted stabilizer
\[
 R_c:=\{\bar u\in W:
 \bar u\,\bar c\,\theta(\bar u)^{-1}=\bar c\}.
\]
Let $\Delta=E(\bar k)^{L_E}$.  The shift module in this stratum is
\[
 S_c:=\Delta/(F_c-1)\Delta.
\]
Finally let $K_c$ be any $R_c$-stable family of finite kernels on the marked
curve that are both $F_c$-stable and $L_E$-stable.  This formulation is
deliberately independent of exceptionality; in the counting sections $K_c$
will be specialized to the good kernels of a prescribed rank.

\begin{proposition}[Marked twisted-orbit principle]
\label{prop:marked-orbit}
The map-level marked $k$-forms of $(E,L_E)$ are parametrized by the
$\theta$-twisted conjugacy classes in $W$.  For a fixed stratum represented by
$c$, the corresponding two-sided classes with kernel in $K_c$ are naturally
parametrized by
\begin{equation}
 (K_c\times S_c)/R_c.
 \label{eq:marked-orbit-space}
\end{equation}
Consequently,
\begin{equation}
 |(K_c\times S_c)/R_c|
 =\frac1{|R_c|}\sum_{u\in R_c}|K_c^u|\,|S_c^u|.
 \label{eq:marked-burnside}
\end{equation}
\end{proposition}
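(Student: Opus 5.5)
The plan is to reduce everything to Theorem~\ref{thm:intro-forms}\textup{(iii)}, applied with $E_1$ and $E_2$ both equal to forms of the fixed geometric marked pair $(E,L_E)$.

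\emph{Step 1: strata as twisted conjugacy classes.} A marked $k$-form of $(E,L_E)$ is a semilinear Frobenius on $E_{\bar k}$ fixing $\mathrm O_E$ and normalizing $L_E$. Any two such Frobenii differ by an element of $\operatorname{Aut}(E_{\bar k},\mathrm O_E)$ that normalizes $L_E$. Hence every such Frobenius has the form $F_c=cF_0$ with $c\in U^\sharp$. By Theorem~\ref{thm:intro-forms}\textup{(iii)}, a candidate identification between two such forms is an origin-preserving $u$. Condition \textup{(b)} forces $u\in U^\sharp$. Condition \textup{(c)} says $F_{c'}uF_c^{-1}u^{-1}\in L_E$, that is,
\[
 c'\,\theta(u)\,c^{-1}u^{-1}\in L_E ,
\]
after using $F_0uF_0^{-1}=\theta(u)$. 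Reading this in $W$ gives exactly $\bar c'=\bar u\,\bar c\,\theta(\bar u)^{-1}$. Replacing $c$ by $c\lambda$ with $\lambda\in L_E$ does not change the map-level form: the quotient by $L_E$ is unaffected, and condition \textup{(c)} only sees $L_E$-cosets. So map-level marked forms are the $\theta$-twisted classes in $W$.

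\emph{Step 2: data inside a fixed stratum.} Fix $c$. A datum in this stratum is a kernel $\mathcal N\in K_c$ together with an affine cyclic group $H$ of linear part $L_E$, and $H$ is determined by a fixed point $P_H$. The shift $F_c(P_H)-P_H$ lies in $\Delta$, and its class lies in $S_c$, in agreement with Proposition~\ref{prop:shift-class}. Every class in $S_c$ occurs. Since $F_c-1$ is a nonzero, hence surjective, map on $E(\bar k)$, for any $s\in\Delta$ there is $P$ with $F_c(P)-P=s$. The group $\tau_PL_E\tau_{-P}$ is then Frobenius-stable with shift class $[s]$. This is the one point where Frobenius-stability of the resulting $H$ must be checked, using that $F_c$ normalizes $L_E$. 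Thus data in the stratum correspond to $K_c\times S_c$.

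\emph{Step 3: identifying equivalence with the $R_c$-action.} Between two data in the same stratum, conditions \textup{(b)}--\textup{(c)} of Theorem~\ref{thm:intro-forms}\textup{(iii)} say precisely that $\alpha\in U^\sharp$ has image in $R_c$. Condition \textup{(a)} transports the kernel, and condition \textup{(d)} transports the shift class. So the two-sided classes are the $R_c$-orbits on $K_c\times S_c$, once the action of $R_c$ is shown to be well defined. This verification is the main obstacle.
\begin{itemize}
\item $L_E$ must act trivially. It fixes $\Delta$ pointwise, and each kernel in $K_c$ is $L_E$-stable.
\item A lift $u$ of an element of $R_c$ must satisfy $F_cuF_c^{-1}\in uL_E$. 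Its restriction to $\Delta$ therefore commutes with $F_c$, as in the Galois-equivariance argument in the proof of Theorem~\ref{thm:intro-forms}\textup{(iii)}. Consequently $u$ preserves $(F_c-1)\Delta$ and descends to an automorphism of $S_c$.
\item $u$ must send $F_c$-stable kernels to $F_c$-stable kernels. This follows from the same commutation modulo $L_E$, together with $L_E$-stability of the kernels.
\end{itemize}
Once this is in place, \eqref{eq:marked-orbit-space} follows. Formula \eqref{eq:marked-burnside} is then Burnside's lemma for the diagonal action of $R_c$, since the fixed points of $u$ on a product are $K_c^u\times S_c^u$.
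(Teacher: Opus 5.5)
Your proposal is correct and follows the paper's proof essentially step for step: conditions (b)--(c) of Theorem~\ref{thm:intro-forms}\textup{(iii)} give twisted conjugacy modulo $L_E$; every shift class is realized by solving $(F_c-1)P=s$, where your surjectivity of the nonzero separable isogeny $F_c-1$ plays the role of the paper's appeal to Lang's theorem; the $R_c$-action is checked to be well defined modulo $L_E$; and Burnside's lemma finishes. The one detail to state explicitly is that Frobenius-stability of $\tau_PL_E\tau_{-P}$ needs $\lambda(s)=s$ for all $\lambda\in L_E$ (i.e.\ $s\in\Delta$), in addition to $F_cL_EF_c^{-1}=L_E$.
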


\begin{proof}
Represent a marked descent by the cocycle value $c\in U^\sharp$ on arithmetic
Frobenius.  Replacing the geometric identification by $u\in U^\sharp$ changes
$c$ to $u c\,{}^{\sigma_q}u^{-1}$, while condition
\textup{(c)} of Theorem~\ref{thm:intro-forms}\textup{(iii)} allows an additional
factor in $L_E$.  Modulo $L_E$ this is precisely the twisted
conjugacy relation above.

It remains to verify that every class in $S_c$ is realized by an
$F_c$-stable affine subgroup with linear part $L_E$.  Given $s\in\Delta$,
Lang's theorem for the finite-field form defined by $F_c$ gives
$P\in E(\bar k)$ with
\[
 (F_c-1)P=s.
\]
Put $H_P=\tau_P L_E\tau_{-P}$.  Since
$s\in\Delta=E(\bar k)^{L_E}$, one has
\[
 F_cH_PF_c^{-1}=H_P.
\]
Replacing $P$ by $P+T$ with $T\in\Delta$ does not change $H_P$ and changes
the representative $s$ by $(F_c-1)T$.  Conversely, if $H_P=H_{P'}$, uniqueness of the element with a prescribed
linear part gives $(1-\lambda)(P-P')=0$ for every $\lambda\in L_E$; hence
$P-P'\in\Delta$.  Thus the affine shift classes in the stratum are exactly
\[
 \Delta/(F_c-1)\Delta=S_c.
\]

Fix a stratum $c$.  If $\bar u\in R_c$, then for a lift $u\in U^\sharp$ there
is $\lambda\in L_E$ such that
$uF_cu^{-1}F_c^{-1}=\lambda$.  Since $L_E$ fixes $\Delta$ pointwise, $u$
commutes with $F_c$ on $\Delta$ and hence acts on $S_c$.  The same identity
shows that $u$ transports every $F_c$-stable, $L_E$-stable kernel to
another such kernel.  Replacing $u$ by a different lift modulo $L_E$
does not change either action: $L_E$ fixes $\Delta$ pointwise and
preserves every $L_E$-stable kernel.  Thus the prescribed $R_c$-action on $K_c\times S_c$ is well defined and
compatible with descent.  Conditions \textup{(a)} and \textup{(d)} of
Theorem~\ref{thm:intro-forms}\textup{(iii)} then identify exactly the pairs in
the same $R_c$-orbit.  This proves \eqref{eq:marked-orbit-space};
\eqref{eq:marked-burnside} is Burnside's lemma.
\end{proof}

\section{Arithmetic monodromy and exact behavior over finite extensions}
\label{sec:converse}

The fixed-field classification leaves a finite arithmetic readout on the
translation kernel.  The next theorem collects its monodromy, constant-field,
and exact extension consequences; the equality between permutation and
exceptionality is proved by the affine sector argument later in this section.

\begin{theorem}[Arithmetic of the fixed-field datum]
\label{thm:intro-arithmetic}
Let $\mathcal D=(E,\mathcal N,H)$ be a Euclidean quotient datum over $k$,
and let $g=g_{\mathcal D}$ be an associated lower rational map.  Choose source
and target coordinates $\mathbf{x}_g,\mathbf{t}_g$ with
$\mathbf{t}_g=g(\mathbf{x}_g)$.  Let
$\Omega_g/k(\mathbf t_g)$ be the arithmetic Galois closure and put
\[
 k_{\Omega_g}:=\Omega_g\cap\bar k,\qquad
 A_g:=\operatorname{Gal}(\Omega_g/k(\mathbf t_g)),\qquad
 G_g:=\operatorname{Gal}(\Omega_g/k_{\Omega_g}(\mathbf t_g)).
\]
In the natural action on the sheets, let $\widehat H\le A_g$ be the arithmetic
point stabilizer and $\widehat H_0:=\widehat H\cap G_g$ its geometric part.
Write $L_E=L_E(H)=\langle\beta\rangle$ and
$\mathcal F_{\mathcal N}=\mathcal F_E|_{\mathcal N}$.  Under the descended
realization, coefficientwise arithmetic $q$-Frobenius gives an element of
$\widehat H$ whose action on translations is
$\tau_P\mapsto\tau_{\mathcal F_{\mathcal N}(P)}$.  In particular,
$\mathcal F_{\mathcal N}\beta\mathcal F_{\mathcal N}^{-1}=\beta^q$.
\begin{enumerate}[label=\textup{(\roman*)}]
\item If $H=\langle h\rangle$, then the linear part $\beta$ has exact order
$m$ and
\[
 {}^{\sigma_q}h=h^q,\qquad {}^{\sigma_q}\beta=\beta^q.
\]
The class $\mathfrak s_E(H)$ vanishes exactly when $H$ is conjugate by a
$k$-rational translation to its linear part.  In types $(2,2,2,2)$ and
$(3,3,3)$ a $k$-rational branch point exists exactly when this class vanishes;
in type $(2,4,4)$ the inertia-$2$ branch point is always $k$-rational and
the two inertia-$4$ points are rational exactly when the class vanishes; in
type $(2,3,6)$ all branch points are $k$-rational and the class vanishes.

\item Put
\[
 M:=\langle\beta,\mathcal F_{\mathcal N}\rangle\le\operatorname{GL}(\mathcal N).
\]
Then $g$ is indecomposable over $k$ if and only if $M$ acts irreducibly on
$\mathcal N$, and it is exceptional if and only if
\[
 \ker(\mathcal F_{\mathcal N}-\beta^j)|_{\mathcal N}=0
 \qquad(0\leqslant j<m).
\]
In rank one, if $\mathcal F_{\mathcal N}$ and $\beta$ act by $\lambda$ and $\rho$,
respectively, this is equivalent to $\lambda^m\ne1$.

\item The arithmetic and geometric monodromy groups are
\[
 G_g\cong\mathcal N\rtimes\langle\beta\rangle,
 \qquad
 A_g\cong\mathcal N\rtimes M
 =\mathcal N\rtimes\langle\beta,\mathcal F_{\mathcal N}\rangle.
\]
If
\[
 d_g:=\min\{\nu\geqslant1:\mathcal F_{\mathcal N}^\nu\in\langle\beta\rangle\},
\]
then
\[
 [A_g:G_g]=d_g,
 \qquad
 k_{\Omega_g}=\mathbb F_{q^{d_g}}.
\]
For $r\geqslant1$, put $k_r:=\mathbb F_{q^r}$ and write $A_g^{(r)}$ for the
arithmetic monodromy group of the same rational map over $k_r$.  Then
\[
 [A_g^{(r)}:G_g]=\frac{d_g}{\gcd(d_g,r)}.
\]

\item Define
\[
 \operatorname{Supp}_{\mathrm{exc}}(g)
 :=\{r\geqslant1:g/k_r\text{ is exceptional}\},
\]
and
\[
 \operatorname{Supp}_{\mathrm{perm}}(g)
 :=\{r\geqslant1:
 g:\mathbf P^1(k_r)\to\mathbf P^1(k_r)
 \text{ is bijective}\}.
\]
For every $r\geqslant1$,
\[
\begin{aligned}
 r\in\operatorname{Supp}_{\mathrm{perm}}(g)
 &\Longleftrightarrow
 r\in\operatorname{Supp}_{\mathrm{exc}}(g)\\
 &\Longleftrightarrow
 \ker(\mathcal F_{\mathcal N}^r-\beta^j)|_{\mathcal N}=0
 \quad(0\leqslant j<m).
\end{aligned}
\]
Thus
$\operatorname{Supp}_{\mathrm{perm}}(g)=\operatorname{Supp}_{\mathrm{exc}}(g)$.
This common support is periodic modulo $d_g$ and contains no positive multiple of $d_g$.
If $g/k$ is exceptional, it contains every $r$ coprime to $d_g$.  If $e=1$,
then
\[
 d_g=\operatorname{ord}_{\mathbb F_\ell^\times}(\lambda^m),
 \qquad
 \operatorname{Supp}_{\mathrm{perm}}(g)=\operatorname{Supp}_{\mathrm{exc}}(g)
 =\{r\geqslant1:d_g\nmid r\}.
\]
\end{enumerate}
\end{theorem}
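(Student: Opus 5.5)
The plan is to build the whole proof on one concrete model of the arithmetic Galois closure.  First, set up the descended realization.  By Proposition~\ref{prop:stable-quotients} applied to the datum, the function field $K$ of $E$ over $k$ is a regular form of the geometric closure.  Coefficientwise $q$-Frobenius acts on $\bar k(E)$ and fixes the descended source field, since $H$ is Frobenius-stable.  It conjugates $\tau_P$ to $\tau_{\mathcal F_E(P)}$, because the origin is rational, and it conjugates $h$ to $h^q$ by Theorem~\ref{thm:cyclotomic}.  Part (i) then reduces to already proved statements: Proposition~\ref{prop:linearization}, Theorem~\ref{thm:cyclotomic}, Proposition~\ref{prop:shift-class}, and Theorem~\ref{thm:rational-branches}.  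For part (ii) of Theorem~\ref{thm:intro-forms}, note that $g$ has degree $|\mathcal N|$ and geometric monodromy $T_{\mathcal N}\rtimes H$.  This follows from Lemma~\ref{lem:core-free-affine-complement}: $H$ is core-free, so $E_{\bar k}$ is the geometric Galois closure.  The signature comes from Proposition~\ref{prop:cyclic-signature} together with the unramifiedness of $\varphi$.

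For (iii), identify the sheets with $\mathcal N$ via Lemma~\ref{lem:core-free-affine-complement}.  Consider the group generated by $T_{\mathcal N}\rtimes H$ and the Frobenius semilinear element $\widetilde F$ that fixes $P_H$ after the translation normalization, i.e.\ the element of $\widehat H$.  Its image in $\operatorname{Sym}(\mathcal N)$ is affine with linear part group $M=\langle\beta,\mathcal F_{\mathcal N}\rangle$.  Hence $A_g\cong\mathcal N\rtimes M$, with translations as the normal regular subgroup, and $G_g\cong\mathcal N\rtimes\langle\beta\rangle$.  The quotient $A_g/G_g$ is cyclic, generated by the image of $\mathcal F_{\mathcal N}$, and has order $d_g$.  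The constant field is therefore $\mathbb F_{q^{d_g}}$, since $A_g/G_g\cong\operatorname{Gal}(k_{\Omega_g}/k)$.  Over $k_r$ the Frobenius becomes $\mathcal F_{\mathcal N}^r$, which gives the index $d_g/\gcd(d_g,r)$.  For (ii), indecomposability is primitivity of the affine group $A_g$, which is equivalent to irreducibility of $M$ on $\mathcal N$.  Exceptionality is the standard criterion \cite{FGS93}: every element of the Frobenius coset $\mathcal F_{\mathcal N}G_g$ must have exactly one fixed point.  An affine element $x\mapsto \mathcal F_{\mathcal N}\beta^j x+c$ has a unique fixed point exactly when $\mathcal F_{\mathcal N}\beta^j-1$ is invertible.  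Rewriting via $\beta$-conjugation gives the kernel condition.  In rank one the condition is $\lambda\rho^j\ne1$ for all $j$, i.e.\ $\lambda^m\ne1$, since $\rho$ ranges over all $m$th roots of unity.

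For (iv), the equivalence between exceptionality and the kernel condition with $\mathcal F_{\mathcal N}^r$ follows from the $k_r$ version of (ii).  No multiple of $d_g$ lies in the support, because $\mathcal F_{\mathcal N}^{d_g}=\beta^{j}$ gives a nonzero kernel.  Periodicity mod $d_g$ holds because $\mathcal F_{\mathcal N}^{d_g}\in\langle\beta\rangle$.  For $r$ coprime to $d_g$, the coset of $\mathcal F_{\mathcal N}^r$ is a generator of the same cyclic quotient.  It equals $\mathcal F_{\mathcal N}^{s}\beta^i$ with $s\equiv r$, and a power argument shows its fixed-point-freeness is inherited.  I would reduce to the classical fact that $(A,G)$-exceptionality depends only on the generated cyclic coset group.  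In rank one, $d_g$ is the order of $\lambda$ modulo $\mu_m$, which is $\operatorname{ord}(\lambda^m)$, and the support is $\{r: \lambda^{rm}\ne1\}$.

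The main obstacle is the equality of permutation and exceptional supports.  Exceptional implies permutation by \cite{GTZ07}.  For the converse, I would count $|\mathbf P^1(k_r)|$ preimages via an affine sector decomposition.  Let $\phi=\mathcal F_E^r$ on $E$.  The $k_r$-points of the source $E/H$ are the $H$-orbits $\{P\}$ with $\phi(P)\in hP$ for some $h\in H$.  Partition them into twisted sectors indexed by $j$, meaning $\phi P=h^jP$.  Each sector is the kernel of the isogeny $\phi-h^j$ up to torsor translation.  The same decomposition holds on the target with $E'$ and $H'$.  The map $\varphi$ sends sector $j$ to sector $j$, and on each sector fibre it is a homomorphism between kernels.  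Its kernel is $\ker(\mathcal F_{\mathcal N}^r-\beta^j)|_{\mathcal N}$.  Since $E$ and $E'$ are isogenous, $\phi-\beta^j$ has equal degree on both.  So a nonzero kernel makes the sector map non-injective between finite sets of equal weighted size, and hence non-surjective.  The delicate part is the weighting for ramified orbits (points with nontrivial stabilizer counted in several sectors).  I would handle this by working with orbit-weighted counts, $\sum_j|\ker(\phi-h^j)|/m$, and showing that the fibres over branch points contribute compatibly.  This is where Theorem~\ref{thm:rational-branches} and the bijection $\Delta(H)\cong\Delta(H')$ are used.
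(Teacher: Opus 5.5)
Parts (i)--(iii) are fine, and so are the periodicity, multiples-of-$d_g$, coprime-degree and rank-one claims in (iv). Computing the sheet action $\tau_PH\mapsto\tau_{\mathcal F_{\mathcal N}(P)}H$ of the Frobenius lift directly is a legitimate variant of the paper's argument. So is testing exceptionality by requiring each $x\mapsto\mathcal F_{\mathcal N}\beta^jx+c$ to have a unique fixed point. The Guralnick--Tucker--Zieve theorem does give the direction from exceptionality to bijectivity.

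\textbf{The gap.} What is missing is the other direction of (iv): bijectivity of $g$ on $\mathbf P^1(k_r)$ forces $K_{r,\gamma}=\mathcal N\cap\ker(\mathcal F_E^r-u_{\gamma,E})=0$ for every $\gamma\in H$. From $K_{r,\gamma}\ne0$ you correctly obtain a point $P'\in\mathcal S_{r,\gamma'}(E')$ outside $\varphi(\mathcal S_{r,\gamma}(E))$. This does not yet produce a $k_r$-point of the target with no $k_r$-rational preimage. If $\pi'(P')=g(Q)$ with $Q$ rational, some $X\in\pi^{-1}(Q)$ has $\varphi(X)=P'$. But $X$ lies in a sector $\gamma_1$ satisfying only $\gamma_1'P'=\gamma'P'$, so $\gamma_1\ne\gamma$ is possible whenever $P'$ has nontrivial stabilizer.

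Your proposed remedy does not close this.
\begin{enumerate}[label=\textup{(\alph*)}]
\item The orbit-weighted counts $\frac1m\sum_\gamma|\mathcal S_{r,\gamma}|$ equal $q^r+1$ on both source and target for every datum, and corresponding sectors already have equal size. No count of this kind can detect non-bijectivity.
\item Theorem~\ref{thm:rational-branches} concerns $k$-rationality of branch points, not anything over $k_r$.
\item The bijection $\Delta(H)\cong\Delta(H')$ only controls points with full stabilizer $H$. It misses the points with stabilizer $C_2$ or $C_3$ in types $(2,4,4)$ and $(2,3,6)$.
\end{enumerate}

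\textbf{The paper's missing idea.} The paper argues by divisibility from injectivity (Lemma~\ref{lem:sector-permutation}). Suppose $g$ is injective on $k_r$-points and $0\ne D\in K_{r,\gamma}$. For $P\in\mathcal S_{r,\gamma}(E)$, the point $P+D$ stays in the same sector and has the same $\varphi$-image. Hence $\pi(P+D)$ is $k_r$-rational with the same $g$-image as $Q=\pi(P)$, which forces $\pi(P+D)=Q$. So $K_{r,\gamma}$ acts freely on $\mathcal S_{r,\gamma}(E)\cap\pi^{-1}(Q)$. By Lemma~\ref{lem:affine-sector-mass} this set has cardinality $\gcd(m/|A_Q|,q^r-1)$, which divides $m$. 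That is impossible, because $|K_{r,\gamma}|$ is a positive power of $\ell$ and $\ell\nmid m$.

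\textbf{Repairing your route instead.} You must show pointwise that missed points have trivial stabilizer.
\begin{enumerate}[label=\textup{(\arabic*)}]
\item For each nontrivial $A\le H$, $\operatorname{Fix}(A)$ is a torsor under $E[1-u]$, where $u\ne1$ is the linear part of a generator. This group has order $4$, $3$, $2$ or $1$, hence prime to $\ell$, so $\varphi\colon\operatorname{Fix}(A)\to\operatorname{Fix}(A')$ is bijective.
\item Since ${}^{\sigma_{q^r}}A=A$ and $H$ is abelian, $\gamma^{-1}\mathcal F_E^r$ preserves $\operatorname{Fix}(A)$.
\item Injectivity of $\varphi$ on $\operatorname{Fix}(A)$ then shows that every point of $\mathcal S_{r,\gamma'}(E')$ with nontrivial stabilizer lies in $\varphi(\mathcal S_{r,\gamma}(E))$.
\item A missed $P'$ therefore has a free $H'$-orbit. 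This forces $\gamma_1=\gamma$ in the situation above, so $\pi'(P')$ has no $k_r$-rational preimage.
\end{enumerate}
One of these two arguments must be supplied. As written, the inclusion $\operatorname{Supp}_{\mathrm{perm}}(g)\subseteq\operatorname{Supp}_{\mathrm{exc}}(g)$ is not proved.
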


Part \textup{(i)} is Theorem~\ref{thm:rational-branches}.  Parts
\textup{(ii)}--\textup{(iii)} follow from the finite linear action of
$\langle\beta,\mathcal F_{\mathcal N}\rangle$ on $\mathcal N$, while part
\textup{(iv)} combines the same linear criterion with the affine sector
argument developed below.

\subsection{Converse, indecomposability, and exceptionality}

The exact equivalence theorem classifies the data once they are realized.  We
first establish the converse direction: every admissible datum produces a cover
of the asserted degree and ramification type.

Let $\mathcal D=(E,\mathcal N,H)$ be a Euclidean quotient datum over $k$,
and let $g=g_{\mathcal D}$ be an associated lower rational map.  Choose coordinates
$\mathbf{x}_g,\mathbf{t}_g$ with $\mathbf{t}_g=g(\mathbf{x}_g)$.

\begin{theorem}[Converse]
\label{thm:converse}
For a Euclidean quotient datum, every associated lower rational map
$g=g_{\mathcal D}\in k(X)$ is separable of degree $\ell^e$ and has geometric
monodromy group
\[
 T_{\mathcal{N}}\rtimes H.
\]
Its geometric ramification type is the row corresponding to $m$ in
\eqref{eq:four-types-intro}.
\end{theorem}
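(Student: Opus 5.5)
The plan is to work entirely with the commutative square defining $\bar g_{\mathcal D}$ and to read off every assertion from the composite $E\to E'_{\mathcal D}\to E'_{\mathcal D}/H'_{\mathcal D}$ after base change to $\bar k$.

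\emph{Step 1: Galois structure.} By Lemma~\ref{lem:core-free-affine-complement}, $G_{\mathcal D}=T_{\mathcal N}\rtimes H$ is a finite subgroup of $\operatorname{Aut}(E_{\bar k})$. Geometrically, $E/G_{\mathcal D}=(E/T_{\mathcal N})/H'_{\mathcal D}=E'_{\mathcal D}/H'_{\mathcal D}$, and $E/H$ is the source line. Thus $\bar k(E)/\bar k(E/G_{\mathcal D})$ is Galois with group $G_{\mathcal D}$, and $\bar k(\mathbf x_g)=\bar k(E)^H$.

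Since $|G_{\mathcal D}:H|=|\mathcal N|=\ell^e$, the lower map has degree $\ell^e$. For separability, I will argue that $\varphi_{\mathcal D}$ is separable because its kernel scheme is reduced. The cyclic quotients are separable because $m$ is prime to $p$. Hence the composite $E\to E'_{\mathcal D}/H'_{\mathcal D}$ is separable, and so is the intermediate extension $\bar k(\mathbf x_g)/\bar k(\mathbf t_g)$.

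\emph{Step 2: geometric monodromy.} The geometric monodromy group is the Galois group of the normal closure of $\bar k(\mathbf x_g)/\bar k(\mathbf t_g)$. Inside the Galois extension with group $G_{\mathcal D}$, this normal closure is the fixed field of $\operatorname{core}_{G_{\mathcal D}}(H)$. Lemma~\ref{lem:core-free-affine-complement} shows this core is trivial, so the monodromy is $G_{\mathcal D}=T_{\mathcal N}\rtimes H$, acting on sheets as in that lemma. One must also check that the descended $k$-forms give these geometric fields; this holds because the diagram is defined over $k$ and the quotients are geometrically integral.

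\emph{Step 3: ramification.} This step reuses the argument in the proof of Theorem~\ref{thm:rational-branches}. Since $\varphi_{\mathcal D}$ is separable, it is étale, so the branch locus and inertia groups of $E\to E'/H'$ coincide with those of $E'\to E'/H'$. The latter is a cyclic quotient by a group translation-conjugate to a linear group of order $m$ (via the fixed point $\varphi_{\mathcal D}(P_H)$). Proposition~\ref{prop:cyclic-signature}, applied to $(E',H')$, then gives the signature listed for $m$. Semiregularity of $L_E(H)$ ensures each such point is genuinely branched in the lower map.

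\emph{Main obstacle.} The delicate point is the applicability of Proposition~\ref{prop:cyclic-signature} to $H'$. That requires faithfulness of the linear part of $H'$ and linearization on $E'$, both noted after Remark~\ref{rem:faithful-linear-part}. Since $\varphi$ is an isogeny, the linear part of $h'$ is induced by $\beta$ and still has order $m$. Everything else is formal Galois correspondence.
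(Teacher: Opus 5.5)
Your proposal is correct and follows essentially the same route as the paper: core-freeness from Lemma~\ref{lem:core-free-affine-complement} identifies $E$ with the geometric Galois closure, the degree is $|\mathcal N|$, the translation quotient is unramified, and Proposition~\ref{prop:cyclic-signature} gives the signature. The only difference is that the paper gets separability directly from the fact that all these extensions are fixed fields of finite automorphism groups, which is simpler than your appeal to reduced kernels and $p\nmid m$, while you spell out the linearization of $H'$ that the paper leaves implicit.
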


\begin{proof}
Over $\bar k$, the group $T_{\mathcal N}\rtimes H$ acts on $E$, and the fixed
field of $H$ is the source quotient.  By
Lemma~\ref{lem:core-free-affine-complement}, $H$ is core-free, so $E$ is the
geometric Galois closure of the lower extension.  The degree is $|\mathcal N|$.  All extensions here are fixed fields of finite
groups of automorphisms and are therefore separable.  The translation quotient
is unramified, so all ramification comes from the cyclic affine quotient.  Proposition~\ref{prop:cyclic-signature}
gives exactly the four Euclidean signatures.
\end{proof}

This proves Theorem~\ref{thm:intro-forms}\textup{(ii)}.

For the arithmetic analysis, use
$\Omega_g,k_{\Omega_g},A_g,G_g,\widehat H,\widehat H_0$ as in
Theorem~\ref{thm:intro-arithmetic}.  Put
$\mathcal F_{\mathcal N}:=\mathcal F_E|_{\mathcal N}$.  By
Lemma~\ref{lem:arithmetic-exact}, coefficientwise arithmetic $q$-Frobenius
has a lift in the arithmetic source stabilizer, and on translations it acts by
$\tau_P\mapsto\tau_{\mathcal F_{\mathcal N}(P)}$.  Since $H$ is
Frobenius-stable,
\begin{equation}
 \mathcal F_{\mathcal N}\beta\mathcal F_{\mathcal N}^{-1}=\beta^q.
\label{eq:normalizer-linear}
\end{equation}

\begin{lemma}[Faithful point-stabilizer action on the kernel]
\label{lem:arithmetic-stabilizer-kernel}
The translation subgroup $T_{\mathcal N}$ is normal in $A_g$.  Conjugation on
$T_{\mathcal N}\cong\mathcal N$ is faithful on the arithmetic point
stabilizer $\widehat H$ and identifies
\[
 \widehat H_0\cong L_E(H)=\langle\beta\rangle,
 \qquad
 \widehat H\cong
 M:=\langle\beta,\mathcal F_{\mathcal N}\rangle.
\]
Under this identification the coefficientwise arithmetic $q$-Frobenius lift
maps to $\mathcal F_{\mathcal N}$.
\end{lemma}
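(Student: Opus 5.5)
We realize the arithmetic Galois closure of $g$ inside the descended elliptic picture, much as in Section~\ref{sec:descent}, and read off the point stabilizer from its conjugation action on translations. Apply Lemma~\ref{lem:arithmetic-exact} and Lemma~\ref{lem:procyclic-splitting} to $g$, with $\bar\Omega=\bar k(E)$. By Theorem~\ref{thm:converse} and Lemma~\ref{lem:core-free-affine-complement}, $\bar k(E)$ is the geometric Galois closure of $g$. Hence $\widetilde G=\operatorname{Aut}_{k(\mathbf t_g)}(\bar k(E))$ is an extension of $\Gamma_k$ by $T_{\mathcal N}\rtimes H$. The canonical lower morphism is built from a $k$-diagram, so coefficientwise Frobenius on $k(E)\otimes\bar k$ is a lift of $\sigma_q$. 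On $\bar k$-points this lift acts through $\mathcal F_E$. It lies in the arithmetic source stabilizer because it preserves $k(\mathbf x_g)=(\bar k(E)^H)^{\sigma_q}$. Conjugating a translation by it gives
\[
 F\tau_PF^{-1}=\tau_{\mathcal F_E(P)},
\]
since $\mathcal F_E$ is a group endomorphism fixing $\mathrm O_E$. The arithmetic group $A_g$ is the image of $\widetilde G$ in the permutation group on the $\ell^e$ sheets. Equivalently, it is $\widetilde G$ modulo the subgroup acting trivially on $\Omega_g$.

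Normality of $T_{\mathcal N}$ in $A_g$ follows as in the proof of Lemma~\ref{lem:gms-reduction}. The subgroup $T_{\mathcal N}$ is the unique Sylow-$\ell$ subgroup of $G_g=T_{\mathcal N}\rtimes H$, because $\ell\nmid m$. It is therefore characteristic in $G_g$, and $G_g$ is normal in $A_g$. Identify the sheets with $\mathcal N$ via Lemma~\ref{lem:core-free-affine-complement}, so that the base sheet is $0$. The stabilizer $\widehat H$ of the base sheet then acts on $\mathcal N$ by conjugation on $T_{\mathcal N}$. Its action on sheets is $x\tau_PH=\tau_{x(P)}H$, where $x(P)$ denotes the point with $x\tau_Px^{-1}=\tau_{x(P)}$. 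Consequently an element of $\widehat H$ acting trivially by conjugation fixes every sheet, so it is trivial in the faithful permutation group $A_g$. This gives faithfulness.

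It remains to identify the image. The geometric part $\widehat H_0=H$ maps onto $L_E(H)=\langle\beta\rangle$, since $h\tau_Ph^{-1}=\tau_{\beta P}$. The Frobenius lift maps to $\mathcal F_{\mathcal N}$. By the exact sequence, $\widehat H$ is generated by $\widehat H_0$ together with the image of the procyclic section. The image of that section is the closure of the powers of the Frobenius lift, and it becomes a finite cyclic group in $A_g$. Hence $\widehat H\cong\langle\beta,\mathcal F_{\mathcal N}\rangle=M$.

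The main obstacle is showing that the coefficientwise Frobenius really stabilizes the base sheet and acts on $T_{\mathcal N}$ by $\mathcal F_{\mathcal N}$. The difficulty is that $H$ is only an affine Frobenius-stable subgroup, possibly with a nonzero shift. To handle it, I would not choose any $k$-rational fixed point. Instead I would identify sheets with the cosets $\tau_P H$ directly and check the statement on the coset of $H$ itself, which Frobenius normalizes. Any translation part of a lift then disappears in the conjugation action on $T_{\mathcal N}$. Relation \eqref{eq:normalizer-linear} follows as a consistency check.
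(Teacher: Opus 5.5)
Your proposal is correct and follows essentially the same route as the paper: normality via the unique Sylow-$\ell$ subgroup of $G_g$, faithfulness by letting a centralizing stabilizer element act on the cosets $\tau_PH$, and identification of the image as generated by $\langle\beta\rangle$ together with a Frobenius lift lying in the point stabilizer. Your choice of that lift as coefficientwise Frobenius on $k(E)\otimes_k\bar k$ just makes concrete the lift the paper takes from Lemma~\ref{lem:arithmetic-exact}. Since this lift fixes $k(E)\supseteq k(\mathbf x_g)$ pointwise, the affine shift you flag as the main obstacle causes no difficulty.
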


\begin{proof}
By Theorem~\ref{thm:converse},
$G_g=T_{\mathcal N}\rtimes\widehat H_0$ in the natural sheet action.  Since
$|\mathcal N|=\ell^e$ and $\ell\nmid m$, the subgroup $T_{\mathcal N}$ is the
unique Sylow-$\ell$ subgroup of $G_g$.  It is therefore characteristic in
$G_g$ and hence normal in $A_g$.

Lemma~\ref{lem:core-free-affine-complement} identifies the geometric sheets
with $\mathcal N$.  Conjugation by the geometric stabilizer acts on
translations through the linear image $L_E(H)=\langle\beta\rangle$, so
$\widehat H_0$ has this image.  Since $G_g$ is transitive,
$A_g=G_g\widehat H$ and
\[
 \widehat H/\widehat H_0\cong A_g/G_g.
\]
The latter cyclic quotient is generated by the coefficientwise arithmetic
$q$-Frobenius class.  Lemma~\ref{lem:arithmetic-exact} supplies a lift in
$\widehat H$, and its conjugation action on translations is
$\tau_P\mapsto\tau_{\mathcal F_{\mathcal N}(P)}$.  Hence the conjugation image
of $\widehat H$ is exactly
$M=\langle\beta,\mathcal F_{\mathcal N}\rangle$.

It remains to prove faithfulness.  If $a\in\widehat H$ centralizes every
translation, then $a$ fixes the base sheet and, for every $P\in\mathcal N$,
\[
 a(\tau_P\omega_0)
 =a\tau_Pa^{-1}(a\omega_0)
 =\tau_P\omega_0.
\]
Thus $a$ fixes every sheet.  The monodromy action of the normal closure is
faithful, so $a=1$.  The displayed identifications follow.
\end{proof}

\begin{theorem}[Indecomposability and exceptionality]
\label{thm:exceptionality}
For the lower map $g=g_{\mathcal D}$ constructed from a Euclidean quotient
datum, put
\[
 M:=\langle\beta,\mathcal F_{\mathcal N}\rangle\le\operatorname{GL}(\mathcal{N}).
\]
Then $g$ is indecomposable over $k$ if and only if $M$ acts irreducibly on
$\mathcal{N}$.  It is exceptional if and only if
\begin{equation}
 \ker(\mathcal F_{\mathcal N}-\beta^j)|_{\mathcal{N}}=0
 \qquad(0\leqslant j<m).
\label{eq:exceptionality-final}
\end{equation}
\end{theorem}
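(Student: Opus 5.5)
The argument runs entirely inside the affine permutation group supplied by Lemma~\ref{lem:arithmetic-stabilizer-kernel}. That lemma identifies the sheets with $\mathcal N$, makes $T_{\mathcal N}$ act regularly by translations, and identifies the arithmetic point stabilizer $\widehat H$ with $M=\langle\beta,\mathcal F_{\mathcal N}\rangle\le\operatorname{GL}(\mathcal N)$ acting linearly. It also identifies the geometric point stabilizer $\widehat H_0$ with $\langle\beta\rangle$. Hence $A_g\cong\mathcal N\rtimes M$ and $G_g\cong\mathcal N\rtimes\langle\beta\rangle$ as permutation groups on $\mathcal N$. Both assertions then become statements about this affine action.

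For indecomposability, I would use that $g$ is indecomposable over $k$ exactly when $A_g$ is primitive on the sheets. This is the standard Galois correspondence between intermediate fields of $k(\mathbf x_g)/k(\mathbf t_g)$ and overgroups of $\widehat H$ in $A_g$. For a transitive group containing the regular normal subgroup $T_{\mathcal N}$, a block containing $0$ is a subgroup $\mathcal B\le\mathcal N$. Indeed, the block is stable under the translations by its own elements, since these preserve it. Conversely, $\widehat H\mathcal B$ is an overgroup exactly when $\mathcal B$ is $\widehat H$-stable. So the blocks through $0$ correspond to $M$-invariant subgroups of $\mathcal N$. Since $\mathcal N\cong C_\ell^e$ is elementary abelian, its subgroups are $\mathbb F_\ell$-subspaces, and primitivity becomes irreducibility of $M$.

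For exceptionality, I would invoke the finite-field exceptional-pair dictionary already used in Lemma~\ref{lem:gms-reduction}, following \cite{FGS93,GMS03}. It says that $g$ is exceptional iff every element of the coset $A_g^\ast$ of $G_g$ generating $A_g/G_g$ (the Frobenius coset) fixes exactly one sheet. Equivalently, every element of that coset has a unique fixed point. By Lemma~\ref{lem:arithmetic-stabilizer-kernel}, the coset consists of the maps $x\mapsto \mathcal F_{\mathcal N}\beta^{j}x+P$ with $P\in\mathcal N$ and $0\le j<m$. For the linear part, writing $\beta^j\mathcal F_{\mathcal N}$ or $\mathcal F_{\mathcal N}\beta^j$ gives the same coset by \eqref{eq:normalizer-linear}.

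The fixed-point count is now a linear-algebra statement. The affine map $x\mapsto\gamma x+P$ has a unique fixed point for every $P$ iff $1-\gamma$ is invertible on $\mathcal N$. If $1-\gamma$ is singular, then some $P$ lies outside its image and gives no fixed point, while other choices of $P$ give several. Exceptionality is therefore equivalent to $\ker(1-\mathcal F_{\mathcal N}\beta^{j})=0$ for all $j$. This kernel equals $\ker(\beta^{-j}-\mathcal F_{\mathcal N})$, and as $j$ runs mod $m$ so does $-j$, which yields \eqref{eq:exceptionality-final}.

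The main obstacle is the dictionary step. One must check that the coset relevant to exceptionality is exactly $\mathcal F_{\mathcal N}\langle\beta\rangle\cdot T_{\mathcal N}$, with no ambiguity coming from the lift. The possible lifts of Frobenius differ by elements of $G_g$, so they sweep out the whole coset, and this is what the argument needs.
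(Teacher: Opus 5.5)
Your proof is correct. The indecomposability half is the paper's argument: an affine group with regular normal subgroup $T_{\mathcal N}$ is primitive iff its point stabilizer is irreducible. Your identification of blocks through $0$ with $M$-stable subgroups is exactly that argument. The paper also cites L\"uroth to pass from intermediate fields of $k(\mathbf x_g)/k(\mathbf t_g)$ to decompositions of $g$ over $k$; your appeal to the ``standard'' correspondence uses this tacitly.

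For exceptionality you take a different, equivalent form of the exceptional-pair criterion from the paper.
\begin{itemize}
\item You use the coset form: every element of the Frobenius coset has exactly one fixed point. You identify that coset as the affine maps $x\mapsto\gamma x+P$ with $\gamma\in\mathcal F_{\mathcal N}\langle\beta\rangle$ and $P\in\mathcal N$, and reduce to invertibility of $1-\gamma$.
\item The paper uses the orbit form: $\widehat H$ and $\widehat H_0$ have no common orbit other than $\{0\}$. Since $\langle\beta\rangle$ is normal in $M$, a nonzero $\langle\beta\rangle$-orbit is an $M$-orbit iff $\mathcal F_{\mathcal N}$ stabilizes it, i.e.\ iff $\mathcal F_{\mathcal N}P=\beta^jP$ for some $P\ne0$.
\end{itemize}
The orbit version never has to describe the full coset or its translation parts, so no lift-ambiguity issue arises. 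Your version requires the coset identification, which you handle correctly by noting that lifts of Frobenius sweep out the whole coset. In exchange, it becomes a one-line fact about affine maps: $(1-\gamma)x=P$ has a unique solution for every $P$ iff $1-\gamma$ is invertible.

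One small slip: $\ker(1-\mathcal F_{\mathcal N}\beta^j)$ is $\ker(\mathcal F_{\mathcal N}-\beta^{-jq})$, not $\ker(\mathcal F_{\mathcal N}-\beta^{-j})$. The literal identity you wrote holds for $\beta^j\mathcal F_{\mathcal N}$. The conclusion is unaffected, since $j\mapsto -jq$ permutes $\mathbb Z/m\mathbb Z$ because $p\nmid m$.
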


\begin{proof}
By Lemma~\ref{lem:arithmetic-stabilizer-kernel},
$T_{\mathcal N}\triangleleft A_g$ is regular on the sheets and
$\widehat H\cong M=\langle\beta,\mathcal F_{\mathcal N}\rangle$.  Since $G_g$
is transitive, $A_g=T_{\mathcal N}\rtimes\widehat H$, so the arithmetic
monodromy group is affine with regular elementary abelian normal subgroup
$\mathcal N$.  Such an affine permutation group is primitive exactly when its
point stabilizer is irreducible on $\mathcal N$.  By Galois correspondence
together with L\"uroth's theorem for intermediate fields of
$k(\mathbf{x}_g)/k(\mathbf{t}_g)$, primitivity of the arithmetic monodromy
action is equivalent to indecomposability over $k$.

For exceptionality, use the finite-field exceptional-pair criterion
\cite[\S2.8 and Lemma~3.3]{GMS03}; see also the General Exceptionality Lemma
of \cite[\S10]{FGS93}.  In the affine identification with $\mathcal{N}$, the
nontrivial geometric point-stabilizer orbits are precisely the nonzero
$L_E(H)=\langle\beta\rangle$-orbits.  Moreover
\[
 \mathcal F_{\mathcal N}(\langle\beta\rangle P)=\langle\beta\rangle\mathcal F_{\mathcal N} P
\]
because $\mathcal F_{\mathcal N}\beta\mathcal F_{\mathcal N}^{-1}=\beta^q$.  Hence a nonzero
$\langle\beta\rangle$-orbit is also an orbit of the arithmetic point
stabilizer $M=\langle\beta,\mathcal F_{\mathcal N}\rangle$ exactly when Frobenius carries that
geometric orbit to itself.  This occurs precisely when
\[
 \mathcal F_{\mathcal N}P=\beta^jP
\]
for some $P\ne0$ and some $j$.  Therefore the geometric and arithmetic point
stabilizers have no common nontrivial orbit exactly when
\eqref{eq:exceptionality-final} holds.
\end{proof}

\begin{corollary}[Rank-one Frobenius criterion]
\label{cor:rank-one-exceptionality}
Assume $e=1$.  Write the actions of $\mathcal F_{\mathcal N}$ and $\beta$ on
$\mathcal{N}\cong\mathbb{F}_\ell$ as multiplication by $\lambda$ and $\rho$,
respectively, where $\rho$ has order $m$.  Then
\[
 g\text{ is exceptional}
 \quad\Longleftrightarrow\quad
 \lambda\notin\langle\rho\rangle
 \quad\Longleftrightarrow\quad
 \lambda^m\ne1\quad\text{in }\mathbb{F}_\ell.
\]
For $m=2$ this is $\lambda\ne\pm1$.
\end{corollary}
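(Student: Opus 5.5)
This is a direct specialization of Theorem~\ref{thm:exceptionality} to $e=1$. There $\mathcal N\cong\mathbb F_\ell$ and every endomorphism of $\mathcal N$ is scalar multiplication. So $\mathcal F_{\mathcal N}-\beta^j$ acts on $\mathcal N$ as multiplication by $\lambda-\rho^j$. Its kernel on $\mathcal N$ is zero exactly when $\lambda\ne\rho^j$, because a nonzero scalar on a one-dimensional space is injective and the zero scalar kills everything. Hence criterion \eqref{eq:exceptionality-final} becomes $\lambda\ne\rho^j$ for all $0\leqslant j<m$, that is, $\lambda\notin\langle\rho\rangle$. This gives the first equivalence.

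For the second equivalence, I would use that $\mathbb F_\ell^\times$ is cyclic. We have $\rho\in\mathbb F_\ell^\times$ of exact order $m$, since $\beta$ has order $m$ and acts faithfully by Definition~\ref{def:datum} and Remark~\ref{rem:faithful-linear-part}; this forces $m\mid\ell-1$. In a cyclic group the subgroup of order $m$ is unique and equals $\{x:x^m=1\}$, so $\langle\rho\rangle=\mu_m(\mathbb F_\ell)$. Also $\lambda\ne0$, since $\mathcal F_E$ is bijective on $E(\bar k)$ and $\mathcal N$ is Frobenius-stable. Therefore $\lambda\in\langle\rho\rangle$ if and only if $\lambda^m=1$.

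For $m=2$ we have $\langle\rho\rangle=\{\pm1\}$, which gives $\lambda\ne\pm1$. There is no real obstacle. The only point to make explicit is the faithfulness of $\beta$ on $\mathcal N$, which comes from semiregularity: $\rho$ has order exactly $m$, not merely order dividing $m$.
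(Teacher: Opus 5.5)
Your proof is correct and is essentially the paper's argument: on the one-dimensional space $\mathcal N$, the kernel criterion of Theorem~\ref{thm:exceptionality} becomes $\lambda\ne\rho^j$ for all $j$, and since $\rho$ has exact order $m$, $\langle\rho\rangle$ is precisely the group of $m$th roots of unity in $\mathbb F_\ell^\times$. Your separate check that $\lambda\ne0$ is harmless but unnecessary, because $\lambda^m=1$ already forces $\lambda\ne0$.
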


\begin{proof}
On the one-dimensional $\mathbb{F}_\ell$-space $\mathcal{N}$, condition
\eqref{eq:exceptionality-final} says precisely that
$\lambda\ne\rho^j$ for every $j$.  Since $\rho$ has order $m$, its
powers are exactly the $m$th roots of unity in $\mathbb{F}_\ell^\times$.
\end{proof}

\subsection{Arithmetic monodromy and constant fields}

The Frobenius coset modulo $\langle\beta\rangle$ controls not only
exceptionality but also arithmetic monodromy and the constant field.  After
extending constants, $G_g$ identifies with the geometric monodromy
group of Theorem~\ref{thm:converse}.  The standard quotient
$A_g/G_g\cong\operatorname{Gal}(k_{\Omega_g}/k)$ is cyclic; compare
\cite[\S2.8]{GMS03}.

\begin{theorem}[Arithmetic monodromy and constants]
\label{thm:arithmetic-monodromy}
Let $g$ be the lower rational map attached to a Euclidean quotient datum
$\mathcal D=(E,\mathcal{N},H)$.  Let
\[
 L_E(H)=\langle\beta\rangle,\qquad
 M:=\langle\beta,\mathcal F_{\mathcal N}\rangle\le\operatorname{GL}(\mathcal{N}),
\]
where
$\mathcal F_{\mathcal N}=\mathcal F_E|_{\mathcal N}$.  Since
$\mathcal F_{\mathcal N}\beta\mathcal F_{\mathcal N}^{-1}=\beta^q$, the
subgroup $\langle\beta\rangle$ is normal in $M$.  Then, in their natural permutation representations on the sheets,
\[
 G_g\cong\mathcal{N}\rtimes\langle\beta\rangle,\qquad
 A_g\cong\mathcal{N}\rtimes M.
\]
Consequently
\[
 A_g/G_g\cong M/\langle\beta\rangle.
\]
Define
\begin{equation}
 d_g:=
 \operatorname{ord}_{M/\langle\beta\rangle}
       \bigl(\mathcal F_{\mathcal N}\langle\beta\rangle\bigr)
 =
 \min\{\nu\geqslant1:\mathcal F_{\mathcal N}^\nu\in\langle\beta\rangle\}.
\label{eq:dg-definition}
\end{equation}
Then
\begin{equation}
 [A_g:G_g]=d_g,\qquad
 k_{\Omega_g}=\mathbb{F}_{q^{d_g}}.
\label{eq:constant-field}
\end{equation}

For $r\geqslant1$, let $k_r=\mathbb{F}_{q^r}$ and regard the same rational map over
$k_r$.  Under the common geometric permutation representation its arithmetic
monodromy group is
\begin{equation}
 A_g^{(r)}
 \cong
 \mathcal{N}\rtimes M_r,\qquad
 M_r:=\langle\beta,\mathcal F_{\mathcal N}^r\rangle.
\label{eq:Ar}
\end{equation}
The geometric monodromy group remains $G_g$.  The constant field of
the base-changed arithmetic Galois closure is
\begin{equation}
 k_{\Omega_g,r}
 =
 k_{\Omega_g}k_r
 =
 \mathbb{F}_{q^{\operatorname{lcm}(d_g,r)}},
\label{eq:constant-field-r}
\end{equation}
and hence
\begin{equation}
 [A_g^{(r)}:G_g]
 =
 [k_{\Omega_g,r}:k_r]
 =
 \frac{d_g}{\gcd(d_g,r)}.
\label{eq:index-r}
\end{equation}
In particular,
\[
 d_g\mid r
 \quad\Longleftrightarrow\quad
 A_g^{(r)}=G_g.
\]
\end{theorem}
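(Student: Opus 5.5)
The argument is mostly a repackaging of Lemma~\ref{lem:arithmetic-stabilizer-kernel}. The only genuine work is the base-change comparison in \eqref{eq:Ar}--\eqref{eq:index-r}.

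First, by Lemma~\ref{lem:arithmetic-stabilizer-kernel} and the proof of Theorem~\ref{thm:exceptionality}, we have $A_g=T_{\mathcal N}\rtimes\widehat H$ with $\widehat H\cong M$ acting faithfully on $T_{\mathcal N}\cong\mathcal N$. Identifying the sheets with $\mathcal N$ via Lemma~\ref{lem:core-free-affine-complement} gives $A_g\cong\mathcal N\rtimes M$ as permutation groups. The same identification restricted to $\widehat H_0\cong\langle\beta\rangle$ gives $G_g\cong\mathcal N\rtimes\langle\beta\rangle$. Normality of $\langle\beta\rangle$ in $M$ follows from \eqref{eq:normalizer-linear}. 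Hence $A_g/G_g\cong\widehat H/\widehat H_0\cong M/\langle\beta\rangle$.

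Next, $M/\langle\beta\rangle$ is generated by the image of $\mathcal F_{\mathcal N}$, so it is cyclic of order $d_g$ as in \eqref{eq:dg-definition}. Since $A_g/G_g\cong\operatorname{Gal}(k_{\Omega_g}/k)$ and $k_{\Omega_g}$ is a finite extension of $\mathbb F_q$, we get $[k_{\Omega_g}:k]=d_g$, which proves \eqref{eq:constant-field}.

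For base change to $k_r$, I would not recompute the Galois closure from scratch. Instead I would argue that the arithmetic Galois closure of $k_r(\mathbf x_g)/k_r(\mathbf t_g)$ is the compositum $\Omega_g k_r$. The conjugates of $\mathbf x_g$ are unchanged, and $\Omega_g k_r$ is Galois over $k_r(\mathbf t_g)$ and generated by them over $k_r(\mathbf t_g)$. Its constant field is $k_{\Omega_g}k_r=\mathbb F_{q^{\operatorname{lcm}(d_g,r)}}$; here I use that $\Omega_g/k_{\Omega_g}(\mathbf t_g)$ is regular, so the constants of the compositum are the compositum of the constant fields. This gives \eqref{eq:constant-field-r}. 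Then $A_g^{(r)}=\operatorname{Gal}(\Omega_g k_r/k_r(\mathbf t_g))$ restricts injectively into $A_g$. Its image is the preimage of $\operatorname{Gal}(k_{\Omega_g}/k_{\Omega_g}\cap k_r)$, which is the subgroup generated by $G_g$ and the $r$-th power of the Frobenius class. In the affine identification this image is $\mathcal N\rtimes\langle\beta,\mathcal F_{\mathcal N}^r\rangle$, because the $q^r$-Frobenius lift is the $r$-th power of the $q$-Frobenius lift in $\widehat H$ and acts on translations by $\mathcal F_{\mathcal N}^r$. This proves \eqref{eq:Ar}.

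The index formula \eqref{eq:index-r} then follows in either of two ways: from the order of $\mathcal F_{\mathcal N}^r$ modulo $\langle\beta\rangle$, which is $d_g/\gcd(d_g,r)$, or directly from the field degrees. The final equivalence is the special case $d_g/\gcd(d_g,r)=1$. The geometric group is unchanged because the geometric closure $\bar\Omega$ does not depend on $r$.

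The main obstacle I expect is justifying that the compositum $\Omega_g k_r$ is exactly the arithmetic Galois closure over $k_r$, together with the constant-field computation for it, so that the restriction map has the claimed image. If the regularity argument is awkward, I would instead apply Lemma~\ref{lem:arithmetic-stabilizer-kernel} directly over $k_r$, with $q$ replaced by $q^r$; since $\mathcal F_E$ is replaced by $\mathcal F_E^r$, this yields \eqref{eq:Ar} immediately. I would then deduce the constant field from $[A_g^{(r)}:G_g]=d_g/\gcd(d_g,r)$.
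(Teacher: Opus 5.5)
Your proposal is correct and follows essentially the paper's proof: you read off $G_g$ and $A_g$ from Lemma~\ref{lem:arithmetic-stabilizer-kernel}, obtain $d_g$ as the order of the Frobenius class in $A_g/G_g\cong\operatorname{Gal}(k_{\Omega_g}/k)$, and pass to $\Omega_g k_r$, using regularity of $\Omega_g/k_{\Omega_g}(\mathbf t_g)$ to compute its constant field. The only minor difference is that you obtain \eqref{eq:Ar} by restricting $A_g^{(r)}$ into $A_g$ as the preimage of the Frobenius-power subgroup, whereas the paper simply reapplies Lemma~\ref{lem:arithmetic-stabilizer-kernel} over $k_r$ with $\mathcal F_{\mathcal N}^r$, which is your stated fallback.
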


\begin{proof}
By Theorem~\ref{thm:converse}, the geometric monodromy group is
$G_g=T_{\mathcal N}\rtimes\widehat H_0$ in the natural sheet action.  Since
$G_g$ is transitive,
\[
 A_g=G_g\widehat H=T_{\mathcal N}\rtimes\widehat H.
\]
Lemma~\ref{lem:arithmetic-stabilizer-kernel} identifies
$\widehat H_0\cong\langle\beta\rangle$ and $\widehat H\cong M$.  Therefore
\[
 G_g\cong\mathcal N\rtimes\langle\beta\rangle,
 \qquad
 A_g\cong\mathcal N\rtimes M,
\]
and consequently $A_g/G_g\cong M/\langle\beta\rangle$.

The exact arithmetic/geometric monodromy sequence identifies
\[
 A_g/G_g\cong\operatorname{Gal}(k_{\Omega_g}/k).
\]
Under the preceding identification its Frobenius generator is
$\mathcal F_{\mathcal N}\langle\beta\rangle$.  Its order is therefore $d_g$, proving
\eqref{eq:constant-field}.

Now let $\Omega_{g,r}:=\Omega_gk_r$.  After adjoining $k_r$, the normal closure is generated over $k_r(\mathbf{t}_g)$ by the same
geometric conjugates of $\mathbf{x}_g$; therefore $\Omega_{g,r}$ is the arithmetic Galois closure over $k_r(\mathbf{t}_g)$.
Since $k_{\Omega_g}$ is the full constant field of $\Omega_g$, the extension
$\Omega_g/k_{\Omega_g}(\mathbf{t}_g)$ is regular; hence the full constant field
after base change is
\[
 k_{\Omega_g,r}:=\Omega_{g,r}\cap\bar k
 =k_{\Omega_g}k_r.
\]
After base change to $k_r$, Lemma~\ref{lem:arithmetic-stabilizer-kernel} applies
with arithmetic Frobenius $\mathcal F_{\mathcal N}^r$ and gives
\eqref{eq:Ar}.  Since finite fields have a unique
extension of each degree,
\[
 k_{\Omega_g}k_r
 =
 \mathbb{F}_{q^{\operatorname{lcm}(d_g,r)}}.
\]
Taking the degree over $k_r$ gives \eqref{eq:index-r}.  The final equivalence
follows because this index equals $1$ exactly when $d_g\mid r$.
\end{proof}

\begin{corollary}
\label{cor:constants-coprime}
If $\gcd(r,d_g)=1$, then
\[
 M_r:=\langle\beta,\mathcal F_{\mathcal N}^r\rangle=M.
\]
Consequently, under the common affine identification on $\mathcal N$, the
permutation images of $A_g^{(r)}$ and $A_g$ coincide.  If $d_g\mid r$, then
$A_g^{(r)}=G_g$.
\end{corollary}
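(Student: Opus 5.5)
The corollary has three assertions: $M_r=M$ when $\gcd(r,d_g)=1$, the resulting equality of permutation images, and $A_g^{(r)}=G_g$ when $d_g\mid r$. Each comes from the structure of the quotient $M/\langle\beta\rangle$ together with the identifications in Theorem~\ref{thm:arithmetic-monodromy}.

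First, by \eqref{eq:normalizer-linear} the subgroup $\langle\beta\rangle$ is normal in $M$. The quotient $M/\langle\beta\rangle$ is cyclic, generated by the coset $\bar F:=\mathcal F_{\mathcal N}\langle\beta\rangle$, whose order is $d_g$ by \eqref{eq:dg-definition}. The subgroup $M_r=\langle\beta,\mathcal F_{\mathcal N}^r\rangle$ contains $\langle\beta\rangle$, and its image in the quotient is $\langle\bar F^r\rangle$. If $\gcd(r,d_g)=1$, then $\bar F^r$ generates the cyclic group of order $d_g$, so this image is all of $M/\langle\beta\rangle$. Hence $M_r\langle\beta\rangle=M$, and since $\langle\beta\rangle\subseteq M_r$ we get $M_r=M$. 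Concretely, choose $s$ with $rs\equiv1\pmod{d_g}$. Then $\mathcal F_{\mathcal N}^{rs}=\mathcal F_{\mathcal N}\cdot\mathcal F_{\mathcal N}^{rs-1}$, and $\mathcal F_{\mathcal N}^{rs-1}\in\langle\beta\rangle$ because $d_g\mid rs-1$. So $\mathcal F_{\mathcal N}$ lies in $M_r$.

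Second, I would compare permutation images. Theorem~\ref{thm:arithmetic-monodromy} describes both $A_g$ and $A_g^{(r)}$ inside the same affine group on the sheet set $\mathcal N$, given by Lemma~\ref{lem:core-free-affine-complement}. The translations $T_{\mathcal N}$ are common to both, and the point stabilizers are $M$ and $M_r$ respectively, by \eqref{eq:Ar}. Equal stabilizers then give equal images $\mathcal N\rtimes M=\mathcal N\rtimes M_r$. The one point needing care is that the identification for $k_r$ uses the same geometric sheet labelling. I would justify this by noting that the Frobenius lift over $k_r$ may be taken to be the $r$th power of the lift over $k$. By Lemma~\ref{lem:arithmetic-stabilizer-kernel} this $r$th power acts on translations as $\mathcal F_{\mathcal N}^r$, so the two identifications agree.

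Third, suppose $d_g\mid r$. Then $\mathcal F_{\mathcal N}^r\in\langle\beta\rangle$, so $M_r=\langle\beta\rangle$ and $A_g^{(r)}\cong\mathcal N\rtimes\langle\beta\rangle=G_g$. This also follows directly from the final equivalence in Theorem~\ref{thm:arithmetic-monodromy}, via \eqref{eq:index-r}.

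I expect no real obstacle. The only subtle step is the compatibility of the identifications over $k$ and over $k_r$ in the second step, and it is settled by taking the $r$th power of the Frobenius lift as described.
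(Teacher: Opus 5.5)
Your proposal is correct and follows the paper's proof. The paper likewise observes that $\mathcal F_{\mathcal N}^r\langle\beta\rangle$ generates the cyclic quotient $M/\langle\beta\rangle$ of order $d_g$ when $\gcd(r,d_g)=1$, reads off the permutation images from \eqref{eq:Ar}, and cites Theorem~\ref{thm:arithmetic-monodromy} for the case $d_g\mid r$; your remark that the Frobenius lift over $k_r$ is the $r$th power of the lift over $k$ simply makes explicit the compatibility the paper already uses in proving \eqref{eq:Ar}.
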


\begin{proof}
The second assertion is part of
Theorem~\ref{thm:arithmetic-monodromy}.  If $\gcd(r,d_g)=1$, then
$\mathcal F_{\mathcal N}^r\langle\beta\rangle$ generates the same cyclic
quotient $M/\langle\beta\rangle$ as
$\mathcal F_{\mathcal N}\langle\beta\rangle$.  Hence
$M_r=\langle\beta,\mathcal F_{\mathcal N}^r\rangle=M$, and the assertion about
permutation images follows from \eqref{eq:Ar}.
\end{proof}

\begin{remark}
\label{rem:dg-exceptional}
If $g$ is exceptional and $\deg g>1$, then $d_g>1$.  Otherwise
$\mathcal F_{\mathcal N}\in\langle\beta\rangle$, contrary to
Theorem~\ref{thm:exceptionality}.
\end{remark}

\subsection{Permutation and exceptionality over finite extensions}

The finite-extension arithmetic is stronger than the usual eventual
exceptionality--permutation comparison.  For $k_r=\mathbb F_{q^r}$, define
the \emph{exceptionality support}
\begin{equation}
 \operatorname{Supp}_{\mathrm{exc}}(g):=
 \{\,r\geqslant1:g/k_r\text{ is exceptional}\,\},
\label{eq:supp-exc-def}
\end{equation}
and the \emph{permutation support}
\[
 \operatorname{Supp}_{\mathrm{perm}}(g):=
 \{\,r\geqslant1:
 g:\mathbf P^1(k_r)\to\mathbf P^1(k_r)
 \text{ is bijective}\,\}.
\]

\begin{lemma}[Affine sector mass]
\label{lem:affine-sector-mass}
Let $\mathcal D=(E,\mathcal N,H)$ be a Euclidean quotient datum, put
$m=|H|$, fix $r\geqslant1$, and let $\pi:E\to E/H$ be the quotient map.  For
$\gamma\in H$ write
\[
 \gamma(P)=u_{\gamma,E}(P)+a_{\gamma,E},
 \qquad
 \mathcal S_{r,\gamma}(E)
 :=\{P\in E(\bar k):\mathcal F_E^r(P)=\gamma(P)\}.
\]
Then every $\mathcal S_{r,\gamma}(E)$ is nonempty and is a torsor under
$\ker(\mathcal F_E^r-u_{\gamma,E})$.  For $Q\in(E/H)(k_r)$ put
\[
 m_Q(\gamma)
 :=\bigl|\mathcal S_{r,\gamma}(E)\cap\pi^{-1}(Q)\bigr|,
\]
and, for $P\in\pi^{-1}(Q)$, put $A_Q=\operatorname{Stab}_H(P)$.  Then
\begin{equation}
 \sum_{\gamma\in H}m_Q(\gamma)=m.
 \label{eq:affine-sector-mass}
\end{equation}
If $m_Q(\gamma)>0$, then
\begin{equation}
 m_Q(\gamma)=\gcd\!\left(\frac{m}{|A_Q|},q^r-1\right),
 \qquad
 m_Q(\gamma)\mid\frac{m}{|A_Q|}\mid m.
 \label{eq:affine-sector-divisibility}
\end{equation}
\end{lemma}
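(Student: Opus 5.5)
The plan is to treat the three assertions separately: the torsor statement is a statement about one endomorphism of $E$, while the mass and divisibility formulas come from counting inside a single $H$-orbit.

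\emph{Torsor statement.} Rewrite $\mathcal F_E^r(P)=\gamma(P)$ as
\[
(\mathcal F_E^r-u_{\gamma,E})(P)=a_{\gamma,E}.
\]
\begin{itemize}
\item The endomorphism $\mathcal F_E^r-u_{\gamma,E}$ is nonzero: its differential is $-u_{\gamma,E}^*\neq0$, because $\mathcal F_E^r$ has zero differential.
\item Hence it is separable and nonconstant, so it is surjective \cite[Theorem~II.2.3]{Silverman09}.
\item Therefore $\mathcal S_{r,\gamma}(E)$ is nonempty.
\item Two solutions differ by an element of $\ker(\mathcal F_E^r-u_{\gamma,E})$, which gives the torsor structure.
\end{itemize}

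\emph{Mass formula \eqref{eq:affine-sector-mass}.}
\begin{itemize}
\item $H$ is Frobenius-stable, so $\pi$ is defined over $k$.
\item Take $Q\in(E/H)(k_r)$. The fibre $\pi^{-1}(Q)$ is a single $H$-orbit $HP$ of size $m/|A_Q|$, and it is stable under $\mathcal F_E^r$.
\item Since $H$ is abelian, $A_Q$ does not depend on the choice of $P$ in the fibre.
\item For each $P'$ in the fibre, $\mathcal F_E^r(P')\in HP'$. So the set of $\gamma\in H$ with $\mathcal F_E^r(P')=\gamma(P')$ is a coset of $A_Q$, and it has exactly $|A_Q|$ elements.
\item Double counting the pairs $(P',\gamma)$ gives
\[
\sum_\gamma m_Q(\gamma)=\frac{m}{|A_Q|}\cdot|A_Q|=m.
\]
\end{itemize}

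\emph{Divisibility \eqref{eq:affine-sector-divisibility}.}
\begin{itemize}
\item Suppose $m_Q(\gamma)>0$ and fix $P$ in the fibre with $\mathcal F_E^rP=\gamma P$.
\item The fibre consists of the points $\delta P$, and these correspond bijectively to the cosets $\delta A_Q$ in the cyclic group $H/A_Q$ of order $m/|A_Q|$.
\item By Theorem~\ref{thm:cyclotomic}, $\mathcal F_E\,\delta\,\mathcal F_E^{-1}=\delta^q$ on geometric points. Hence $\mathcal F_E^r(\delta P)=\delta^{q^r}\gamma P$.
\item The condition $\mathcal F_E^r(\delta P)=\gamma\delta P$ therefore becomes $\gamma\delta^{q^r}P=\gamma\delta P$, using commutativity of $H$. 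This is equivalent to $\delta^{q^r-1}\in A_Q$.
\item So $m_Q(\gamma)$ is the number of solutions of $x^{q^r-1}=1$ in a cyclic group of order $m/|A_Q|$. That number is $\gcd(m/|A_Q|,q^r-1)$, and the divisibilities $m_Q(\gamma)\mid m/|A_Q|\mid m$ are immediate.
\end{itemize}

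The main point needing care is the Frobenius-conjugation convention. It must match ${}^{\sigma_q}h=h^q$ as an identity of maps on $E(\bar k)$ composed with $\mathcal F_E$, and the translation parts must be absorbed correctly; Proposition~\ref{prop:linearization} lets us assume $H$ is linear after a geometric translation if needed. Everything else is elementary counting.
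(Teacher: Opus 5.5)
Your proof is correct and follows essentially the same route as the paper. The torsor claim comes from surjectivity of the separable endomorphism $\mathcal F_E^r-u_{\gamma,E}$; the mass formula from double counting point--label incidences, each point carrying a coset of $A_Q$; and the gcd formula from ${}^{\sigma_q}\delta=\delta^q$, which reduces it to counting solutions of $x^{q^r-1}=1$ in the cyclic group $H/A_Q$ (the translation-linearization you mention at the end is never actually used).
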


\begin{proof}
The endomorphism $\mathcal F_E^r-u_{\gamma,E}$ is nonzero and separable:
its differential is $-du_{\gamma,E}\ne0$, because Frobenius has zero
differential and $u_{\gamma,E}$ is an automorphism.  Hence it is surjective,
so the displayed affine fibre is a nonempty torsor under its kernel.  Let
$P\in\pi^{-1}(Q)$.  Rationality of $Q$ gives one sector label
$\gamma_0$, the labels at $P$ form the coset $\gamma_0A_Q$, and the fibre has
$m/|A_Q|$ points.  This count also covers ramified fibres: the smaller orbit
size $m/|A_Q|$ is exactly offset by the $|A_Q|$ labels carried by each point.
Double counting point-label incidences gives
\eqref{eq:affine-sector-mass}.

Now assume $m_Q(\gamma)>0$ and choose $P$ in that sector fibre.  By
Theorem~\ref{thm:cyclotomic},
\[
 {}^{\sigma_{q^r}}b=b^{q^r}\qquad(b\in H).
\]
Since $\gcd(q,m)=1$, this Frobenius action preserves every subgroup of the
cyclic group $H$, in particular $A_Q$.  Moreover, $bP$ remains in the same
sector exactly when $b^{-1}{}^{\sigma_{q^r}}b=b^{q^r-1}$ lies in $A_Q$.
Therefore
\[
 m_Q(\gamma)=
 \left|\ker\bigl(H/A_Q\to H/A_Q,\ bA_Q\mapsto b^{q^r-1}A_Q\bigr)\right|.
\]
Since $H/A_Q$ is cyclic of order $m/|A_Q|$, this is
\eqref{eq:affine-sector-divisibility}.
\end{proof}

\begin{lemma}[Affine-sector permutation criterion]
\label{lem:sector-permutation}
Let $\mathcal D=(E,\mathcal N,H)$ be a Euclidean quotient datum, let
$\varphi:E\to E'=E/\mathcal N$ be the quotient isogeny, and let $H'$ be the
induced affine group.  The quotient isogeny identifies $H$ with $H'$; for
$\gamma\in H$ write $\gamma'\in H'$ for its image and set
\[
 K_{r,\gamma}:=\mathcal N\cap
 \ker(\mathcal F_E^r-u_{\gamma,E}).
\]
Then $\varphi$ maps $\mathcal S_{r,\gamma}(E)$ into
$\mathcal S_{r,\gamma'}(E')$, and every nonempty fibre of the restricted map
is a $K_{r,\gamma}$-torsor.  Moreover,
\[
 \varphi:\mathcal S_{r,\gamma}(E)\xrightarrow{\sim}
 \mathcal S_{r,\gamma'}(E')
 \quad\Longleftrightarrow\quad
 K_{r,\gamma}=0.
\]
For the induced lower map $g:E/H\to E'/H'$ one has
\[
 g:(E/H)(k_r)\longrightarrow(E'/H')(k_r)\text{ bijective}
 \quad\Longleftrightarrow\quad
 K_{r,\gamma}=0\text{ for every }\gamma\in H.
\]
\end{lemma}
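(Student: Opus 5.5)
The plan is to work sector by sector, then pass to the lower curves through Lemma~\ref{lem:affine-sector-mass}.

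\emph{Step 1: $\varphi$ respects sectors.} Let $P\in\mathcal S_{r,\gamma}(E)$. Since $\varphi$ is defined over $k$, we have $\mathcal F_{E'}^r\varphi=\varphi\mathcal F_E^r$, and by construction $\varphi\gamma=\gamma'\varphi$. Hence
\[
 \mathcal F_{E'}^r\varphi(P)=\varphi(\gamma P)=\gamma'\varphi(P),
\]
so $\varphi(P)\in\mathcal S_{r,\gamma'}(E')$.

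\emph{Step 2: fibres.} Take $P,P_1\in\mathcal S_{r,\gamma}(E)$ with the same image. Then $P_1=P+T$ with $T\in\mathcal N$. Subtracting the two affine identities $\mathcal F^r_E(P)=u_{\gamma,E}(P)+a_{\gamma,E}$ gives $(\mathcal F_E^r-u_{\gamma,E})T=0$, so $T\in K_{r,\gamma}$. Conversely, adding any element of $K_{r,\gamma}$ keeps us in the sector. So every nonempty fibre is a $K_{r,\gamma}$-torsor.

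\emph{Step 3: surjectivity onto the target sector.} By Lemma~\ref{lem:affine-sector-mass}, $\mathcal S_{r,\gamma'}(E')$ is a torsor under $\ker(\mathcal F_{E'}^r-u_{\gamma',E'})$. The kernel on $E$ has size $\deg(\mathcal F_E^r-u_{\gamma,E})$, and the same holds on $E'$. These degrees agree: both equal the norm of $\pi^r-u$ in the endomorphism algebra, which is preserved under the isogeny $\varphi$ because $\varphi$ intertwines Frobenius and the linear parts $u_{\gamma}$ and $u_{\gamma'}$. Therefore the source and target sectors have the same cardinality. Since the image has size $|\mathcal S_{r,\gamma}(E)|/|K_{r,\gamma}|$, the map is a bijection exactly when $K_{r,\gamma}=0$. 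In that case the image has full size, so the map is also onto.

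\emph{Step 4: descend to the lower map.} A point $Q\in (E/H)(k_r)$ has its fibre $\pi^{-1}(Q)$ split among the sectors with masses $m_Q(\gamma)$, and each mass is determined by the stabilizer $A_Q$ through \eqref{eq:affine-sector-divisibility}. Since $H\cong H'$ and $\varphi$ is injective on stabilizer data (the orders of $\mathcal N$ and $m$ are coprime), $\varphi$ preserves the stabilizer and hence the mass $m_Q(\gamma)$. Thus
\[
 \bigsqcup_\gamma \mathcal S_{r,\gamma}(E)\to(E/H)(k_r)
\]
is an $m$-to-one weighted covering, with weights $m_Q(\gamma)$, and the same holds on $E'$. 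It follows that $g$ is bijective on $k_r$-points iff the induced map on the disjoint union of sectors is bijective, where we use that the weights match through $\varphi$. By Step~3 this happens iff every $K_{r,\gamma}=0$.

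For the converse direction, suppose some $K_{r,\gamma}\ne0$. Then $\mathcal S_{r,\gamma'}(E')$ misses points. The complete mass formula \eqref{eq:affine-sector-mass} shows that total mass is conserved, so some rational point of $E'/H'$ either has no preimage or has two. Either way $g$ is not bijective.

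The main obstacle is Step~4: passing from point counts on sectors to honest bijectivity on $\mathbf P^1(k_r)$ over ramified fibres. This requires checking that the weights $m_Q(\gamma)$ correspond under $g$, and I would do this using the divisibility formula in Lemma~\ref{lem:affine-sector-mass}.
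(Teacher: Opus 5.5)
Your Steps 1--3 are correct and match the paper; your norm argument is the paper's ``take degrees in $(\mathcal F_{E'}^r-u_{\gamma',E'})\varphi=\varphi(\mathcal F_E^r-u_{\gamma,E})$''. The problems are in Step 4.

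\textbf{The stabilizer claim is false.} It is not true that $\varphi$ preserves stabilizers and hence the masses $m_Q(\gamma)$. Let $P_H$ be a common fixed point of $H$ and $0\ne T\in\mathcal N$. Then $P_H+T$ has trivial $H$-stabilizer by semiregularity, yet $\varphi(P_H+T)=\varphi(P_H)$ is fixed by all of $H'$. These are exactly the points where $g$ ramifies. You only have an injection $\operatorname{Stab}_H(P)\hookrightarrow\operatorname{Stab}_{H'}(\varphi P)$.

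For the direction ``all $K_{r,\gamma}=0\Rightarrow g$ bijective'' this error is harmless, because pointwise matching of weights is not needed. By \eqref{eq:affine-sector-mass}, every $k_r$-point carries total label mass exactly $m$ on both sides. Sectorwise bijectivity then gives $m'_{Q'}(\gamma')=\sum_{g(Q)=Q'}m_Q(\gamma)$. Summing over $\gamma$ yields $m=m\,|g^{-1}(Q')\cap(E/H)(k_r)|$, which is the paper's argument.

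\textbf{The converse has a genuine gap.} Suppose $g$ were bijective. The labelled map $(P,\gamma)\mapsto(\varphi(P),\gamma')$ would then send the $m$ labelled points over each $Q\in(E/H)(k_r)$ into the $m$ labelled points over $g(Q)$. A map between two $m$-element sets that is neither injective nor surjective violates no mass total. So ``total mass is conserved'' does not produce a point of $(E'/H')(k_r)$ with zero or two preimages, and no pure counting argument can.

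The missing input is the arithmetic of $\ell$ against $m$. You do invoke $\gcd(|\mathcal N|,m)=1$, but for the wrong purpose. The paper's argument runs as follows.
\begin{enumerate}
\item Assume $g$ is injective, take $P\in\mathcal S_{r,\gamma}(E)$, and put $Q=\pi(P)$.
\item For $D\in K_{r,\gamma}$, the point $P+D$ stays in the same sector and has the same $\varphi$-image. So $\pi(P+D)$ is $k_r$-rational with the same $g$-image, and injectivity forces $\pi(P+D)=Q$.
\item The same holds for every point of the sector fibre over $Q$. Hence $K_{r,\gamma}$ acts freely by translation on $\mathcal S_{r,\gamma}(E)\cap\pi^{-1}(Q)$, and $|K_{r,\gamma}|$ divides $m_Q(\gamma)$.
\item By \eqref{eq:affine-sector-divisibility}, $m_Q(\gamma)$ divides $m$. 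Since $K_{r,\gamma}$ is an $\ell$-group and $\ell\nmid m$, it is trivial.
\end{enumerate}
Divisibility is essential here, not merely the bound $m_Q(\gamma)\le m$. For example, $\ell=3$, $m=4$, $e=2$ is admissible, and there $|K_{r,\gamma}|$ could be $3<4$.
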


\begin{proof}
Equivariance and $k$-rationality give
\[
 (\mathcal F_{E'}^r-u_{\gamma',E'})\varphi
 =\varphi(\mathcal F_E^r-u_{\gamma,E}).
\]
Hence $\varphi$ maps the source sector into the target sector, and subtracting
two points in one fibre shows that its nonempty fibres are precisely
$K_{r,\gamma}$-torsors.  Taking degrees in the displayed intertwining identity
shows that the two separable endomorphisms have the same degree, so the source
and target sectors have the same finite cardinality.  Thus the sector map is
bijective exactly when $K_{r,\gamma}=0$.

Suppose first that all $K_{r,\gamma}$ vanish.  For
$Q'\in(E'/H')(k_r)$ let $m'_{Q'}(\gamma')$ denote the target sector mass.
Sectorwise bijectivity gives
\[
 m'_{Q'}(\gamma')=
 \sum_{\substack{Q\in(E/H)(k_r)\\ g(Q)=Q'}}m_Q(\gamma).
\]
Summing over $\gamma$ and applying Lemma~\ref{lem:affine-sector-mass} on both
sides yields
\[
 m=m\,\bigl|g^{-1}(Q')\cap(E/H)(k_r)\bigr|.
\]
Thus every rational $Q'$ has exactly one rational preimage, so $g$ is
bijective.

Conversely, assume that $g$ is injective and that $K_{r,\gamma}\ne0$ for some
$\gamma$.  Choose $P\in\mathcal S_{r,\gamma}(E)$ and put $Q=\pi(P)$.  For
every $D\in K_{r,\gamma}$, the point $P+D$ remains in the same sector and has
the same image under $\varphi$.  Thus $\pi(P+D)$ is rational and has the same
lower image as $Q$, so injectivity forces $\pi(P+D)=Q$.  Consequently
$K_{r,\gamma}$ acts freely on that sector fibre and
\[
 |K_{r,\gamma}|\mid m_Q(\gamma)\mid m
\]
by Lemma~\ref{lem:affine-sector-mass}.  This is impossible because
$K_{r,\gamma}$ is a nontrivial $\ell$-group whereas $\ell\nmid m$.
\end{proof}

\begin{theorem}[Exact permutation and exceptionality support]
\label{thm:extension-support}
In the notation of Theorem~\ref{thm:arithmetic-monodromy}, for every
$r\geqslant1$,
\begin{equation}
 r\in\operatorname{Supp}_{\mathrm{perm}}(g)
 \Longleftrightarrow
 r\in\operatorname{Supp}_{\mathrm{exc}}(g)
 \Longleftrightarrow
 \ker(\mathcal F_{\mathcal N}^r-\beta^j)|_{\mathcal N}=0
 \quad(0\leqslant j<m).
\label{eq:extension-support}
\end{equation}
Consequently
\begin{equation}
 \operatorname{Supp}_{\mathrm{perm}}(g)=\operatorname{Supp}_{\mathrm{exc}}(g).
\label{eq:support-equality}
\end{equation}
This common support is periodic modulo $d_g$.  More precisely, if
\[
 \mathcal F_{\mathcal N}^{d_g}=\beta^{j_0}
\]
for some $j_0\in\mathbb Z/m\mathbb Z$, then
\[
 r\in\operatorname{Supp}_{\mathrm{exc}}(g)
 \quad\Longleftrightarrow\quad
 r+d_g\in\operatorname{Supp}_{\mathrm{exc}}(g).
\]
Moreover,
\begin{equation}
 d_g\mathbb Z_{>0}\cap\operatorname{Supp}_{\mathrm{exc}}(g)=\varnothing.
\label{eq:multiples-out}
\end{equation}
If $g/k$ is exceptional, then
\begin{equation}
 \{\,r\geqslant1:\gcd(r,d_g)=1\,\}
 \subseteq
 \operatorname{Supp}_{\mathrm{exc}}(g).
\label{eq:units-in-support}
\end{equation}

If $e=1$, write the actions of $\mathcal F_{\mathcal N}$ and $\beta$ on
$\mathcal N\cong\mathbb F_\ell$ as multiplication by
$\lambda,\rho\in\mathbb F_\ell^\times$, where $\rho$ has order $m$.  Then
\begin{equation}
 d_g=\operatorname{ord}_{\mathbb F_\ell^\times}(\lambda^m)
\label{eq:rank-one-d}
\end{equation}
and, without any additional hypothesis,
\begin{equation}
 \operatorname{Supp}_{\mathrm{perm}}(g)=\operatorname{Supp}_{\mathrm{exc}}(g)
 =\{\,r\geqslant1:d_g\nmid r\,\}.
\label{eq:rank-one-support}
\end{equation}
Equivalently,
\[
 r\in\operatorname{Supp}_{\mathrm{perm}}(g)
 \Longleftrightarrow
 r\in\operatorname{Supp}_{\mathrm{exc}}(g)
 \Longleftrightarrow
 \lambda^{mr}\ne1.
\]
\end{theorem}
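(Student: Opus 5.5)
The plan is to combine three earlier results: the sector permutation criterion (Lemma~\ref{lem:sector-permutation}), the exceptionality criterion (Theorem~\ref{thm:exceptionality}) applied over $k_r$, and the monodromy computation (Theorem~\ref{thm:arithmetic-monodromy}).

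\emph{First equivalence chain.} Regard $\mathcal D$ as a datum over $k_r$. It is still a Euclidean quotient datum: the $q$-power Frobenius is replaced by its $r$-th power, and every stability condition is preserved. The rational map $g$ is then an associated lower map over $k_r$, so Theorem~\ref{thm:exceptionality} applies and gives
\[
 r\in\operatorname{Supp}_{\mathrm{exc}}(g)\iff\ker(\mathcal F_{\mathcal N}^r-\beta^j)|_{\mathcal N}=0\quad\text{for all } j.
\]
For permutation we use Lemma~\ref{lem:sector-permutation}: bijectivity on $k_r$-points is equivalent to $K_{r,\gamma}=0$ for every $\gamma\in H$. Here $u_{\gamma,E}$ runs through the linear parts, and faithfulness makes these exactly $\beta^j$ for $0\leqslant j<m$. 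Hence $K_{r,\gamma}=\mathcal N\cap\ker(\mathcal F_E^r-\beta^j)=\ker(\mathcal F_{\mathcal N}^r-\beta^j)|_{\mathcal N}$. The only point to watch is that the identification of $E/H$ and $E'/H'$ with $\mathbf P^1$ is $k$-rational, so $k_r$-points correspond. This gives \eqref{eq:extension-support} and \eqref{eq:support-equality}.

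\emph{Periodicity and the multiples of $d_g$.} Write $\mathcal F_{\mathcal N}^{d_g}=\beta^{j_0}$. Then $\mathcal F_{\mathcal N}^{r+d_g}=\mathcal F_{\mathcal N}^r\beta^{j_0}$. So $\mathcal F_{\mathcal N}^{r+d_g}P=\beta^jP$ holds if and only if $\mathcal F_{\mathcal N}^rP=\beta^{j-j_0}P$, and $j\mapsto j-j_0$ permutes the exponents. For \eqref{eq:multiples-out}, note that if $d_g\mid r$ then $\mathcal F_{\mathcal N}^r=\beta^{j}$ for some $j$. The kernel of $\mathcal F_{\mathcal N}^r-\beta^j$ is then all of $\mathcal N\neq0$, so $r$ is excluded.

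\emph{The coprime case.} For \eqref{eq:units-in-support}, take $\gcd(r,d_g)=1$ and choose $s$ with $rs\equiv1\pmod{d_g}$. Suppose $\mathcal F^r_{\mathcal N}P=\beta^jP$ with $P\ne0$. The relation $\mathcal F\beta\mathcal F^{-1}=\beta^q$ shows that $\mathcal F^r$ normalizes $\langle\beta\rangle$, so the line spanned by the orbit is preserved. Iterating gives $\mathcal F^{rs}P\in\langle\beta\rangle P$. We have $\mathcal F^{rs}=\mathcal F\cdot\mathcal F^{rs-1}$ with $\mathcal F^{rs-1}\in\langle\beta\rangle$, because $d_g\mid rs-1$. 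Therefore $\mathcal F_{\mathcal N}P\in\langle\beta\rangle P$, which contradicts exceptionality over $k$. The same orbit argument also yields periodicity in an alternative form.

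\emph{Rank one.} Here $\lambda$ commutes with $\rho$. The condition $\lambda^r\in\langle\rho\rangle=\mu_m$ is equivalent to $\lambda^{rm}=1$. So $d_g$ is the least $\nu$ with $(\lambda^m)^\nu=1$, namely $\operatorname{ord}(\lambda^m)$, and the support is $\{r:d_g\nmid r\}$.

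I expect the main obstacle to be justifying cleanly that base change to $k_r$ preserves the datum, and that Theorem~\ref{thm:exceptionality} and Lemma~\ref{lem:sector-permutation} apply verbatim with $\mathcal F_E^r$. The rest is linear algebra on $\mathcal N$.
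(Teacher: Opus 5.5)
Your proposal is correct and follows essentially the paper's route. The core equivalence comes from Lemma~\ref{lem:sector-permutation} together with Theorem~\ref{thm:exceptionality} applied over $k_r$, and periodicity, the exclusion of multiples of $d_g$, and the rank-one formula are the same linear algebra on $\mathcal N$. Your shift $j\mapsto j-j_0$ in the periodicity step implicitly uses that $\beta^{j_0}=\mathcal F_{\mathcal N}^{d_g}$ commutes with $\mathcal F_{\mathcal N}^r$, which is true; the paper writes the shift as $j\mapsto j-j_0q^r$. The one genuine variation is the coprime case. There you show directly, via $rs\equiv1\pmod{d_g}$, that a $\langle\beta\rangle$-orbit stable under $\mathcal F_{\mathcal N}^r$ is stable under $\mathcal F_{\mathcal N}$, whereas the paper invokes Corollary~\ref{cor:constants-coprime} ($M_r=M$) and reapplies the exceptional-pair criterion. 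Both arguments are sound.
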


\begin{proof}
\noindent\emph{Core equivalence.}
By Lemma~\ref{lem:sector-permutation},
\[
 g\text{ permutes }\mathbf P^1(k_r)
 \quad\Longleftrightarrow\quad
 K_{r,\gamma}=0\quad\text{for every }\gamma\in H.
\]
The linear parts of the elements of $H$ are precisely the powers of $\beta$,
and $\mathcal F_{\mathcal N}^r=\mathcal F_E^r|_{\mathcal N}$.  Hence this is
exactly the kernel condition in \eqref{eq:extension-support}.  Applying
Theorem~\ref{thm:exceptionality} over $k_r$ gives the three-way equivalence and
\eqref{eq:support-equality}.

\medskip\noindent\emph{Periodicity.}  Let
$\mathcal F_{\mathcal N}^{d_g}=\beta^{j_0}$.  Since
$\mathcal F_{\mathcal N}^r\beta^{j_0}\mathcal F_{\mathcal N}^{-r}=\beta^{j_0q^r}$,
\[
 \mathcal F_{\mathcal N}^{r+d_g}
 =
 \mathcal F_{\mathcal N}^r\beta^{j_0}
 =
 \beta^{j_0q^r}\mathcal F_{\mathcal N}^r.
\]
Hence, for $P\in\mathcal N$,
\[
 \mathcal F_{\mathcal N}^{r+d_g}P=\beta^jP
 \quad\Longleftrightarrow\quad
 \mathcal F_{\mathcal N}^rP=\beta^{j-j_0q^r}P.
\]
As $j$ runs modulo $m$, so does $j-j_0q^r$, proving periodicity.

\medskip\noindent\emph{Multiples of $d_g$.}  If $d_g\mid r$, then $\mathcal F_{\mathcal N}^r\in\langle\beta\rangle$, say
$\mathcal F_{\mathcal N}^r=\beta^j$.  Thus
$\ker(\mathcal F_{\mathcal N}^r-\beta^j)=\mathcal N\ne0$, which proves
\eqref{eq:multiples-out}.

\medskip\noindent\emph{Degrees coprime to $d_g$.}  Assume that $g/k$ is exceptional and $\gcd(r,d_g)=1$.  By
Corollary~\ref{cor:constants-coprime},
\[
 M_r=\langle\beta,\mathcal F_{\mathcal N}^r\rangle=M.
\]
Thus over $k_r$ the geometric and arithmetic point-stabilizer actions are the
same pair $(\langle\beta\rangle,M)$ as over $k$.  The exceptional-pair
criterion used in Theorem~\ref{thm:exceptionality} therefore shows that
$g/k_r$ is exceptional, proving \eqref{eq:units-in-support}.

\medskip\noindent\emph{Rank one.}  Finally, assume $e=1$.  Condition \eqref{eq:extension-support} becomes
\[
 \lambda^r\notin\langle\rho\rangle.
\]
Since $\langle\rho\rangle$ is the subgroup of the $m$th roots of unity in
$\mathbb F_\ell^\times$,
\[
 \lambda^r\in\langle\rho\rangle
 \quad\Longleftrightarrow\quad
 (\lambda^m)^r=1.
\]
The order of $\lambda^m$ is therefore the order of the coset
$\lambda\langle\rho\rangle$ in
$\mathbb F_\ell^\times/\langle\rho\rangle$, namely $d_g$.  Equations
\eqref{eq:rank-one-d} and \eqref{eq:rank-one-support} follow.
\end{proof}

\begin{corollary}[Base-change package]
\label{cor:base-change-package}
For every $r\geqslant1$,
\[
 A_g^{(r)}
 \cong
 \mathcal N\rtimes\langle\beta,\mathcal F_{\mathcal N}^r\rangle,
 \qquad
 [A_g^{(r)}:G_g]
 =
 \frac{d_g}{\gcd(d_g,r)}.
\]
Moreover, the following are equivalent:
\[
 g/\mathbb F_{q^r}\text{ is exceptional},
 \qquad
 g\text{ permutes }\mathbf P^1(\mathbb F_{q^r}),
\]
and
\[
 \mathcal F_{\mathcal N}^rP\ne\beta^jP
 \qquad
 \text{for every }0\ne P\in\mathcal N
 \text{ and every }0\leqslant j<m.
\]
\end{corollary}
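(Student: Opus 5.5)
This corollary should need no new argument: it assembles Theorem~\ref{thm:arithmetic-monodromy} and Theorem~\ref{thm:extension-support}, so the plan is to quote them and translate the kernel condition into pointwise form.

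First, the monodromy assertions. For any $r\geqslant1$, \eqref{eq:Ar} already gives
\[
 A_g^{(r)}\cong\mathcal N\rtimes M_r,\qquad M_r=\langle\beta,\mathcal F_{\mathcal N}^r\rangle,
\]
and \eqref{eq:index-r} gives the index $d_g/\gcd(d_g,r)$. I would simply cite these and note that the identification is the common affine one on $\mathcal N$ from Lemma~\ref{lem:arithmetic-stabilizer-kernel}, applied over $k_r$ with arithmetic Frobenius $\mathcal F_{\mathcal N}^r$.

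Second, the three-way equivalence. Theorem~\ref{thm:extension-support}, in \eqref{eq:extension-support}, shows that exceptionality of $g/\mathbb F_{q^r}$, bijectivity on $\mathbf P^1(\mathbb F_{q^r})$, and the condition
\[
 \ker(\mathcal F_{\mathcal N}^r-\beta^j)|_{\mathcal N}=0\qquad(0\leqslant j<m)
\]
are all equivalent. It remains only to unwind this kernel condition. The operator $\mathcal F_{\mathcal N}^r-\beta^j$ on the finite group $\mathcal N$ has trivial kernel exactly when no nonzero $P\in\mathcal N$ satisfies $\mathcal F_{\mathcal N}^rP=\beta^jP$. Quantifying over all $j$ with $0\leqslant j<m$ gives the displayed pointwise statement.

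There is no substantive obstacle here. The only point needing care is that $\beta^j$ in the condition ranges over the linear parts of all elements of $H$. This holds by Proposition~\ref{prop:linearization}, since $\beta$ has exact order $m$, so the powers $\beta^j$ for $0\leqslant j<m$ exhaust $L_E(H)$. Theorem~\ref{thm:extension-support} already used exactly this fact, so the corollary follows directly.
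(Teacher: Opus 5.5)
Your proposal is correct and matches the paper, which states this corollary without a separate proof. It is a direct repackaging of \eqref{eq:Ar} and \eqref{eq:index-r} from Theorem~\ref{thm:arithmetic-monodromy} together with the three-way equivalence \eqref{eq:extension-support} of Theorem~\ref{thm:extension-support}. Your unwinding of the kernel condition into the pointwise form, with the powers $\beta^j$ ($0\leqslant j<m$) exhausting the linear parts of $H$, is the only remaining step.
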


For a general finite separable cover, exceptionality implies bijectivity on
rational points over the same finite field
\cite[Theorem~1]{GTZ07}.  The converse can fail over a small field:
nonexceptional rational maps which are nevertheless bijective occur in
\cite[Example~5.2]{GTZ07}, while the general converse theorem imposes
explicit largeness hypotheses \cite[Theorem~2]{GTZ07}.  In the
origin-preserving involutive case, the permutation criterion above
specializes to K\"u\c{c}\"uksakall\i{}'s Corollary~2.8
\cite{Kuc14}; for the multiplication-by-$\ell$ rank-two case it is
equivalent to Corollary~2.6 of Bell et al. \cite{Bell22}, where
$\tau_r$ is the trace of $q^r$-Frobenius and
$(q^r+1)^2-\tau_r^2=|E(\mathbb F_{q^{2r}})|$.  For prescribed Frobenius-stable, origin-preserving equivariant-isogeny data,
the companion paper proves the all-extension kernel and support criterion by
weighted Frobenius sectors
\cite[Theorems~1.3--1.4 and Theorem~2.4]{Fan26}.  The argument above is the
self-contained affine extension needed for the cyclic Euclidean data recovered
in the present paper: the descended point stabilizer may have a nonzero
translation part, whereas the finite-kernel obstruction depends on its linear
part.  We include this argument so that the complete finite-extension
arithmetic used in the occurrence and enumeration results below is established
internally.

\subsection{Closed rank-two support formulas}
\label{sec:rank-two-support}

In rank two the uniform kernel criterion is still a matrix test.  The
Frobenius characteristic polynomial turns it into closed support formulas.

In rank two, $\mathcal N=E[\ell]$, so
$\mathcal F_{\mathcal N}=\mathcal F_E|_{E[\ell]}$.  Write
\[
 T^2-a_qT+q,
 \qquad
 a_q=q+1-|E(k)|,
\]
for the characteristic polynomial of $\mathcal F_E$ on $E[\ell]$, and let
$\alpha_1,\alpha_2\in\overline{\mathbb F}_\ell^\times$ be its roots modulo
$\ell$.

\begin{lemma}[Determinant and symmetry-twist invariance]
\label{lem:det-lift}
On $E[\ell]$ one has
\[
 \det(\beta)=1,
 \qquad
 \det(\mathcal F_{\mathcal N})=q\pmod\ell.
\]
If $q\equiv1\pmod m$, then for every $a\in\mathbb Z$ the operator
$\beta^a\mathcal F_{\mathcal N}$ has the same $m$th powers of eigenvalues
as $\mathcal F_{\mathcal N}$.  If $m>2$ and $q\equiv-1\pmod m$, then
\[
 (\beta^a\mathcal F_{\mathcal N})^2=\mathcal F_{\mathcal N}^2.
\]
\end{lemma}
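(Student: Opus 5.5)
The plan is to obtain both determinant identities from the Weil pairing $e_\ell$ on $E[\ell]$, which is nondegenerate and alternating because rank two forces $\ell\ne\operatorname{char}k$ (Theorem~\ref{thm:translation-kernel}). For any endomorphism $\psi$ of $E$, the compatibility of the Weil pairing with dual isogenies gives $e_\ell(\psi P,\psi Q)=e_\ell(P,Q)^{\deg\psi}$. So the determinant of $\psi$ acting on $E[\ell]$ is $\deg\psi\bmod\ell$. The automorphism $\beta$ has degree $1$, so $\det\beta=1$. The Frobenius $\mathcal F_E$ has degree $q$, so $\det\mathcal F_{\mathcal N}\equiv q\pmod\ell$. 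This is the standard fact that the characteristic polynomial of $\mathcal F_E$ on $E[\ell]$ has constant term $q$. Alternatively, since $\beta$ has order $m$ prime to $\ell$ and $\det\beta$ equals $\beta\hat\beta=\deg\beta=1$, one can argue via $\beta\hat\beta=1$.

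For the first twist statement, assume $q\equiv1\pmod m$. The relation \eqref{eq:normalizer-linear} becomes $\mathcal F_{\mathcal N}\beta\mathcal F_{\mathcal N}^{-1}=\beta$, so $\beta$ and $\mathcal F_{\mathcal N}$ commute in $\operatorname{GL}(\mathcal N)$. Two commuting operators on $\overline{\mathbb F}_\ell^2$ can be simultaneously triangularized. The eigenvalues of $\beta^a\mathcal F_{\mathcal N}$ are therefore $\zeta_i^a\alpha_i$, where $\zeta_1,\zeta_2$ are the diagonal entries of $\beta$ and are $m$th roots of unity because $\beta^m=1$. Taking $m$th powers gives $\alpha_i^m$, matching as a multiset. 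The only care needed is that the triangularization pairs the eigenvalues correctly, which it does automatically because the triangular forms are simultaneous.

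For the second twist statement, assume $m>2$ and $q\equiv-1\pmod m$. Then $\mathcal F_{\mathcal N}\beta\mathcal F_{\mathcal N}^{-1}=\beta^{-1}$, so $\mathcal F_{\mathcal N}\beta^a=\beta^{-a}\mathcal F_{\mathcal N}$. Expanding $(\beta^a\mathcal F_{\mathcal N})^2=\beta^a\mathcal F_{\mathcal N}\beta^a\mathcal F_{\mathcal N}=\beta^a\beta^{-a}\mathcal F_{\mathcal N}^2=\mathcal F_{\mathcal N}^2$ gives the identity. The hypothesis $m>2$ only makes the condition $q\equiv-1$ distinct from $q\equiv1$; the computation itself does not use it.

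I expect the main obstacle to be justifying cleanly that \eqref{eq:normalizer-linear} holds on $E[\ell]$ with exponent $q$ taken modulo $m$. This follows from Theorem~\ref{thm:cyclotomic} (${}^{\sigma_q}\beta=\beta^q$) together with the identification of Frobenius conjugation with $\mathcal F_E\beta\mathcal F_E^{-1}$ on points. I would cite that identification directly.
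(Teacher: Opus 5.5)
Your proof is correct and follows the paper's argument: the Weil pairing gives $\det(\mathcal F_{\mathcal N})\equiv q\pmod\ell$, commutation in the split case means $\beta^a$ only rescales eigenvalues by $m$th roots of unity, and $\mathcal F_{\mathcal N}\beta^a=\beta^{-a}\mathcal F_{\mathcal N}$ gives the nonsplit identity. The paper obtains $\det(\beta)=1$ from the eigenvalues $\rho,\rho^{-1}$ (and from $\beta=-I$ when $m=2$), whereas you derive it uniformly from $\deg\beta=1$ via the Weil pairing; separately, in the split case you could skip the triangularization, since commutation gives $(\beta^a\mathcal F_{\mathcal N})^m=\beta^{am}\mathcal F_{\mathcal N}^m=\mathcal F_{\mathcal N}^m$ directly.
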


\begin{proof}
For $m=2$, $\beta=-I$.  For $m=3,4,6$, the two eigenvalues of $\beta$ are
$\rho$ and $\rho^{-1}$ with $\rho$ of exact order $m$, so
$\det(\beta)=1$.  The Weil pairing gives
$\det(\mathcal F_{\mathcal N})=q$ on $E[\ell]$.  If
$q\equiv1\pmod m$, then $\mathcal F_{\mathcal N}$ commutes with $\beta$,
so multiplication by $\beta^a$ multiplies each eigenvalue by an $m$th root
of unity.  If $q\equiv-1\pmod m$, then
$\mathcal F_{\mathcal N}\beta^a=\beta^{-a}\mathcal F_{\mathcal N}$, whence
$(\beta^a\mathcal F_{\mathcal N})^2=\mathcal F_{\mathcal N}^2$.
\end{proof}

\subsection{Split rank-two support}

Assume
\[
 q\equiv1\pmod m;
\]
for $m=2$ this condition is automatic.  Put
\begin{equation}
 d_i=\operatorname{ord}_{\overline{\mathbb F}_\ell^\times}(\alpha_i^m),
 \qquad i=1,2.
 \label{eq:di}
\end{equation}

\begin{theorem}[Split spectral support]
\label{thm:split-support}
For every rank-two Euclidean datum in the split case and every $r\geqslant1$,
\begin{align}
 r\in\operatorname{Supp}_{\mathrm{perm}}(g)
 &\Longleftrightarrow
 r\in\operatorname{Supp}_{\mathrm{exc}}(g)
 \Longleftrightarrow
 \det(\mathcal F_E^{mr}-I\mid E[\ell])\ne0
 \label{eq:split-det}\\
 &\Longleftrightarrow
 d_1\nmid r\ \text{ and }\ d_2\nmid r
 \label{eq:split-order}\\
 &\Longleftrightarrow
 \ell\nmid|E(\mathbb F_{q^{mr}})|.
 \label{eq:split-pointcount}
\end{align}
Its natural density is
\begin{equation}
 1-\frac1{d_1}-\frac1{d_2}
 +\frac1{\operatorname{lcm}(d_1,d_2)},
 \label{eq:split-density}
\end{equation}
and the least positive support period is
\begin{equation}
 \begin{cases}
 d_1,&d_1\mid d_2,\\
 d_2,&d_2\mid d_1,\\
 \operatorname{lcm}(d_1,d_2),&\text{otherwise}.
 \end{cases}
 \label{eq:split-period}
\end{equation}
\end{theorem}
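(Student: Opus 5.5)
The plan is to start from Theorem~\ref{thm:extension-support}. It already gives the first equivalence in \eqref{eq:split-det} together with the kernel criterion $\ker(\mathcal F_{\mathcal N}^r-\beta^j)|_{E[\ell]}=0$ for $0\le j<m$. In the split case $q\equiv1\pmod m$, so \eqref{eq:normalizer-linear} shows that $\mathcal F_{\mathcal N}$ commutes with $\beta$. We therefore work in the commutative algebra generated by $\mathcal F_{\mathcal N}$ and $\beta$ acting on $E[\ell]\otimes\overline{\mathbb F}_\ell$.

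The key factorization comes from $\beta^m=1$, which gives the polynomial identity
\[
 \mathcal F^{mr}-I=\prod_{j=0}^{m-1}(\mathcal F^r-\zeta^j I)
\]
for commuting operators. To use it, we need $\beta$ to be semisimple with eigenvalues in $\mu_m(\mathbb F_\ell)$ (recall $\ell^e\equiv1\pmod m$). For $m=2$ we have $\beta=-I$. For $m=3,4,6$, Lemma~\ref{lem:det-lift} shows $\beta$ has distinct eigenvalues $\rho^{\pm1}$, and $\mathcal F_{\mathcal N}$ preserves its two eigenlines because it commutes with $\beta$. So we choose a simultaneous eigenbasis, possibly over $\mathbb F_{\ell^2}$, although $\mu_m\subset\mathbb F_\ell$ suggests the basis is rational. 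In this basis $\beta=\operatorname{diag}(\rho_1,\rho_2)$ and $\mathcal F_{\mathcal N}=\operatorname{diag}(\alpha_1,\alpha_2)$. When $m=2$, $\mathcal F_{\mathcal N}$ is triangularizable over $\overline{\mathbb F}_\ell$ with diagonal $(\alpha_1,\alpha_2)$ and $\beta$ is scalar.

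The kernel condition then reads: there is no $j$ and no eigenvector direction with $\alpha_i^r=\rho_i^j$. Equivalently, $\alpha_i^r\notin\mu_m$ for $i=1,2$. In the diagonal case this is clear. In the triangular $m=2$ case, a nonzero kernel of $\mathcal F^r\mp I$ exists exactly when $\pm1$ is an eigenvalue of $\mathcal F^r$. Now $\alpha_i^r\notin\mu_m$ is the same as $(\alpha_i^m)^r\ne1$, i.e.\ $d_i\nmid r$, which gives \eqref{eq:split-order}. It is also the same as $1$ not being an eigenvalue of $\mathcal F_E^{mr}$ on $E[\ell]$, i.e.\ $\det(\mathcal F_E^{mr}-I)\ne0$. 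Finally, $\ker(\mathcal F_E^{mr}-1)=E(\mathbb F_{q^{mr}})$, so $\ell$ divides $|E(\mathbb F_{q^{mr}})|$ exactly when $E(\mathbb F_{q^{mr}})[\ell]\ne0$, which gives \eqref{eq:split-pointcount}. The separability of $\mathcal F^{mr}-1$ gives the point count as $\deg$ and avoids any issue with $\ell=p$, which is excluded in rank two anyway by Theorem~\ref{thm:translation-kernel}.

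The density \eqref{eq:split-density} follows by inclusion--exclusion on $\{r: d_1\mid r\}\cup\{r:d_2\mid r\}$. For the period, the support is the complement of $d_1\mathbb Z\cup d_2\mathbb Z$. If $d_1\mid d_2$ this set is $d_1\mathbb Z$, with least period $d_1$, and symmetrically when $d_2\mid d_1$. Otherwise any period $P$ must map the set into itself. Testing $r=d_1$ and $r=d_2$ shows $P$ must preserve membership in each of $d_1\mathbb Z$ and $d_2\mathbb Z$ separately: since neither divides the other, there are elements in exactly one of the two sets. Hence $d_1\mid P$ and $d_2\mid P$, giving the $\operatorname{lcm}$. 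This last step is where I expect the only real care to be needed. One must check that a period of the union forces $d_i\mid P$: take $r=d_1$, then $d_1+P$ lies in the union, and the same holds for all multiples. The cleaner route is to note that the union's indicator has minimal period $\operatorname{lcm}$ unless one set contains the other. The main technical obstacle overall is the simultaneous triangularization and eigenvalue matching when $m=2$ and $\mathcal F_{\mathcal N}$ is not semisimple, which the triangular argument above handles.
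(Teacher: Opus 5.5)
Your proposal is correct and follows essentially the same route as the paper. You reduce the kernel criterion of Theorem~\ref{thm:extension-support} to $\alpha_i^{mr}\ne1$ via the Frobenius-stable $\beta$-eigenlines (or triangularization when $\beta=-I$), then read off the determinant, point-count, density and period statements; the only difference is that you get the point-count criterion from $\ker(\mathcal F_E^{mr}-1)=E(\mathbb F_{q^{mr}})$ instead of the paper's congruence $\det(I-\mathcal F_E^s\mid E[\ell])\equiv|E(\mathbb F_{q^s})|\pmod\ell$. One harmless slip: in rank two only $\ell^2\equiv1\pmod m$ holds, so $\mu_m$ need not lie in $\mathbb F_\ell$ (e.g.\ $m=3$, $\ell\equiv2\pmod3$), but nothing breaks because you work over $\overline{\mathbb F}_\ell$ and the kernel condition is a determinant condition.
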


\begin{proof}
Assume first $m>2$.  Since $\ell\nmid m$, the operator $\beta$ is semisimple
with two distinct eigenvalues $\rho,\rho^{-1}$ of exact order $m$.  The split
relation makes the two $\beta$-eigenlines Frobenius-stable.  On either line,
\[
 \mathcal F_{\mathcal N}^rv=\beta^jv
\]
for some $j$ if and only if the corresponding Frobenius eigenvalue
$\alpha_i$ satisfies $\alpha_i^{mr}=1$.  Thus
\eqref{eq:extension-support} is equivalent to \eqref{eq:split-det} and
\eqref{eq:split-order}.

For $m=2$, the forbidden operators are $\mathcal F_{\mathcal N}^r-I$ and
$\mathcal F_{\mathcal N}^r+I$; one is singular exactly when some eigenvalue
satisfies $\alpha_i^{2r}=1$, so the same conclusion holds.  Finally,
\[
 \det(I-\mathcal F_E^s\mid E[\ell])
 \equiv|E(\mathbb F_{q^s})|\pmod\ell,
\]
which proves \eqref{eq:split-pointcount}.  The bad set is the union of the
multiples of $d_1$ and $d_2$; inclusion--exclusion gives the density, and the
least-period formula follows from the elementary structure of this union.
\end{proof}

\begin{corollary}[Indecomposable split collapse]
\label{cor:split-collapse}
If the rank-two lower map is indecomposable over $k$, then
\[
 d_1=d_2=d_g=:d
\]
and
\begin{equation}
 \operatorname{Supp}_{\mathrm{perm}}(g)=\operatorname{Supp}_{\mathrm{exc}}(g)
 =\{r\geqslant1:d_g\nmid r\}.
 \label{eq:split-collapse}
\end{equation}
Thus the least period is $d_g$ and the density is $1-1/d_g$.
If $m>2$, one may identify $E[\ell]$ with the one-dimensional
$\mathbb F_{\ell^2}$-space $\mathbb F_{\ell^2}$ so that $\beta$ and
Frobenius act by multiplication by $\rho$ and $\alpha$; then
\begin{equation}
 d_g=\operatorname{ord}_{\mathbb F_{\ell^2}^\times}(\alpha^m).
 \label{eq:split-dg}
\end{equation}
For $m=2$ the same formula holds after identifying an irreducible Frobenius
action with multiplication by $\alpha\in\mathbb F_{\ell^2}^\times$.
\end{corollary}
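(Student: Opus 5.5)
The plan is to show that indecomposability forces $\mathcal F_{\mathcal N}$ to lie in a copy of $\mathbb F_{\ell^2}^\times$ acting on $E[\ell]$. Its two eigenvalues are then Galois conjugate over $\mathbb F_\ell$, which makes $d_1=d_2$. The support formula then follows from Theorem~\ref{thm:split-support}.

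By Theorem~\ref{thm:exceptionality}, indecomposability of $g$ over $k$ means that $M=\langle\beta,\mathcal F_{\mathcal N}\rangle$ acts irreducibly on $\mathcal N=E[\ell]\cong\mathbb F_\ell^2$. In the split case $q\equiv1\pmod m$, relation \eqref{eq:normalizer-linear} says that $\mathcal F_{\mathcal N}$ commutes with $\beta$.

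\emph{Case $m>2$.} Since $\ell\nmid m$, $\beta$ is semisimple with eigenvalues $\rho,\rho^{-1}$ of exact order $m$. First suppose $\rho\in\mathbb F_\ell$. Then the two $\beta$-eigenlines are defined over $\mathbb F_\ell$ inside $E[\ell]$, and they are preserved by the commuting operator $\mathcal F_{\mathcal N}$. So they are proper $M$-stable subspaces, contradicting irreducibility. Hence $\rho\notin\mathbb F_\ell$, the minimal polynomial of $\beta$ is irreducible of degree $2$, and $F:=\mathbb F_\ell[\beta]$ is a field isomorphic to $\mathbb F_{\ell^2}$. This makes $E[\ell]$ a one-dimensional $F$-space, so we may identify it with $\mathbb F_{\ell^2}$ with $\beta$ acting by multiplication by $\rho$. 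The centralizer of $F$ in $\operatorname{End}_{\mathbb F_\ell}(E[\ell])$ is $F$ itself, so $\mathcal F_{\mathcal N}$ acts by multiplication by some $\alpha\in\mathbb F_{\ell^2}^\times$.

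\emph{Case $m=2$.} Here $\beta=-I$, so irreducibility of $M$ means $\mathcal F_{\mathcal N}$ has no eigenline over $\mathbb F_\ell$. Its characteristic polynomial $T^2-a_qT+q$ is then irreducible mod $\ell$. Hence $\mathbb F_\ell[\mathcal F_{\mathcal N}]\cong\mathbb F_{\ell^2}$, and again $\mathcal F_{\mathcal N}$ acts by multiplication by some $\alpha\in\mathbb F_{\ell^2}^\times$.

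In both cases the eigenvalues of $\mathcal F_{\mathcal N}$ are $\alpha_1=\alpha$ and $\alpha_2=\alpha^\ell$. Since $x\mapsto x^\ell$ is an automorphism of $\mathbb F_{\ell^2}^\times$, the elements $\alpha^m$ and $\alpha^{\ell m}$ have the same order. Thus $d_1=d_2=:d$ in \eqref{eq:di}.

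Next I identify $d_g$. The group $\langle\beta\rangle$ is the group of $m$th roots of unity in $\mathbb F_{\ell^2}^\times$: it has order $m$, and for $m=2$ it is $\{\pm1\}$. Since $\mathcal F_{\mathcal N}^\nu$ acts by the scalar $\alpha^\nu$, we get $\mathcal F_{\mathcal N}^\nu\in\langle\beta\rangle$ if and only if $\alpha^{m\nu}=1$. By \eqref{eq:dg-definition} this gives $d_g=\operatorname{ord}(\alpha^m)=d$, which is \eqref{eq:split-dg}.

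Substituting $d_1=d_2=d_g$ into \eqref{eq:split-order} gives \eqref{eq:split-collapse}. Formula \eqref{eq:split-period} then gives least period $d_g$, and \eqref{eq:split-density} reduces to $1-2/d_g+1/d_g=1-1/d_g$.

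I expect the main obstacle to be the first step for $m>2$: ruling out $\rho\in\mathbb F_\ell$. The point to check carefully is that when $\rho\in\mathbb F_\ell$ the eigenlines lie in $E[\ell]$ over $\mathbb F_\ell$ and not just after extending scalars, so that they genuinely contradict irreducibility. The rest is routine: the centralizer of a field acting on a one-dimensional space over itself is that field.
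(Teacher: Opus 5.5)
Your proposal is correct and follows essentially the same route as the paper. Irreducibility of $M$ rules out a split $\beta$ when $m>2$, and forces an irreducible Frobenius when $m=2$. The commuting Frobenius then acts as multiplication by some $\alpha$ in the field $\mathbb F_{\ell^2}$, the Galois-conjugate eigenvalues $\alpha,\alpha^\ell$ give $d_1=d_2$, and $\mathcal F_{\mathcal N}^\nu\in\langle\beta\rangle\iff(\alpha^m)^\nu=1$ identifies $d_g$, after which the support, least period, and density follow from Theorem~\ref{thm:split-support} exactly as you describe.
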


\begin{proof}
For $m=2$, indecomposability is irreducibility of $\mathcal F_{\mathcal N}$.  The two
Frobenius eigenvalues are Galois conjugate in $\mathbb F_{\ell^2}$ and hence
have the same multiplicative order.  Moreover
$\mathcal F_{\mathcal N}^\nu\in\{\pm I\}$ exactly when $(\alpha^2)^\nu=1$.

Let $m>2$.  If $\beta$ split over $\mathbb F_\ell$, its two eigenlines would
be invariant under both $\beta$ and $\mathcal F_{\mathcal N}$, contradicting
indecomposability.  Thus $\beta$ is irreducible and its centralizer is the
field $\mathbb F_{\ell^2}$.  In this field
\[
 \mathcal F_{\mathcal N}^\nu\in\langle\beta\rangle
 \Longleftrightarrow
 \alpha^\nu\in\langle\rho\rangle
 \Longleftrightarrow
 (\alpha^m)^\nu=1.
\]
This gives $d_g$ and simultaneously shows $d_1=d_2=d_g$.
\end{proof}

\begin{example}[One split isogeny, two quotient orders]
\label{ex:split-C3-C6-support}
Let
\[
 E/\mathbb F_7:y^2=x^3+1.
\]
Here $|E(\mathbb F_7)|=12$, so the Frobenius polynomial is
$T^2+4T+7$.  Modulo $13$ it factors as
\[
 T^2+4T+7=(T-4)(T-5).
\]
Let $\mathcal N\subset E[13]$ be the Frobenius eigenspace with multiplier
$\alpha=5$.  Since $13\equiv1\pmod6$, this line is stable under both the
order-three and order-six CM complements, and it gives the same
Frobenius-stable cyclic $13$-isogeny kernel in the two Euclidean quotient
structures.  The element $5\in\mathbb F_{13}^{\times}$ has order $4$, so
\[
 d_3=\operatorname{ord}(5^3)=4,
 \qquad
 d_6=\operatorname{ord}(5^6)=2.
\]
The split support theorem therefore gives
\[
 \operatorname{Supp}_{\mathrm{perm}}(g_3)=\operatorname{Supp}_{\mathrm{exc}}(g_3)
 =\{r\geqslant1:4\nmid r\},
\]
\[
 \operatorname{Supp}_{\mathrm{perm}}(g_6)=\operatorname{Supp}_{\mathrm{exc}}(g_6)
 =\{r\geqslant1:2\nmid r\}.
\]
Thus the same elliptic isogeny kernel can produce genuinely different
common permutation--exceptionality periods after changing only the Euclidean
complement.  This example makes explicit that, even in the split case, the quotient order $m$
enters arithmetically through the power $\alpha^m$.
\end{example}

\subsection{Nonsplit rank-two support}

Assume now
\[
 m\in\{3,4,6\},
 \qquad
 q\equiv-1\pmod m,
\]
and put
\begin{equation}
 c=-q\pmod\ell,
 \qquad
 h=\operatorname{ord}_{\mathbb F_\ell^\times}(c).
 \label{eq:h}
\end{equation}

\begin{lemma}[The nonsplit Frobenius square]
\label{lem:F2}
On $E[\ell]$ one has
\begin{equation}
 \mathcal F_{\mathcal N}^2=-qI.
 \label{eq:F2}
\end{equation}
Moreover $\langle\beta,\mathcal F_{\mathcal N}\rangle$ acts irreducibly on $E[\ell]$.
\end{lemma}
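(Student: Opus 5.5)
We work on $\mathcal N=E[\ell]$, with $m\in\{3,4,6\}$ and $q\equiv-1\pmod m$. By \eqref{eq:normalizer-linear}, $\mathcal F_{\mathcal N}\beta\mathcal F_{\mathcal N}^{-1}=\beta^q=\beta^{-1}$. Since $\ell\nmid m$, $\beta$ is semisimple with eigenvalues $\rho,\rho^{-1}$ of exact order $m$, and $\rho\ne\rho^{-1}$ because $m>2$. We first treat the case $\rho\in\mathbb F_\ell$, and reduce the other case to the same computation over $\overline{\mathbb F}_\ell$.

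Choose eigenvectors $v_\pm$ of $\beta$ with eigenvalues $\rho^{\pm1}$. The relation $\beta\mathcal F_{\mathcal N}=\mathcal F_{\mathcal N}\beta^{-1}$ shows that $\mathcal F_{\mathcal N}$ sends the $\rho$-eigenline to the $\rho^{-1}$-eigenline and vice versa. Hence, in the basis $(v_+,v_-)$, $\mathcal F_{\mathcal N}$ is antidiagonal,
\[
 \mathcal F_{\mathcal N}=\begin{pmatrix}0&b\\ c&0\end{pmatrix}.
\]
It follows that $\operatorname{tr}\mathcal F_{\mathcal N}=0$ and $\mathcal F_{\mathcal N}^2=bc\,I=-\det(\mathcal F_{\mathcal N})I$. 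Lemma~\ref{lem:det-lift} gives $\det\mathcal F_{\mathcal N}=q\bmod\ell$, so $\mathcal F_{\mathcal N}^2=-qI$. Equivalently, Cayley--Hamilton with trace zero gives $\mathcal F_{\mathcal N}^2+qI=0$.

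If $\rho\notin\mathbb F_\ell$, we run the same argument after extending scalars to $\mathbb F_{\ell^2}$. The identity $\mathcal F_{\mathcal N}^2=-qI$ is a matrix identity, so it descends to $\mathbb F_\ell$.

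For irreducibility, suppose $W\subset E[\ell]$ is a line stable under both $\beta$ and $\mathcal F_{\mathcal N}$. Being $\beta$-stable, $W$ must be one of the two $\beta$-eigenlines, which requires $\rho\in\mathbb F_\ell$. But $\mathcal F_{\mathcal N}$ interchanges the two eigenlines. These lines are distinct, and $\mathcal F_{\mathcal N}$ is invertible because its determinant is $q\ne0\bmod\ell$ (indeed $\ell\ne p$ in rank two by Theorem~\ref{thm:translation-kernel}). So $\mathcal F_{\mathcal N}$ cannot preserve $W$, a contradiction. If $\rho\notin\mathbb F_\ell$, then $\beta$ alone already has no invariant $\mathbb F_\ell$-line. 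Either way $\langle\beta,\mathcal F_{\mathcal N}\rangle$ acts irreducibly.

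The main care point is that $\rho\ne\rho^{-1}$, so that the eigenlines are distinct and the swap argument applies. This is exactly where $m>2$ enters: the argument fails for $m=2$, where $\beta=-I$.
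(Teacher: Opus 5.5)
Your proof is correct and is essentially the paper's argument. In both, Frobenius interchanges the two distinct $\beta$-eigenlines over $\overline{\mathbb F}_\ell$, so $\operatorname{tr}\mathcal F_{\mathcal N}=0$ and $\det\mathcal F_{\mathcal N}=q$, and Cayley--Hamilton gives $\mathcal F_{\mathcal N}^2=-qI$; irreducibility follows because $\beta$ is either irreducible or has split eigenlines that Frobenius swaps.
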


\begin{proof}
Over $\overline{\mathbb F}_\ell$, the normalizer relation exchanges the two
$\beta$-eigenlines.  Hence $\operatorname{tr}(\mathcal F_{\mathcal N})=0$, while
$\det(\mathcal F_{\mathcal N})=q$ by Lemma~\ref{lem:det-lift}.  Cayley--Hamilton gives
\eqref{eq:F2}.  If $\beta$ is irreducible there is no invariant line; if it
splits, $\mathcal F_{\mathcal N}$ exchanges its two eigenlines.  In either case the full
action is irreducible.
\end{proof}

\begin{theorem}[Nonsplit parity-residue support]
\label{thm:nonsplit-support}
For every $r\geqslant1$,
\begin{equation}
 r\in\operatorname{Supp}_{\mathrm{perm}}(g)
 \Longleftrightarrow
 r\in\operatorname{Supp}_{\mathrm{exc}}(g)
 \Longleftrightarrow
 \begin{cases}
 h\nmid r,&r\text{ odd},\\[1mm]
 h\nmid \dfrac{mr}{2},&r\text{ even}.
 \end{cases}
 \label{eq:nonsplit-support}
\end{equation}
Equivalently,
\begin{equation}
 r\in\operatorname{Supp}_{\mathrm{perm}}(g)
 \Longleftrightarrow
 r\in\operatorname{Supp}_{\mathrm{exc}}(g)
 \Longleftrightarrow
 \begin{cases}
 \ell\nmid q^r+1,&r\text{ odd},\\[1mm]
 \ell\nmid(-q)^{mr/2}-1,&r\text{ even}.
 \end{cases}
 \label{eq:nonsplit-residue}
\end{equation}
In particular,
\begin{equation}
 g\text{ permutes }\mathbf P^1(k)
 \Longleftrightarrow
 g/k\text{ is exceptional}
 \Longleftrightarrow
 \ell\nmid q+1.
 \label{eq:nonsplit-ground}
\end{equation}
\end{theorem}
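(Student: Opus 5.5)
The plan is to deduce everything from the core equivalence of Theorem~\ref{thm:extension-support}. That theorem already shows that $r$ lies in the permutation support exactly when it lies in the exceptionality support, and exactly when $\ker(\mathcal F_{\mathcal N}^r-\beta^j)|_{\mathcal N}=0$ for all $0\leqslant j<m$. So it only remains to evaluate this kernel condition using Lemma~\ref{lem:F2}, namely $\mathcal F_{\mathcal N}^2=-qI$ on $E[\ell]$. This relation makes $\mathcal F_{\mathcal N}^r$ either a scalar multiple of $\mathcal F_{\mathcal N}$ (for $r$ odd) or a scalar (for $r$ even), so I split by the parity of $r$. Since $\mathcal F_{\mathcal N}^r-\beta^j$ is an endomorphism of the $\mathbb F_\ell$-space $\mathcal N$, its kernel vanishes exactly when its determinant is nonzero, and I may test the determinant over $\overline{\mathbb F}_\ell$.

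For $r$ even, $\mathcal F_{\mathcal N}^r=(-q)^{r/2}I=c^{r/2}I$ is a scalar $s\in\mathbb F_\ell^\times$. Then $\det(sI-\beta^j)=0$ exactly when $s$ is an eigenvalue of $\beta^j$, that is, $s\in\{\rho^{j},\rho^{-j}\}$, where $\rho$ has exact order $m$. As $j$ ranges over all residues, these eigenvalues cover all $m$th roots of unity in $\overline{\mathbb F}_\ell^\times$. So some $j$ is bad if and only if $s^m=1$, i.e.\ $c^{mr/2}=1$, i.e.\ $h\mid mr/2$. This is also the condition $\ell\mid(-q)^{mr/2}-1$.

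For $r$ odd, $q^r\equiv-1\pmod m$, so $\mathcal F_{\mathcal N}^r\beta\mathcal F_{\mathcal N}^{-r}=\beta^{-1}$. Put $X_j:=\beta^{-j}\mathcal F_{\mathcal N}^r$; the kernel of $\mathcal F_{\mathcal N}^r-\beta^j$ equals $\ker(X_j-I)$. Just as in the proof of Lemma~\ref{lem:F2}, $X_j$ still inverts $\beta$ by conjugation, so over $\overline{\mathbb F}_\ell$ it exchanges the two $\beta$-eigenlines and has trace $0$. By Lemma~\ref{lem:det-lift} its determinant is $\det(\mathcal F_{\mathcal N})^r=q^r$. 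Cayley--Hamilton therefore gives characteristic polynomial $T^2+q^r$. Hence $1$ is an eigenvalue iff $q^r\equiv-1\pmod\ell$, iff $c^r=(-q)^r=1$ (using that $r$ is odd), iff $h\mid r$. Moreover this condition is independent of $j$. This yields both \eqref{eq:nonsplit-support} and \eqref{eq:nonsplit-residue}, and specializing to $r=1$ gives \eqref{eq:nonsplit-ground}.

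The only delicate point is the odd case. There one must check that the trace-zero argument survives twisting by $\beta^{-j}$, which holds because $\beta^{-j}$ preserves each eigenline while $\mathcal F_{\mathcal N}^r$ swaps them. One must also check that the determinant is multiplicative with $\det\beta=1$. In the even case the only care needed is that a scalar in $\mathbb F_\ell$ can equal a power of $\rho$ even when $\rho\notin\mathbb F_\ell$; the determinant test handles this uniformly, since the criterion reduces to $s^m=1$ in $\mathbb F_\ell$.
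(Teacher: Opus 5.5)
Your proposal is correct and follows the paper's proof essentially step for step. You reduce to the kernel criterion of Theorem~\ref{thm:extension-support}. For odd $r$ you use the trace-zero, determinant-$q^r$ operator $\beta^{-j}\mathcal F_{\mathcal N}^r$, which is equivalent to the paper's computation $\det(\mathcal F_{\mathcal N}^r-\beta^j)=1+q^r$ independently of $j$. For even $r$ you use Lemma~\ref{lem:F2} to write $\mathcal F_{\mathcal N}^r$ as the scalar $c^{r/2}$ and test whether it is an $m$th root of unity.
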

\begin{proof}
Let $r$ be odd.  For each $j$ put
$A_j=\beta^{-j}\mathcal F_{\mathcal N}^r$.  The operator $A_j$ still exchanges the two
$\beta$-eigenlines, so $\operatorname{tr}(A_j)=0$, while
$\det(A_j)=q^r$.  Hence
\[
 \det(\mathcal F_{\mathcal N}^r-\beta^j)
 =\det(A_j-I)=1+q^r,
\]
independently of $j$.  Thus \eqref{eq:extension-support} is equivalent to
$q^r+1\not\equiv0\pmod\ell$, which for odd $r$ is $h\nmid r$.

If $r=2s$, Lemma~\ref{lem:F2} gives
$\mathcal F_{\mathcal N}^r=c^sI$.  The operator $c^sI-\beta^j$ is singular for some $j$
exactly when $c^s$ is an $m$th root of unity, or equivalently when
$h\mid ms$, or equivalently $h\mid mr/2$.  This proves both formulas and the ground-field
criterion.
\end{proof}

\begin{corollary}[Constant field, least period, and density]
\label{cor:nonsplit-invariants}
Let
\[
 a=\gcd(h,m),
 \qquad
 \varepsilon_h=
 \begin{cases}
 1,&h\text{ odd},\\
 0,&h\text{ even}.
 \end{cases}
\]
Then
\begin{equation}
 d_g=\frac{2h}{\gcd(h,\gcd(m,2))}.
 \label{eq:nonsplit-dg}
\end{equation}
The least positive period of the common support
$\operatorname{Supp}_{\mathrm{perm}}(g)=\operatorname{Supp}_{\mathrm{exc}}(g)$ is
\begin{equation}
 \begin{cases}
 \dfrac{2h}{a},&h\text{ even},\\[2mm]
 h,&h\text{ odd and }a=1,\\
 2h,&h\text{ odd and }a>1,
 \end{cases}
 \label{eq:nonsplit-period}
\end{equation}
and the natural density is
\begin{equation}
 1-\frac{a+\varepsilon_h}{2h}.
 \label{eq:nonsplit-density}
\end{equation}
The least support period can therefore be a proper divisor of $d_g$.
\end{corollary}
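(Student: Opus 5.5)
The plan is to make everything explicit from Lemma~\ref{lem:F2} and Theorem~\ref{thm:nonsplit-support}. First, $d_g$. On $E[\ell]\otimes\overline{\mathbb F}_\ell$ the operator $\beta$ has two distinct eigenlines. By the normalizer relation $\mathcal F_{\mathcal N}\beta\mathcal F_{\mathcal N}^{-1}=\beta^{q}=\beta^{-1}$, Frobenius exchanges these lines. For odd $\nu$, Lemma~\ref{lem:F2} gives
\[
 \mathcal F_{\mathcal N}^\nu=c^{(\nu-1)/2}\mathcal F_{\mathcal N},
\]
which still exchanges the lines. Every power of $\beta$ preserves them, so $\mathcal F_{\mathcal N}^\nu\notin\langle\beta\rangle$. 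For $\nu=2s$ we have $\mathcal F_{\mathcal N}^\nu=c^sI$. We therefore need the scalars in $\langle\beta\rangle$. An element $\beta^j$ is scalar iff $\rho^j=\rho^{-j}$, i.e.\ iff $\rho^{2j}=1$. So the scalar subgroup is $\{I\}$ for $m=3$ and $\{\pm I\}$ for $m=4,6$. Hence $c^s$ lies in it iff $h\mid s$ when $m$ is odd, and iff $h/\gcd(h,2)\mid s$ when $m$ is even. Taking the least $s$ and doubling gives \eqref{eq:nonsplit-dg}.

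Next I describe the complement of the support. Let $B$ be the set of $r\geqslant1$ not in the support. By \eqref{eq:nonsplit-support}, an odd $r$ lies in $B$ iff $h\mid r$. An even $r=2s$ lies in $B$ iff $h\mid ms$. Writing $a=\gcd(h,m)$, the condition $h\mid ms$ is equivalent to $(h/a)\mid s$, i.e.\ to $(2h/a)\mid r$. The odd part of $B$ is empty when $h$ is even; when $h$ is odd it is the set of odd multiples of $h$.

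The density \eqref{eq:nonsplit-density} then follows by adding densities. The even part contributes $a/(2h)$. The odd part contributes $1/(2h)$ exactly when $h$ is odd, which is the term $\varepsilon_h/(2h)$.

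For the least period I use that the periods of a purely periodic indicator on $\mathbb Z$ form a subgroup. Since $2h$ is visibly a period, the least period $P$ divides $2h$, and it suffices to test the maximal proper divisors $2h/p$.

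\begin{itemize}
\item If $h$ is even, $B=(2h/a)\mathbb Z_{>0}$, and the least period of a set of multiples is its modulus, namely $2h/a$.
\item If $h$ is odd and $a=1$, the even bad set is $2h\mathbb Z$ and the odd bad set is the odd multiples of $h$. Their union is $h\mathbb Z_{>0}$, so $P=h$.
\item If $h$ is odd and $a>1$, then $a$ is an odd divisor of $m\in\{3,4,6\}$, so $a=3$. Now $B$ is the union of the odd multiples of $h$ and the multiples of $2h/3$, and I exhibit witnesses against each candidate $2h/p$.
\begin{itemize}
\item For $p=2$: the point $2h/3$ is in $B$, but $2h/3+h=5h/3$ is odd and not a multiple of $h$, so it is not in $B$.
\item For odd $p\mid h$ with $p\neq3$: starting from $h\in B$, I expect a shift by $2h/p$ to leave $B$ by a residue check modulo $2h/3$.
\item For $p=3$: the shift $2h/3$ moves the odd bad point $h$ to $5h/3$, which is not in $B$.
\end{itemize}
Hence $P=2h$.
\end{itemize}

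Finally, the proper-divisor remark needs an example. Take $m=4$ and $4\mid h$. Then $a=4$ and $P=h/2$, while $d_g=h$, so $P$ is a proper divisor of $d_g$.

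The main obstacle is the least-period case with $h$ odd and $a=3$: ruling out every proper divisor of $2h$ requires careful choice of witnesses, especially for divisors $2h/p$ with $p\neq3$. Everything else is bookkeeping.
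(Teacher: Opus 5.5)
Your proposal is correct and follows essentially the same route as the paper. Odd powers of $\mathcal F_{\mathcal N}$ invert $\beta$ (equivalently, swap its eigenlines), even powers are the scalars $c^sI$ compared against the scalar subgroup of $\langle\beta\rangle$, and the bad set splits into the odd multiples of $h$ and the multiples of $2h/a$, from which you read off the density and least period; your maximal-proper-divisor test just makes explicit the paper's brief ``translation stabilizer'' step. The bullet you left as an expectation is immediate and not a real obstacle: for any odd prime $p\mid h$ (including $p=3$), the point $h+2h/p$ is odd, so it could lie in the bad set only as an odd multiple of $h$, which would force $h\mid 2h/p$, i.e.\ $p\mid 2$.
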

\begin{proof}
No odd power of $\mathcal F_{\mathcal N}$ belongs to $\langle\beta\rangle$, because odd
powers invert $\beta$ whereas powers of $\beta$ centralize it.  For an even
power $2s$, one has $\mathcal F_{\mathcal N}^{2s}=c^sI$.  The scalar subgroup of
$\langle\beta\rangle$ is trivial for odd $m$ and is $\{\pm I\}$ for even
$m$.  This gives \eqref{eq:nonsplit-dg}.
The bad odd degrees are the odd multiples of $h$, while the bad even degrees
are precisely the multiples of $2h/a$.  Counting these disjoint progressions
gives the density.  Their stabilizer under translation modulo a common period yields
\eqref{eq:nonsplit-period}; in particular, if $h$ is odd and
$a=1$, the two progressions merge into the set of all multiples of $h$.
\end{proof}

\begin{corollary}[Characteristic-two nonsplit cubic support]
\label{cor:char2-rank2-support}
Let $q=2^a$ with $a$ odd, and let $g$ be a nonsplit rank-two cubic datum of
translation degree $\ell^2$, where $\ell\ne2,3$.  Put
\[
 h=\operatorname{ord}_{\mathbb F_\ell^\times}(-q).
\]
Then
\[
 r\in\operatorname{Supp}_{\mathrm{perm}}(g)
 \Longleftrightarrow
 r\in\operatorname{Supp}_{\mathrm{exc}}(g)
 \Longleftrightarrow
 \begin{cases}
 h\nmid r,&r\text{ odd},\\
 h\nmid3r/2,&r\text{ even},
 \end{cases}
 \qquad
 d_g=2h.
\]
In particular, $g$ permutes $\mathbf P^1(k)$ exactly when it is exceptional
over $k$, equivalently when $\ell\nmid q+1$.
\end{corollary}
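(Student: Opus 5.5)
This corollary should follow by specializing Theorem~\ref{thm:nonsplit-support} and Corollary~\ref{cor:nonsplit-invariants} to $m=3$ in characteristic $2$. The plan is to check that the hypotheses of the nonsplit setting hold, and then read off the formulas.

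First verify the setup. In characteristic $2$ the only surviving tame signature is $(3,3,3)$, so $m=3$ and $\operatorname{char}k\nmid m$. Since $a$ is odd, $q=2^a\equiv 2\equiv-1\pmod 3$, which is the nonsplit hypothesis $q\equiv-1\pmod m$ with $m\in\{3,4,6\}$. In rank two, Theorem~\ref{thm:translation-kernel} gives $\ell\ne\operatorname{char}k=2$. The degree condition $\ell^2\equiv1\pmod 6$ forces $\ell\ne3$. Thus $\mathcal N=E[\ell]$, and Lemma~\ref{lem:F2} gives $\mathcal F_{\mathcal N}^2=-qI$ with $c=-q\bmod\ell$. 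Therefore $h=\operatorname{ord}_{\mathbb F_\ell^\times}(-q)$ agrees with \eqref{eq:h}.

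Next substitute $m=3$ into \eqref{eq:nonsplit-support}. This gives exactly the stated parity-residue law: $h\nmid r$ for odd $r$ and $h\nmid 3r/2$ for even $r$. Here $3r/2$ is an integer because $r$ is even. The ground-field statement is the case $r=1$, namely \eqref{eq:nonsplit-ground}: $g$ permutes $\mathbf P^1(k)$ iff $g/k$ is exceptional iff $\ell\nmid q+1$.

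For $d_g$, apply \eqref{eq:nonsplit-dg} with $m=3$. Since $\gcd(m,2)=1$, we get $\gcd(h,1)=1$ and $d_g=2h$. Equivalently, one can use the argument directly: odd powers of $\mathcal F_{\mathcal N}$ invert $\beta$, so they do not lie in $\langle\beta\rangle$. Even powers $\mathcal F_{\mathcal N}^{2s}$ equal $c^sI$, and since $m$ is odd the scalar part of $\langle\beta\rangle$ is trivial. Hence $\mathcal F_{\mathcal N}^{2s}\in\langle\beta\rangle$ exactly when $c^s=1$, that is, when $h\mid s$.

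I expect no real obstacle. The only point to watch is that Theorem~\ref{thm:nonsplit-support} and Lemma~\ref{lem:F2} were proved without assuming $\operatorname{char}k>3$. Their inputs are the semisimplicity of $\beta$ on $E[\ell]$ (which holds since $\ell\nmid 3$), the Weil-pairing determinant, and the eigenline exchange coming from $\mathcal F_{\mathcal N}\beta\mathcal F_{\mathcal N}^{-1}=\beta^q=\beta^{-1}$. All of these remain valid in characteristic $2$ because $\ell\ne2$.
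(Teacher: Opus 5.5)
Your proposal is correct and matches the paper's proof, which is simply the specialization of Theorem~\ref{thm:nonsplit-support} and Corollary~\ref{cor:nonsplit-invariants} to $m=3$, with $q\equiv-1\pmod 3$ because $a$ is odd. Your direct check that $d_g=2h$ is the same argument the paper gives for \eqref{eq:nonsplit-dg}: odd powers of $\mathcal F_{\mathcal N}$ invert $\beta$, and $\langle\beta\rangle$ contains no nontrivial scalars since $m$ is odd.
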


\begin{proof}
This is Theorem~\ref{thm:nonsplit-support} and
Corollary~\ref{cor:nonsplit-invariants} with $m=3$.
\end{proof}

\section{Elliptic models and marked automorphisms}
\label{sec:236-244}

\begin{theorem}[The four Euclidean signatures]
\label{thm:intro-signatures}
Assume the hypotheses of Theorem~\ref{thm:intro-forms}, and fix a datum
$\mathcal D=(E,\mathcal N,H)$ supplied by
Theorem~\ref{thm:intro-forms}\textup{(i)}, together with its associated
$E'_{\mathcal D}$, $H'_{\mathcal D}$, and $\varphi_{\mathcal D}$.  For this
theorem write $E':=E'_{\mathcal D}$, $H':=H'_{\mathcal D}$, and
$\varphi:=\varphi_{\mathcal D}$.  After two-sided $k$-M\"obius
equivalence the following hold.
\begin{enumerate}[label=\textup{(\roman*)}]
\item In type $(2,2,2,2)$,
\[
 H=\langle\iota_\varepsilon\rangle,
 \qquad
 \iota_\varepsilon(P)=\varepsilon-P,
 \qquad
 \varepsilon\in E(k),
\]
and the target involution is $P'\mapsto\varphi(\varepsilon)-P'$.

\item In type $(2,3,6)$ one may take
\[
 E:V^2=U^3-1,
 \qquad
 \psi(U,V)=U^3,
\]
and there is a separable $\Phi\in\operatorname{End}_k(E)$ of degree
$\deg f$ such that $\psi\Phi=f\psi$.  The order-six affine shift is removable
over $k$.

\item In type $(2,4,4)$, for some $a\in k^\times$ one may take
\[
 E:y^2=x^3+ax,
 \qquad
 W=(0,0),
 \qquad
 \psi(x,y)=\frac{x^2-a}{4x},
\]
and there is a separable $\Phi\in\operatorname{End}_k(E)$ of degree
$\deg f$ such that, over $k(\zeta_4)$ with $\zeta_4^2=-1$,
\[
 \beta(x,y)=(-x,\zeta_4y),\qquad
 h(P)=\beta(P+W),
\]
where $h$ generates the cyclic quotient, and
\[
 \Phi\beta=\beta\Phi,\qquad
 \Phi(W)=W,
\]
with $\psi\Phi=f\psi$.

\item In type $(3,3,3)$,
$\boldsymbol{j}(E)=\boldsymbol{j}(E')=0$, the curves $E$ and $E'$ are
$k$-isomorphic, and the translation kernel is the kernel of a separable
$k$-endomorphism of degree $\deg f$.  If $q\equiv1\pmod3$, a same-$C_3$
self-endomorphism realization always exists.  The only possible obstruction is
\[
 \deg f=\ell^2,
 \qquad
 q\equiv-1\pmod3,
 \qquad
 \mathfrak s_E(H)\ne0.
\]
Define $\operatorname{sgn}_3(\ell)\in\{\pm1\}$ by
$\operatorname{sgn}_3(\ell)\ell\equiv1\pmod3$.  In this case a same-$C_3$
self-endomorphism realization exists if and only if
\[
 v_3(\operatorname{sgn}_3(\ell)\ell-1)\geqslant v_3(q+1).
\]
The inequality fails for $(q,\ell)=(17,7)$, yielding an obstructed
degree-$49$ form over $\mathbb F_{17}$.
\end{enumerate}
\end{theorem}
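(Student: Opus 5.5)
The plan is to treat the four signatures separately. In each case we start from the datum $\mathcal D=(E,\mathcal N,H)$ of Theorem~\ref{thm:intro-forms}\textup{(i)}, use Theorem~\ref{thm:cyclotomic} and the shift class $\mathfrak s_E(H)$ to normalize $H$ over $k$, and then choose explicit $k$-coordinates on $E/H$ and $E'/H'$.

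\emph{Type $(2,2,2,2)$.} Here $\beta=[-1]$, so $h(P)=-P+Q$ with $Q\in E(\bar k)$. Frobenius-stability of $H$ together with ${}^{\sigma_q}h=h^q=h$ forces $\sigma_q(Q)=Q$, so $\varepsilon:=Q\in E(k)$. Applying $\varphi$, the induced target involution is $P'\mapsto\varphi(\varepsilon)-P'$. This case needs no change of model.

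\emph{Types $(2,3,6)$ and $(2,4,4)$.} By Theorem~\ref{thm:rational-branches} the distinguished branch points are $k$-rational: all three branch points in type $(2,3,6)$, and the inertia-$2$ point in type $(2,4,4)$. Proposition~\ref{prop:shift-class} kills the shift for $m=6$, so after a $k$-rational translation $H=L_E$. I would then choose a short Weierstrass model with $j=0$ or $j=1728$ adapted to the rational fixed data, and check that the standard invariant of $L_E$ is defined over $k$. For $m=6$ this invariant is $U^3$ on $V^2=U^3-1$, after twisting the constant to $-1$ using $k$-rationality of the branch points. For $m=4$, $\Delta(H)=\{O,W\}$ with $W=(0,0)$, and after translating by $W$ we take $h=\beta\tau_W$; the quotient coordinate $(x^2-a)/(4x)$ is invariant under $x\mapsto -x$ composed with translation by $W$, which is $x\mapsto -a/x$.

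For the isogeny, Theorem~\ref{thm:translation-kernel} gives $\varphi:E\to E'$. Since $E'$ inherits a Frobenius-stable $H'$ of the same type and with rational fixed data, $E'$ has the same normalized model. Hence there is a $k$-isomorphism $E'\cong E$ carrying $H'$ to $H$, and in type $(2,4,4)$ also $\varphi(W)$ to $W$, because $\varphi$ restricts to a Frobenius-equivariant isomorphism $\Delta(H)\cong\Delta(H')$. Composing gives $\Phi\in\operatorname{End}_k(E)$ commuting with $\beta$, and $\psi\Phi=f\psi$ follows from the commutative square defining $g_{\mathcal D}$.

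\emph{Type $(3,3,3)$.} Here $j(E)=j(E')=0$. When $q\equiv1\pmod 3$, $\beta$ is defined over $k$, and the $k$-forms of $(E,L_E)$ with shift data are classified by Proposition~\ref{prop:marked-orbit}. This should let us match $(E',H')$ with $(E,H)$ over $k$, using $\varphi_*\mathfrak s_E(H)=\mathfrak s_{E'}(H')$; that gives the same-$C_3$ realization.

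When $q\equiv-1\pmod3$ and $e=1$, a rank-one kernel is impossible by Lemma~\ref{lem:F2}, so that case does not arise. If the shift vanishes, $H$ is linear and again transports. The remaining case is $e=2$, $\varphi=\theta\circ[\ell]$, with $\mathfrak s\ne0$. A same-$C_3$ self-endomorphism must then be $\Phi=u\circ[\ell]$ with $u$ in the marked normalizer, and it must satisfy $\Phi(P_H)\in P_H+\Delta$ compatibly with Frobenius. Writing $P_H$ with $(\sigma_q-1)P_H=s\in\Delta\cong C_3$, this becomes the condition that $\ell' P_H-P_H\in\Delta+(\sigma_q-1)$-adjustments, where $\ell'=\pm\ell$ is an available unit multiple. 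In other words, the orbit of $P_H$ must absorb the factor $\ell'-1$. Since $P_H$ is a point of $3$-power order, roughly $3^{v_3(q+1)+1}$, in the Frobenius-twisted torsor, I expect the condition to be exactly $v_3(\ell'-1)\ge v_3(q+1)$.

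\emph{Main obstacle.} The hard step is this $3$-adic saturation computation: identifying the minimal $3$-power order of $P_H$ in terms of $v_3(q+1)$ on the nonsplit form, and showing that no other normalizer element can repair the failure. The first thing to try is working in $\mathbb Z[\zeta_3]$-module terms on $E[3^\infty]$, where Frobenius acts as $\pi$ with $\pi^2=-q$. After that, verify $(17,7)$ numerically: $v_3(7-1)=1<v_3(18)=2$.
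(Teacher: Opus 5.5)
Your treatment of type $(2,2,2,2)$ matches the paper. The argument for (ii) and (iii), however, breaks at the sentence ``$E'$ has the same normalized model''.

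A Frobenius-stable $H'$ of the same type with rational fixed data does not determine the $k$-form of the curve. For example, take $q\equiv1\pmod4$ and $g\in k^{\times2}\setminus k^{\times4}$. The curves $y^2=x^3+x$ and $y^2=x^3+gx$ both carry $h=\beta\tau_W$ with zero shift and $k$-rational $\{\mathrm O,W\}$, yet they are not $k$-isomorphic. Likewise $j=0$ has several sextic twists, and twisting the source constant to $-1$ twists the target by the image cocycle. Nothing forces the target onto the same model.

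At best, ``same type'' matches $E'$ with $E$ up to an $L_E$-valued cocycle; this is all Proposition~\ref{prop:marked-orbit} gives when $W=1$. Composing $\varphi$ with such an identification yields $\Phi$ with ${}^{\sigma_q}\Phi=c\Phi$ for some $c\in L_E$, not $\Phi\in\operatorname{End}_k(E)$.

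The missing input is arithmetic: $\mathcal N$ must be the kernel of a separable $k$-endomorphism commuting with $h$.
In rank two this is $[\ell]$, by Theorem~\ref{thm:translation-kernel}.
In rank one, $q\equiv1\pmod m$ makes $\beta$ $k$-rational. One then uses $[a_\ell]+[b_\ell]\beta$ with $a_\ell^2+b_\ell^2=\ell$ and $a_\ell+b_\ell\rho\equiv0\pmod\ell$ (an Eisenstein prime when $m=3$), including the reduced-line case $\ell=p$.
For $m=4$ you must still check $\Phi(W)=W$, which uses that $a_\ell+b_\ell$ is odd.
For $m=6$ the paper instead lifts the normalized lower map explicitly. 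There the source--target identification comes from $c\in k^{\times2}\cap k^{\times3}=k^{\times6}$.

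Your normalization $h=\beta\tau_W$ also assumes that the inertia-$2$ fibre is pointwise $k$-rational on the given $E$. This can fail for the descended section and must be arranged by changing the section; the paper gets it from the Kummer model $Y^4=X^2-a_s$. The same missing endomorphism is what proves the first clause of (iv), namely $E\cong_kE'$ with $\mathcal N$ the kernel of a separable $k$-endomorphism. Your outline never addresses this clause.

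For the realization part of (iv), an affine $k$-isomorphism $(E',H')\cong(E,H)$ gives at best a $k$-morphism $E\to E$ commuting with $H$. Definition~\ref{def:same-c3} also requires a $k$-rational origin fixed by that morphism. This is the simultaneous solvability of $(1-\mathcal F_j)P=s$ and $(1-L)P\in\Delta$ in Theorem~\ref{thm:c3-saturation}.

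In the split case this can fail for the given section, for instance when $v_\delta(1-\mathcal F_0)$ exceeds $v_\delta(1-L)$ for every correct-kernel $L$. The paper has to pass to a section with $v_\delta(1-\mathcal F_j)=1$, choose the sign so that $L\equiv1\pmod\delta$, and take $P\in E[\delta^2]$. Your ``matching via the marked-orbit classification'' does none of this.

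In the nonsplit case your outline has the right shape, but it is stated only as an expectation, and its framing is off. With the origin fixed at $\mathrm O_E$, $P_H$ is determined only modulo $\Delta$ and need not have $3$-power order. What lets one work inside $E[3^\infty]_+$ is the freedom to move the $k$-rational origin through the coset of solutions of $(1-\mathcal F_j)P=s$. You would still need the following:
\begin{enumerate}
\item every Frobenius section is some $\beta^j\mathcal F_0$;
\item the correct-kernel endomorphisms are exactly $\pm[\ell]$, since the factors $\beta^b$ do not commute with $\mathcal F_j$ when $q\equiv-1\pmod3$;
\item the splitting $T_3(E)=T_+\oplus T_-$, with $v_3(1-\vartheta_+)=v_3(q+1)$ and $1-\mathcal F_j$ invertible on $T_-$;
\item the no-cancellation step, showing that $\mathcal F_j$-fixed points cannot repair a failure;
\item the exclusion of the other sign, via $v_3(1+\operatorname{sgn}_3(\ell)\ell)=0$.
\end{enumerate}
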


Parts \textup{(i)}--\textup{(iii)} are proved here; part \textup{(iv)} is proved
in Section~\ref{sec:333}.  Fried and GMS provide the classical geometric models
\cite[Lemma~2.1]{Fried78} \cite[Theorems~6.5--6.6]{GMS03}; the contribution
here is their prescribed-field refinement, including affine shifts, rank-two
kernels, and the marked normalizers needed in small characteristic.  Write
\[
 n:=\deg f=\ell^e.
\]

\subsection{The \texorpdfstring{$(2,2,2,2)$}{(2,2,2,2)} case}

For $m=2$, Theorem~\ref{thm:cyclotomic} shows that the involution itself is
defined over $k$.  Since its linear part is $[-1]$, it has the form
\[
 \iota_\varepsilon(P)=\varepsilon-P,
 \qquad
 \varepsilon=\iota_\varepsilon(\mathrm O_E)\in E(k).
\]
From $\varphi\iota_\varepsilon=\iota'\varphi$, evaluation at the origin gives
\[
 \iota'(P')=\varphi(\varepsilon)-P'.
\]
This proves Theorem~\ref{thm:intro-signatures}(i).  In prime degree the
unshifted involution quotient is already part of Fried's Schur/CM picture
\cite[Lemma~2.1]{Fried78}; GMS gives the shifted characteristic-zero form
\cite[Theorem~6.5 and the subsequent remark]{GMS03}.  The contribution here is
the prescribed-field recovery and converse, including the arithmetic shift and
the rank-two kernel.

M\"uller's characteristic-zero treatment of the $(2,2,2,2)$ series
obtains a ground-field elliptic isogeny when a branch point is rational over
the ground field and describes the associated constant field through the
kernel \cite[\S5, especially Proposition~5.3]{Muller99}.
The present fixed-field theorem does not assume a rational branch point; its
failure is measured by the shift class in Theorem~\ref{thm:rational-branches}.

In prime degree, the unshifted case $\varepsilon=0$ is closely related to the
finite-field construction of Bisson--Tibouchi: their Theorem~2.1 treats the
standard involution quotient in characteristic different from $2$ and $3$,
and their algorithms in \S\S3--4 additionally assume
$\ell\ne\operatorname{char}k$ \cite[Theorem~2.1 and \S\S3--4]{BT18}.  Our
statement is a converse/descent theorem for the Euclidean cover and allows the
shifted involution as well as the rank-two kernel.

\subsection{The \texorpdfstring{$(2,3,6)$}{(2,3,6)} case}

The prime-degree order-six quotient with $\boldsymbol{j}(E)=0$ belongs to Fried's classical CM
description \cite[Lemma~2.1]{Fried78}.  We follow the algebraic normal form of
\cite[Theorem~6.6(a) and its proof]{GMS03} and establish it over the prescribed
finite field.
The three inertia orders are distinct, hence the three branch points and the
unique unramified point in each branch fibre are $k$-rational.  After
independent $k$-M\"obius changes, put the branch points of orders $6,3,2$ at
$\infty,0,1$ and their unique unramified preimages at the same points.
The divisors give monic $a_0,a_1,D\in k[X]$ and constants $c,c_1\in k^\times$
with
\[
 f(X)=cX\frac{a_0(X)^3}{D(X)^6},
 \qquad
 f(X)-1=c_1(X-1)\frac{a_1(X)^2}{D(X)^6}.
\]
Because $\infty$ is a simple pole, the leading terms agree, so $c_1=c$.
Substituting $X=1$ and $X=0$ gives
\[
 c=\left(\frac{D(1)^2}{a_0(1)}\right)^3,
 \qquad
 c=\left(\frac{D(0)^3}{a_1(0)}\right)^2.
\]
Thus $c$ is both a cube and a square.  Since $k^\times$ is cyclic,
$k^{\times 2}\cap k^{\times 3}=k^{\times 6}$, so $c$ is a sixth power in
$k$.  Absorbing it into $D$ yields
\begin{align}
 f(X)&=X\frac{a_0(X)^3}{D(X)^6},
\label{eq:236-f}\\
 f(X)-1&=(X-1)\frac{a_1(X)^2}{D(X)^6}.
\label{eq:236-fminus1}
\end{align}

Put
\[
 E:V^2=U^3-1,
 \qquad
 \psi(U,V)=U^3.
\]
If $\zeta_3^2+\zeta_3+1=0$, then over $k(\zeta_3)$
\[
 \beta(U,V)=(\zeta_3 U,-V)
\]
has order $6$, $\langle\beta\rangle$ is Frobenius-stable, and $\psi$ is the
quotient by this cyclic group.  Define
\begin{equation}
 U'=U\frac{a_0(U^3)}{D(U^3)^2},
 \qquad
 V'=V\frac{a_1(U^3)}{D(U^3)^3}.
\label{eq:236-Phi}
\end{equation}
Then \eqref{eq:236-f}--\eqref{eq:236-fminus1} give
\[
 U'^3-1=V'^2,
\]
so \eqref{eq:236-Phi} extends to a $k$-morphism $\Phi:E\to E$.  By
construction,
\[
 \psi\Phi=f\psi.
\]
Now $\psi(\mathrm O_E)=\infty$ and $f(\infty)=\infty$.  Since
$\mathrm O_E$ is the unique point above $\infty$ under $\psi$, we have
$\Phi(\mathrm O_E)=\mathrm O_E$.  Therefore
$\Phi\in\operatorname{End}_k(E)$ \cite[Theorem~III.4.8]{Silverman09}.  Taking degrees in
$\psi\Phi=f\psi$ gives $\deg\Phi=\deg f$.  Since $\psi$ and $f$ are
separable, so is $\Phi$.  Finally, direct substitution in
\eqref{eq:236-Phi} gives $\Phi\beta=\beta\Phi$.  This proves
Theorem~\ref{thm:intro-signatures}(ii); Proposition~\ref{prop:shift-class}
explains abstractly why the order-six shift is removable over $k$.

\subsection{The \texorpdfstring{$(2,4,4)$}{(2,4,4)} case}

The argument has three steps.  The branch fibres first produce a quartic
Kummer identity over $k$; that identity lifts the lower map to the source and
target genus-one $C_4$ covers; finally a Gaussian-CM endomorphism with the
same kernel identifies the target equivariantly with the source.  Throughout
this subsection, the subscripts $s$ and $t$ denote source and target objects,
while the subscript $a$ refers to the common one-parameter elliptic model
before specialization.

We follow \cite[Theorem~6.6(c) and its proof]{GMS03}, retaining the source
and target parameters until the final equivariant identification.  Let $P_2$
be the unique branch point with inertia order $2$.  Frobenius preserves inertia
orders, so $P_2$ and its unique unramified preimage are $k$-rational; normalize
both to $\infty$.  Write the two order-four branch values as
$\{\xi,-\xi\}$ and their unique unramified preimages as
$\{\upsilon,-\upsilon\}$, where
\[
 a_t:=\xi^2\in k^\times,
 \qquad
 a_s:=\upsilon^2\in k^\times.
\]
After a source scaling, arrange $f(X)=X+O(1)$ at infinity.  Since $n\equiv1\pmod4$, the fibre over infinity gives
\[
 f(X)=\frac{N_4(X)}{D_4(X)^2},
 \qquad
 \deg N_4=n,
 \qquad
 \deg D_4=\frac{n-1}{2},
\]
with $N_4,D_4$ monic.  The two order-four fibres yield unique monic polynomials
$A_\pm$ of degree $(n-1)/4$ such that
\[
 N_4-\xi D_4^2=(X-\upsilon)A_+^4,
 \qquad
 N_4+\xi D_4^2=(X+\upsilon)A_-^4.
\]
Multiplying gives
\[
 N_4^2-a_tD_4^4=(X^2-a_s)(A_+A_-)^4.
\]
The quotient of the left-hand side by $X^2-a_s$ lies in $k[X]$, and
$A_+A_-$ is its unique monic fourth root; hence $A_+A_-\in k[X]$.  Therefore
\begin{equation}
 f(X)^2-a_t=(X^2-a_s)R(X)^4,
 \qquad
 R=\frac{A_+A_-}{D_4}\in k(X).
\label{eq:244-identity}
\end{equation}

Let $C_s$ (source) and $C_t$ (target) be the smooth projective models of
\[
 C_s:Y^4=X^2-a_s,
 \qquad
 C_t:V^4=U^2-a_t.
\]
They have genus one: the coordinate map to $\mathbf{P}^1$ is a tame cyclic cover of degree
$4$ with the $(2,4,4)$ branching, so Riemann--Hurwitz gives genus one.
Equation \eqref{eq:244-identity} gives a separable degree-$n$ morphism
\begin{equation}
 (X,Y)\longmapsto(f(X),YR(X))
\label{eq:244-lift}
\end{equation}
from $C_s$ to $C_t$, equivariant for $Y\mapsto \zeta_4 Y$.

The two Kummer models $C_s$ and $C_t$ fit into a single one-parameter elliptic
model $E_a$; the specializations $a=a_s$ and $a=a_t$ recover the source and
target curves.
For $a\in k^\times$, the birational change
\[
 u=2(X+Y^2),
 \qquad
 v=4Y(X+Y^2),
\]
with inverse
\[
 X=\frac{u^2+4a}{4u},
 \qquad
 Y=\frac{v}{2u},
\]
identifies $Y^4=X^2-a$ with
\[
 E_a:v^2=u^3-4au.
\]
Under $Y\mapsto \zeta_4 Y$, the coordinates in the elliptic model transform as
\[
 (u,v)\longmapsto\left(\frac{4a}{u},\frac{4\zeta_4 av}{u^2}\right).
\]
On $E_a$, let $W_a=(0,0)$.  Addition by $W_a$ is
\[
 (u,v)\longmapsto\left(-\frac{4a}{u},\frac{4av}{u^2}\right).
\]
Hence the quartic generator becomes
\[
 h_a(P)=\beta_a(P+W_a),
 \qquad
 \beta_a(u,v)=(-u,\zeta_4 v),
\]
and the quotient coordinate is
\begin{equation}
 \psi_a(u,v)=\frac{u^2+4a}{4u}.
\label{eq:244-psi}
\end{equation}
The function \eqref{eq:244-psi} is invariant under $h_a$ and has degree $4$,
so it is the quotient map by $\langle h_a\rangle$.  Over $\bar k$, a cyclic
quartic cover with full ramification at the two marked order-four points and
ramification index $2$ at infinity has Kummer exponents $1,1,2$ after choosing
a generator; hence it is, compatibly with the quotient coordinate, of the form
$Y^4=X^2-a$.  Thus, after base change to $\bar k$, $C_s$ and $C_t$ identify with the source
and target vertical $C_4$ covers in Theorem~\ref{thm:intro-forms}\textup{(i)}.  Under these
geometric identifications, the quotient isogeny supplied by
Theorem~\ref{thm:intro-forms}\textup{(i)} and the explicit lift
\eqref{eq:244-lift} lie over the same lower map.  Since the target vertical map
is a connected Galois $C_4$-cover, any two such lifts differ by a unique target
deck transformation.  Composing the target identification by that deck
transformation if necessary, we may transport \eqref{eq:244-lift} to that
quotient isogeny, whose geometric kernel is $\mathcal{N}$.  Only this
geometric identification is used here: the curves $C_s,C_t$ and the lift
\eqref{eq:244-lift} are already defined over $k$.
Write $E_s:=E_{a_s}$ and $E_t:=E_{a_t}$ for the source and target
specializations, with origins $\mathrm O_s,\mathrm O_t$ and distinguished
points $W_s:=W_{a_s}$ and $W_t:=W_{a_t}$.  Let $\beta_s,h_s$ and
$\beta_t,h_t$ denote the corresponding specializations of $\beta_a,h_a$.
Transporting \eqref{eq:244-lift} therefore gives a separable degree-$n$
$k$-morphism
\[
 \varphi:E_s\longrightarrow E_t
\]
intertwining the two shifted $C_4$ groups.  The fibre of $\psi_a$ above
$\infty$ is $\{\mathrm O_{E_a},W_a\}$.  Since $f(\infty)=\infty$, the map $\varphi$ sends
$\{\mathrm O_s,W_s\}$ into $\{\mathrm O_t,W_t\}$.  If necessary, compose on
the target with $\tau_{W_t}$.  This translation is $k$-rational and commutes
with the shifted $C_4$ action: indeed $\beta_t(W_t)=W_t$ and $2W_t=0$, so
$\tau_{W_t}h_t=h_t\tau_{W_t}$.  We may therefore assume
\[
 \varphi(\mathrm O_s)=\mathrm O_t.
\]
With these origins $\varphi$ is a separable $k$-isogeny
\cite[Theorem~III.4.8]{Silverman09}.  Equivariance then gives
\[
 \varphi(W_s)=\varphi h_s(\mathrm O_s)=h_t\varphi(\mathrm O_s)=W_t.
\]

It remains to identify the target with the source equivariantly.  If $e=2$,
Theorem~\ref{thm:translation-kernel} gives $\mathcal{N}=E_s[\ell]$ and we take
$\Phi=[\ell]$; since $\ell$ is odd, $\Phi(W_s)=W_s$.  If $e=1$, faithful
order-four action on the cyclic kernel gives $\ell\equiv1\pmod4$, and
Frobenius compatibility gives $q\equiv1\pmod4$, so $\beta_s$ is defined over
$k$.  Let its action on $\mathcal{N}$ be multiplication by
$\rho\in\mathbb{F}_\ell^\times$ with $\rho^2=-1$.  Choose integers $a_\ell,b_\ell$ with
\[
 a_\ell^2+b_\ell^2=\ell
\]
and, after replacing $b_\ell$ by $-b_\ell$ if necessary, with
$a_\ell+b_\ell\rho\equiv0\pmod\ell$.  Then
\[
 \Phi=[a_\ell]+[b_\ell]\beta_s
\]
annihilates $\mathcal{N}$.  Since $\widehat\beta_s=\beta_s^{-1}=-\beta_s$, its
dual is $\widehat\Phi=[a_\ell]-[b_\ell]\beta_s$, and therefore
\[
 \widehat\Phi\Phi=[a_\ell^2+b_\ell^2]=[\ell].
\]
Hence $\deg\Phi=a_\ell^2+b_\ell^2=\ell$.  Since $\mathcal{N}$ contains
$\ell$ distinct points, $\Phi$ is separable and
$\ker\Phi=\mathcal{N}$, even in the possible case $\ell=\operatorname{char}k$.
Moreover, since $\ell$ is odd and $a_\ell^2+b_\ell^2=\ell$, exactly one
of $a_\ell,b_\ell$ is odd, so $a_\ell+b_\ell$ is odd.  Since
$\beta_s(W_s)=W_s$,
\[
 \Phi(W_s)=[a_\ell+b_\ell]W_s=W_s.
\]
In both ranks $\Phi\beta_s=\beta_s\Phi$ and $\Phi(W_s)=W_s$, hence
$\Phi h_s=h_s\Phi$.

Thus in both ranks $\Phi$ and $\varphi$ have the same kernel.  The quotient
universal property gives a unique $k$-isomorphism
$\theta:E_t\xrightarrow{\sim}E_s$ with
$\Phi=\theta\varphi$.  From
$\varphi h_s=h_t\varphi$ and $\Phi h_s=h_s\Phi$ we obtain
$\theta h_t=h_s\theta$ by surjectivity of $\varphi$.  Hence the target is
identified with the source as a shifted $C_4$-curve.  Writing
$a=-4a_s$ gives the model and quotient coordinate in
Theorem~\ref{thm:intro-signatures}(iii).

\subsection{Marked normalizers in characteristics 3 and 2}
\label{sec:marked-normalizers}

The common tame theory depends on the normalizer of the marked cyclic linear
part, not on the full automorphism group of the elliptic curve.  In
characteristics $3$ and $2$ the full special-$j$ automorphism groups enlarge,
but the two marked normalizers needed below remain small.

\subsubsection{The marked \texorpdfstring{$C_4$}{C4} normalizer in characteristic 3}

Let
\[
 E_0/\mathbb F_3:y^2=x^3-x,
 \qquad
 U=\operatorname{Aut}(E_{0,\bar k},\mathrm O_{E_0}),
 \qquad
 Z=\{\pm1\},
\]
and let $H=\langle\beta\rangle\cong C_4$ be a chosen order-four subgroup.

\begin{lemma}[Characteristic-$3$ marked-$C_4$ normalizer]
\label{lem:char3-C4-normalizer}
One has
\[
 |U|=12,\qquad U/Z\cong S_3,\qquad N_U(H)=H.
\]
Consequently the marked $C_4$ quotient has one map-level geometric form.  Its
shift group is
\[
 \Delta=E_0[1-\beta]\cong C_2,
\]
and it has exactly two shift classes.
\end{lemma}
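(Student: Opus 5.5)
The plan is to take the structure of $U$ from Silverman and then reduce everything to the finite orbit principle of Proposition~\ref{prop:marked-orbit}.

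\emph{Step 1: the group $U$.} In characteristic $3$ the curve $y^2=x^3-x$ has $\boldsymbol j=0=1728$. By \cite[Theorem~III.10.1 and Exercise~A.1]{Silverman09}, $\operatorname{Aut}(E_{0,\bar k},\mathrm O_{E_0})$ has order $12$ and is the semidirect product $C_3\rtimes C_4$. Concretely, the automorphisms are
\[
 (x,y)\mapsto(u^2x+r,\;u^3y),\qquad u^4=1,\ r\in\mathbb F_3 .
\]
Here $u=\pm1$ with $r\in\mathbb F_3$ gives the order-six subgroup, and $u=\zeta_4$ together with $r$ gives the elements of order four. The subgroup $Z=\{\pm1\}$ is the center, because $[-1]$ commutes with every automorphism. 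The quotient $U/Z$ has order $6$. It is nonabelian, since conjugating the translation $x\mapsto x+r$ by $\beta(x,y)=(-x,\zeta_4y)$ sends $r$ to $-r$, and so $r\mapsto-r$ acts nontrivially on the $C_3$. Hence $U/Z\cong S_3$.

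\emph{Step 2: the normalizer.} Every order-four subgroup $H$ contains $Z$, so $N_U(H)/Z=N_{S_3}(H/Z)$. The image $H/Z$ has order $2$, and in $S_3$ an order-two subgroup is its own normalizer. Therefore $N_U(H)=H$. One can also see this directly: there are three order-four subgroups, one for each $r$, and they are permuted transitively by conjugation by the $C_3$. With $U^\sharp=N_U(H)=H$ we get $W=U^\sharp/L_E=1$. By the marked twisted-orbit principle (Proposition~\ref{prop:marked-orbit}) there is then exactly one twisted conjugacy class, so the marked quotient has exactly one map-level geometric form.

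\emph{Step 3: the shift group.} The shift group is $\Delta=E_0[1-\beta]$. By \eqref{eq:fixed-counts}, $|E_0[1-\beta]|=2$; the degree of $1-\beta$ is $2$, which is prime to $3$, so $1-\beta$ is separable. Hence $\Delta\cong C_2$, and explicitly $\Delta=\{\mathrm O,(0,0)\}$. Frobenius acts trivially on a group of order $2$, so $(F_c-1)\Delta=0$. This gives $S_c=\Delta$, that is, two shift classes.

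\emph{Main obstacle.} The step needing the most care is confirming that $R_c=W=1$. Only once that is known can we conclude that the two shift classes are not identified by any marked automorphism. Since $W$ is trivial, $R_c$ is trivial, and the orbit space $S_c/R_c$ in Proposition~\ref{prop:marked-orbit} has exactly two elements.
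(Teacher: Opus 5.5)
Your proof is correct and follows essentially the same route as the paper's: the explicit description $(x,y)\mapsto(u^2x+r,u^3y)$ of $U$, then $U/Z\cong S_3$, then self-normalizing order-two subgroups giving $N_U(H)=H$ and $W=1$, and finally Frobenius fixing the nonzero point of $\Delta\cong C_2$, so that $S_c=\Delta$ with trivial $R_c$ yields two classes. The differences are cosmetic. You take $|U|=12$ from Silverman and recognize $U/Z$ as the nonabelian group of order $6$, whereas the paper cites Kronberg--Soomro--Top and reads $U/Z$ off as $\operatorname{AGL}_1(\mathbb F_3)$ acting on the roots of $x^3-x$. Two small corrections: the order-four elements come from $u=\pm\zeta_4$, not only $u=\zeta_4$, and you have shown only that $Z$ is central, not that it is the full center, though normality is all the argument needs.
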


\begin{proof}
For the model $y^2=x^3-x$, Kronberg--Soomro--Top describe the twelve
origin-preserving automorphisms explicitly as
$\Phi_{u,r}(x,y)=(u^2x+r,u^3y)$, where $u^4=1$ and $r\in\mathbb F_3$
\cite[Proposition~2.1]{KST17}.  Modulo the central subgroup $Z$, their action
on the three roots of $x^3-x$ is the full affine group
$\operatorname{AGL}_1(\mathbb F_3)\cong S_3$.  Since $H/Z$ is a transposition
subgroup and a transposition in $S_3$ is self-normalizing,
$N_U(H)/Z=H/Z$, hence $N_U(H)=H$.

Thus the exact two-sided equivalence theorem leaves no residual geometric
normalizer outside the complement itself.  Moreover
$\deg(1-\beta)=2$, so $\Delta\cong C_2$.  Frobenius normalizes $H$ and hence
fixes the unique nonzero point of $\Delta$; therefore
$\Delta/(\sigma_q-1)\Delta=\Delta$ has two elements, and no residual
normalizer identifies them.
\end{proof}

\subsubsection{The marked \texorpdfstring{$C_3$}{C3} normalizer in characteristic 2}

Let
\[
 E_0/\mathbb F_2:y^2+y=x^3,
\]
choose a primitive cube root $\omega\in\mathbb F_4$, and put
\[
 \beta(x,y)=(\omega x,y),\qquad
 L_{E_0}=\langle\beta\rangle\cong C_3,
 \qquad
 \Delta=E_0[1-\beta]=\{O,(0,0),(0,1)\}.
\]

\begin{lemma}[Marked $C_3$ normalizer in characteristic $2$]
\label{lem:char2-C3-normalizer}
One has
\[
 N_{\operatorname{Aut}(E_0,\mathrm O_{E_0})}(L_{E_0})
 =C_{\operatorname{Aut}(E_0,\mathrm O_{E_0})}(L_{E_0})
 =\langle\beta,[-1]\rangle\cong C_6,
\]
and hence
\[
 N_{\operatorname{Aut}(E_0)}(L_{E_0})=\Delta\rtimes C_6.
\]
\end{lemma}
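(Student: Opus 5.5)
The plan is to work with the explicit description of the origin-preserving automorphism group of the supersingular curve $E_0:y^2+y=x^3$ over $\bar{\mathbb F}_2$. By \cite[Theorem~III.10.1 and Exercise~A.1]{Silverman09}, $U:=\operatorname{Aut}(E_{0,\bar k},\mathrm O_{E_0})$ has order $24$ and is isomorphic to $\operatorname{SL}_2(\mathbb F_3)$. Its elements are the substitutions
\[
 (x,y)\longmapsto(u^2x+s^2,\;y+u^2sx+t),
\]
subject to $u^3=1$, $s^4+s=0$ and $t^2+t=s^3$. In this description $\beta$ corresponds to $u=\omega^2$ (or $\omega$, depending on normalization), $s=t=0$, and $[-1]$ corresponds to $u=1$, $s=0$, $t=1$. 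The center of $U\cong\operatorname{SL}_2(\mathbb F_3)$ is $\{\pm1\}$. The element $\beta$ has order $3$, so $\langle\beta,[-1]\rangle\cong C_6$ and it centralizes $L_{E_0}$.

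\textbf{Step one: the normalizer in $U$.} In $\operatorname{SL}_2(\mathbb F_3)$ the Sylow $3$-subgroups have order $3$, and there are four of them, since the image in $\operatorname{PSL}_2(\mathbb F_3)\cong A_4$ has four Sylow $3$-subgroups. Hence $|N_U(L_{E_0})|=24/4=6$, and this normalizer already contains $\langle\beta,[-1]\rangle$. So the two are equal, and $N_U=C_U$. As a check that avoids the group-theoretic identification, I can also compute directly. Conjugating $\beta$ by $(u,s,t)$ yields a map whose $x$-translation part is a nonzero multiple of $s^2$, and this must vanish for the result to lie in $\langle\beta\rangle$. So $s=0$, and then $t\in\{0,1\}$, which gives six elements.

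\textbf{Step two: the full automorphism group.} Any $\alpha\in\operatorname{Aut}(E_0)$ can be written uniquely as $\tau_Q\alpha_0$ with $\alpha_0\in U$. Comparing linear parts shows that $\alpha$ normalizes $L_{E_0}$ (viewed as origin-fixing automorphisms) only if $\alpha_0\in N_U(L_{E_0})$. The translation $\tau_Q$ then has to normalize $L_{E_0}$ as well. Since $\tau_Q\beta\tau_{-Q}=\tau_{(1-\beta)Q}\beta$, this forces $Q\in E_0[1-\beta]=\Delta$. Conversely, every such pair $(Q,\alpha_0)$ normalizes $L_{E_0}$. The set $\Delta=\{O,(0,0),(0,1)\}$ is exactly the fixed locus of $x\mapsto\omega x$, and it has order $3$ by \eqref{eq:fixed-counts}. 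It is stable under $C_6$, so we get the semidirect product $\Delta\rtimes C_6$.

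\textbf{Main obstacle.} The delicate point is pinning down the explicit form of $U$ in characteristic $2$ and verifying the $s=0$ conclusion. The Sylow counting argument sidesteps this, provided $U\cong\operatorname{SL}_2(\mathbb F_3)$ is cited correctly.
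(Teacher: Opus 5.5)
Your proof is correct. Step two is exactly the paper's argument. You write an automorphism as $\tau_Q\alpha_0$, force $\alpha_0$ into the origin-preserving normalizer by comparing linear parts, and then use $\tau_Q\beta\tau_{-Q}=\tau_{(1-\beta)Q}\beta$ to force $Q\in\Delta$.

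Step one is where you differ. The paper works directly with the Kronberg--Soomro--Top list $\Phi_{u,r,t}$, which is your Silverman parametrization with $r=s^2$. It observes that anything normalizing $L_{E_0}$ must preserve the fixed locus $\Delta$. Since $\Phi_{u,r,t}(0,0)=(r,t)$, this forces $r=0$ and leaves six maps, which visibly centralize $\beta$.

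Your main route instead uses only the abstract group structure $U\cong\operatorname{SL}_2(\mathbb F_3)$. Here $L_{E_0}$ is one of four Sylow $3$-subgroups, so $|N_U(L_{E_0})|=6$. The abelian group $\langle\beta,[-1]\rangle$ of order $6$ already sits inside this normalizer, so $N=C$ follows at once. This is slicker and explains conceptually why normalizer equals centralizer. However, it rests on the isomorphism type of $U$, which you cite from an exercise, whereas the paper needs only the explicit list.

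In fact you need less than the isomorphism. Sylow's theorem gives $n_3(U)\in\{1,4\}$, so it suffices that $L_{E_0}$ is not normal. Your backup computation supplies this: conjugating $\beta$ by an element with $s\neq0$ produces a map whose $x$-coordinate has a nonzero constant term, so the conjugate lies outside $L_{E_0}$. That same computation also makes the step from ``$A_4$ has four Sylow $3$-subgroups'' to ``$U$ has four'' unnecessary. Your backup computation, forcing $s=0$ through the conjugate's $x$-translation, is essentially the paper's fixed-set argument in different clothing.
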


\begin{proof}
Kronberg--Soomro--Top, Proposition~3.1, describe the $24$ origin-preserving
automorphisms of $E_0$ as
\[
 \Phi_{u,r,t}(x,y)=
 \bigl(u^2x+r,\ y+u^2r^2x+t\bigr),
\]
where $u\in\mathbb F_4^\times$, $r\in\mathbb F_4$, and
$t^2+t+r^3=0$ \cite{KST17}.  Any automorphism normalizing $L_{E_0}$ preserves
its fixed-point set $\Delta$.  The two finite points of $\Delta$ have
$x$-coordinate $0$, so the displayed formula forces $r=0$.  Then
$t\in\{0,1\}$ and $u\in\mathbb F_4^\times$, giving exactly six
automorphisms.  They are generated by $\beta$ and the elliptic involution
$[-1]:(x,y)\mapsto(x,y+1)$ and centralize $\beta$.

For the full automorphism group of the genus-one curve, write an arbitrary
automorphism as $\tau_T\alpha$ with $\alpha$ origin-preserving.  If it
normalizes $L_{E_0}$, then $\alpha\beta\alpha^{-1}=\beta^{\pm1}$ and
\[
 (\tau_T\alpha)\beta(\tau_T\alpha)^{-1}
 =\tau_{(1-\beta^{\pm1})T}\beta^{\pm1}.
\]
The right-hand side lies in the origin-preserving group $L_{E_0}$ exactly
when $T\in E_0[1-\beta^{\pm1}]$.  Since
$E_0[1-\beta^{-1}]=E_0[1-\beta]=\Delta$, the translation part is exactly
$\Delta$.  This proves
$N_{\operatorname{Aut}(E_0)}(L_{E_0})=\Delta\rtimes C_6$.
\end{proof}

\begin{lemma}[Marked $C_3$ normalizer in characteristic different from $3$]
\label{lem:C3-normalizer-allchar}
Let $E/\bar k$ carry a marked order-three subgroup
$L_E=\langle\beta\rangle$ and suppose $\operatorname{char}k\ne3$.  Then
\[
 N_{\operatorname{Aut}(E,\mathrm O_E)}(L_E)
 =C_{\operatorname{Aut}(E,\mathrm O_E)}(L_E)
 =\langle\beta,[-1]\rangle\cong C_6,
\]
and, with $\Delta=E[1-\beta]$,
\[
 N_{\operatorname{Aut}(E)}(L_E)=\Delta\rtimes C_6.
\]
\end{lemma}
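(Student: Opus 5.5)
The plan is to split by characteristic and reduce to what is already known. Since $\beta$ has order $3$ and $\operatorname{char}k\ne3$, the fixed-count formula \eqref{eq:fixed-counts} gives $|E[1-\beta]|=3$. The tangent-character argument in the proof of Theorem~\ref{thm:cyclotomic} shows that $\beta$ acts on the invariant differential by a primitive cube root of unity $\zeta_3$. Hence $\boldsymbol j(E)=0$, because a curve whose origin-preserving automorphism group contains an element of order $3$ has $j=0$ \cite[Theorem~III.10.1]{Silverman09}.

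\emph{Characteristic greater than $3$.} Here $\operatorname{Aut}(E,\mathrm O_E)\cong C_6$ is abelian and equals $\langle\beta,[-1]\rangle$. The normalizer and the centralizer are therefore both the whole group, and there is nothing more to prove.

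\emph{Characteristic $2$.} Every curve with $j=0$ is $\bar k$-isomorphic to $E_0:y^2+y=x^3$. I would transport $L_E$ along such an isomorphism to an order-three subgroup of $U_0=\operatorname{Aut}(E_0,\mathrm O_{E_0})$, a group of order $24$ isomorphic to $\mathrm{SL}_2(\mathbb F_3)$. The subgroups of order $3$ in $\mathrm{SL}_2(\mathbb F_3)$ are its Sylow $3$-subgroups, so they are all conjugate. After composing the isomorphism with an element of $U_0$, we may assume the transported subgroup is $\langle\beta_0\rangle$ with $\beta_0(x,y)=(\omega x,y)$. Lemma~\ref{lem:char2-C3-normalizer} then gives the normalizer and centralizer $\langle\beta_0,[-1]\rangle\cong C_6$, and transporting back proves the claim.

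The step I expect to need the most care is this conjugacy of order-three subgroups. If I want to avoid quoting the structure of $\mathrm{SL}_2(\mathbb F_3)$, I would argue directly instead. A Sylow count in a group of order $24$ gives $1$ or $4$ Sylow $3$-subgroups, and in either case they are conjugate. The centralizer claim also has a direct proof valid in both characteristics. An element normalizing $L_E$ either centralizes $\beta$ or inverts it. The tangent character $\chi:\operatorname{Aut}(E,\mathrm O_E)\to\bar k^\times$ is a homomorphism to an abelian group, so $\chi(\beta)=\chi(\beta^{-1})$ would force $\chi(\beta)^2=1$. This contradicts the fact that $\chi(\beta)=\zeta_3$ is a primitive cube root of unity. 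So the normalizer equals the centralizer.

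\emph{The full normalizer.} This part repeats the second half of the proof of Lemma~\ref{lem:char2-C3-normalizer} and is uniform in the characteristic. Write an arbitrary automorphism normalizing $L_E$ as $\tau_T\alpha$ with $\alpha$ origin-preserving. Conjugating $\beta$ gives $\tau_{(1-\beta')T}\beta'$, where $\beta'=\alpha\beta\alpha^{-1}$. This is forced to lie in $L_E$, so $\alpha$ lies in the origin-preserving normalizer $C_6$ and $\beta'=\beta^{\pm1}$. It then requires $T\in E[1-\beta^{\pm1}]=\Delta$, using $1-\beta^{-1}=-\beta^{-1}(1-\beta)$. Conversely, every $\tau_T\alpha$ with $T\in\Delta$ and $\alpha\in C_6$ normalizes $L_E$. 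Finally, $\Delta$ is normal and meets $C_6$ trivially, so $N_{\operatorname{Aut}(E)}(L_E)=\Delta\rtimes C_6$.
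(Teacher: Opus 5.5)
Your proof is correct and follows the paper's route: Silverman's classification when $\operatorname{char}k>3$, Lemma~\ref{lem:char2-C3-normalizer} in characteristic $2$, and the affine decomposition $\tau_T\alpha$ for the full normalizer. You also make explicit two reductions that the paper's short proof leaves implicit: transporting $L_E$ to $\langle\beta_0\rangle$ on $y^2+y=x^3$ via Sylow conjugacy in the order-$24$ automorphism group, and checking that the linear part of a normalizing element already lies in $C_6$.
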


\begin{proof}
If $\operatorname{char}k>3$, an order-three automorphism forces
$\boldsymbol j(E)=0$, and the origin-preserving automorphism group is the
cyclic group $C_6$ \cite[Theorem~III.10.1]{Silverman09}.  In characteristic
$2$ the first assertion is Lemma~\ref{lem:char2-C3-normalizer}.  For the full
affine normalizer, write an automorphism as $\tau_Tu$ with
$u\in C_6$.  Since $u$ centralizes $\beta$,
\[
 (\tau_Tu)\beta(\tau_Tu)^{-1}=\tau_{(1-\beta)T}\beta,
\]
which lies in the linear subgroup $L_E$ exactly when $T\in\Delta$.
\end{proof}

\section{Cubic self-endomorphism realization and the \texorpdfstring{$3$}{3}-adic boundary}
\label{sec:333}

The cubic row alone has a stronger realization problem: over the prescribed
field, the affine shift need not extend to a same-$C_3$ self-endomorphism
model.  Fried and GMS supply the geometric and characteristic-zero models
\cite[Lemma~2.1 and Theorem~2.2]{Fried78} \cite[Theorem~6.6(b)]{GMS03}; the
new issue is arithmetic saturation of the shift.  Write $n=\deg f=\ell^e$.
The construction below follows the algebraic core of \cite[Theorem~6.6(b) and
its proof]{GMS03} and isolates this finite-field step.

\subsection{The cubic isogeny}

Let $\mathcal B=\operatorname{Br}(f)=\{\xi_1,\xi_2,\xi_3\}$ be the
branch locus of $f$.  Each branch fibre has partition $\{1,3,\ldots,3\}$, so let $\upsilon_i$ be the unique
unramified point above $\xi_i$.  Frobenius preserves ramification indices,
so the correspondence $\xi_i\mapsto\upsilon_i$ is Galois equivariant.
Over $\bar k$ there is a unique M\"obius transformation carrying the ordered
branch triple to the corresponding simple-preimage triple.  Galois equivariance
and uniqueness show that this transformation lies in $\operatorname{PGL}_2(k)$.  Replacing
$f$ by $\widetilde f=f\circ\eta$, we may therefore assume that every $\xi_i$ is its
own simple preimage.

Let $B(X,Z)\in k[X,Z]$ be a squarefree homogeneous binary cubic whose zero
divisor on $\mathbf P^1_{\bar k}$ is $\mathcal B$.  Write the normalized
lower map as
\[
 \widetilde f=[F:G],
\]
where $F,G\in k[X,Z]$ are coprime homogeneous forms of degree $n$.

\begin{proposition}[Cubic twist identity]
\label{prop:cubic-twist}
There are $\kappa\in k^\times$ and a homogeneous form
$S(X,Z)\in k[X,Z]$ of degree $n-1$ such that
\[
 B(F,G)=\kappa\,B(X,Z)S(X,Z)^3.
\]
\end{proposition}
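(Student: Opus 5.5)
The plan is to compare zero divisors of the two binary forms of degree $3n$. Write $B(X,Z)=\prod_i(\eta_iX-\xi_iZ)$ over $\bar k$, up to a constant, with $\xi_i=[\xi_i:\eta_i]$. Then
\[
 B(F,G)=c\prod_{i=1}^3(\eta_iF-\xi_iG).
\]
The factor $\eta_iF-\xi_iG$ is a form of degree $n$ whose zero divisor is the fibre $\widetilde f^{-1}(\xi_i)$, counted with ramification multiplicities. By Theorem~\ref{thm:converse} (type $(3,3,3)$), or directly from Lemma~\ref{lem:gms-reduction}, each branch fibre has partition $\{1,3,\dots,3\}$. After the normalization made above, the simple point of that fibre is $\xi_i$ itself. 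Hence
\[
 \eta_iF-\xi_iG=c_i\,(\eta_iX-\xi_iZ)\,S_i^3
\]
for a form $S_i\in\bar k[X,Z]$ of degree $(n-1)/3$. Here $n\equiv1\pmod 6$ by \eqref{eq:four-types-intro}. Multiplying the three identities gives
\[
 B(F,G)=\kappa' B(X,Z)S'^3,\qquad S'=S_1S_2S_3\in\bar k[X,Z],
\]
with $\deg S'=n-1$ and $\kappa'\in\bar k^\times$. Coprimality of $F,G$ ensures that no factor vanishes identically and that the three fibres are disjoint, so this is an honest factorization.

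Next comes descent to $k$. The form $B(F,G)$ has coefficients in $k$, and so does $B(X,Z)$. Since $B$ is squarefree and coprime to $S'$ (the points $\xi_i$ are unramified, hence not zeros of $S_i$, and the fibres are disjoint), we can take
\[
 Q:=B(F,G)/B(X,Z)\in k[X,Z].
\]
This $Q$ is a constant times a cube over $\bar k$. Normalize $S'$ by making a chosen nonzero coefficient equal to $1$, say its leading coefficient in $X$ after a $k$-rational change of variables so that $Z\nmid S'$. Its zero divisor, a third of the divisor of $Q$, is Galois stable because $\operatorname{div}Q$ is. The normalized form is therefore fixed by $\Gamma_k$, so $S:=S'\in k[X,Z]$. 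Then $\kappa=Q/S^3$ is a Galois-invariant constant, so $\kappa\in k^\times$.

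The only point I expect to require care is the descent step: a cube root of a $k$-form need not have $k$-coefficients in general. The concern is handled by normalizing a coefficient to $1$ rather than trying to take a cube root of the full coefficient. This leaves the genuinely nontrivial cube class in $\kappa\in k^\times/k^{\times3}$, which is exactly why the statement allows an arbitrary twist $\kappa$ instead of $\kappa=1$. Everything else is divisor bookkeeping from the $(3,3,3)$ ramification partition.
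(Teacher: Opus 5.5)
Your proposal is correct and follows essentially the same route as the paper: both compare zero divisors to get $\operatorname{div}_0B(F,G)=\operatorname{div}_0B(X,Z)+3D$ with $D$ Galois-stable of degree $n-1$, and then take the ratio $B(F,G)/\bigl(B(X,Z)S^3\bigr)$ as a scalar in $k^\times$. The only difference is in the descent step. You prove directly that $D$ comes from a form over $k$, by normalizing a fixed nonzero coefficient of $S'$; the paper instead cites the standard fact that a $k$-rational effective divisor on $\mathbf P^1$ is cut out by a form in $k[X,Z]$. Your normalization works for any fixed coefficient position, since ${}^{\sigma}S'$ is a scalar multiple of $S'$, so the preliminary change of variables making $Z\nmid S'$ is unnecessary.
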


\begin{proof}
The zero divisor of $B(F,G)$ is the pullback of the branch divisor
$\mathcal B$.  In each branch fibre, the distinguished point $\xi_i$ occurs
with multiplicity $1$, while every other point occurs with ramification index
$3$.  Since each $\xi_i$ is its own simple preimage,
\[
 \operatorname{div}_0 B(F,G)=\operatorname{div}_0 B(X,Z)+3D
\]
for an effective $k$-rational divisor $D$ of degree $n-1$ on $\mathbf P^1$.
Because $D$ is an effective $k$-rational divisor on $\mathbf P^1$, it is
the zero divisor of a homogeneous form $S(X,Z)\in k[X,Z]$ of degree $n-1$.
The homogeneous forms $B(F,G)$ and $B(X,Z)S(X,Z)^3$ then have the same degree and
the same zero divisor on $\mathbf P^1_{\bar k}$, so they differ by a scalar
$\kappa\in k^\times$.
\end{proof}

Consider the smooth plane cubics
\[
 \mathcal C:\ W^3=B(X,Z),
 \qquad
 \mathcal C':\ W'^3=\kappa^{-1}B(X',Z').
\]
They have genus one because $B$ is squarefree and $\operatorname{char}k\ne3$;
equivalently, tame Riemann--Hurwitz for the degree-three coordinate map gives
genus one.  The homogeneous identity above defines a $k$-morphism
\[
 [X:Z:W]\longmapsto [F(X,Z):G(X,Z):W S(X,Z)]
\]
from $\mathcal C$ to $\mathcal C'$, which we denote by $\Psi$.  If
$\pi,\pi'$ denote the degree-three coordinate maps, then
$\pi'\Psi=\widetilde f\pi$, so $3\deg\Psi=3\deg\widetilde f$ and
$\deg\Psi=n$.  Since $\widetilde f$, $\pi$, and $\pi'$ are separable, so is
$\Psi$.  By Hasse--Weil, both curves have $k$-points.  Choosing
$\mathrm O_{\mathcal C}\in\mathcal C(k)$ and
$\mathrm O_{\mathcal C'}=\Psi(\mathrm O_{\mathcal C})$ as origins turns
$\Psi$ into a separable $k$-isogeny.  With these origins, $\mathcal C$ and $\mathcal C'$ are elliptic curves.
After base change to $\bar k$, a projective change of the $(X,Z)$-coordinates
sends the three zeros of $B$ to $0,1,\infty$; hence both curves are isomorphic
to the standard cyclic cubic model and satisfy
$\boldsymbol{j}(\mathcal C)=\boldsymbol{j}(\mathcal C')=0$.

\begin{lemma}[Identification with the geometric Galois closure]
\label{lem:c3-cubic-identification}
After base change to $\bar k$, the two cubic covers above are the source and
target cyclic $C_3$ quotients of the geometric genus-one Galois closure, and
$\Psi$ identifies with the quotient isogeny of
Theorem~\ref{thm:translation-kernel}.  In particular,
\[
 \ker\Psi(\bar k)=\mathcal{N}.
\]
\end{lemma}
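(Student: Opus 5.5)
The plan is to compare function fields over $\bar k$, using the uniqueness of the cyclic $C_3$ Kummer cover with prescribed branch data. By Theorem~\ref{thm:intro-forms}\textup{(i)} and Proposition~\ref{prop:induced-complement}, the geometric Galois closure is $E$, with source quotient $E\to E/H\cong\mathbf P^1$ (function field $\bar k(\mathbf x)$) and target quotient $E'\to E'/H'\cong\mathbf P^1$ (function field $\bar k(\mathbf t)$). Both are cyclic of degree $3$ and have type $(3,3,3)$ by Proposition~\ref{prop:cyclic-signature}.

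\textbf{Target side.} The branch locus of $E'\to E'/H'$ equals $\operatorname{Br}(f)=\mathcal B$; this was shown in the proof of Theorem~\ref{thm:rational-branches}. Since $\operatorname{char}k\ne3$, Kummer theory over $\bar k(\mathbf t)$ shows that a cyclic cubic extension totally ramified exactly over the three zeros of $B$ has the form $W^3=B_1^{a_1}B_2^{a_2}B_3^{a_3}$, where the $B_i$ are the linear factors of $B$ and each $a_i\in\{1,2\}$. The condition $\sum a_i\equiv0\pmod3$ (no branching at the point at infinity of the dehomogenized chart) leaves only $(1,1,1)$ and $(2,2,2)$, and these define the same extension. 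So the function field of $E'$ is $\bar k(\mathbf t)(\sqrt[3]{B})$, which is the function field of $\mathcal C'$ over $\pi'$; the scalar $\kappa^{-1}$ is irrelevant over $\bar k$.

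\textbf{Source side.} The source quotient $E\to E/H$ is branched at the points of $E/H$ lying under fixed points of $H$. These map under $f$ to $\mathcal B$, and they are the unramified preimages $\upsilon_i$. Indeed, a point of $E$ fixed by $H$ lies over a sheet fixed by the inertia group, which by semiregularity is the unique unramified point. After the normalization $\upsilon_i=\xi_i$, the source cubic cover is therefore again $W^3=B(X,Z)$, namely $\mathcal C$ over $\pi$.

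\textbf{Matching the map.} Both $\Psi$ and the geometric quotient isogeny $\varphi$ (transported through these identifications) satisfy $\pi'\circ(\cdot)=\widetilde f\circ\pi$ on function fields. Both are therefore $\bar k(\mathbf t)$-embeddings of the function field of $\mathcal C'$ into that of $\mathcal C$ extending $\mathbf t\mapsto\widetilde f(\mathbf x)$. Any two such embeddings differ by the deck group $C_3$ of $\pi'$, that is, by $\langle h'\rangle$. Composing the target identification with a power of $h'$ is harmless, since it changes neither $E'$ nor the quotient. After doing so, $\Psi=\varphi$ up to an automorphism of the target. The origin choices differ by a translation, and a translation does not change the kernel. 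Hence
\[
 \ker\Psi(\bar k)=\ker\varphi(\bar k)=\mathcal N.
\]

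\textbf{Main obstacle.} The delicate step is the source side: showing that the $\bar k$-identification of $E/H$ with the $X$-line carries the branch points of $E\to E/H$ exactly to the zeros of $B$. This is what makes the source Kummer cover be given by the same cubic $B$. I would first argue through the sheet-stabilizer description (Lemma~\ref{lem:core-free-affine-complement}): an $H$-fixed point is a sheet fixed by an inertia group, and semiregularity makes that sheet the simple preimage. The remaining uniqueness of lifts is the torsor statement of Lemma~\ref{lem:normal-closure-lifting}.
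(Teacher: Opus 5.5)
Your proposal is correct and follows essentially the paper's proof. Like the paper, you identify both vertical covers over $\bar k$ with $W^3=B$ by the Kummer-exponent argument (exponents all $1$ or all $2$), and then match $\Psi$ with the quotient isogeny lying over $\widetilde f$. The differences are minor. You justify via semiregularity why the source branch points are exactly the simple preimages, which the paper simply asserts. You also conclude $\Psi=\gamma\circ\varphi$ for a target deck transformation $\gamma$ using the $C_3$-torsor of lifts, whereas the paper reads off the deck group of $\Psi$ as $T_{\mathcal N}$ from the degree count ($3n$ over the target, $3$ over the source).
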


\begin{proof}
The source vertical cover of the geometric Galois closure is cyclic of degree
$3$ and is fully ramified exactly at the three marked simple preimages.  For a
cyclic cubic cover with three full branch points, the three Kummer exponents
are either all $1$ or all $2$: their sum is $0$ modulo $3$, and replacing the
generator of $C_3$ interchanges the two possibilities.  Hence over $\bar k$
the source cover is isomorphic, compatibly with the quotient coordinate, to
$W^3=B(X,Z)$.  The same argument applies to the target; the scalar $\kappa$ is irrelevant
over $\bar k$ because it becomes a cube.  Under these geometric identifications,
the lift $\Psi$ lies over the original lower map $\widetilde f$ and has degree
$n$.  The geometric Galois closure of $\widetilde f$ has degree $3n$ over the
target and degree $3$ over the source, so this cyclic cubic source cover is
precisely the top genus-one cover.  The translation deck subgroup of $\Psi$
is therefore $T_{\mathcal{N}}$, as in Theorem~\ref{thm:translation-kernel}.
\end{proof}

Via Lemma~\ref{lem:c3-cubic-identification}, we henceforth write
\[
 E:=\mathcal C,\qquad E':=\mathcal C',
\]
and identify $\Psi$ with the quotient isogeny $\varphi:E\to E'$ and its
kernel with $\mathcal N$.

The target is in fact $k$-isomorphic to the source.  If $e=2$, this is already
contained in Theorem~\ref{thm:translation-kernel}.  Suppose $e=1$.  The
order-three linear part acts faithfully on the cyclic kernel, so
$\ell\equiv1\pmod3$.  Frobenius acts by a scalar on that kernel, hence commutes
with the order-three action; comparing with Theorem~\ref{thm:cyclotomic}
forces $q\equiv1\pmod3$.  Thus $\beta$ is defined over $k$.  The Eisenstein
order $\mathbb Z[\beta]\cong\mathbb Z[\zeta_3]$ is a PID\@.  If
$\ell\ne\operatorname{char}k$, then $\ell\equiv1\pmod3$ splits in this order.
The two prime-factor endomorphisms have kernels equal to the two
$\beta$-eigenlines in $E[\ell]$, so one of them has kernel $\mathcal{N}$; choose a
generator $\varpi$ of the corresponding prime ideal.  Since
$\widehat\beta=\beta^{-1}=\beta^2$, one has
$\widehat\varpi\varpi=[\operatorname{Norm}(\varpi)]=[\ell]$, hence
$\deg\varpi=\ell$.  As $\ell\ne\operatorname{char}k$, this endomorphism is
separable; it is defined over $k$ because $\beta$ is.  If
$\ell=\operatorname{char}k$, the existence of the
reduced subgroup $\mathcal{N}\cong C_\ell$ forces $E$ to be ordinary
\cite[Corollary~III.6.4(c)]{Silverman09}.  Here $\ell\equiv1\pmod3$, so
$(\ell)=\mathfrak p\bar{\mathfrak p}$ in $\mathbb Z[\beta]$.  For an ordinary elliptic curve in characteristic $\ell$, the group scheme
$E[\ell]$ has a unique reduced subgroup of order $\ell$ and a unique connected
subgroup of order $\ell$.  An isogeny of degree $\ell$ is separable exactly
when its kernel group scheme is reduced.  Hence, among the two degree-$\ell$
prime-factor isogenies above $\ell$, exactly one has reduced kernel and is
separable; its geometric kernel is the unique reduced subgroup
$E[\ell](\bar k)$.  Thus that kernel is precisely $\mathcal{N}$.  Because $\beta$ is defined over
$k$, the separable prime factor is a $k$-endomorphism.  Hence $E'\cong_kE$ in all cases.

\subsection{Two-sided saturation}

Translate a geometric fixed point of $H$ to the origin, so that
$H=\langle\beta\rangle$.  Put
\[
 \delta:=1-\beta,
 \qquad
 \Delta:=E[\delta],
 \qquad |\Delta|=3.
\]
Under this linearization, let $s\in\Delta$ be the cocycle representative
arising from the shift class, so that $\mathfrak s_E(H)=[s]$; equivalently,
$s$ is the translation part of the descended arithmetic Frobenius.  Let
$\mathcal F_0$ denote the origin-preserving linear part of the chosen
arithmetic $q$-Frobenius descent and put
\[
 \widetilde{\mathcal F}_0(P)=\mathcal F_0(P)+s.
\]
Under the fixed geometric identification, arithmetic Frobenius acts on
$\Delta$ through $\mathcal F_0|_\Delta$; hence the original shift class is
represented in $\Delta/(\mathcal F_0-1)\Delta$.  Replacing the chosen lift of arithmetic
Frobenius by $\beta^j$ times that lift gives the three Frobenius lifts
\begin{equation}
 \widetilde{\mathcal F}_j(P)=\mathcal F_j(P)+s,
 \qquad
 \mathcal F_j=\beta^j\mathcal F_0,
 \qquad j=0,1,2.
\label{eq:c3-sections}
\end{equation}
Since $s\in\Delta$, one has $\beta(s)=s$, so the translation term is unchanged.
For each section $j$, let $E_j/k$ denote the elliptic curve obtained from the
corresponding descended genus-one form after choosing a $k$-rational origin;
such an origin exists by the Hasse--Weil bound.  Under a fixed geometric
identification $E_{j,\bar k}\cong E$, its Frobenius endomorphism is represented
by $\mathcal F_j$.  When the ambient object is explicitly torsion or
$T_3(E)$, the same symbol denotes the induced operator; statements on
$T_3(E)$ are equalities in
$\operatorname{End}_{\mathbb Z_3}(T_3(E))$.

\begin{definition}[Correct-kernel endomorphism for a section]
\label{def:correct-kernel}
For a fixed section index $j$, a \emph{correct-kernel endomorphism} is a
separable endomorphism $L\in\operatorname{End}(E_{\bar k})$ of degree $|\mathcal{N}|$ satisfying
\[
 \ker L(\bar k)=\mathcal{N},\qquad L\beta=\beta L,\qquad L\mathcal F_j=\mathcal F_jL.
\]
Thus, $L$ is exactly the geometric kernel isogeny expressed on the chosen
geometric elliptic curve with $\boldsymbol{j}(E)=0$ and is compatible with both the linear $C_3$ action and
the linear Frobenius descent for that section.  For such an $L$, the target
$k$-form has descent datum
\[
 \widetilde{\mathcal F}'_j(P)=\mathcal F_j(P)+L(s),
\]
because $L\widetilde{\mathcal F}_j=\widetilde{\mathcal F}'_jL$.
\end{definition}

The distinction between linear and affine descent is essential here.  A
correct-kernel endomorphism $L$ carries the source affine shift $s$ to the
target shift $L(s)$, so it gives a $k$-isogeny between the corresponding
descended forms, but not a priori a self-endomorphism of one form.  The
saturation problem asks whether the two affine descents can be identified,
within the $C_3$-normalizer freedom, so that this isogeny becomes a
same-$C_3$ self-model.

\begin{definition}[Same-$C_3$ realization]
\label{def:same-c3}
A \emph{same-$C_3$ self-endomorphism realization} of the lower cover means a
$k$-form of the geometric genus-one curve equipped with a $k$-rational origin,
a separable self-endomorphism fixing that origin, and a Frobenius-stable affine
$C_3$ subgroup commuting with the self-endomorphism, such that quotienting by
this same affine subgroup on source and target yields a lower rational map
two-sided $k$-M\"obius equivalent to the original cover.
\end{definition}

\begin{lemma}[Section and normalizer freedom]
\label{lem:c3-normalizer}
Every Frobenius section of the same arithmetic point-stabilizer extension is
represented by one of the three lifts in \eqref{eq:c3-sections}.  Moreover,
\[
 N_{\operatorname{Aut}(E_{\bar k})}(\langle\beta\rangle)
 =\Delta\rtimes C_6,
\]
and every origin-preserving element of this normalizer preserves
$\mathcal N$.
\end{lemma}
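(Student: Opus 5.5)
The proof splits into three assertions, each handled with earlier results.

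\emph{Sections.} By Lemma~\ref{lem:procyclic-splitting}, a Frobenius section is determined by the image of $\sigma_q$, which is a lift in the arithmetic point stabilizer $\widetilde H=H\rtimes B$. Any two such lifts of $\sigma_q$ differ by an element of the geometric stabilizer $H$. After linearization $H=\langle\beta\rangle$, so the lifts are exactly $\beta^j\widetilde{\mathcal F}_0$ with $j=0,1,2$. Because $\beta$ fixes $s\in\Delta$, we get
\[
\beta^j\widetilde{\mathcal F}_0(P)=\beta^j\mathcal F_0(P)+\beta^j(s)=\mathcal F_j(P)+s,
\]
which is \eqref{eq:c3-sections}. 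The section is determined by its value at the topological generator $\sigma_q$, since continuity extends it uniquely from $\mathbb Z$ to $\widehat{\mathbb Z}$ as in the proof of Lemma~\ref{lem:procyclic-splitting}. Hence every section is represented by one of these three lifts.

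\emph{Normalizer.} Here $\operatorname{char}k\ne3$, since the cubic row requires tameness. Lemma~\ref{lem:C3-normalizer-allchar} therefore applies to the marked pair $(E,\langle\beta\rangle)$ and gives
\[
N_{\operatorname{Aut}(E_{\bar k})}(\langle\beta\rangle)=\Delta\rtimes C_6,
\]
whose origin-preserving part is $\langle\beta,[-1]\rangle$. In characteristic $2$, Lemma~\ref{lem:char2-C3-normalizer} is the relevant input for the origin-preserving part. No further argument is needed for this assertion.

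\emph{Preservation of $\mathcal N$.} Take $u\in\langle\beta,[-1]\rangle$. The subgroup $\mathcal N$ is a subgroup, so $[-1]\mathcal N=\mathcal N$. The datum condition $L_E(H)\mathcal N=\mathcal N$ from Definition~\ref{def:datum} gives $\beta\mathcal N=\mathcal N$. Since the datum was linearized, $L_E(H)=\langle\beta\rangle$, so the same $\beta$ is meant in both places. Both generators preserve $\mathcal N$, hence every origin-preserving element of the normalizer does.

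The only step requiring care is the section statement. We must check that conjugating the chosen translation-linearization does not change the translation term $s$ among the lifts. This holds because all lifts differ by elements of the linear $\langle\beta\rangle$, and these fix $\Delta$ pointwise.
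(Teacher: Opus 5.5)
Your proposal is correct and follows the paper's proof. Sections are determined by the image of $\sigma_q$, whose possible lifts form the coset $H\widetilde F$ and become the three $\beta^j\widetilde{\mathcal F}_0$ because $\beta$ fixes $s$; the normalizer identity is Lemma~\ref{lem:C3-normalizer-allchar}; and preservation of $\mathcal N$ reduces to $\beta$-stability. For the last point, your argument is uniform in the rank: every subgroup is $[-1]$-stable, and $\beta\mathcal N=\mathcal N$ by Definition~\ref{def:datum}. This is a slightly cleaner version of the paper's case split into $e=2$, where $\mathcal N=E[\ell]$, and $e=1$, where $\mathcal N$ is a $\beta$-stable line.
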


\begin{proof}
The extension
\[
 1\to H\to\widetilde H\to\Gamma_k\to1
\]
has fibre $H\widetilde F$ over arithmetic Frobenius; since $\Gamma_k$ is
procyclic, a continuous section is determined by the image of this generator.
This gives the first assertion.  The normalizer identity is
Lemma~\ref{lem:C3-normalizer-allchar}.  If $e=2$ then
$\mathcal N=E[\ell]$, while for $e=1$ the kernel is a $\beta$-stable cyclic
line.  Hence every unit in $\langle-1,\beta\rangle$ preserves $\mathcal N$.
\end{proof}

\begin{theorem}[Two-sided saturation criterion]
\label{thm:c3-saturation}
A $(3,3,3)$ cover is two-sided $k$-M\"obius equivalent to a same-$C_3$
self-endomorphism quotient if and only if there exist
$j\in\{0,1,2\}$, a correct-kernel endomorphism $L$ for that section, and
$P\in E(\bar k)$ such that
\begin{equation}
 (1-\mathcal F_j)P=s,
\label{eq:c3-descent-eq}
\end{equation}
and
\begin{equation}
 (1-L)P\in\Delta.
\label{eq:c3-normalizer-eq}
\end{equation}
\end{theorem}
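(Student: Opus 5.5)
The plan is to prove both directions by moving the affine Frobenius descent $\widetilde{\mathcal F}_j$ around with translations, and to use Lemma~\ref{lem:c3-normalizer} to control all the freedom. The starting point is the linearized setup: $H=\langle\beta\rangle$ and the chosen section $j$. The source $k$-form is cut out by $\widetilde{\mathcal F}_j=\tau_s\mathcal F_j$. For a correct-kernel $L$, the target form is cut out by $\tau_{L(s)}\mathcal F_j$ (Definition~\ref{def:correct-kernel}), and $L$ is a $k$-isogeny between these affine forms.

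A same-$C_3$ realization asks for a $k$-rational origin, that is, a point $P$ fixed by the source descent. The condition $\widetilde{\mathcal F}_j(P)=P$ reads $\mathcal F_j(P)+s=P$, which is exactly \eqref{eq:c3-descent-eq}. Recentring at $P$ conjugates the descent to the linear $\mathcal F_j$ and turns $H$ into the affine group $\tau_{-P}H\tau_P$. The self-endomorphism should then be the recentred isogeny $\tau_{-P}L\tau_P$, suitably composed with a $k$-identification of target and source.

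\emph{Sufficiency.} Given $j$, $L$, $P$ satisfying \eqref{eq:c3-descent-eq} and \eqref{eq:c3-normalizer-eq}, set $\Phi:=\tau_{-P}\circ L\circ\tau_P$. This is an affine map with linear part $L$ and translation part $L(P)-P\in\Delta$.

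Since $\Delta=E[1-\beta]$ is fixed by $\beta$, the translation $\tau_{L(P)-P}$ commutes with $\beta$, so $\tau_{-L(P)}\tau_{P}$ normalizes $H$. This translation lies in $\Delta\subset N_{\operatorname{Aut}}(\langle\beta\rangle)$ by Lemma~\ref{lem:c3-normalizer}. The target form carries descent $\tau_{L(s)}\mathcal F_j$ with rational point $L(P)$. Its difference from $P$ is a $\Delta$-translation, so $\tau_{P-L(P)}$ identifies the target descent with the source descent. For this one checks that $(1-\mathcal F_j)(L(P)-P)=L(s)-s$, using that $\mathcal F_j$ commutes with $L$ and acts on $\Delta$ compatibly with the $\Delta$-normalizer element; the identification also preserves $H$.

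Composing, we obtain a $k$-endomorphism of $(E,P)$ that fixes the origin and commutes with the Frobenius-stable affine group $\tau_{-P}H\tau_P$. Quotienting source and target by this same group gives, by Proposition~\ref{prop:induced-complement} and the well-definedness of $\mathcal M(\mathcal D)$, a lower map equivalent to the cover.

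\emph{Necessity.} Start from a realization. Transport it to the fixed geometric model via the exact equivalence theorem, Theorem~\ref{thm:intro-forms}(iii). This produces a geometric isomorphism $\alpha$ with linear part in the normalizer $\Delta\rtimes C_6$. Since we only take $\alpha$ up to $L_E$, its linear part may be taken in $\langle[-1]\rangle$, and $[-1]$ preserves $\mathcal N$ by Lemma~\ref{lem:c3-normalizer}.

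The realization's Frobenius corresponds to some section, and by Lemma~\ref{lem:c3-normalizer} every section is one of the $\widetilde{\mathcal F}_j$. Its $k$-rational origin pulls back to a point $P$ fixed by $\widetilde{\mathcal F}_j$, which gives \eqref{eq:c3-descent-eq}.

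The self-endomorphism, read in the linearized coordinates, is an affine map $\tau_{-P}L'\tau_P$ up to normalizer elements. Its linear part $L$ has kernel $\mathcal N$, commutes with $\beta$ because the affine $C_3$ is the same on both sides, and commutes with $\mathcal F_j$ because it is defined over $k$. So $L$ is a correct-kernel endomorphism. The fact that the same affine $C_3$ is used on source and target forces the translation discrepancy $L(P)-P$ to lie in the translation part of $N(\langle\beta\rangle)$, namely $\Delta$; this is \eqref{eq:c3-normalizer-eq}.

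The main obstacle is the bookkeeping in the necessity direction. The realization's target identification may involve a $\Delta$-translation, a $[-1]$, and a change of section simultaneously. I would absorb $[-1]$ into $L$ first, replacing $L$ by $-L$, which keeps the kernel and all commutations. I would then absorb the $\beta$-power into the choice of $j$, leaving only the $\Delta$-translation to read off as \eqref{eq:c3-normalizer-eq}.
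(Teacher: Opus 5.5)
Your proposal is correct and follows the paper's proof. For sufficiency, your recentred $L$ followed by the $k$-rational $\Delta$-translation $\tau_{P-L(P)}$, justified by $(1-\mathcal F_j)(1-L)P=(1-L)s$, is exactly the paper's map $\tau_T L$ with $T=(1-L)P\in\Delta$. For necessity, you use the same transport-and-decompose argument, reading off $L$ and $T=(1-L)P$ from $\Phi_0\beta=\beta\Phi_0$, $\Phi_0\widetilde{\mathcal F}_j=\widetilde{\mathcal F}_j\Phi_0$ and $\Phi_0(P)=P$.

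The bookkeeping you flag as the main obstacle does not arise. A single lifted isomorphism transports both source and target of the self-model, and its only ambiguity lies in $H$, which merely changes $j$. Also, your $\tau_{-P}L'\tau_P$ should read $\tau_P L'\tau_{-P}$; this is harmless because $\Delta$ is a subgroup.
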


\begin{proof}
Assume first that \eqref{eq:c3-descent-eq}--\eqref{eq:c3-normalizer-eq}
hold and set $T=(1-L)P\in\Delta$.  Equation
\eqref{eq:c3-descent-eq} says that $P$ is $k$-rational for the source genus-one
form with descent $P\mapsto \mathcal F_jP+s$.  On the target form,
\[
 \mathcal F_j(LP)+Ls=L(\mathcal F_jP+s)=LP,
\]
so $LP$ is $k$-rational.  Translation by $T$ sends $LP$ to $P$, and it is
defined over
$k$ between the two descended forms because
\[
 (1-\mathcal F_j)T=(1-L)(1-\mathcal F_j)P=(1-L)s.
\]
Since $T\in\Delta$, it commutes with $\beta$.  Hence
$\Phi:=\tau_TL$ identifies the two affine $C_3$ quotients and fixes the chosen
origin $P$.  It is separable of degree $|\mathcal{N}|$ because $L$ has reduced kernel $\mathcal{N}$; with $P$ as origin it is an elliptic endomorphism
\cite[Theorem~III.4.8]{Silverman09}.

Conversely, suppose that a two-sided $k$-M\"obius equivalent realization in
the sense of Definition~\ref{def:same-c3} exists.  By Lemma~\ref{lem:normal-closure-lifting}, the lower-cover equivalence lifts to
the geometric normal closures and conjugates their point stabilizers.  Transport
the self-model to the fixed geometric curve $E$ and
linearize its geometric $C_3$ subgroup as above.  By
Lemma~\ref{lem:c3-normalizer}, the transported $k$-descent is one of the three
data \eqref{eq:c3-sections}.  If $P\in E(\bar k)$ is the transported
$k$-rational origin of the self-model, then
\[
 \mathcal F_jP+s=P,
\]
which is \eqref{eq:c3-descent-eq}.

Let $\Phi_0:E\to E$ be the transported geometric self-map.  Relative to the
fixed origin $\mathrm O_E$, write its unique affine decomposition as
\[
 \Phi_0=\tau_TL,
\]
where $L$ is an origin-preserving isogeny
\cite[Theorem~III.4.8]{Silverman09}.  The translations between two points in a
fibre of $\Phi_0$ are exactly $T_{\ker L(\bar k)}$.  The lifted normal-closure
identification carries this regular translation deck group of the self-model to
$T_{\mathcal{N}}$; hence $\ker L(\bar k)=\mathcal{N}$.  Since the self-model has degree $|\mathcal{N}|$ and is separable, $L$ is a
separable degree-$|\mathcal{N}|$ isogeny.  Since the same affine $C_3$ subgroup is used
on source and target, $\Phi_0\beta=\beta\Phi_0$.  Comparing the unique affine
parts gives
\[
 L\beta=\beta L,
 \qquad
 T\in E[1-\beta]=\Delta.
\]
Likewise, $k$-descent of $\Phi_0$ means
$\Phi_0\widetilde{\mathcal F}_j=\widetilde{\mathcal F}_j\Phi_0$; comparison of linear parts yields
$L\mathcal F_j=\mathcal F_jL$.  Thus $L$ is a correct-kernel endomorphism for section $j$.
Finally, the self-endomorphism fixes its chosen origin, so
\[
 P=\Phi_0(P)=LP+T,
 \qquad
 T=(1-L)P\in\Delta.
\]
This is \eqref{eq:c3-normalizer-eq}.
\end{proof}

If the shift class vanishes, take $s=0$ and $P=\mathrm O_E$.  We henceforth assume
$s\ne0$.

\subsection{The split case}

The split argument is local at $3$.  We first choose a Frobenius section for which $1-\mathcal F_j$ has the
smallest possible Eisenstein valuation; we then choose a
correct-kernel endomorphism congruent to the identity modulo $1-\beta$.  The
two choices make the descent and normalizer equations of
Theorem~\ref{thm:c3-saturation} simultaneously solvable.

\begin{lemma}[The Eisenstein $3$-adic module]
\label{lem:eisenstein-3adic}
Let $\mathcal O_3=\mathbb Z_3[\beta]$.  Then $T_3(E)$ is free of rank one as
an $\mathcal O_3$-module, and
\[
 \operatorname{Cent}_{\operatorname{End}_{\mathbb Z_3}(T_3(E))}(\beta)=\mathcal O_3.
\]
Consequently, in the split case every $\mathcal F_j$ and every correct-kernel
endomorphism for section $j$ may be regarded as an element of
$\mathcal O_3$.
\end{lemma}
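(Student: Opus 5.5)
The plan is to treat $\beta$ as a $\mathbb Z_3$-linear operator on the rank-two free $\mathbb Z_3$-module $T_3(E)$. We are in the $(3,3,3)$ setting with $\operatorname{char}k\ne3$, so $T_3(E)\cong\mathbb Z_3^2$ (by \cite[Corollary~III.6.4]{Silverman09}), and $\beta$ acts with $\beta^2+\beta+1=0$.

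The first step is to make $T_3(E)$ a module over $\mathcal O_3=\mathbb Z_3[\beta]\cong\mathbb Z_3[\zeta_3]$. This is a complete DVR, totally ramified of degree two over $\mathbb Z_3$, with uniformizer $1-\beta$. Since $\beta\ne1$ acts faithfully through this quotient of $\mathbb Z_3[x]$, the module structure is well defined. Next I show freeness: $T_3(E)$ is finitely generated and torsion-free over $\mathcal O_3$, because it is already $\mathbb Z_3$-torsion-free and every nonzero element of $\mathcal O_3$ divides a nonzero element of $\mathbb Z_3$ (its norm). Over a DVR, finitely generated torsion-free modules are free. Comparing $\mathbb Z_3$-ranks, $2=2\cdot\operatorname{rank}_{\mathcal O_3}$, so the module is free of rank one.

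For the centralizer, I take an endomorphism $A$ of $T_3(E)$ commuting with $\beta$. Such an $A$ is $\mathcal O_3$-linear, hence an element of $\operatorname{End}_{\mathcal O_3}(\mathcal O_3)=\mathcal O_3$, acting by multiplication. The reverse inclusion is clear, since $\mathcal O_3$ is commutative.

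For the consequence, I note that in the split case $q\equiv1\pmod3$, so Theorem~\ref{thm:cyclotomic} gives ${}^{\sigma_q}\beta=\beta$. Thus $\mathcal F_0$ commutes with $\beta$, and so do the $\mathcal F_j=\beta^j\mathcal F_0$. A correct-kernel endomorphism $L$ satisfies $L\beta=\beta L$ by Definition~\ref{def:correct-kernel}. Via their induced actions on $T_3(E)$, all of these therefore lie in $\mathcal O_3$.

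The only mildly delicate point is identifying the abstract operators with their Tate-module images. Passing from $\operatorname{End}(E)$ to $\operatorname{End}_{\mathbb Z_3}(T_3(E))$ is injective, by \cite[Theorem~III.7.4]{Silverman09}. The Frobenius operator is handled on geometric points as in the paper's conventions, and it acts compatibly on $3$-power torsion. I expect no real obstacle beyond checking that $\beta$ generates an honest quotient $\mathbb Z_3[x]/(x^2+x+1)$, which holds since the minimal polynomial is irreducible over $\mathbb Q_3$.
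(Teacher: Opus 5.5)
Your proof is correct and follows essentially the same route as the paper: view $T_3(E)$ as a module over the ramified quadratic DVR $\mathcal O_3=\mathbb Z_3[\beta]$, conclude it is free of rank one because it is finitely generated and torsion-free, identify the centralizer of $\beta$ with $\operatorname{End}_{\mathcal O_3}(\mathcal O_3)=\mathcal O_3$, and use Theorem~\ref{thm:cyclotomic} together with Definition~\ref{def:correct-kernel} for the split-case consequence. You add a little detail the paper leaves implicit: the norm argument for torsion-freeness and the $\mathbb Z_3$-rank count.
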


\begin{proof}
The action of $\beta$ is faithful and satisfies $\beta^2+\beta+1=0$, so
$\mathcal O_3$ acts faithfully on the free rank-two $\mathbb Z_3$-module
$T_3(E)$.  The ring $\mathcal O_3$ is the ramified quadratic discrete
valuation ring over $\mathbb Z_3$.  Hence $T_3(E)$ is a torsion-free rank-one
$\mathcal O_3$-module and is therefore free.  Its $\mathcal O_3$-linear
endomorphism ring is $\mathcal O_3$, which is exactly the centralizer of
$\beta$.  When $q\equiv1\pmod3$, Theorem~\ref{thm:cyclotomic} gives
$\mathcal F_j\beta=\beta \mathcal F_j$; a correct-kernel endomorphism commutes with $\beta$ by
Definition~\ref{def:correct-kernel}.
\end{proof}

\noindent\emph{Completion of the split case.}  Assume $q\equiv1\pmod3$.
We verify the two equations in Theorem~\ref{thm:c3-saturation}.  In rank one, use the separable $k$-endomorphism with kernel $\mathcal N$
constructed in the paragraph preceding the two-sided saturation setup; in rank
two, $\mathcal N=E[\ell]$ and one may take $[\ell]$.  In the split case these
endomorphisms commute with both $\mathcal F_0$ and $\beta$, hence with every
$\mathcal F_j=\beta^j\mathcal F_0$; thus correct-kernel endomorphisms are
available for all three sections.  Work in the ramified quadratic local ring
\[
 \mathcal O_3=\mathbb Z_3[\beta],
 \qquad \delta=1-\beta,
\]
and normalize $v_\delta(\delta)=1$.  Since the shift class is nonzero,
its image in $\Delta/(\mathcal F_0-1)\Delta$ is nonzero.  As
$\Delta\cong\mathbb F_3$, a nontrivial action of $\mathcal F_0$ on $\Delta$
would make $\mathcal F_0-1$ invertible and force this quotient to vanish.
Hence $\mathcal F_0$ acts trivially on $\Delta$, equivalently
$\mathcal F_0\equiv1\pmod\delta$.  Modulo $\delta^2$, multiplication by
$\beta^j\equiv1-j\delta$ shifts the first-order coefficient of
$\mathcal F_0-1$ by $-j$.  Hence one can choose $j\in\{0,1,2\}$ for which
that coefficient is nonzero modulo $3$; then
\[
 v_\delta(1-\mathcal F_j)=1.
\]
By Lemma~\ref{lem:eisenstein-3adic}, the relevant operators lie in
$\mathcal O_3$.  Since $\deg L=|\mathcal{N}|=\ell^e$ and $\ell\ne3$, the degree of a
correct-kernel endomorphism is prime to $3$.  If its
$3$-adic representative were divisible by $\delta$, its determinant on
$T_3(E)$, hence its degree, would be divisible by
$\operatorname{Norm}(\delta)=3$.  Thus it is a unit modulo $\delta$.
Since
$(\mathcal O_3/(\delta))^\times=\{\pm1\}$, multiplying by $[-1]$ if necessary
allows us to choose
\[
 L\equiv1\pmod\delta,
 \qquad 1-L=\delta\mathfrak u.
\]
Write $1-\mathcal F_j=\delta\mathfrak v$ with $\mathfrak v$ a unit.  Multiplication by $\delta$ maps
$E[\delta^2]$ onto $E[\delta]$, so
$1-\mathcal F_j:E[\delta^2]\to\Delta$ is surjective.  Choose
$P\in E[\delta^2]$ with $(1-\mathcal F_j)P=s$.  Then
$(1-L)P\in E[\delta]=\Delta$, and
Theorem~\ref{thm:c3-saturation} applies.

Thus every split $(3,3,3)$ case admits a same-$C_3$ self-endomorphism realization.
In particular, every prime-degree case is split: Frobenius acts by a scalar on
the cyclic kernel, while the faithful order-three action of $\beta$ must commute
with it; Theorem~\ref{thm:cyclotomic} then forces $q\equiv1\pmod3$.

\subsection{The nonsplit rank-two case}

In this subsection, formulas involving $T_3(E)$ use $\mathcal F_j$ for the
induced Tate-module operator fixed in the convention of Section~\ref{sec:333}.
Here the obstruction is entirely $3$-primary.  Correct-kernel endomorphisms collapse to the two scalar possibilities
$\pm[\ell]$; Frobenius has a trace-zero $3$-adic splitting; and the
same-$C_3$ realization problem reduces to comparing the depth of a preimage
under $1-\mathcal F_j$ with the $3$-adic valuation of
$1-\operatorname{sgn}_3(\ell)\ell$.  Fix the cocycle representative
$s\in\Delta$ from the two-sided saturation setup.

Assume
\[
 e=2,
 \qquad
 q\equiv-1\pmod3,
 \qquad
 s\ne0.
\]
Then $\mathcal{N}=E[\ell]$.

\begin{lemma}[Correct-kernel endomorphisms]
\label{lem:c3-correct-kernel}
Every separable degree-$\ell^2$ endomorphism with kernel $E[\ell]$ has the
form $\gamma[\ell]$ for some $\gamma\in\operatorname{Aut}(E,\mathrm O_E)$.
For every section $j$, such an endomorphism is correct-kernel if and only if
$\gamma=\pm1$; equivalently, the correct-kernel endomorphisms are exactly
$[\ell]$ and $[-\ell]$.
\end{lemma}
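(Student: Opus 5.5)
The plan has two parts. First, identify all separable degree-$\ell^2$ endomorphisms with kernel $E[\ell]$. Then decide which of these commute with $\beta$ and with each $\mathcal F_j$.

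\emph{Factorization through $[\ell]$.} Let $L$ be separable of degree $\ell^2$ with $\ker L(\bar k)=E[\ell]$. Since $\ell\ne\operatorname{char}k$ (Theorem~\ref{thm:translation-kernel} in rank two), $[\ell]$ is separable with the same reduced kernel. The universal property of separable quotients \cite[Proposition~III.4.12 and Remark~III.4.13.2]{Silverman09} then gives an isogeny $\gamma$ with $L=\gamma\circ[\ell]$. Comparing degrees gives $\deg\gamma=1$, and $\gamma$ fixes the origin, so $\gamma\in\operatorname{Aut}(E,\mathrm O_E)$. The converse is immediate: every $\gamma[\ell]$ is separable of degree $\ell^2$ with kernel $E[\ell]$.

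\emph{Commuting with $\beta$.} The scalar $[\ell]$ commutes with every endomorphism. Hence $\gamma[\ell]$ is correct-kernel exactly when $\gamma$ commutes with $\beta$ and with $\mathcal F_j$; to see this, cancel $[\ell]$ using surjectivity, or injectivity in $\operatorname{End}(E)$, which is torsion-free. By Lemma~\ref{lem:C3-normalizer-allchar}, which holds in any characteristic $\ne3$ and so covers the characteristic-$2$ case, the centralizer of $L_E=\langle\beta\rangle$ in $\operatorname{Aut}(E,\mathrm O_E)$ is $\langle\beta,[-1]\rangle\cong C_6$. So $\gamma=\pm\beta^i$.

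\emph{Commuting with $\mathcal F_j$.} Here we use the nonsplit hypothesis $q\equiv-1\pmod 3$. By Theorem~\ref{thm:cyclotomic}, $\mathcal F_0\beta\mathcal F_0^{-1}=\beta^q=\beta^{-1}$, and the same relation holds for every $\mathcal F_j=\beta^j\mathcal F_0$. Therefore
\[
 \mathcal F_j(\pm\beta^i)\mathcal F_j^{-1}=\pm\beta^{-i},
\]
computed on $E(\bar k)$, where $\mathcal F_j$ is bijective; alternatively, write $\mathcal F_j\gamma=\gamma^{\sigma}\mathcal F_j$ at the level of endomorphisms. Commutation thus forces $\beta^{2i}=1$, so $i=0$, and $\gamma=\pm1$. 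Conversely, $\pm[\ell]$ commute with everything, and this holds for every section $j$. Hence the correct-kernel endomorphisms are exactly $[\ell]$ and $[-\ell]$.

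\emph{Main obstacle.} The delicate step is the Frobenius relation. We must justify that the conjugation by $\mathcal F_j$ on automorphisms coincides with the Galois twist ${}^{\sigma_q}\beta=\beta^q$ of Theorem~\ref{thm:cyclotomic}, using the fixed geometric identification of $E_j$ with $E$. The factorization step is routine.
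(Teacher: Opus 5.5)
Your proposal is correct and follows essentially the same route as the paper: factor $L=\gamma[\ell]$ through the quotient by $E[\ell]$, use Lemma~\ref{lem:C3-normalizer-allchar} to place $\gamma$ in $\langle\beta,[-1]\rangle\cong C_6$, and then use that Frobenius fixes $[-1]$ and inverts $\beta$ in the nonsplit case to force $\gamma=\pm1$. The step you flag as delicate is taken for granted in the paper as well, through the relation that Frobenius conjugation realizes ${}^{\sigma_q}\beta=\beta^q$ (as in \eqref{eq:normalizer-linear}). This is the standard identity $\mathcal F_E\psi=\psi^{(q)}\mathcal F_E$ for $q$-power Frobenius, so no gap remains.
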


\begin{proof}
Let $L$ be a separable degree-$\ell^2$ endomorphism with kernel $E[\ell]$.
The maps $L$ and $[\ell]$ have the same kernel and degree, so the universal
property of the quotient by $E[\ell]$ gives $L=\gamma[\ell]$ with
$\gamma\in\operatorname{Aut}(E,\mathrm O_E)$.  Commutation with $\beta$ forces
$\gamma\in C_{\operatorname{Aut}(E,\mathrm O_E)}(\beta)=C_6$ by
Lemma~\ref{lem:C3-normalizer-allchar}.  Write
$\gamma=(-1)^a\beta^b$.  In the nonsplit case Frobenius fixes $[-1]$ and
sends $\beta$ to $\beta^{-1}$, so the condition
$L\mathcal F_j=\mathcal F_jL$ gives $b\equiv-b\pmod3$ and hence $b=0$.
Thus $\gamma=\pm1$.  Conversely $[\ell]$ and $[-\ell]$ have the required
kernel, are separable because $\ell\ne\operatorname{char}k$, and commute with
both $\beta$ and every $\mathcal F_j$.
\end{proof}

Define $\operatorname{sgn}_3(\ell)\in\{\pm1\}$ by
\[
 \operatorname{sgn}_3(\ell)\ell\equiv1\pmod3,
\]
and put $L=[\operatorname{sgn}_3(\ell)\ell]$.

\begin{lemma}[Trace-zero Frobenius and the $3$-adic splitting]
\label{lem:c3-tate-split}
For every section $j$, the descended elliptic curve $E_j/k$ defined above has
Frobenius trace zero.  Under the fixed geometric identification
$E_{j,\bar k}\cong E$, Frobenius acts on the common Tate module $T_3(E)$ as
$\mathcal F_j$, and hence
\[
 \mathcal F_j^2=-q\,\operatorname{id}_{T_3(E)}.
\]
Moreover
\[
 T_3(E)=T_+\oplus T_-
\]
with $\mathcal F_j$-eigenvalues $\vartheta_+\equiv1$ and
$\vartheta_-\equiv-1\pmod3$.  If the shift class is nonzero, then
$\Delta$ is the $+1$ line modulo $3$, and
\begin{equation}
 v_3(1-\vartheta_+)=v_3(q+1),
\label{eq:c3-plus-valuation}
\end{equation}
while $1-\mathcal F_j$ is a $3$-adic unit on $T_-$.
\end{lemma}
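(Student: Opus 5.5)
The plan is to deduce everything from the nonsplit relation $\mathcal F_j\beta\mathcal F_j^{-1}=\beta^{-1}$, which holds for every section $j$ by Theorem~\ref{thm:cyclotomic} because $q\equiv-1\pmod3$ and $\beta^j$ is a power of $\beta$.

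\textbf{Trace zero and the square of Frobenius.} Work on $V=T_3(E)\otimes\mathbb Q_3$, or equivalently on $T_\ell(E)$ for any prime $\ell'\ne p$. Since $\beta$ has two distinct eigenvalues $\zeta,\zeta^{-1}$ over $\overline{\mathbb Q}_3$ and $\mathcal F_j$ conjugates $\beta$ to $\beta^{-1}$, the operator $\mathcal F_j$ swaps the two $\beta$-eigenlines. It therefore has zero diagonal in an eigenbasis, so $\operatorname{tr}\mathcal F_j=0$. This is the same argument as in Lemma~\ref{lem:F2}. Because the Frobenius trace $a_q(E_j)$ is an integer independent of the auxiliary prime, $E_j$ has trace zero. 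The determinant is $q$ by the Weil pairing, so Cayley--Hamilton on $T_3(E)$ gives $\mathcal F_j^2=-q$ as an identity in $\operatorname{End}_{\mathbb Z_3}(T_3(E))$.

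\textbf{The $3$-adic splitting.} Since $-q\equiv1\pmod3$, we have $-q\in 1+3\mathbb Z_3$, so $-q$ has a square root $\vartheta_+\in\mathbb Z_3$ with $\vartheta_+\equiv1\pmod3$. Put $\vartheta_-=-\vartheta_+$. The polynomial $T^2+q=(T-\vartheta_+)(T+\vartheta_+)$ has roots whose difference $2\vartheta_+$ is a $3$-adic unit. Hence the idempotents $(\mathcal F_j\pm\vartheta_+)/(\pm2\vartheta_+)$ are integral, and they split $T_3(E)=T_+\oplus T_-$ into rank-one pieces, with $\mathcal F_j$ acting on $T_\pm$ by $\vartheta_\pm$. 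Both eigenvalues occur because the trace is zero. On $T_-$ we have $1-\mathcal F_j=1+\vartheta_+\equiv2\pmod3$, which is a unit.

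For the valuation, use $(1-\vartheta_+)(1+\vartheta_+)=1-\vartheta_+^2=1+q$. Since $1+\vartheta_+$ is a unit, this gives $v_3(1-\vartheta_+)=v_3(q+1)$, which is \eqref{eq:c3-plus-valuation}.

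\textbf{Locating $\Delta$.} Reduce modulo $3$, so that $E[3]=T_+/3\oplus T_-/3$ and $\mathcal F_j$ acts by $+1$ and $-1$ on the two lines. The group $\Delta\subset E[3]$ is an $\mathcal F_j$-stable line, since $\mathcal F_j$ normalizes $\langle\beta\rangle$ and hence preserves $E[1-\beta]=E[1-\beta^{-1}]$. An $\mathcal F_j$-stable line is an eigenline, so $\Delta$ is either the $+1$ line or the $-1$ line. If $\mathcal F_j$ acted by $-1$ on $\Delta$, then $\mathcal F_j-1$ would be invertible on $\Delta\cong\mathbb F_3$, and the quotient $\Delta/(\mathcal F_j-1)\Delta$ containing the shift class would be zero. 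This contradicts $s\ne0$. Note that $\beta^j$ fixes $\Delta$ pointwise, so $\mathcal F_j$ and $\mathcal F_0$ agree on $\Delta$ and the argument is independent of $j$. Hence $\Delta=T_+/3T_+$.

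\textbf{Main obstacle.} The delicate point is justifying that the operator $\mathcal F_j$ on the common Tate module really is the Frobenius endomorphism of $E_j$, so that point counts and traces transfer. I would cite the convention fixed just before Definition~\ref{def:correct-kernel} for this. I would also check that the splitting holds integrally: it does because $2\vartheta_+$ is a unit, which uses $3\nmid 2q$.
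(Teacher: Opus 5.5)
Your proof is correct and follows the same route as the paper. The relation $\mathcal F_j\beta\mathcal F_j^{-1}=\beta^{-1}$ forces trace zero, determinant $q$ then gives $\mathcal F_j^2=-q$, the roots $\pm\vartheta_+$ are distinct modulo $3$ and split $T_3(E)$, and a nonzero shift class forces $\Delta$ to be the $+1$ line. The differences are cosmetic: you get trace zero from the eigenline swap, as in Lemma~\ref{lem:F2}, where the paper writes $\mathcal F_j(x)=a\bar x$ on the free $\mathcal O_3$-module $T_3(E)$ and computes $\mathcal F_j^2=N(a)=-\det\mathcal F_j$; and you use explicit integral idempotents where the paper cites Hensel lifting.
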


\begin{proof}
By Lemma~\ref{lem:eisenstein-3adic}, $T_3(E)$ is free of rank one over
$\mathcal O_3=\mathbb Z_3[\beta]$.  In the nonsplit case
$\mathcal F_j\beta\mathcal F_j^{-1}=\beta^{-1}$, so after choosing an
$\mathcal O_3$-basis there is $a\in\mathcal O_3\otimes\mathbb Q_3$ such that
\[
 \mathcal F_j(x)=a\bar x,
\]
where the bar is the Eisenstein conjugation $\beta\mapsto\beta^{-1}$.
Therefore $\mathcal F_j^2=N(a)$, whereas as a $\mathbb Z_3$-linear operator
$\det(\mathcal F_j)=-N(a)$.  Elliptic Frobenius has determinant $q$, hence
$N(a)=-q$ and
\[
 \mathcal F_j^2=-q\,\operatorname{id}_{T_3(E)}.
\]
The same anti-linear matrix has trace zero, so its characteristic polynomial
is $X^2+q$.  Modulo $3$ this polynomial is $(X-1)(X+1)$ with distinct roots,
and Hensel lifting yields $T_3(E)=T_+\oplus T_-$.  A nonzero class in
$\Delta/(\mathcal F_j-1)\Delta$ forces Frobenius to act trivially on
$\Delta$, so $\Delta$ is the $+1$ line modulo $3$.  Finally
\[
 (1-\vartheta_+)(1+\vartheta_+)=1+q,
\]
and $1+\vartheta_+$ is a $3$-adic unit; this proves
\eqref{eq:c3-plus-valuation}.  Since $\vartheta_-\equiv-1\pmod3$,
$1-\vartheta_-$ is a unit.
\end{proof}

Let $E[3^\infty]_+\subset E[3^\infty]$ denote the $3$-divisible subgroup whose
Tate module is the direct summand $T_+$ of
Lemma~\ref{lem:c3-tate-split}.

\begin{lemma}[Preimage depth and no cancellation]
\label{lem:c3-depth}
On $E[3^\infty]_+$, every solution of
\[
 (1-\mathcal F_j)P=s,
 \qquad 0\ne s\in\Delta,
\]
has exact order $3^{v_3(q+1)+1}$; any two solutions differ by a point of
order at most $3^{v_3(q+1)}$.  Consequently, $(1-L)P$ lies in $\Delta$ for some solution if and only if
\[
 v_3(\operatorname{sgn}_3(\ell)\ell-1)\geqslant v_3(q+1).
\]
If $v_3(\operatorname{sgn}_3(\ell)\ell-1)<v_3(q+1)$, adding an
$\mathcal F_j$-fixed point cannot change this conclusion.
\end{lemma}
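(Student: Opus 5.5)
We will work entirely inside the $3$-divisible group $E[3^\infty]_+$, which is a copy of $\mathbb Q_3/\mathbb Z_3$ on which $\mathcal F_j$ acts by the scalar $\vartheta_+$ of Lemma~\ref{lem:c3-tate-split}. Put $v:=v_3(q+1)\geqslant1$; by \eqref{eq:c3-plus-valuation}, $1-\vartheta_+=3^v u$ with $u\in\mathbb Z_3^\times$. Since the shift class is nonzero, the same lemma shows that $\Delta$ is the $+1$ line modulo $3$, so $\Delta=E[3]_+$ is the order-$3$ subgroup of $E[3^\infty]_+$.

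\emph{Depth and differences.} Write the equation $(1-\mathcal F_j)P=s$ as $3^v u P=s$ with $s$ of exact order $3$. Any solution $P\in E[3^\infty]_+$ then satisfies $3^vP=u^{-1}s\neq0$ and $3^{v+1}P=0$. Hence $P$ has exact order $3^{v+1}$, and solutions exist because the group is divisible. Two solutions differ by an element of $\ker(1-\vartheta_+)=E[3^v]_+$, which gives the second claim.

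\emph{The criterion.} Take $L=[\operatorname{sgn}_3(\ell)\ell]$, which is legitimate by Lemma~\ref{lem:c3-correct-kernel}; replacing $L$ by $-L$ cannot help, since $1+\operatorname{sgn}_3(\ell)\ell\equiv2\pmod3$ is a unit. Set $w:=v_3(1-\operatorname{sgn}_3(\ell)\ell)\geqslant1$, so that $1-L$ acts on $E[3^\infty]_+$ as $3^w$ times a unit. For $P$ of exact order $3^{v+1}$, the point $(1-L)P$ has exact order $3^{\max(v+1-w,0)}$. It lies in $\Delta=E[3]_+$ if and only if $v+1-w\leqslant1$, that is, if and only if $w\geqslant v$.

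\emph{Main obstacle.} The delicate step is the ``no cancellation'' clause: we must show that leaving $E[3^\infty]_+$, or adding an $\mathcal F_j$-fixed point, cannot rescue the condition. A general solution in $E(\bar k)$ has the form $P=P_+ + R$, where $R\in\ker(1-\mathcal F_j)$. I would decompose $E(\bar k)$ into its prime-to-$3$ part and its $T_+$ and $T_-$ components.

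First, $1-\mathcal F_j$ is a unit on $T_-$ by Lemma~\ref{lem:c3-tate-split}, so $R$ has trivial $T_-$-component.

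Second, on the prime-to-$3$ part, $1-L$ lands in $\Delta$ (a $3$-group) only if it vanishes. This only adds a condition and cannot cancel the $3$-primary component.

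Third, the $E[3^\infty]_+$-component of $R$ lies in $E[3^v]_+$, so $P_+ + R$ still has exact order $3^{v+1}$ in the $+$ part. Then $(1-L)(P_+ + R)$ has $+$-component of exact order $3^{v+1-w}>3$ whenever $w<v$, and hence never lies in $\Delta$.

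Because $E[3^\infty]=E[3^\infty]_+\oplus E[3^\infty]_-$ is a direct sum, projection to the $+$ summand is $\mathcal F_j$- and $L$-equivariant ($L$ is a scalar). This justifies computing componentwise and completes the argument.
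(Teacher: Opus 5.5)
Your proof is correct and follows the paper's argument essentially step by step. You identify $E[3^\infty]_+$ with $\mathbb Q_3/\mathbb Z_3$, where $1-\mathcal F_j$ is $3^{v_3(q+1)}$ times a unit, read off the exact order $3^{v_3(q+1)+1}$ of the preimages, and let $1-L$ lower that order by $v_3(\operatorname{sgn}_3(\ell)\ell-1)$. You then treat the no-cancellation clause through the same $+$, $-$ and prime-to-$3$ decomposition as the paper: the $-$ component must vanish, and any fixed-point $+$ component has order at most $3^{v_3(q+1)}$. Your side remark that $-L$ cannot help matches the comment the paper makes immediately after the lemma.
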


\begin{proof}
Identify $E[3^\infty]_+$ with $\mathbb Q_3/\mathbb Z_3$.  By
Lemma~\ref{lem:c3-tate-split}, multiplication by $1-\mathcal F_j$ is multiplication by a unit times
$3^{v_3(q+1)}$.  The preimage of a nonzero point of order $3$ therefore has exact order $3^{v_3(q+1)+1}$, and the kernel has exponent $3^{v_3(q+1)}$.

Now $1-L=[1-\operatorname{sgn}_3(\ell)\ell]$ has $3$-adic valuation
$v_3(\operatorname{sgn}_3(\ell)\ell-1)$, so it lowers the $3$-power order of
such a point by exactly that amount.  The result lies in the order-three
subgroup $\Delta$ precisely when
\[
 v_3(q+1)+1-v_3(\operatorname{sgn}_3(\ell)\ell-1)\leqslant1,
\]
equivalently when
$v_3(\operatorname{sgn}_3(\ell)\ell-1)\geqslant v_3(q+1)$.  If this
inequality fails, adding a kernel element $Q$ of order at most
$3^{v_3(q+1)}$ cannot cancel the leading $3^{v_3(q+1)+1}$ layer: indeed,
\[
 3^{v_3(q+1)}(P+Q)=3^{v_3(q+1)}P\ne0.
\]
Thus every solution gives the same failure.  The $-$ component may be taken
zero because $1-\mathcal F_j$ is invertible there; prime-to-$3$ torsion cannot
interact with this primary obstruction.
\end{proof}

For the other correct-kernel choice $L=-[\operatorname{sgn}_3(\ell)\ell]$, one has
$\displaystyle v_3(1+\operatorname{sgn}_3(\ell)\ell)=0<v_3(q+1)$.  Applying the same argument to
this scalar choice shows that it cannot satisfy
\eqref{eq:c3-normalizer-eq}.  Thus no sign omitted above can remove the
obstruction.

Combining Theorem~\ref{thm:c3-saturation} and
Lemmas~\ref{lem:c3-tate-split}--\ref{lem:c3-depth} gives
\begin{equation}
 {v_3}(\operatorname{sgn}_3(\ell)\ell-1)\geqslant{v_3}(q+1)
\label{eq:c3-criterion-final}
\end{equation}
as the necessary and sufficient condition in the nonsplit rank-two,
nonzero-shift case.  This proves Theorem~\ref{thm:intro-signatures}(iv).

\subsection{A degree-49 obstruction to same-\texorpdfstring{$C_3$}{C3} realization}
\label{subsec:F17}

\begin{example}[A degree-49 nonsplit obstruction]
\label{ex:F17}
Let
\[
 k=\mathbb{F}_{17},
 \qquad
 E:y^2=x^3+1.
\]
Since $17\equiv-1\pmod3$, the cube map is a bijection of $k$, and the same
character-sum argument as above gives $|E(k)|=18$.  Let $\beta$ be an
order-three automorphism over $\mathbb{F}_{17^2}$ and put
\[
 \Delta=E[1-\beta]=\{\mathrm O_E,(0,1),(0,-1)\}\subset E(k).
\]
Choose $0\ne s\in\Delta$.  Lang's theorem
\cite[Theorem~1 and Corollary]{Lang56} gives $P\in E(\bar k)$ with
$\sigma_{17}(P)-P=s$.  Since $s\in\Delta$ is fixed by $\beta$ and
${}^{\sigma_{17}}\beta=\beta^{-1}$, one has
${}^{\sigma_{17}}H=H$.  Thus
$H=\tau_P\langle\beta\rangle\tau_{-P}$ is Frobenius-stable with nonzero shift
class.

Take $\mathcal{N}=E[7]$.  The two eigenvalues of $\beta$ on $E[7]$ are
$2$ and $4$, and Frobenius exchanges the corresponding eigenlines because
$\mathcal F_{\mathcal N}\beta\mathcal F_{\mathcal N}^{-1}=\beta^{-1}$.  Choosing an eigenvector on the $2$-line and
scaling the eigenvector on the $4$-line therefore gives a basis of
$\mathcal{N}\cong\mathbb{F}_7^2$ in which
\[
 \beta=
 \begin{pmatrix}2&0\\0&4\end{pmatrix},
 \qquad
 \mathcal F_{\mathcal N}=
 \begin{pmatrix}0&1\\4&0\end{pmatrix}.
\]
Then
\[
 \mathcal F_{\mathcal N}\beta\mathcal F_{\mathcal N}^{-1}=\beta^{-1},
 \qquad
 \mathcal F_{\mathcal N}^2
 =4\,\operatorname{id}_{E[7]}
 =-17\,\operatorname{id}_{E[7]}
 \quad\text{on }E[7].
\]
Moreover
\[
 \det(\mathcal F_{\mathcal N}-\beta^j)=4\ne0
 \qquad(j=0,1,2).
\]
Theorem~\ref{thm:exceptionality} therefore gives exceptionality.  The two
$\beta$-eigenlines are exchanged by $\mathcal F_{\mathcal N}$, so
$\langle\beta,\mathcal F_{\mathcal N}\rangle$ is irreducible and the degree-$49$ map is
indecomposable.  The converse theorem ensures the corresponding tame
$(3,3,3)$ rational cover over $k$.

Finally
\[
 {v_3}(17+1)=2,
 \qquad
 {v_3}(7-1)=1.
\]
Thus \eqref{eq:c3-criterion-final} fails.  The cover has the uniform
$\boldsymbol{j}(E)=0$ isogeny structure but no two-sided
$\mathbb{F}_{17}$-M\"obius equivalent same-$C_3$ self-endomorphism realization.
Its finite-field arithmetic is nevertheless completely explicit: permutation
and exceptionality have the same support in every extension degree.  Since
\[
 -17\equiv4\pmod7,\qquad
 h=\operatorname{ord}_{\mathbb F_7^\times}(4)=3,
\]
Theorem~\ref{thm:nonsplit-support} gives
\[
\begin{aligned}
 \operatorname{Supp}_{\mathrm{perm}}(g)=\operatorname{Supp}_{\mathrm{exc}}(g)
 &=\{r\geqslant1:r\equiv1,5\pmod6\},\\
 d_g&=6,\qquad \text{natural density}=\frac13.
\end{aligned}
\]
Moreover, Proposition~\ref{thm:C3-r2-count} shows that the prescribed
parameter $(q,\ell,m,e)=(17,7,3,2)$ supports exactly three two-sided
$\mathbb F_{17}$-M\"obius classes.
\end{example}

\subsection{Characteristic two in the cubic signature}
\label{sec:char2}

Let
\[
 E_0/\mathbb F_2:\quad y^2+y=x^3,
\]
and let $\mathcal F_{E_0}$ denote $2$-power Frobenius.  Since
$|E_0(\mathbb F_2)|=3$, the Frobenius polynomial is $T^2+2$, and hence
\begin{equation}
 \mathcal F_{E_0}^{2}=[-2].
\label{eq:char2-frob-square}
\end{equation}
The quotient of the marked normalizer by $C_3$ has order two.  We call the
marked parity represented by the untwisted Frobenius action the
\emph{positive marked parity}; its central $[-1]$-twist is the negative parity.
For $q=2^a$ with $a$ even, put
\[
 s_q=(-2)^{a/2}.
\]
The positive marked parity may then be chosen with scalar $q$-Frobenius
$[s_q]$.  Thus the characteristic-two cubic row requires no new geometric
classification: the marked normalizer is already supplied by
Lemma~\ref{lem:char2-C3-normalizer}, while \eqref{eq:char2-frob-square} makes
the split Frobenius data completely explicit.

\begin{corollary}[Characteristic-two realization boundary]
\label{thm:char2-C3-boundary}
Let $q=2^a$ with $a$ odd and let $\ell\ne2,3$.  In the nonsplit rank-two
cubic case with nonzero shift, the positive marked form admits a same-$C_3$
self-endomorphism realization if and only if
\[
 v_3(\operatorname{sgn}_3(\ell)\ell-1)\geqslant v_3(q+1).
\]
\end{corollary}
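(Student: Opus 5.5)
The plan is to reduce Corollary~\ref{thm:char2-C3-boundary} to the general nonsplit saturation analysis of Section~\ref{sec:333}, after checking that each ingredient used there holds for the characteristic-two model $E_0$ with its positive marked parity. Since $a$ is odd, $q=2^a\equiv-1\pmod3$, so we are in the nonsplit setting of Theorem~\ref{thm:nonsplit-support}. Also $\ell\ne2$ forces $\ell\ne\operatorname{char}k$, so Theorem~\ref{thm:translation-kernel} gives $\mathcal N=E_0[\ell]$. The tameness hypothesis $\operatorname{char}k\nmid3$ holds, so the two-sided saturation criterion, Theorem~\ref{thm:c3-saturation}, applies as stated. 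Its proof uses only Lang's theorem, affine decomposition of morphisms, and Lemma~\ref{lem:c3-normalizer}. The normalizer input for that lemma in characteristic $2$ is Lemma~\ref{lem:char2-C3-normalizer}, via Lemma~\ref{lem:C3-normalizer-allchar}.

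The key steps, in order, are as follows. First, I would verify Lemma~\ref{lem:c3-correct-kernel} for $E_0$. Any separable degree-$\ell^2$ endomorphism with kernel $E_0[\ell]$ is $\gamma[\ell]$ with $\gamma$ origin-preserving. Commutation with $\beta$ puts $\gamma$ in the centralizer $\langle\beta,[-1]\rangle\cong C_6$, which is exactly what Lemma~\ref{lem:char2-C3-normalizer} supplies, even though $\operatorname{Aut}(E_0,\mathrm O)$ has order $24$. Frobenius inverts $\beta$ because $q\equiv-1\pmod3$, and this kills the $\beta$-component, leaving $L=\pm[\ell]$.

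Second, I would rerun Lemma~\ref{lem:c3-tate-split} on $T_3(E_0)$. This uses only three facts: $T_3$ is free of rank one over $\mathbb Z_3[\beta]$ (Lemma~\ref{lem:eisenstein-3adic}, valid since $\operatorname{char}k\ne3$), Frobenius is $\beta$-antilinear, and $\det\mathcal F_j=q$. Together they give $\mathcal F_j^2=-q$, trace zero, the splitting $T_+\oplus T_-$, and $v_3(1-\vartheta_+)=v_3(q+1)$. As a consistency check for the positive marked parity, \eqref{eq:char2-frob-square} gives $\mathcal F_{E_0}^{2}=[-2]$, hence $q$-Frobenius $\mathcal F_{E_0}^a$ satisfies $\mathcal F^2=(-2)^a=-q$. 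This agrees with the general formula, and it confirms that the parity choice does not affect the square.

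Third, I would apply Lemma~\ref{lem:c3-depth} verbatim. Combined with Theorem~\ref{thm:c3-saturation}, it yields the inequality as both necessary and sufficient, and the remark following that lemma excludes the sign $-[\operatorname{sgn}_3(\ell)\ell]$.

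I expect the main obstacle to be the first step, together with making sure the choice of section $j$ and the marked parity are handled correctly. In characteristic $2$ the extra automorphisms $\Phi_{u,r,t}$ with $r\ne0$ could a priori give additional correct-kernel endomorphisms, or let one move between the positive and negative parities. My approach is to argue that any element normalizing $L_{E_0}$ must preserve $\Delta$, which forces $r=0$ exactly as in Lemma~\ref{lem:char2-C3-normalizer}. The negative parity is a $[-1]$-twist of the descent, so it is not one of the three sections $\beta^j\mathcal F_0$ in \eqref{eq:c3-sections}. For that reason the statement is restricted to the positive form. Within that form, Lemma~\ref{lem:c3-normalizer} lists all sections, and the analysis above is exhaustive.
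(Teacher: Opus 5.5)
Your proposal is correct and takes the same route as the paper. The paper's proof only observes that the characteristic-$\ne3$ saturation analysis of Section~\ref{sec:333} (Theorem~\ref{thm:c3-saturation} and Lemmas~\ref{lem:c3-correct-kernel}--\ref{lem:c3-depth}, with Lemma~\ref{lem:char2-C3-normalizer} supplying the marked normalizer) applies verbatim in characteristic $2$, and your ingredient-by-ingredient check makes this explicit. One correction to your motivation: the restriction to the positive form is automatic and is not explained by the section list, because on the negative parity Frobenius acts by $-1$ on $\Delta\cong\mathbb F_3$, so $\Delta/(F_c-1)\Delta=0$ and a nonzero shift cannot occur there.
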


\begin{proof}
Characteristic $2$ is allowed in the characteristic-different-from-$3$
analysis above, so this is exactly \eqref{eq:c3-criterion-final}.
\end{proof}

\begin{example}[A characteristic-$2$ degree-$25$ obstruction]
\label{ex:F8-char2-obstruction}
Let $q=8$ and $\ell=5$, and put
$E=E_0\otimes_{\mathbb F_2}\mathbb F_8$.  The datum is nonsplit.  Since
$5\nmid9$, Theorem~\ref{thm:nonsplit-support} at $r=1$ gives exceptionality,
and Lemma~\ref{lem:F2} gives irreducibility.  On $E[5]$ one may choose a
basis in which
\[
 \beta=\begin{pmatrix}0&4\\1&4\end{pmatrix},\qquad
 \mathcal F_{\mathcal N}=\begin{pmatrix}1&2\\3&4\end{pmatrix}.
\]
Then
\[
 \mathcal F_{\mathcal N}\beta\mathcal F_{\mathcal N}^{-1}=\beta^{-1},\qquad
 \mathcal F_{\mathcal N}^2=2I=-8I,
\]
and
\[
 \det(\mathcal F_{\mathcal N}-\beta^j)=4\ne0\qquad(j=0,1,2).
\]
Here
\[
 h=\operatorname{ord}_{\mathbb F_5^\times}(2)=4,
\]
so Corollary~\ref{cor:char2-rank2-support} gives
\[
 d_g=8,
 \qquad
 \operatorname{Supp}_{\mathrm{perm}}(g)=\operatorname{Supp}_{\mathrm{exc}}(g)
 =\{r\geqslant1:8\nmid r\}.
\]
Its natural density is therefore $7/8$.  The exact enumeration in
Corollary~\ref{thm:char2-occurrence-count} gives three two-sided classes.
For the positive nonzero-shift class,
\[
 v_3(\operatorname{sgn}_3(5)5-1)=v_3(-6)=1<2=v_3(9),
\]
so Corollary~\ref{thm:char2-C3-boundary} shows that no same-$C_3$
self-endomorphism realization exists.
\end{example}

\section{Existence for prescribed parameters}
\label{sec:occurrence}

The classification and arithmetic-readout theorems reduce existence to a
finite Frobenius problem.  For a prescribed admissible $(q,\ell,m,e)$, we ask
whether any indecomposable exceptional datum is realized over $k$.  Because
the affine shift does not affect indecomposability or exceptionality, the
answer is linear once the relevant elliptic automorphism type has been fixed.

For an admissible tame row, write
\[
 \operatorname{Occ}_{m,e}(q,\ell)
\]
for the existence of an indecomposable exceptional Euclidean datum of rank
$e$, complement $C_m$, and degree $\ell^e$.

For a fixed map-level marked $k$-form in the relevant signature, call a
rank-$e$ translation kernel \emph{good} if it is reduced, stable under both
the corresponding Frobenius and cyclic linear action, satisfies the
semiregularity condition of Definition~\ref{def:datum}, and the resulting
finite linear action is irreducible and satisfies the exceptionality criterion
of Theorem~\ref{thm:exceptionality}.  These conditions depend only on
$\mathcal N$, $L_E$, and $\mathcal F_{\mathcal N}$, not on the affine shift.
Thus every allowed shift over a good kernel gives an indecomposable
exceptional datum.  Equivalently, $\operatorname{Occ}_{m,e}(q,\ell)$ holds if
and only if some relevant map-level marked $k$-form admits a good rank-$e$
kernel.

Let $\mathcal T(q)$ denote the Waterhouse trace set of elliptic curves over
$\mathbb F_q$.  For the split
quartic and hexagonal rows put
\[
 E_4^0:y^2=x^3+x,
 \qquad
 E_6^0:y^2=x^3+1,
\]
and, for $m=4,6$ with $q\equiv1\pmod m$, define
\begin{equation}
 A_m(q):=q^m+1-|E_m^0(\mathbb F_{q^m})|.
 \label{eq:Am}
\end{equation}
We say that the \emph{split CM bad condition} $\mathfrak B_m(q,\ell)$
holds when
\begin{equation}
 \begin{cases}
 q^m\equiv1\pmod\ell
 \text{ and }
 A_m(q)\equiv2\pmod\ell,&\ell\ne p,\\
 A_m(q)\equiv1\pmod p,&\ell=p.
 \end{cases}
 \label{eq:B-condition}
\end{equation}
Lemma~\ref{lem:CM-twist-killing} proves that $A_m(q)$ is independent of the
marked $k$-form.  The complete occurrence criterion is as follows.

\begin{theorem}[Prescribed-parameter occurrence]
\label{thm:master-occurrence}
Assume that $(m,\operatorname{char}k)$ is an admissible tame row in
\eqref{eq:four-types-intro} and that the degree condition holds.  Then
$\operatorname{Occ}_{m,e}(q,\ell)$ is given exactly by the following list.
\begin{enumerate}[label=\textup{(\arabic*)}]
\item $m=2,e=1$: either $\ell=p$, or $\ell\ne p$ and there are
$t\in\mathcal T(q)$ and
$\lambda\in\mathbb F_\ell^\times\setminus\{\pm1\}$ such that
$\lambda^2-t\lambda+q\equiv0\pmod\ell$.
\item $m=2,e=2$: $\ell\ne p$ and there is $t\in\mathcal T(q)$ such that
$\bigl(\frac{t^2-4q}{\ell}\bigr)=-1$.
\item $m=3,e=1$: $q\equiv1\pmod3$ and $\ell\equiv1\pmod3$.
\item $m=3,e=2$: $\ell\ne p$ and either
$q\equiv1\pmod3$, $\ell\equiv2\pmod3$, or
$q\equiv-1\pmod3$, $\ell\nmid q+1$.
\item $m=4,e=1$: $q\equiv1\pmod4$, $\ell\equiv1\pmod4$, and
$\mathfrak B_4(q,\ell)\text{ fails}$.
\item $m=4,e=2$: $\ell\ne p$ and either
$q\equiv1\pmod4$, $\ell\equiv3\pmod4$,
$\mathfrak B_4(q,\ell)\text{ fails}$, or
$q\equiv-1\pmod4$, $\ell\nmid q+1$.
\item $m=6,e=1$: $q\equiv1\pmod6$, $\ell\equiv1\pmod6$, and
$\mathfrak B_6(q,\ell)\text{ fails}$.
\item $m=6,e=2$: $\ell\ne p$ and either
$q\equiv1\pmod6$, $\ell\equiv5\pmod6$,
$\mathfrak B_6(q,\ell)\text{ fails}$, or
$q\equiv-1\pmod6$, $\ell\nmid q+1$.
\end{enumerate}
\end{theorem}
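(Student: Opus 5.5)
The plan is to reduce $\operatorname{Occ}_{m,e}(q,\ell)$ to the existence of a good rank-$e$ kernel on some map-level marked $k$-form, as explained just before the statement, and then treat the eight rows by linear algebra on $\mathcal N$. Throughout, Theorem~\ref{thm:exceptionality} (irreducibility of $M=\langle\beta,\mathcal F_{\mathcal N}\rangle$ and the kernel criterion), Corollary~\ref{cor:rank-one-exceptionality}, and Lemmas~\ref{lem:det-lift} and~\ref{lem:F2} are the tools. The shift plays no role, so we may take $s=0$. By Lemma~\ref{lem:c3-normalizer} and its analogues, the marked forms are parametrized by twisted classes in $W$ (Proposition~\ref{prop:marked-orbit}). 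For $m=2$, the marked forms are all elliptic curves over $k$, with Frobenius trace ranging over $\mathcal T(q)$. For $m=3,4,6$, the marked forms are the twists of $E_m^0$ (respectively $y^2+y=x^3$ in characteristic $2$) by $W$.

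For $m=2$, $\beta=-1$. In rank one, a good kernel is a Frobenius-stable reduced line on which $\mathcal F$ acts by some $\lambda\ne\pm1$ (Corollary~\ref{cor:rank-one-exceptionality}). If $\ell\ne p$, such a line exists exactly when $T^2-tT+q$ has a root $\lambda\ne\pm1$ modulo $\ell$. If $\ell=p$, we use an ordinary curve, whose unique reduced line carries the unit root $\lambda$. We then need to show that some ordinary trace $t$ with $t\not\equiv\pm1\pmod p$ lies in $\mathcal T(q)$, which holds because the $p$-adic unit traces are unrestricted except for $p\mid t$. In rank two, Theorem~\ref{thm:translation-kernel} forces $\ell\ne p$ and $\mathcal N=E[\ell]$. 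Irreducibility of $\mathcal F$ means that its characteristic polynomial is irreducible, i.e.\ the discriminant is a nonresidue. Exceptionality is then automatic, since the eigenvalues are not $\pm1$.

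For $m=3,4,6$, we split by $q\bmod m$. In the nonsplit case $q\equiv-1\pmod m$, only rank two is possible: in rank one, Frobenius is a scalar that commutes with $\beta$, which forces $q\equiv1$. Lemma~\ref{lem:F2} gives irreducibility, and Theorem~\ref{thm:nonsplit-support} gives exceptionality iff $\ell\nmid q+1$. For existence, any twist works once $\ell\ne p$, and $\mathcal N=E[\ell]$ is then automatically $\beta$-stable and semiregular. In the split case $q\equiv1\pmod m$, $\beta$ is defined over $k$ and $\mathcal F$ lies in $\mathbb Z[\beta]$. In rank one we need $\ell\equiv1\pmod m$ for a faithful semiregular action on a line. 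The kernels are then the $\beta$-eigenlines, with Frobenius eigenvalue $\alpha$, and the condition is $\alpha^m\ne1$. Lemma~\ref{lem:det-lift} shows that $\alpha^m$ is twist-invariant. Moreover, since $\mathcal F^m$ of a twist is Frobenius of $E_m^0$ over $\mathbb F_{q^m}$ up to a root of unity (Lemma~\ref{lem:CM-twist-killing}), both $\alpha^m$ and $\bar\alpha^m$ are roots of $T^2-A_m(q)T+q^m$. If $\ell\ne p$, one eigenline has $\alpha^m=1$ exactly when $A_m(q)\equiv1+q^m$. So a good line exists unless both are bad, which, using $q^m\equiv1$, is the condition $\mathfrak B_m$. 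If $\ell=p$, there is only the reduced line, with unit-root eigenvalue, which is bad iff $A_m(q)\equiv1\pmod p$. In rank two with $\ell\not\equiv1\pmod m$, $\beta$ is irreducible on $E[\ell]$, so $M$ is irreducible, and the split support criterion requires $\alpha^m\ne1$ in $\mathbb F_{\ell^2}$. This is again the failure of $\mathfrak B_m$. For $m=3$, we must show that $\mathfrak B_3$ never holds, so that the condition disappears. The point is that the $C_6$-twists allow $\alpha^3$ to be replaced by $-\alpha^3$, so some twist is always good.

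The main obstacle I expect is this twist bookkeeping: making precise which twists are marked forms, that $\mathcal F^m$ is twist-invariant up to the correct root of unity, and why the $m=3$ rows lose the bad condition while $m=4,6$ retain it. I would also need a separate check that $\ell\equiv1\pmod m$ with $e=2$ never gives irreducibility in the split case, which would explain why those rows require $\ell\not\equiv1$.
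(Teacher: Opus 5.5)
Your proposal is correct and follows essentially the paper's own route. It reduces to good kernels on map-level marked forms, uses Waterhouse traces for $m=2$, the $m$-step trace invariance of Lemma~\ref{lem:CM-twist-killing} (with Lemma~\ref{lem:det-lift}) in the split CM rows, the $[-1]$-parity escape for $m=3$, and Lemma~\ref{lem:F2} with Theorem~\ref{thm:nonsplit-support} in the nonsplit rows.

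Two small corrections. First, the bad condition for $m=3$ does hold on one parity for some parameters (e.g.\ $(q,\ell)=(67,7)$), so what you need is only that it cannot hold on both parities. Second, the split rank-two reducibility for $\ell\equiv1\pmod m$ that you flag is immediate, since both $\beta$-eigenlines are then $\mathcal F$-stable, exactly as in Lemma~\ref{lem:rank-constraints}.
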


The propositions below prove the eight cases of
Theorem~\ref{thm:master-occurrence}.

\subsection{The involutive row}

For the involutive row, $\mathcal T(q)$ is the Waterhouse set of
$q$-Frobenius traces of elliptic curves over $\mathbb F_q$.  Waterhouse's classification \cite[Theorem~4.1]{Waterhouse69} describes it
explicitly in every characteristic.  In the generic ordinary range it contains
all integers $t$ satisfying
\[
 t^2\leqslant4q,
 \qquad
 p\nmid t,
\]
together with the supersingular traces permitted by the parity of $a$ and
the congruence class of $p$; we use $\mathcal T(q)$ as a compact notation for
this finite set.

\begin{proposition}[Exact $C_2$ occurrence]
\label{thm:C2-occurrence}
Assume $\ell^e\geqslant5$.
\begin{enumerate}[label=\textup{(\roman*)}]
\item If $e=1$ and $\ell=p$, occurrence always holds.  If $e=1$ and
$\ell\ne p$, occurrence holds if and only if there are
$t\in\mathcal T(q)$ and
\[
 \lambda\in\mathbb F_\ell^\times\setminus\{\pm1\}
\]
such that
\begin{equation}
 \lambda^2-t\lambda+q\equiv0\pmod\ell.
 \label{eq:C2-occ-r1}
\end{equation}
\item If $e=2$, occurrence holds if and only if $\ell\ne p$ and there is
$t\in\mathcal T(q)$ such that
\begin{equation}
 \left(\frac{t^2-4q}{\ell}\right)=-1.
 \label{eq:C2-occ-r2}
\end{equation}
\end{enumerate}
\end{proposition}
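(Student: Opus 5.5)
For $m=2$ the plan is to reduce occurrence to the existence of a good kernel on some elliptic curve $E/k$, and then read off the Frobenius condition. For $m=2$ the cyclic linear part is $L_E=\{\pm1\}$, which is always defined over $k$ and always Frobenius-stable. Its normalizer is all of $\operatorname{Aut}(E,\mathrm O_E)$, so every elliptic curve $E/k$ carries a map-level marked form for this row. Semiregularity of $\{\pm1\}$ on $\mathcal N\setminus\{0\}$ is automatic because $\ell$ is odd. Moreover every subgroup of $E(\bar k)$ is $[-1]$-stable. By the discussion preceding Theorem~\ref{thm:master-occurrence}, occurrence therefore holds exactly when some $E/k$ has a reduced, Frobenius-stable $\mathcal N\cong C_\ell^e$ satisfying two further conditions: irreducibility of $M=\langle -1,\mathcal F_{\mathcal N}\rangle$ (Theorem~\ref{thm:exceptionality}), and $\ker(\mathcal F_{\mathcal N}\mp1)=0$. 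As $E$ ranges over elliptic curves over $\mathbb F_q$, the trace $t=q+1-|E(k)|$ ranges exactly over $\mathcal T(q)$ by Waterhouse.

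For rank one with $\ell\ne p$, irreducibility is automatic. A Frobenius-stable line in $E[\ell]$ on which Frobenius acts by $\lambda$ exists if and only if $\lambda$ is an $\mathbb F_\ell$-root of $T^2-tT+q$. Corollary~\ref{cor:rank-one-exceptionality} then gives exceptionality exactly when $\lambda\ne\pm1$, which yields \eqref{eq:C2-occ-r1}. Conversely, given $t$ and $\lambda$, I would take any $E$ with that trace and let $\mathcal N$ be the $\lambda$-eigenline. If the root is repeated but Frobenius is scalar on $E[\ell]$, any line still works.

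For rank one with $\ell=p$, I need an ordinary curve. Its unique reduced subgroup $E[p](\bar k)\cong C_p$ is automatically Frobenius-stable. Frobenius acts on it by the unit root of $T^2-tT+q$ reduced mod $p$, namely $\lambda\equiv t$, because $q\equiv0$. Exceptionality then requires $t\not\equiv\pm1\pmod p$. This is the step I expect to be the main obstacle: I must exhibit an ordinary trace with $t\not\equiv\pm1\pmod p$ for every $q=p^a$ with $p\ge5$. First I would try $t=2$ and $t=p-2$, both prime to $p$ and satisfying $t^2\le 4q$ since $p\le 2\sqrt q$ when $a\ge1$. At least one of them avoids $\pm1\pmod p$ once $p\ge5$; in fact $2\not\equiv\pm1$ for $p\ge5$, so $t=2$ alone suffices. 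The Waterhouse ordinary range then provides the curve.

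For rank two, Theorem~\ref{thm:translation-kernel} forces $\ell\ne p$ and $\mathcal N=E[\ell]$, which is automatically reduced and Frobenius-stable. Irreducibility of $M$ is equivalent to $\mathcal F_{\mathcal N}$ having no $\mathbb F_\ell$-eigenline, that is, $\bigl(\frac{t^2-4q}{\ell}\bigr)=-1$; the degenerate scalar case has discriminant $0$ and is excluded. Under that condition the eigenvalues lie in $\mathbb F_{\ell^2}\setminus\mathbb F_\ell$, so they cannot equal $\pm1$, and $\ker(\mathcal F_{\mathcal N}\mp1)=0$ holds automatically. This gives \eqref{eq:C2-occ-r2}. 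Conversely, any $E$ with such a trace supplies a good kernel. Finally, Theorem~\ref{thm:intro-forms}(ii) shows that the resulting datum, with either shift, produces a lower map of the required type.
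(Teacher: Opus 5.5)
Your proposal is correct and follows essentially the same route as the paper. In both, occurrence reduces to finding a good kernel on a curve with trace in $\mathcal T(q)$. In rank one, Frobenius-stable lines are read off as $\mathbb F_\ell$-roots $\lambda$ with $\lambda\ne\pm1$. When $\ell=p$, an ordinary trace-$2$ curve has multiplier $2\not\equiv\pm1$. In rank two, irreducibility of Frobenius on $E[\ell]$ gives the Legendre condition, and exceptionality is then automatic.

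One harmless slip: your aside that $t=p-2$ satisfies $t^2\leqslant4q$ "since $p\leqslant2\sqrt q$" is false when $q=p\geqslant11$. You rightly discard it, since $t=2$ alone suffices.
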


\begin{proof}
Here $\beta=-I$.  For $\ell\ne p$, a rank-one Frobenius-stable kernel is an
invariant line in $E[\ell]$, hence corresponds to a root $\lambda$ of the
Frobenius polynomial modulo $\ell$; exceptionality is exactly
$\lambda\ne\pm1$.  In rank two the kernel is all of $E[\ell]$, and
indecomposability is equivalent to irreducibility of the Frobenius polynomial,
which is \eqref{eq:C2-occ-r2}; then exceptionality is automatic.

If $\ell=p$ and $e=1$, choose an ordinary elliptic curve of trace $2$ in the
Waterhouse list.  Here $p=\ell\geqslant5$ by the degree hypothesis, so
$2\not\equiv\pm1\pmod p$; moreover $p\nmid2$, so trace $2$ lies in the
ordinary Waterhouse range.  Its unique reduced $p$-line is Frobenius-stable and the
multiplier is $2\not\equiv\pm1\pmod p$.  Rank two with $\ell=p$ is excluded
by Theorem~\ref{thm:translation-kernel}.
\end{proof}

\begin{remark}
The trace quantifier in Proposition~\ref{thm:C2-occurrence} is genuine.  For
$q=5$ one has
\[
 \mathcal T(5)=\{-4,-3,-2,-1,0,1,2,3,4\}.
\]
A direct quadratic-residue check shows that
$X^2-tX+5$ has no root modulo $79$ for any $t\in\mathcal T(5)$, while
$t^2-20$ is a square or zero modulo $229$ for every
$t\in\mathcal T(5)$.  Hence there is no rank-one occurrence for $\ell=79$
and no rank-two occurrence for $\ell=229$.  Thus the $C_2$ row does not
collapse to a uniform congruence in $(q,\ell)$.
\end{remark}

\subsection{Cyclotomic rank constraints}

For $m=3,4,6$, the characteristic polynomial of $\beta$ on $E[\ell]$ is
\[
 \Phi_3(X)=X^2+X+1,
 \qquad
 \Phi_4(X)=X^2+1,
 \qquad
 \Phi_6(X)=X^2-X+1.
\]

\begin{lemma}[Rank constraints]
\label{lem:rank-constraints}
Let $m\in\{3,4,6\}$ be a surviving tame row, so
$\operatorname{char}k\nmid m$.
\begin{enumerate}[label=\textup{(\roman*)}]
\item Rank one requires
\[
 q\equiv1\pmod m,
 \qquad
 \ell\equiv1\pmod m.
\]
\item In split rank two, indecomposability is possible exactly when
$\Phi_m$ is irreducible modulo $\ell$, namely
\[
 \begin{array}{c|c}
 m&\text{rank-two congruence}\\ \hline
 3&\ell\equiv2\pmod3,\\
 4&\ell\equiv3\pmod4,\\
 6&\ell\equiv5\pmod6.
 \end{array}
\]
\item In nonsplit rank two, $q\equiv-1\pmod m$, irreducibility is automatic.
\end{enumerate}
\end{lemma}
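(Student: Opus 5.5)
The plan is to read all three parts off the finite linear data on $\mathcal N$. I will use the semiregularity condition of Definition~\ref{def:datum}, the normalizer relation $\mathcal F_{\mathcal N}\beta\mathcal F_{\mathcal N}^{-1}=\beta^q$ from \eqref{eq:normalizer-linear}, and the irreducibility criterion of Theorem~\ref{thm:exceptionality}. Throughout, $\ell\nmid m$, so $\beta$ is semisimple on $\mathcal N$. On $E[\ell]$ (when $\ell\ne p$) its characteristic polynomial is $\Phi_m$: the eigenvalues are a pair $\rho,\rho^{-1}$ of exact order $m$ with determinant $1$, as in Lemma~\ref{lem:det-lift}.

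\emph{Part (i).} In rank one $\mathcal N\cong\mathbb F_\ell$, and $\beta$ acts by a scalar $\rho$. Semiregularity on $\mathcal N\setminus\{0\}$ forces $\rho$ to have exact order $m$ in $\mathbb F_\ell^\times$, so $m\mid\ell-1$. Frobenius also acts by a scalar $\lambda$, hence commutes with $\beta$. The relation $\mathcal F_{\mathcal N}\beta\mathcal F_{\mathcal N}^{-1}=\beta^q$ then gives $\rho=\rho^q$, that is $\rho^{q-1}=1$, and since $\rho$ has order $m$ this yields $q\equiv1\pmod m$. This is the argument already used before the two-sided saturation setup in the cubic case, and it also appears in the $(2,4,4)$ construction; it works uniformly for $m=3,4,6$. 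If $\ell=p$ the rank-one argument is unchanged, since only the faithful action on the reduced line is used.

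\emph{Part (ii).} In rank two, Theorem~\ref{thm:translation-kernel} gives $\ell\ne p$ and $\mathcal N=E[\ell]$, and the split hypothesis $q\equiv1\pmod m$ makes $\mathcal F_{\mathcal N}$ commute with $\beta$.
\begin{itemize}
\item If $\Phi_m$ splits modulo $\ell$, the two $\beta$-eigenlines are distinct and Frobenius-stable. Then $M=\langle\beta,\mathcal F_{\mathcal N}\rangle$ is reducible, and Theorem~\ref{thm:exceptionality} says the map is decomposable. This is the same argument as in Corollary~\ref{cor:split-collapse}.
\item If $\Phi_m$ is irreducible, then $\beta$ alone has no invariant line, so $M$ is irreducible and indecomposability holds.
\end{itemize}
It remains to translate irreducibility of $\Phi_m$ into congruences. $\Phi_m$ is irreducible over $\mathbb F_\ell$ exactly when $\mathbb F_\ell^\times$ has no element of order $m$, i.e.\ $m\nmid\ell-1$. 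For $m=3,6$ this means $\ell\equiv2\pmod3$, equivalently $\ell\equiv5\pmod6$ since $\ell$ is odd. For $m=4$ it means $\ell\equiv3\pmod4$. The phrase ``possible exactly when'' should be read as: the congruence is necessary, and it is sufficient for irreducibility of the linear pair. Existence of an actual curve is deferred to the occurrence propositions.

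\emph{Part (iii).} When $q\equiv-1\pmod m$, Lemma~\ref{lem:F2} already shows that $\langle\beta,\mathcal F_{\mathcal N}\rangle$ is irreducible. I would simply cite it: Frobenius conjugates $\beta$ to $\beta^{-1}\ne\beta$, so it swaps the eigenlines of $\beta$ whenever they exist. The only mild obstacle I expect is in (i): making sure the scalar-commutation step is valid when $\ell=p$, and keeping the bookkeeping for $m=6$ straight ($\rho$ of order $6$ versus order $3$). Both are handled by the exact-order statement $\operatorname{ord}(\beta)=m$ from Proposition~\ref{prop:linearization} together with faithfulness and semiregularity.
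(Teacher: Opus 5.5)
Your proposal is correct and follows the paper's proof. In rank one, semiregularity makes $\beta$ a scalar $\rho$ of exact order $m$, and \eqref{eq:normalizer-linear} then forces $\rho^{q-1}=1$; split rank two compares the common $\beta$-eigenlines with the irreducibility of $\beta$ alone; nonsplit rank two has Frobenius exchanging the eigenlines (Lemma~\ref{lem:F2}). Your explicit translation of the irreducibility of $\Phi_m$ into $m\nmid\ell-1$ also correctly supplies the congruence table that the paper states without comment.
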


\begin{proof}
In rank one, $\beta$ acts by a scalar of exact order $m$, so $m\mid\ell-1$;
then \eqref{eq:normalizer-linear} forces $q\equiv1\pmod m$.  In split rank
two, $\mathcal F_{\mathcal N}$ commutes with $\beta$: if $\Phi_m$ splits, its two
$\beta$-eigenlines are common invariant lines, while if $\Phi_m$ is
irreducible, $\beta$ alone acts irreducibly.  In the nonsplit case Frobenius
exchanges the two eigenlines whenever they exist, giving irreducibility.
\end{proof}

\subsection{The cubic row}

The order-three case is distinguished by
\[
 C_3=\langle\beta\rangle
 \subsetneq
 \operatorname{Aut}(E_{\bar k},\mathrm O_E)\cong C_6.
\]
The residual unit $[-1]$ gives a quadratic twist not absorbed by the
geometric complement.

\begin{lemma}[Sign-form escape in the $C_3$ row]
\label{lem:C3-twist}
Assume $\operatorname{char}k\ne3$ and $q\equiv1\pmod3$.  The two marked
$C_3$-form parities differ by the central unit $[-1]$.  Hence, on every
Frobenius-stable reduced cyclic kernel of odd order, their Frobenius
multipliers differ by a sign.  Consequently, for every nonzero multiplier
$\lambda$, at least one of $\lambda$ and $-\lambda$ lies outside the group of
cube roots of unity.
\end{lemma}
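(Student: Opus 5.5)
The plan is to use the marked twisted-orbit principle of Proposition~\ref{prop:marked-orbit} with the marked normalizer from Lemma~\ref{lem:C3-normalizer-allchar}. By that lemma the origin-preserving normalizer of $L_E=\langle\beta\rangle$ is $U^\sharp=\langle\beta,[-1]\rangle\cong C_6$. Hence $W=U^\sharp/L_E=\{1,[-1]\}$ has order two.

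First I would fix the twisted conjugacy classes in $W$. Since $q\equiv1\pmod3$, Theorem~\ref{thm:cyclotomic} gives ${}^{\sigma_q}\beta=\beta$. Because $[-1]$ is defined over the prime field, $\theta$ is trivial on $W$. Twisted conjugacy on the abelian group $W$ therefore reduces to ordinary conjugacy, and there are exactly two marked $k$-forms, the two \emph{parities}. If $F_0$ is a reference semilinear Frobenius, these are represented by $F_0$ and $F_{[-1]}=[-1]F_0$. Modulo $L_E$ they differ exactly by the central unit $[-1]$, which is the first assertion.

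Next I compare the two Frobenius multipliers on a kernel. Let $\mathcal N$ be a reduced cyclic kernel of odd order $\ell$ that is Frobenius-stable for one parity. It is automatically stable for the other, since $[-1]$ preserves every subgroup. Suppose $\mathcal F_E$ acts on $\mathcal N\cong\mathbb F_\ell$ by $\lambda$. The twisted form's Frobenius endomorphism, under the fixed geometric identification, is represented by $[-1]\mathcal F_E$ (up to a factor in $L_E$). By the convention of Theorem~\ref{thm:intro-forms}(iii), a factor in $L_E$ only changes $\lambda$ by a cube root of unity, so the multiplier becomes $-\lambda$ up to a cube root of unity.

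The only delicate point is making this identification of the twisted Frobenius precise. I would argue with the cocycle description: a cocycle value $c\in U^\sharp$ replaces $F_0$ by $cF_0$, and on geometric points this replaces $\mathcal F_E$ by $c\,\mathcal F_E$. The ambiguity in $L_E$ is harmless for the final claim, because the set $\mu_3\lambda$ is what matters.

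Finally, the consequence is elementary. Suppose both $\lambda$ and $-\lambda$ lay in $\mu_3\subset\mathbb F_\ell^\times$. Then their quotient $-1$ would lie in $\mu_3$, so $(-1)^3=1$. This gives $-1=1$ in $\mathbb F_\ell$, which is impossible for odd $\ell$, and $\ell\ne2$ holds because the order is odd. The argument applies equally to the whole coset $\mu_3\lambda$, so at least one parity has multiplier outside the cube roots of unity.
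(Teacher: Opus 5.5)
Your proposal is correct and follows essentially the paper's argument: the marked normalizer $C_6$ has quotient by $L_E$ generated by $[-1]$, so the nontrivial parity replaces Frobenius by $[-1]\mathcal F_E$, which acts as $-\lambda$ on any odd-order kernel, and $\mu_3\cap(-\mu_3)=\varnothing$ because $-1\notin\mu_3$ in odd characteristic. Your extra twisted-conjugacy bookkeeping is harmless, and taking the representative $c=[-1]$ itself removes the $L_E$-ambiguity, which gives exactly the sign in the statement rather than only a sign up to $\mu_3$.
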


\begin{proof}
By Lemma~\ref{lem:C3-normalizer-allchar}, the marked normalizer is $C_6$ and
its quotient by $L_E=C_3$ is generated by the class of $[-1]$.  Thus the
nontrivial marked form multiplies Frobenius by $[-1]$ without changing the
order-three subgroup.  On every reduced cyclic kernel of odd order, $[-1]$ acts as $-1$; this
includes the unique reduced $p$-line in the ordinary characteristic-dividing
rank-one case.  Since $-1\notin\mu_3$, the sets $\mu_3$ and $-\mu_3$ are
disjoint.
\end{proof}

\begin{proposition}[Exact $C_3$ occurrence]
\label{thm:C3-occurrence}
Assume $\operatorname{char}k\ne3$ and the degree condition for $m=3$.
\begin{enumerate}[label=\textup{(\roman*)}]
\item Rank one occurs exactly when
\[
 q\equiv1\pmod3,
 \qquad
 \ell\equiv1\pmod3.
\]
This includes $\ell=p$.
\item Rank two occurs exactly when $\ell\ne p$ and either
\[
 q\equiv1\pmod3,
 \qquad
 \ell\equiv2\pmod3,
\]
or
\[
 q\equiv-1\pmod3,
 \qquad
 \ell\nmid q+1.
\]
\end{enumerate}
\end{proposition}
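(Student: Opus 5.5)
Necessity comes first, from the constraints already proved; sufficiency is then a matter of exhibiting a good kernel on a suitable marked $k$-form. For (i), Lemma~\ref{lem:rank-constraints}(i) forces $q\equiv1\pmod3$ and $\ell\equiv1\pmod3$. For (ii), Theorem~\ref{thm:translation-kernel} forces $\ell\ne p$. If $q\equiv1\pmod3$, the split case applies, and Lemma~\ref{lem:rank-constraints}(ii) requires $\Phi_3$ to be irreducible modulo $\ell$, i.e.\ $\ell\equiv2\pmod3$. If $q\equiv-1\pmod3$, the nonsplit case applies, and Theorem~\ref{thm:nonsplit-support} gives exceptionality over $k$ exactly when $\ell\nmid q+1$. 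Since $\ell\equiv\pm1\pmod 3$ in each case and $\ell\ne3$, the degree condition $\ell^e\equiv1\pmod6$ is consistent throughout.

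For sufficiency we work with a $j=0$ curve over $k$ carrying a Frobenius-stable $C_3$. When $p\ne3$ we take $E:y^2=x^3+1$ for $p>3$ and $E_0:y^2+y=x^3$ over $\mathbb F_2$, base-changed, for $p=2$. Its linear $C_3$ is Frobenius-stable by Theorem~\ref{thm:cyclotomic}, and the shift is irrelevant, so $s=0$ suffices.

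\emph{Rank one.} Here $\beta$ is defined over $k$. If $\ell\ne p$, then $E[\ell]$ splits into the two $\beta$-eigenlines, and these are Frobenius-stable because $\mathcal F$ commutes with $\beta$. Either line is a reduced, semiregular kernel, and irreducibility is automatic in rank one. Exceptionality requires the Frobenius multiplier $\lambda$ on the chosen line to satisfy $\lambda^3\ne1$ (Corollary~\ref{cor:rank-one-exceptionality}). If this fails, Lemma~\ref{lem:C3-twist} lets us switch to the other marked parity, where the multiplier becomes $-\lambda\notin\mu_3$. If $\ell=p$, we need an ordinary $j=0$ curve. Since $p\equiv1\pmod3$, $j=0$ is ordinary, so $E[p](\bar k)$ is a reduced line. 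This line is $\beta$-stable and Frobenius-stable, being unique, and the same parity switch handles the multiplier.

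\emph{Rank two, split case} ($q\equiv1$, $\ell\equiv2\pmod3$). We take $\mathcal N=E[\ell]$. Then $\beta$ is irreducible on $\mathcal N$, giving indecomposability, and semiregularity holds because $\beta$ has no eigenvalue $1$ when $\ell\ne3$. Exceptionality means $\mathcal F_{\mathcal N}\notin\beta^j+\{\text{singular}\}$. In the field $\mathbb F_{\ell^2}$ centralizing $\beta$ (Corollary~\ref{cor:split-collapse}), Frobenius is multiplication by some $\alpha$, and the condition is $\alpha^3\ne1$. This is the main obstacle, because twisting can change $\alpha$ only by units of $C_6$, i.e.\ by $\pm\beta^a$. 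By Lemma~\ref{lem:det-lift} the $\beta^a$ twists do not change $\alpha^3$, so only the sign matters, via Lemma~\ref{lem:C3-twist}. If $\alpha^3=1$, then $(-\alpha)^3=-1\ne1$ since $\ell$ is odd. So one of the two parities works, using the same sign-escape trick as in rank one.

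\emph{Rank two, nonsplit case} ($q\equiv-1\pmod3$). Lemma~\ref{lem:F2} gives irreducibility, and Theorem~\ref{thm:nonsplit-support} gives exceptionality exactly when $\ell\nmid q+1$. Semiregularity again follows from $\ell\ne3$. Finally, Theorem~\ref{thm:intro-forms}(ii) together with Theorem~\ref{thm:exceptionality} converts each good kernel into an indecomposable exceptional datum, which completes the proof.
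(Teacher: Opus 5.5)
Your proposal is correct and is essentially the paper's proof. Necessity comes from Lemma~\ref{lem:rank-constraints} (plus $\ell\ne p$ in rank two) and Theorem~\ref{thm:nonsplit-support} at $r=1$. Sufficiency comes from the $[-1]$ parity switch of Lemma~\ref{lem:C3-twist}, applied on a $\beta$-eigenline or the reduced $p$-line in rank one, and on $E[\ell]$ (replacing $\alpha$ by $-\alpha$) in split rank two.

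The only blemish is that you cite Theorem~\ref{thm:cyclotomic} for the Frobenius-stability of $\langle\beta\rangle$, which that theorem presupposes. On your explicit models, stability follows directly from ${}^{\sigma_q}\beta=\beta^q$, since $\beta(x,y)=(\zeta_3x,y)$ has coefficients in $k(\zeta_3)$.
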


\begin{proof}
The rank congruences are Lemma~\ref{lem:rank-constraints}.  In split rank
one, if a Frobenius multiplier lies in $\mu_3$, Lemma~\ref{lem:C3-twist}
makes the multiplier on the other marked parity lie outside $\mu_3$; hence
one of the two parities is exceptional.  The same argument works on the unique reduced $p$-line when $\ell=p$.  In split rank two,
$\beta$ is irreducible; if one quadratic parity is bad, twisting replaces an
eigenvalue $\alpha$ by $-\alpha$ and changes $\alpha^3=1$ to
$(-\alpha)^3=-1$, so the other parity is good.  The nonsplit assertion is
Theorem~\ref{thm:nonsplit-support} at $r=1$.
\end{proof}

\begin{corollary}[Characteristic-two cubic occurrence]
\label{cor:char2-occurrence}
Let $q=2^a$ and let $\ell\ne2,3$.
\begin{enumerate}[label=\textup{(\roman*)}]
\item Rank one occurs exactly when $a$ is even and
$\ell\equiv1\pmod3$.
\item If $a$ is even, split rank two occurs exactly when
$\ell\equiv2\pmod3$.
\item If $a$ is odd, nonsplit rank two occurs exactly when
$\ell\nmid q+1$.
\end{enumerate}
\end{corollary}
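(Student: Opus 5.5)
We will read the corollary as the specialization of Proposition~\ref{thm:C3-occurrence} to $p=2$, with one extra check in each case. In characteristic $2$ only the cubic row survives, and $\ell\ne2,3$ gives $\ell\ne p$. So the case $\ell=p$ in Proposition~\ref{thm:C3-occurrence}(i) never arises, and the condition ``$\ell\ne p$'' in part (ii) holds automatically. The remaining task is to express the congruence of $q=2^a$ modulo $3$ in terms of $a$. Since $2\equiv-1\pmod3$, we have $q\equiv(-1)^a\pmod3$. Thus $q\equiv1\pmod3$ exactly when $a$ is even, and $q\equiv-1\pmod3$ exactly when $a$ is odd.

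For part (i), Proposition~\ref{thm:C3-occurrence}(i) says rank one occurs precisely when $q\equiv1\pmod3$ and $\ell\equiv1\pmod3$, which becomes $a$ even and $\ell\equiv1\pmod3$. For parts (ii) and (iii), Proposition~\ref{thm:C3-occurrence}(ii) splits into two disjoint alternatives according to the residue of $q$. When $a$ is even, only the first alternative can hold, giving the condition $\ell\equiv2\pmod3$; this is the split case $q\equiv1\pmod m$. When $a$ is odd, only the second alternative can hold, giving $\ell\nmid q+1$; this is the nonsplit case $q\equiv-1\pmod m$.

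The one point that needs checking is that Proposition~\ref{thm:C3-occurrence} is valid in characteristic $2$. It was stated under the hypothesis $\operatorname{char}k\ne3$. Its ingredients are the following:
\begin{itemize}
\item Lemma~\ref{lem:rank-constraints};
\item the sign-twist Lemma~\ref{lem:C3-twist}, which relies on the marked normalizer $C_6$; in characteristic $2$ this is supplied by Lemma~\ref{lem:char2-C3-normalizer} through Lemma~\ref{lem:C3-normalizer-allchar};
\item Theorem~\ref{thm:nonsplit-support} at $r=1$.
\end{itemize}
None of these assumes $p>3$. I expect this to be the main obstacle, in the sense that the proof must confirm characteristic $2$ is genuinely covered. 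In particular, we need a marked $k$-form of $E_0$ (the supersingular $j=0$ curve) to exist in each parity, and the twist by $[-1]$ must stay inside the marked normalizer. Both follow from the characteristic-two normalizer lemma. Once this is confirmed, the corollary follows by direct substitution.
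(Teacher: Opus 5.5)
Your proposal is correct and matches the paper's own argument, which consists exactly of observing $q\equiv(-1)^a\pmod 3$ and specializing Proposition~\ref{thm:C3-occurrence} to $p=2$. Your extra check that the ingredients hold in characteristic $2$ does no harm but is not needed: that proposition is already stated under the sole hypothesis $\operatorname{char}k\ne3$. Likewise, for $\ell\ne2,3$ the degree condition $\ell^e\equiv1\pmod 6$ holds automatically in rank two and reduces to $\ell\equiv1\pmod3$ in rank one.
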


\begin{proof}
Since $2\equiv-1\pmod3$, one has $q\equiv(-1)^a\pmod3$.  The result is
therefore the characteristic-two specialization of
Proposition~\ref{thm:C3-occurrence}.
\end{proof}

\subsection{The quartic and hexagonal rows}

For $m=4,6$, the standard marked CM curves $E_4^0,E_6^0$ and
$A_m(q)$ from \eqref{eq:Am} encode the split arithmetic.
\begin{lemma}[Marked twist killing and $m$-step trace invariance]
\label{lem:CM-twist-killing}
Let $m\in\{4,6\}$ be a surviving tame row with $q\equiv1\pmod m$.  For the
marked subgroup $L_E=C_m$, one has
\[
 N_{\operatorname{Aut}(E,\mathrm O_E)}(L_E)=L_E.
\]
Every marked $k$-form becomes isomorphic to the standard marked form over
$\mathbb F_{q^m}$.  Consequently the trace of $q^m$-Frobenius is independent
of the marked form and equals $A_m(q)$.
\end{lemma}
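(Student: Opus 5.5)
I will prove the three assertions in order: the normalizer computation, then triviality of the marked twists over $\mathbb F_{q^m}$, then trace invariance.

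\emph{Normalizer.} In a surviving tame row with $m\in\{4,6\}$, the characteristic is not $2$, and for $m=6$ it is also not $3$. If $\operatorname{char}k>3$, \cite[Theorem~III.10.1]{Silverman09} gives $\operatorname{Aut}(E,\mathrm O_E)\cong C_m$ (here $\boldsymbol j=1728$ or $0$). This group is cyclic and equals $L_E$, so the normalizer is $L_E$. The only remaining case is $m=4$ in characteristic $3$. There Lemma~\ref{lem:char3-C4-normalizer} already gives $N_U(H)=H$.

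\emph{Triviality of twists.} By Proposition~\ref{prop:marked-orbit}, marked map-level $k$-forms of $(E,L_E)$ are classified by $\theta$-twisted conjugacy classes in $W=U^\sharp/L_E$. Since $U^\sharp=L_E$, the group $W$ is trivial, so this alone would give a single marked form. I will not rely only on that, because the statement concerns marked forms as elliptic curves carrying the marked $C_m$, and the Frobenius of such a form is only determined up to $L_E$. So I will argue directly.

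Fix a geometric isomorphism $\alpha:E\to E^0_m$ respecting markings. The descent cocycle on $q$-Frobenius is $c=({}^{\sigma_q}\alpha)\alpha^{-1}\in N(L)=L_E\cong C_m$. Because $q\equiv1\pmod m$, Theorem~\ref{thm:cyclotomic} shows that $\beta$ is defined over $k$, so $\sigma_q$ acts trivially on $L_E$. The cocycle over $\mathbb F_{q^m}$ is then $c^m=1$. Hence $\alpha$ is defined over $\mathbb F_{q^m}$, which gives an isomorphism of marked forms there.

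\emph{Trace invariance.} Isomorphic curves over $\mathbb F_{q^m}$ have equal point counts. Therefore $q^m+1-|E(\mathbb F_{q^m})|=A_m(q)$ for every marked $k$-form.

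\emph{Main obstacle.} The delicate point is the characteristic-$3$ case with $m=4$. There the full automorphism group has order $12$ and contains elements of order $3$ that are not defined over small fields. I must make sure the cocycle genuinely lands in the marked normalizer $H$ and not merely in $U$. This holds because marked isomorphisms conjugate $L_E$ to $L_{E^0}$. With that in place, the triviality of the Galois action on $C_4$, which follows from $q\equiv1\pmod4$, finishes the argument.
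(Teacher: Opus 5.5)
Your proposal is correct and follows essentially the same route as the paper. The paper likewise gets the normalizer from the automorphism classification in characteristic greater than $3$ and from Lemma~\ref{lem:char3-C4-normalizer} in characteristic $3$. It then notes that the marked cocycle lies in $C_m$ and that its $q^m$-Frobenius value is $c^{1+q+\cdots+q^{m-1}}=1$, which is your $c^m=1$ once ${}^{\sigma_q}c=c^q=c$.
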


\begin{proof}
For $m=4$ the normalizer assertion is standard in characteristic greater than
$3$ and is Lemma~\ref{lem:char3-C4-normalizer} in characteristic $3$; the row
does not occur in characteristic $2$.  For $m=6$ tameness forces
$\operatorname{char}k>3$, where the origin-preserving group is $C_6$.
Thus a marked descent cocycle takes values in $C_m$.  Its value on
$q^m$-Frobenius is
\[
 c\,{}^{\sigma_q}\!c\cdots{}^{\sigma_q^{m-1}}\!c
 =c^{1+q+\cdots+q^{m-1}}=1,
\]
because $q\equiv1\pmod m$.  Hence the marked twist is trivial over
$\mathbb F_{q^m}$, and the Frobenius trace there is the common value $A_m(q)$.
\end{proof}

Equivalently, if $a_m(q)=q+1-|E_m^0(k)|$, then
\begin{align*}
 A_4(q)&=a_4(q)^4-4q\,a_4(q)^2+2q^2,\\
 A_6(q)&=a_6(q)^6-6q\,a_6(q)^4+9q^2a_6(q)^2-2q^3.
\end{align*}

The split CM bad condition $\mathfrak B_m(q,\ell)$ is the condition
\eqref{eq:B-condition} fixed at the beginning of this section.

For $\ell\ne p$, this says precisely that the full $\ell$-torsion is rational
over $\mathbb F_{q^m}$; for $\ell=p$ it says that the reduced $p$-line is
rational over that field.

\begin{proposition}[Exact split $C_4$ and $C_6$ occurrence]
\label{thm:C46-split-occurrence}
Let $m\in\{4,6\}$ be a surviving tame row, assume
$q\equiv1\pmod m$, and assume the corresponding degree condition.
\begin{enumerate}[label=\textup{(\roman*)}]
\item Rank one occurs if and only if
\[
 \ell\equiv1\pmod m
 \qquad\text{and}\qquad
 \mathfrak B_m(q,\ell)\text{ fails}.
\]
\item Rank two occurs if and only if $\ell\ne p$, the corresponding
irreducibility congruence in Lemma~\ref{lem:rank-constraints} holds, and
\[
 \mathfrak B_m(q,\ell)\text{ fails}.
\]
\end{enumerate}
\end{proposition}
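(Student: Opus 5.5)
By Lemma~\ref{lem:CM-twist-killing}, the marked normalizer of $L_E=C_m$ is $L_E$ itself. So, by Proposition~\ref{prop:marked-orbit}, there is exactly one map-level marked $k$-form, and we may work on it directly. Since $q\equiv1\pmod m$, Theorem~\ref{thm:cyclotomic} shows that $\beta$ is defined over $k$ and commutes with $\mathcal F_E$. Hence $\mathcal F_E$ lies in the centralizer of $\beta$, which is the CM order $\mathbb Z[\beta]$ ($=\mathbb Z[i]$ or $\mathbb Z[\zeta_3]$). The plan is to reduce the good-kernel condition, case by case, to the statement that $\mathcal F_E^m$ acts trivially on the relevant $\ell$-primary piece, and then to identify that statement with $\mathfrak B_m(q,\ell)$.

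For rank one, Lemma~\ref{lem:rank-constraints} gives the necessity of $\ell\equiv1\pmod m$. For $\ell\ne p$, the prime $\ell$ splits in $\mathbb Z[\beta]$, and $E[\ell]$ decomposes into the two $\beta$-eigenlines. Both are Frobenius-stable, and each is a legitimate semiregular kernel. On the line with eigenvalue $\alpha_i$, Corollary~\ref{cor:rank-one-exceptionality} says the kernel is good iff $\alpha_i^m\ne1$. So occurrence fails exactly when $\alpha_1^m=\alpha_2^m=1$, i.e.\ when $\mathcal F_E^m$ is the identity on $E[\ell]$; this uses that $\beta$ is semisimple with distinct eigenvalues and commutes with $\mathcal F_E$. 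Since $\mathcal F_E^m$ is the $q^m$-Frobenius of the standard form, this means $E[\ell]\subset E(\mathbb F_{q^m})$. I will show this is equivalent to $q^m\equiv1$ and $A_m(q)\equiv2\pmod\ell$. For the forward direction, the Weil pairing gives $\det=q^m$ and the trace gives $A_m(q)$. For the converse, an element of $\mathbb Z[\beta]$ whose characteristic polynomial mod $\ell$ is $(T-1)^2$ must act as a scalar on $E[\ell]$, because it commutes with $\beta$, which acts with distinct eigenvalues. A unipotent scalar is then the identity. For $\ell=p$, $E$ is ordinary, the unique reduced line is $\beta$-stable, and its Frobenius multiplier $\lambda$ satisfies $\lambda^m\equiv A_m(q)\pmod p$. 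Indeed, the unit root of the $q^m$-Frobenius polynomial reduces to the trace mod $p$, since the other root is divisible by $p$. This yields the $\ell=p$ clause $A_m(q)\equiv1\pmod p$.

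For split rank two, we need $\ell\ne p$ by Theorem~\ref{thm:translation-kernel}, and the irreducibility congruence by Lemma~\ref{lem:rank-constraints}. Then $\beta$ is irreducible and $E[\ell]\cong\mathbb F_{\ell^2}$, with Frobenius acting by multiplication by some $\alpha$. By Theorem~\ref{thm:split-support} at $r=1$ (or Corollary~\ref{cor:split-collapse}), exceptionality is $\alpha^m\ne1$. The conjugate eigenvalue $\alpha^\ell$ has the same property, so badness again means that $\mathcal F_E^m$ is the identity on $E[\ell]$. The same determinant/trace argument then identifies this with $\mathfrak B_m(q,\ell)$; here $q^m\equiv1\pmod\ell$ and trace $2$ force scalar identity, by the same commutation argument.

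The step I expect to be the main obstacle is the converse direction of the equivalence ``$\mathcal F_E^m=1$ on $E[\ell]$ $\Leftrightarrow$ $\mathfrak B_m$'' when $\ell$ divides the discriminant of $\mathcal F_E^m$. The key point is that $\mathcal F_E^m\in\mathbb Z[\beta]$ cannot be a nontrivial unipotent operator mod $\ell$. I would argue this via commutation with a semisimple $\beta$ that has distinct eigenvalues ($\ell\nmid m$). This forces any element of the centralizer to be diagonal in the $\beta$-eigenbasis over $\overline{\mathbb F}_\ell$, so a unipotent element is the identity. A secondary check is that the resulting trace is independent of the marked form, but that is exactly Lemma~\ref{lem:CM-twist-killing}.
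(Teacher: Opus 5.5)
Your proposal is correct and follows the paper's proof. The steps match: there is one map-level form because the marked normalizer equals the complement; a kernel is bad exactly when $\mathcal F_E^m$ acts trivially on the reduced $\ell$-torsion; commutation with the semisimple $\beta$ (distinct eigenvalues) rules out a Jordan block, so badness is exactly $\mathfrak B_m(q,\ell)$; and the case $\ell=p$ is read off on the unique reduced line. The one step you assert without reason, that $E$ is ordinary when $\ell=p$, follows because $p=\ell\equiv1\pmod m$ splits in $\mathbb Z[\beta]$, and you should state it. It is genuinely needed in the ``if'' direction: a supersingular curve would have $A_m(q)\equiv0$ but no reduced $p$-line.
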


\begin{proof}
In split rank one, the two $\beta$-eigenlines are Frobenius-stable and a line
is bad exactly when its multiplier has $m$th power $1$.  Thus a good line
exists exactly when $q^m$-Frobenius does not fix the full reduced
$\ell$-torsion.  In rank two the centralizer of the irreducible $\beta$ is the field
$\mathbb F_{\ell^2}$, so $\mathcal F_E^m$ acts by multiplication by a field
element.  If it has an eigenvalue $1$, that element is $1$, and therefore
$\mathcal F_E^m=I$.  Thus there is no Jordan ambiguity, and the same condition
results.  The characteristic-dividing rank-one case is identical on the unique
reduced line.  Unlike $C_3$, the marked normalizer is already the complement
for $C_4$ and $C_6$, so no additional marked form can escape the bad coset.
\end{proof}

\begin{proposition}[Exact nonsplit occurrence]
\label{thm:nonsplit-occurrence}
Let $m\in\{3,4,6\}$ and suppose $q\equiv-1\pmod m$.  Rank-one occurrence is
impossible.  Rank-two occurrence holds if and only if
\begin{equation}
 \ell\ne p
 \qquad\text{and}\qquad
 \ell\nmid q+1.
 \label{eq:nonsplit-occurrence}
\end{equation}
\end{proposition}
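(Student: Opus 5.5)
The proof divides into three steps: exclusion of rank one, necessity of \eqref{eq:nonsplit-occurrence}, and sufficiency via an explicit construction.

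\emph{Rank one.} Lemma~\ref{lem:rank-constraints}(i) shows that a rank-one datum forces $q\equiv1\pmod m$. Since $m\in\{3,4,6\}$ has $m>2$, the classes $1$ and $-1$ are distinct modulo $m$, so the hypothesis $q\equiv-1\pmod m$ excludes rank one.

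\emph{Necessity in rank two.} Theorem~\ref{thm:translation-kernel} gives $\ell\ne p$ and $\mathcal N=E[\ell]$. Lemma~\ref{lem:F2} makes the action of $\langle\beta,\mathcal F_{\mathcal N}\rangle$ irreducible automatically, so indecomposability imposes nothing further. Exceptionality over $k$ is then equivalent to $\ell\nmid q+1$ by \eqref{eq:nonsplit-ground} in Theorem~\ref{thm:nonsplit-support}.

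\emph{Sufficiency.} Assume $\ell\ne p$ and $\ell\nmid q+1$. We build one datum: a marked $k$-form $(E,L_E)$ with $L_E\cong C_m$ Frobenius-stable and $\mathcal F_E\beta\mathcal F_E^{-1}=\beta^{q}=\beta^{-1}$, together with $\mathcal N=E[\ell]$ and $H=L_E$ (zero shift).
\begin{itemize}
\item For $m=3$ or $6$, take $E:y^2=x^3+1$ when $p>3$. Since $q\equiv-1\pmod3$, the automorphism $\zeta_3$ is not in $k$, and Frobenius conjugates $\beta$ to $\beta^{-1}$.
\item For $m=3$ in characteristic $2$, use $E_0\otimes\mathbb F_q$ with $q=2^a$, $a$ odd, via Lemma~\ref{lem:char2-C3-normalizer}.
\item For $m=4$, take $E:y^2=x^3+x$ when $p>3$, and Lemma~\ref{lem:char3-C4-normalizer} when $p=3$. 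In both cases $q\equiv3\pmod4$ gives $\zeta_4\notin k$.
\end{itemize}
Tameness requires $p\nmid m$, which holds in every case above.

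Next we verify the axioms of Definition~\ref{def:datum}. $E[\ell]$ is reduced because $\ell\ne p$. It is Frobenius-stable and stable under $L_E$. Semiregularity of $L_E$ on $E[\ell]\setminus\{0\}$ holds because each nontrivial $\beta^i$ has fixed group $E[1-\beta^i]$, whose order divides $4$, $3$, or $2$ by \eqref{eq:fixed-counts}; this order is prime to the odd prime $\ell\ne 3$. The fact $\ell\ne3$ follows from the degree condition $\ell^2\equiv1\pmod m$ when $3\mid m$, and from $\ell$ odd when $m=4$.

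Theorem~\ref{thm:intro-forms}(ii) then produces a separable lower map of degree $\ell^2$. Lemma~\ref{lem:F2} gives its indecomposability, and Theorem~\ref{thm:nonsplit-support} at $r=1$ gives its exceptionality.

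The main obstacle is confirming in each characteristic that the chosen marked $k$-form genuinely realizes the nonsplit normalizer relation. This rests on Theorem~\ref{thm:cyclotomic}: Frobenius acts on the tangent eigenvalue $\zeta_m$ by $\zeta_m\mapsto\zeta_m^{q}=\zeta_m^{-1}$, so the relation follows once the model is fixed. The argument also needs Lemma~\ref{lem:F2} to hold for any such form, independently of which twist was chosen.
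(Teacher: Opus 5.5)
Your proof is correct and follows the paper's route: rank one is excluded by Lemma~\ref{lem:rank-constraints}, irreducibility in rank two is automatic by Lemma~\ref{lem:F2}, and exceptionality over $k$ is exactly \eqref{eq:nonsplit-ground}. The paper's two-line proof leaves the existence of a nonsplit rank-two datum implicit. Your explicit zero-shift CM construction, with its check of the axioms of Definition~\ref{def:datum}, fills in that existence step.
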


\begin{proof}
Rank one is excluded by Lemma~\ref{lem:rank-constraints}.  Rank two is
irreducible by Lemma~\ref{lem:F2}, and its ground-field exceptionality is
exactly \eqref{eq:nonsplit-ground}.
\end{proof}

\begin{example}[A sharp occurrence contrast between $C_3$ and $C_6$]
\label{ex:F7-C3-C6-occurrence}
Take $q=\ell=7$.  Rank-one $C_3$ occurrence in degree $7$ holds because
$q\equiv\ell\equiv1\pmod3$; the residual quadratic twist in
\(\operatorname{Aut}(E_{\bar k},\mathrm O_E)/C_3\cong C_2\) moves a bad multiplier
outside the cubic root-of-unity sector.

The corresponding rank-one $C_6$ occurrence fails.  For the canonical
$j=0$ curve
\[
 E_6^0:y^2=x^3+1
\]
one has
\[
 A_6(7)=-286\equiv1\pmod7.
\]
Hence the characteristic-$7$ obstruction $\mathfrak B_6(7,7)$ holds, so the
unique reduced $7$-torsion line has a Frobenius multiplier whose sixth power
is $1$.  There is no residual quadratic twist outside $C_6$ to remove this obstruction.
Thus the availability of a residual twist distinguishes the split $C_3$ and
$C_6$ occurrence problems even on the same $j=0$ CM geometry.
\end{example}

\section{Enumeration of fixed-field forms}
\label{sec:CM-counts}

The occurrence theorem decides whether a prescribed parameter is realized.
The classification becomes arithmetically complete only after counting the
resulting fixed-field forms.  For every admissible parameter put
\[
 \mathrm{Cl}_{m,e}(q,\ell)
 =|\{\text{two-sided $k$-M\"obius classes with parameter $(q,\ell,m,e)$}\}|,
\]
with the convention that this number is zero when
$\operatorname{Occ}_{m,e}(q,\ell)$ fails.

We now specialize the marked-orbit principle of
Proposition~\ref{prop:marked-orbit} to this arithmetic problem.  For each
map-level marked stratum $c$, take $K_c$ to be its set of good rank-$e$
translation kernels.  This family is $R_c$-stable, and
\[
 (K_c\times S_c)/R_c
\]
is exactly the set of two-sided classes contributed by the stratum.  Thus
Section~\ref{sec:occurrence} asks whether the good-kernel locus is nonempty,
whereas the present section counts residual orbits of the same good kernels
together with their affine shift classes.

Here a \emph{map-level marked $k$-form} means a twisted-conjugacy stratum in
Proposition~\ref{prop:marked-orbit}; it should not be confused with an elliptic
$k$-form, or equivalently an elliptic $k$-isomorphism class of the underlying
curve.  In the involutive row, the Waterhouse trace/order data and Schoof-corrected
class-number sums first count the latter.  Generically, the two quadratic
elliptic twists determine one map-level form, which is the source of the
one-half conversion below.

The next theorem summarizes this arithmetic closure before the individual
formulas are introduced.

\begin{theorem}[Arithmetic closure of the fixed-field classification]
\label{thm:master-enumeration}
Let $(q,\ell,m,e)$ be an admissible tame parameter from
\eqref{eq:four-types-intro}.  Then $\mathrm{Cl}_{m,e}(q,\ell)$ is finite and
is determined exactly by the fixed-field datum as follows.
\begin{enumerate}[label=\textup{(\roman*)}]
\item For the cubic signature $m=3$, the count is a finite marked-orbit
calculation involving the marked $k$-form, the good translation kernel, and the
affine shift class.  When $\ell\ne p$, the rank-one and split-rank-two formulas are read from
the multiplicities at $1$ and $-1$ in the three-step Frobenius polynomial;
when $\ell=p$ in rank one, the same role is played by the multiplier on the
unique reduced $p$-line.  Every occurring nonsplit rank-two $C_3$ parameter
supports exactly three classes.
\item For $m=4$ and $m=6$, rank two is rigid after occurrence:
\[
 \mathrm{Cl}_{4,2}(q,\ell)=2,
 \qquad
 \mathrm{Cl}_{6,2}(q,\ell)=1.
\]
The rank-one counts are the corresponding finite-step Frobenius
multiplicity formulas, with the reduced-line analogue when $\ell=p$.
\item For the involutive signature $m=2$, the answer is an exact finite
residue-degree-weighted class-number sum built from Waterhouse trace/order data
and Schoof's fixed-order multiplicities.  In characteristic $3$, the unique
supersingular special-$j$ stratum is replaced by an explicit $S_3$-Burnside
correction.  These are literal class counts, not asymptotic or stack-weighted
masses; the exact formulas are stated later in this section.
\end{enumerate}
The characteristic-two cubic row is already included in \textup{(i)}: the
same marked $C_3$ orbit calculation applies, with no separate classification
or counting mechanism.
\end{theorem}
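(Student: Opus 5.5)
The plan is to apply Proposition~\ref{prop:marked-orbit} stratum by stratum, after first showing that each of the two ingredients is finite. Finiteness is immediate. By Proposition~\ref{prop:marked-orbit}, the map-level marked $k$-forms of a fixed $(E,L_E)$ correspond to $\theta$-twisted classes in the finite group $W=U^\sharp/L_E$. For $m\in\{3,4,6\}$, and in characteristics $2,3$, the geometric marked pair is unique: $j=0$ or $1728$, using Lemmas~\ref{lem:char3-C4-normalizer}, \ref{lem:char2-C3-normalizer} and~\ref{lem:C3-normalizer-allchar}. For $m=2$ there are only finitely many $\bar k$-isomorphism classes of curves with a $k$-form. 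Each stratum then contributes $|(K_c\times S_c)/R_c|$. Here $K_c$ is a finite set of subgroups of $E[\ell]$, or of the reduced $p$-line, and $S_c$ is a quotient of $\Delta$, which has order $4,3,2,1$ by \eqref{eq:fixed-counts}. Hence $\mathrm{Cl}_{m,e}(q,\ell)$ is the finite sum of the Burnside expressions \eqref{eq:marked-burnside}.

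Next I would compute the ingredients row by row.

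\emph{Rows $m=4,6$.} Lemma~\ref{lem:CM-twist-killing} gives $N(L_E)=L_E$, so $W=1$ and $R_c=1$. The count is then $\sum_c|K_c|\,|S_c|$. In rank two, $K_c=\{E[\ell]\}$ is a single kernel. In the split case there is one stratum. In the nonsplit case, the twisted classes of $W=1$ again give a single stratum, and occurrence makes the kernel good. The shift module is $S_c=\Delta$, of order $2$ for $m=4$ (Frobenius fixes its nonzero point, as in Theorem~\ref{thm:rational-branches}(iii)), and $S_c=0$ for $m=6$. This gives $\mathrm{Cl}_{4,2}=2$ and $\mathrm{Cl}_{6,2}=1$. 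The remaining work is to check that exactly one marked $k$-form carries a good kernel. In the split case this follows from Lemma~\ref{lem:CM-twist-killing}, since the rank-two goodness condition $\mathfrak B_m$ is independent of the form. In rank one, $|K_c|$ is the number of $\beta$-eigenlines whose Frobenius multiplier $\lambda$ satisfies $\lambda^m\neq1$. I would read this off from the characteristic polynomial of $\mathcal F_E$ and its $m$-step power, as in the proof of Proposition~\ref{thm:C46-split-occurrence}.

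\emph{Row $m=3$.} Here $W=C_6/C_3\cong C_2$, generated by $[-1]$. This gives two parities, which are two strata when $q\equiv1\pmod 3$ (trivial $\theta$). The element $[-1]$ centralizes Frobenius, so $R_c=W$. Since $[-1]$ fixes every line, it acts trivially on $K_c$. On $S_c$, which is $\Delta\cong\mathbb F_3$ or $0$, it acts by negation. So the Burnside count for each stratum is $|K_c|\cdot(\text{number of }\pm\text{-orbits on }S_c)$. When Frobenius acts trivially on $\Delta$ this gives $|K_c|\cdot 2$, and otherwise $|K_c|\cdot1$. The multiplicities of the eigenvalues $1$ and $-1$ in the three-step Frobenius polynomial control two things for each parity: whether $\lambda^3=1$ (bad line), and the Frobenius action on $\Delta$.

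\emph{Nonsplit rank two, $m=3$.} Now $\theta$ acts nontrivially on $W$. The twisted classes still number two, and both are realized. Lemma~\ref{lem:F2} and \eqref{eq:nonsplit-ground} make the kernel good in both. The shift module $S_c$ has order $3$ in the stratum where Frobenius fixes $\Delta$ and order $1$ in the other. I then expect orbits of sizes $\{1,2\}$ in the first stratum and $1$ in the second, for a total of $3$. This is consistent with Example~\ref{ex:F17}, and I would verify it against $R_c$ directly.

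\emph{Involutive row $m=2$.} Here $L_E=\{\pm1\}$ and $W=\operatorname{Aut}(E,\mathrm O_E)/\{\pm1\}$. For generic $j$, $W=1$, so every quadratic twist pair gives one stratum, and $S_c=E[2]/(\mathcal F-1)E[2]$. I would sum over the Waterhouse traces $t\in\mathcal T(q)$. For each $t$, I would count $\mathbb F_q$-isomorphism classes using Schoof's class-number sums, refined by the number of good kernels, which is read from $X^2-tX+q$ modulo $\ell$. The weight is the residue degree coming from the orbit structure of $E[2]$ over $\mathbb F_q$. Then I would halve, since a twist pair $t,-t$ gives one map-level form. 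At $j=0,1728$ the group $W$ is $C_3$ or $C_2$, and in characteristic $3$ it is $S_3$ by Lemma~\ref{lem:char3-C4-normalizer}. For these strata I would replace the generic term by an explicit Burnside sum over $R_c$ acting on $K_c\times S_c$.

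The main obstacle is the $m=2$ bookkeeping. It requires converting Schoof's counts of isomorphism classes into counts of map-level strata with correct stabilizers, weighting each by $|S_c|$ (which depends on the $E[2]$ Galois structure, not only on $t$), and handling the special-$j$ corrections without double counting. I would try to organize the sum by pairs $(t,\text{structure of }E(\mathbb F_q)[2])$, using Schoof's fixed-order refinement, and isolate the finitely many special-$j$ strata first.
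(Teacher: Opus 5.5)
Your overall route is the paper's: its proof of Theorem~\ref{thm:master-enumeration} simply assembles the row computations coming from Proposition~\ref{prop:marked-orbit}, and your CM-row bookkeeping ($\mathrm{Cl}_{4,2}=2$, $\mathrm{Cl}_{6,2}=1$, $|K_c|\cdot|S_c|$ in rank one, and $2+1=3$ in the nonsplit cubic row) agrees with it. Two statements in the cubic row are wrong, however.

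First, the Frobenius action on $\Delta\cong\mathbb F_3$ is not controlled by the multiplicities at $\pm1$ of the three-step polynomial, which lives in $\mathbb F_\ell[T]$ with $\ell\ne3$. It is fixed by the parity. $\Delta$ consists of $k$-rational points of the standard model, so the positive parity acts trivially and gives two shift orbits per good line. Since $[-1]$ acts by $-1$ on $\Delta$, the negative parity has $S_c=0$ and gives one orbit. What $\nu_+$ and $\nu_-$ count are the bad lines on the positive and negative parity respectively; for the negative parity the multiplier becomes $-\lambda$, so a line is bad when $\lambda^3=-1$. This yields $2(2-\nu_+)+(2-\nu_-)$.

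Second, $\theta$ cannot act nontrivially on $W\cong C_2$. It is trivial in the nonsplit row as well, and that is why there are still two strata, each with $R_c=W$.

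The genuine gap is in the involutive row. You propose to read the number of good kernels from $X^2-tX+q$ modulo $\ell$, but this fails when that polynomial has a repeated root $\lambda\not\equiv\pm1$. A curve of trace $t$ then has one invariant line if Frobenius is a Jordan block on $E[\ell]$ and $\ell+1$ lines if it is scalar, and trace data cannot tell these apart. The shift weight has the same problem: for even $t$, $|S_c|=|E[2](k)|$ is $2$ or $4$ according as Frobenius is non-scalar or scalar on $E[2]$.

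The missing idea is that ``Frobenius acts as $[a]$ on $E[n]$'' is equivalent to $\pi_t-a\in nR$ for $R=\operatorname{End}_k(E)$. Hence it is constant on endomorphism-order strata, whose sizes are Schoof's corrected counts $f_p(R)h(R)$ (Proposition~\ref{prop:scalar-level}). When both refinements occur on the same curve, you must impose scalarity on $E[2\ell]$ through the Chinese-remainder class $\widetilde\lambda$ (Proposition~\ref{prop:raw-r1}).

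This is also what ``residue-degree-weighted'' refers to: $f_p(R)$ is the residue degree of $p$ in the order $R$. It is an invariant of $R$, not something produced by the Galois orbit structure of $E[2]$, as your proposal suggests. Without this order-level refinement your sum over Waterhouse traces is not exact. With it, your halving for generic $j$ goes through, since twists by $-1$ have equal good-kernel and shift counts. Your special-$j$ Burnside replacements also go through, as in Theorems~\ref{thm:global-C2-count} and~\ref{thm:char3-C2-count}.
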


Thus enumeration is the final arithmetic layer of the classification rather
than an auxiliary counting problem.  The exact equivalence theorem reduces
the count to finite marked orbit spaces, while Frobenius relative to the
marked cyclic action determines which kernels survive the exceptionality
test.  The row formulas below establish
Theorem~\ref{thm:master-enumeration}.

\subsection{CM enumeration from finite-step Frobenius}

For the CM signatures $m=3,4,6$, the fixed-field equivalence theorem separates
the exact count into three finite pieces: the map-level $k$-form, the good
translation kernel, and the affine shift class.

For a fixed geometric marked pair put
\[
 U^\sharp=N_{\operatorname{Aut}(E_{\bar k},\mathrm O_E)}(L_E),
 \qquad L_E=\langle\beta\rangle.
\]
The marked-normalizer calculations of Section~\ref{sec:236-244} give
\[
\begin{array}{c|c|c|c}
 m&\text{allowed characteristic}&U^\sharp&U^\sharp/L_E\\ \hline
 3&p\ne3&C_6&C_2,\\
 4&p\ne2&C_4&1,\\
 6&p>3&C_6&1.
\end{array}
\]
The form and shift multiplicities are therefore instances of
Proposition~\ref{prop:marked-orbit}.

\begin{corollary}[CM forms and shift orbits]
\label{lem:form-quotient}
\label{lem:kernel-shifts}
For the surviving tame CM rows, the marked-orbit principle has
\[
\begin{array}{c|c|c|c}
 m&W&\text{map strata}&\text{shift-orbit numbers}\\ \hline
 3&C_2&+,-&2,1,\\
 4&1&\text{one}&2,\\
 6&1&\text{one}&1.
\end{array}
\]
In the $C_3$ row the three combined form/shift strata have branch-point
Frobenius orbit types
\[
 1+1+1,\qquad 3,\qquad 1+2.
\]
In split rank one with $\ell\ne p$, the two $\beta$-eigenlines remain distinct
two-sided classes whenever both are exceptional; for $\ell=p$ the reduced
line is unique, and in rank two the kernel is $E[\ell]$.
\end{corollary}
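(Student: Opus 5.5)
This follows by specializing Proposition~\ref{prop:marked-orbit} row by row, using the marked normalizers already computed. For each $m\in\{3,4,6\}$ we read off $U^\sharp$ and $W=U^\sharp/L_E$ from the table preceding the corollary. That table rests on Lemma~\ref{lem:C3-normalizer-allchar} for $m=3$ (also in characteristic $2$, via Lemma~\ref{lem:char2-C3-normalizer}), on Lemma~\ref{lem:char3-C4-normalizer} and the $p>3$ automorphism classification for $m=4$, and on tameness forcing $p>3$ for $m=6$.

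\emph{Map strata.} When $W=1$ there is exactly one $\theta$-twisted conjugacy class, so one map stratum. For $m=3$, $W=C_2$ is generated by the central unit $[-1]$, and $\theta$ acts trivially on $W$ since $[-1]$ is defined over $k$. Hence the twisted classes are the ordinary conjugacy classes of $C_2$, namely the two parities $+$ and $-$. In every case the twisted stabilizer $R_c$ is all of $W$.

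\emph{Shift orbits.} We compute $S_c=\Delta/(F_c-1)\Delta$ and then the $R_c$-orbits on it.
\begin{itemize}
\item For $m=6$, $\Delta=0$ by \eqref{eq:fixed-counts}, so there is one orbit.
\item For $m=4$, $\Delta\cong C_2$ and every Frobenius fixes its unique nonzero point, so $S_c=\Delta$ has two elements. Since $R_c$ is trivial, there are two orbits, as in Lemma~\ref{lem:char3-C4-normalizer}.
\item For $m=3$, $\Delta\cong\mathbb F_3$, and $F_c$ acts on $\Delta$ by $\pm1$. In the stratum where $F_c$ acts trivially, $S_c\cong\mathbb F_3$ and $R_c=\langle[-1]\rangle$ acts by negation, which gives the orbits $\{0\}$ and $\{\pm s\}$, so two orbits. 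In the other stratum $F_c-1$ is invertible on $\Delta$, so $S_c=0$ and there is one orbit.
\end{itemize}
The main point to check is which parity carries the trivial action. The two parities differ by $[-1]$, which acts as $-1$ on $\Delta$. So exactly one parity acts trivially on $\Delta$, and labelling strata accordingly gives the numbers $2,1$.

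\emph{Branch orbit types.} This uses the proof of Theorem~\ref{thm:rational-branches}: the three branch points correspond bijectively and Frobenius-equivariantly to $\operatorname{Fix}(H')=P_{H'}+\Delta(H')$, on which Frobenius acts affinely as $T\mapsto F_c T+s$.
\begin{itemize}
\item Trivial linear action with $s=0$: the map is the identity, giving type $1+1+1$.
\item Trivial linear action with $s\ne0$: a translation of order $3$, giving type $3$.
\item Linear action $-1$: $T\mapsto -T+s$ is an affine involution on $\mathbb F_3$ with exactly one fixed point, giving type $1+2$.
\end{itemize}

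\emph{Kernel statements.} These come from the $R_c$-action on $K_c$. For $m=4,6$, $R_c$ is trivial. For $m=3$, $R_c$ is generated by $[-1]$, which preserves each $\beta$-eigenline. So the two distinct Frobenius-stable eigenlines are never identified and give distinct classes whenever both are good. For $\ell=p$ there is a unique reduced line (Silverman, Corollary~III.6.4(c)). In rank two, Theorem~\ref{thm:translation-kernel} gives $\mathcal N=E[\ell]$.
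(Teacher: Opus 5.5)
Your proposal is correct and follows essentially the same route as the paper. You specialize Proposition~\ref{prop:marked-orbit} using the marked-normalizer table, separate the two $C_3$ parities by the sign of $F_c$ on $\Delta\cong\mathbb F_3$, and use that $U^\sharp$ is abelian (or equal to $L_E$) so that it preserves each $\beta$-eigenline; your derivation of the branch-orbit types from the affine Frobenius action $T\mapsto F_cT+s$ on $\operatorname{Fix}(H')$, via the proof of Theorem~\ref{thm:rational-branches}, is more explicit than the paper's one-line appeal to Proposition~\ref{prop:cyclic-signature}.
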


\begin{proof}
For $m=3$, $U^\sharp=C_6$ and $W=C_2$; the two twisted-conjugacy classes are
the quadratic parities.  On the positive parity
$S_c\cong C_3$, and the residual involution identifies the two nonzero
classes, giving two shift orbits; on the negative parity $F_c-1$ is invertible
on $\Delta\cong C_3$, giving one.  For $m=4,6$, $U^\sharp=L_E$, hence
$W=1$, while $|\Delta|=2,1$.  The branch-orbit types follow from
Proposition~\ref{prop:cyclic-signature}.  Finally, the marked normalizer is abelian in the $C_3$ case and equals the
complement in the $C_4$ and $C_6$ cases, so it preserves each $\beta$-eigenspace,
giving the stated kernel rigidity.
\end{proof}

\subsubsection{The weighted \texorpdfstring{$C_3$}{C3} formulas}

Take the positive marked parity
\[
 E_3^+:\quad
 \begin{cases}
 y^2=x^3+1,&\operatorname{char}k\ne2,3,\\
 y^2+y=x^3,&\operatorname{char}k=2,
 \end{cases}
\]
and define
\begin{equation}
 A_3(q)=q^3+1-|E_3^+(\mathbb F_{q^3})|.
 \label{eq:A3}
\end{equation}
The value is intrinsic to the positive marked parity: all marked forms within
that parity become isomorphic over $\mathbb F_{q^3}$ by the same cocycle-norm
argument as
in Lemma~\ref{lem:CM-twist-killing}.  If
$a_3(q)=q+1-|E_3^+(\mathbb F_q)|$, then
\[
 A_3(q)=a_3(q)^3-3q\,a_3(q).
\]
For $\ell\ne p$ put
\[
 P_3(T)=T^2-A_3(q)T+q^3\in\mathbb F_\ell[T],
\]
and let
\[
 \nu_+=\operatorname{mult}_{T=1}P_3(T),
 \qquad
 \nu_-=\operatorname{mult}_{T=-1}P_3(T).
\]

\begin{proposition}[Exact rank-one $C_3$ count]
\label{thm:C3-r1-count}
Assume $\operatorname{char}k\ne3$ and
\[
 q\equiv1\pmod3,
 \qquad
 \ell\equiv1\pmod3.
\]
If $\ell\ne p$, then
\begin{equation}
 \mathrm{Cl}_{3,1}(q,\ell)=6-2\nu_+-\nu_-.
 \label{eq:C3-r1-count}
\end{equation}
If $\ell=p$ (necessarily $p>3$), put
\[
 \varepsilon_+=\mathbf 1_{A_3(q)\equiv1\pmod p},
 \qquad
 \varepsilon_-=\mathbf 1_{A_3(q)\equiv-1\pmod p};
\]
then
\begin{equation}
 \mathrm{Cl}_{3,1}(q,p)=3-2\varepsilon_+-\varepsilon_-.
 \label{eq:C3-r1-p-count}
\end{equation}
For $\ell\ne p$, every value $2,3,4,5,6$ occurs for suitable parameters.
\end{proposition}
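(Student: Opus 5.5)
Apply the marked-orbit principle, Proposition~\ref{prop:marked-orbit}, with $K_c$ the set of good rank-one kernels. By Corollary~\ref{lem:form-quotient} there are two map strata, the parities $+$ and $-$, and both have $W=C_2$. The positive stratum has $S_c\cong C_3$, and the residual involution $[-1]$ identifies its two nonzero shift classes. The negative stratum has $S_c=0$, because $F_c-1$ is invertible on $\Delta$. The involution $[-1]$ centralizes $\beta$, so it fixes each $\beta$-eigenline. Hence $R_c$ acts trivially on $K_c$, and for each good kernel the stratum contributes two classes in the positive parity and one in the negative parity, as Corollary~\ref{lem:form-quotient} asserts. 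Summing gives
\[
 \mathrm{Cl}_{3,1}(q,\ell)=2\,g_++g_-,
\]
where $g_\pm$ is the number of good lines in parity $\pm$.

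We now count good lines. Suppose $\ell\ne p$. Since $q\equiv1\pmod3$, the Frobenius commutes with $\beta$ (Theorem~\ref{thm:cyclotomic}). Because $\ell\equiv1\pmod3$, $\beta$ has two distinct eigenlines in $E[\ell]$, and both are Frobenius-stable. Let $\alpha_1,\alpha_2$ be the Frobenius eigenvalues on these lines in the positive parity. In the negative parity they become $-\alpha_1,-\alpha_2$ (Lemma~\ref{lem:C3-twist}). By Corollary~\ref{cor:rank-one-exceptionality}, a line is good exactly when its multiplier cubed is not $1$. Indecomposability is automatic in rank one.

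The cubes $\alpha_i^3$ are the roots of $P_3$ modulo $\ell$. This is because the $q^3$-Frobenius on $E[\ell]$ has characteristic polynomial $T^2-A_3(q)T+q^3$, and $A_3(q)$ is the same for every form in the positive parity. The negative-parity cubes are $-\alpha_i^3$. So the bad positive lines are counted by $\nu_+$, the multiplicity of the root $1$, and the bad negative lines by $\nu_-$, the multiplicity of the root $-1$. In the positive parity, if both cubes equal $1$ then $P_3=(T-1)^2$ and $\nu_+=2$; multiplicity therefore counts lines correctly even when $P_3$ has a double root. This gives $g_+=2-\nu_+$ and $g_-=2-\nu_-$, hence
\[
 \mathrm{Cl}_{3,1}(q,\ell)=2(2-\nu_+)+(2-\nu_-)=6-2\nu_+-\nu_-.
\]

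The delicate point is the parity bookkeeping. One has to check that the weight-$2$ shift count really belongs to the parity whose Frobenius is the untwisted one, on which $A_3$ is computed. This follows because a nonzero shift requires $F_c\equiv1$ on $\Delta$, and that holds exactly for the positive form: $\Delta\subset E_3^+(k)$ in the model $y^2=x^3+1$, where $\Delta=\{\mathrm O,(0,\pm1)\}$.

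Now suppose $\ell=p$. Then there is a unique reduced $p$-line, and it is the unit-root eigenspace. Its multiplier cubed is the unit root of the $q^3$-Frobenius, which is congruent to $A_3(q)$ modulo $p$. So the same counting gives
\[
 \mathrm{Cl}_{3,1}(q,p)=2(1-\varepsilon_+)+(1-\varepsilon_-)=3-2\varepsilon_+-\varepsilon_-.
\]

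Finally, we need that the values $2,\dots,6$ all occur. The pair $(\nu_+,\nu_-)$ can be $(0,0)$, $(1,0)$, $(0,1)$, $(1,1)$ or $(2,0)$, since $(0,2)$ would force $P_3=(T+1)^2$. These give the values $6,4,5,3,2$. Each would be realized by a small explicit pair $(q,\ell)$, for instance $q=7$ with several primes $\ell\equiv1\pmod3$, found by computing $A_3(7)$ and factoring $P_3$ modulo $\ell$. Finding witnesses for all five patterns is the only non-structural step; we would search over small $q\equiv1\pmod 3$.
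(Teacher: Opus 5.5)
Your derivation of the two formulas follows the paper's argument. You use two marked parities weighted $2$ and $1$ by their shift orbits, with Frobenius diagonal on the two $\beta$-eigenlines so that $\nu_\pm$ count bad lines with no Jordan ambiguity. For $\ell=p$ you use the unit-root multiplier $A_3(q)\bmod p$ on the unique reduced line. Your explicit Burnside computation on $K_c\times S_c$ makes explicit what the paper leaves implicit, as does your check that the weight-$2$ parity is the untwisted one, via $\Delta\subset E_3^+(k)$.

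The gap is the final assertion that every value $2,3,4,5,6$ occurs. This is an existence claim, and you offer only a search plan. The search you suggest also cannot succeed at $q=7$. There $P_3(1)=|E_3^+(\mathbb F_{343})|=324=2^2\cdot3^4$, so $\nu_+=0$ for every admissible $\ell$. Also $P_3(-1)=364=2^2\cdot7\cdot13$, so $q=7$ produces only the value $5$ (at $\ell=13$) and the value $6$. The values $2,3,4$ require $\ell\mid|E_3^+(\mathbb F_{q^3})|$ (or $\nu_-=2$) and hence other $q$.

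The paper closes this step with explicit witnesses. The pairs $(q,\ell)=(67,7),(13,7),(31,13),(7,13),(7,19)$ have $A_3(q)=-880,-70,308,20,20$. They give $(\nu_+,\nu_-)=(2,0),(1,1),(1,0),(0,1),(0,0)$, hence the counts $2,3,4,5,6$, and each satisfies $q\equiv\ell\equiv1\pmod 3$ with $\ell\ne p$.

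Separately, your remark excluding $(\nu_+,\nu_-)=(0,2)$ is false, though harmless since it only gives the value $4$ again. For $q=211$ one has $|E_3^+(\mathbb F_{211})|=228$, since this must be divisible by $12$. Thus $a_3(211)=-16$ and $A_3(211)=6032\equiv-2\pmod 7$. Since $211\equiv1\pmod7$, we get $P_3\equiv(T+1)^2\pmod 7$, and the count is $4$.
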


\begin{proof}
On the positive parity, a kernel line is bad exactly when its Frobenius
multiplier has cube $1$.  Since Frobenius commutes with the split semisimple $\beta$, it is diagonal
on the two distinct $\beta$-eigenlines; hence there is no Jordan ambiguity.
The number of bad positive-parity lines is therefore exactly $\nu_+$, and
every good line contributes two shift classes.  On the negative marked parity,
Frobenius is $-\mathcal F_E$.  A line is bad exactly when the original
multiplier has cube $-1$, giving $\nu_-$ bad lines, and each good line
contributes one class.  Thus
\[
 2(2-\nu_+)+(2-\nu_-)=6-2\nu_+-\nu_-.
\]
When $\ell=p$, only the unique reduced line remains, and the multiplier of
$q^3$-Frobenius on it is $A_3(q)$ modulo $p$; the same $2+1$ weighting gives
\eqref{eq:C3-r1-p-count}.

For completeness, the final attainment assertion is witnessed by the
following prime-field parameters; the displayed pair is
$(\nu_+,\nu_-)$:
\[
\begin{array}{c|c|c|c|c}
 \mathrm{Cl}_{3,1}&q&\ell&A_3(q)&(\nu_+,\nu_-)\\ \hline
 2&67&7&-880&(2,0)\\
 3&13&7&-70&(1,1)\\
 4&31&13&308&(1,0)\\
 5&7&13&20&(0,1)\\
 6&7&19&20&(0,0).
\end{array}
\]
In each row $q\equiv\ell\equiv1\pmod3$ and $\ell\ne p$, so the hypotheses
are satisfied.
\end{proof}

For split rank two define
\[
 \epsilon_+
 =\mathbf 1_{\{q^3\equiv1\ (\ell),\ A_3(q)\equiv2\ (\ell)\}},
 \qquad
 \epsilon_-
 =\mathbf 1_{\{q^3\equiv1\ (\ell),\ A_3(q)\equiv-2\ (\ell)\}}.
\]

\begin{proposition}[Exact rank-two $C_3$ count]
\label{thm:C3-r2-count}
Assume $\operatorname{char}k\ne3$, $\ell\ne p$, and the rank-two degree
condition.
\begin{enumerate}[label=\textup{(\roman*)}]
\item If $q\equiv1\pmod3$ and $\ell\equiv2\pmod3$, then
\begin{equation}
 \mathrm{Cl}_{3,2}(q,\ell)=3-2\epsilon_+-\epsilon_-.
 \label{eq:C3-r2-split-count}
\end{equation}
Thus the count is $1$, $2$, or $3$.
\item If $q\equiv-1\pmod3$ and $\ell\nmid q+1$, then
\begin{equation}
 \mathrm{Cl}_{3,2}(q,\ell)=3.
 \label{eq:C3-r2-nonsplit-count}
\end{equation}
If $\ell\mid q+1$, the count is zero.
\end{enumerate}
\end{proposition}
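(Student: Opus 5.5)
The count will follow the pattern of Proposition~\ref{thm:C3-r1-count}, now with the single rank-two kernel. By Theorem~\ref{thm:translation-kernel} the kernel is forced to be $\mathcal N=E[\ell]$, so $K_c$ contains at most one element and the marked-orbit formula \eqref{eq:marked-burnside} reduces to counting shift orbits in each map-level stratum in which $E[\ell]$ is good. Corollary~\ref{lem:form-quotient} supplies the shape of the answer. The two $C_3$ parities are the strata. The positive parity has $2$ shift orbits and the negative parity has $1$, for a total of at most $3$. The remaining task is to decide in which parity $E[\ell]$ fails to be good.

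\emph{Split case.} Assume $q\equiv1\pmod3$ and $\ell\equiv2\pmod3$. Lemma~\ref{lem:rank-constraints}(ii) shows that $\beta$ is irreducible on $E[\ell]$, so indecomposability holds automatically. We identify $E[\ell]$ with $\mathbb F_{\ell^2}$, as in Corollary~\ref{cor:split-collapse}, so that Frobenius acts by an element $\alpha$. By Theorem~\ref{thm:exceptionality}, the positive parity is bad exactly when $\alpha^3=1$, that is, when $\mathcal F_E^3=I$ on $E[\ell]$. The negative parity twists Frobenius by $[-1]$, exactly as in Lemma~\ref{lem:C3-twist}, so it is bad exactly when $\alpha^3=-1$, that is, when $\mathcal F_E^3=-I$.

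We now convert these into the stated congruences. Since $\mathcal F_E^3$ is a field element, it equals $\pm I$ if and only if its characteristic polynomial $P_3(T)$ equals $(T\mp1)^2$. This happens if and only if $A_3(q)\equiv\pm2$ and $q^3\equiv1\pmod\ell$. The scalar case requires no Jordan-block check, because an element of $\mathbb F_{\ell^2}$ with a repeated eigenvalue $\pm1$ must equal $\pm1$. As in the rank-one proof, $A_3(q)$ is intrinsic to the positive parity, which justifies using $A_3(q)$ here. The count is therefore $2(1-\epsilon_+)+(1-\epsilon_-)$. Since $\epsilon_+$ and $\epsilon_-$ cannot both equal $1$ ($\ell$ is odd), the count lies in $\{1,2,3\}$.

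\emph{Nonsplit case.} Assume $q\equiv-1\pmod3$. Lemma~\ref{lem:F2} gives irreducibility in both parities. By \eqref{eq:nonsplit-ground}, exceptionality holds if and only if $\ell\nmid q+1$. The twist by $[-1]$ leaves $\mathcal F^2=-qI$ unchanged and leaves the trace zero, so the condition is the same in both parities.

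It then remains to show that both strata still contribute $2$ and $1$ shift classes. The main obstacle, and the point to check first, is the nonsplit positive parity. There Frobenius acts on $\Delta\cong C_3$, and it acts trivially because the shift example (Example~\ref{ex:F17}) exists, so $S_c\cong C_3$. On the other parity, $-\mathcal F$ acts on $\Delta$ by $-1$, so $S_c=0$. The residual involution $[-1]$ identifies the two nonzero classes. This gives $2+1=3$ classes, and $0$ classes when $\ell\mid q+1$.
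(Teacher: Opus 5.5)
Your proposal is correct and follows essentially the same route as the paper: you reduce via Corollary~\ref{lem:form-quotient} to the single kernel $E[\ell]$ with shift weights $2$ and $1$ on the two parities, detect badness by $\mathcal F_E^3=\pm I$ in the field centralizer (which gives $\epsilon_\pm$), and observe that the nonsplit criterion $\ell\nmid q+1$ is insensitive to the sign twist. One small point: appealing to Example~\ref{ex:F17} cannot establish in general that positive-parity Frobenius fixes $\Delta$, but you do not need that appeal. The fact is already part of Corollary~\ref{lem:form-quotient}, and it also follows directly: $\operatorname{Aut}(\Delta)=\{\pm1\}$ and the two parities differ by $[-1]$, so exactly one parity acts trivially on $\Delta$, and the count $2+1$ results either way.
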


\begin{proof}
In split rank two the kernel is $E[\ell]$ and $\beta$ is irreducible.  On the
positive parity, failure of exceptionality is $\mathcal F_E^3=I$, equivalent in the
field centralizer to
$q^3\equiv1$ and $A_3(q)\equiv2$ modulo $\ell$; this removes the two positive
shift classes.  On the negative parity, failure is $\mathcal F_E^3=-I$, which removes
the single negative class.  This proves \eqref{eq:C3-r2-split-count}.  In the nonsplit row, exceptionality is $\ell\nmid q+1$ and is unchanged by
the central sign form, so both parities occur and contribute $2+1=3$
classes.
\end{proof}

\begin{corollary}[Characteristic-two cubic class counts]
\label{thm:char2-occurrence-count}
Let $q=2^a$ and let $\ell\ne2,3$.
If $a$ is even, write $s_q=(-2)^{a/2}$ as in
Section~\ref{sec:char2} and put
\[
 \epsilon_+=\mathbf1_{s_q^3\equiv1\pmod\ell},\qquad
 \epsilon_-=\mathbf1_{s_q^3\equiv-1\pmod\ell}.
\]
Then every occurring rank-one cubic parameter satisfies
\[
 \mathrm{Cl}_{3,1}(q,\ell)=6-4\epsilon_+-2\epsilon_-,
\]
and every occurring split rank-two cubic parameter satisfies
\[
 \mathrm{Cl}_{3,2}(q,\ell)=3-2\epsilon_+-\epsilon_-.
\]
If $a$ is odd, every occurring nonsplit rank-two cubic parameter supports
exactly three classes.
\end{corollary}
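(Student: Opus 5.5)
The plan is to specialize Propositions~\ref{thm:C3-r1-count} and~\ref{thm:C3-r2-count} to characteristic two. By Lemma~\ref{lem:char2-C3-normalizer}, the marked normalizer in characteristic $2$ is again $C_6$, so $W=C_2$ and the marked-orbit bookkeeping of Corollary~\ref{lem:form-quotient} is unchanged. There are two parities. The positive parity contributes two shift orbits per good kernel and the negative parity contributes one. Since $\ell\ne2=p$, the case $\ell=p$ never arises, so only the formulas for $\ell\ne p$ are needed. The nonsplit case ($a$ odd) is then Proposition~\ref{thm:C3-r2-count}(ii) verbatim, giving three classes whenever occurrence holds. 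Occurrence here is $\ell\nmid q+1$ by Corollary~\ref{cor:char2-occurrence}.

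For $a$ even, I would first make the positive-parity Frobenius explicit. By \eqref{eq:char2-frob-square}, $\mathcal F_{E_0}^2=[-2]$, so $q=2^a$-Frobenius on the positive marked form is the scalar $[s_q]$ with $s_q=(-2)^{a/2}$, as recorded in Section~\ref{sec:char2}. Consequently $q^3$-Frobenius is $[s_q^3]$. This gives $A_3(q)=2s_q^3$ and $q^3=s_q^6$, and the polynomial $P_3(T)$ factors as $(T-s_q^3)^2$.

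In rank one I would then compute the multiplicities $\nu_+$ and $\nu_-$ directly. Because Frobenius is scalar, both $\beta$-eigenlines carry the same multiplier $s_q$. Its cube is $1$ exactly when $\epsilon_+=1$, and then both lines are bad, so $\nu_+=2\epsilon_+$. In the same way, the cube is $-1$ exactly when $\epsilon_-=1$, giving $\nu_-=2\epsilon_-$. Substituting into \eqref{eq:C3-r1-count} yields $6-4\epsilon_+-2\epsilon_-$.

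In split rank two, the indicators of Proposition~\ref{thm:C3-r2-count} are defined by $q^3\equiv1$ together with $A_3(q)\equiv\pm2\pmod\ell$. With $A_3(q)=2s_q^3$ and $q^3=s_q^6$, these conditions become $s_q^3\equiv\pm1\pmod\ell$. Here $\ell$ is odd, so dividing by $2$ is legitimate, and $s_q^6\equiv1$ follows automatically from $s_q^3\equiv\pm1$. Formula \eqref{eq:C3-r2-split-count} then gives $3-2\epsilon_+-\epsilon_-$.

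The main point to check is that the scalar description of Frobenius is really the positive marked parity of the definition \eqref{eq:A3}, that is, that $E_3^+$ in characteristic $2$ is $E_0\otimes\mathbb F_q$ with Frobenius $[s_q]$. The positive parity is by definition the untwisted Frobenius on $E_0\otimes\mathbb F_q$, which is $\mathcal F_{E_0}^a=[(-2)^{a/2}]$. That matches the definition, so this step should go through.
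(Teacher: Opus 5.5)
Your proposal is correct and follows the paper's proof. In both, \eqref{eq:char2-frob-square} makes the positive-parity $q$-Frobenius the scalar $[s_q]$, so that
\[
 A_3(q)=2s_q^3,\qquad P_3(T)=(T-s_q^3)^2,\qquad \nu_\pm=2\epsilon_\pm .
\]
The two even-$a$ formulas then follow from Propositions~\ref{thm:C3-r1-count} and~\ref{thm:C3-r2-count}, and the odd-$a$ count is Proposition~\ref{thm:C3-r2-count}(ii). Your explicit reduction of the rank-two indicators ($q^3\equiv1$ and $A_3(q)\equiv\pm2\pmod\ell$) to $s_q^3\equiv\pm1\pmod\ell$, using that $\ell$ is odd, spells out a step the paper leaves implicit.
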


\begin{proof}
For even $a$, equation~\eqref{eq:char2-frob-square} makes the positive
$q^3$-Frobenius scalar $[s_q^3]$.  Hence the trace $A_3(q)$ defined above is
$2s_q^3$, and the polynomial $P_3(T)$ is $(T-s_q^3)^2$.  Thus $\nu_+=2\epsilon_+$ and $\nu_-=2\epsilon_-$, and the two formulas follow from
Propositions~\ref{thm:C3-r1-count} and~\ref{thm:C3-r2-count}.  The odd-$a$
nonsplit count is Proposition~\ref{thm:C3-r2-count}\textup{(ii)}.
\end{proof}

\subsubsection{The \texorpdfstring{$C_4$ and $C_6$}{C4 and C6} formulas}

For $m=4,6$, use $A_m(q)$ from \eqref{eq:Am}.  In the split case and for
$\ell\ne p$ put
\[
 P_m(T)=T^2-A_m(q)T+q^m\in\mathbb F_\ell[T],
 \qquad
 \nu_m=\operatorname{mult}_{T=1}P_m(T).
\]

\begin{proposition}[Exact rank-one $C_4$ and $C_6$ counts]
\label{thm:C46-r1-count}
Assume $m\in\{4,6\}$ is a surviving tame row,
$q\equiv1\pmod m$, and $\ell\equiv1\pmod m$.  If $\ell\ne p$, then
\begin{equation}
 \mathrm{Cl}_{4,1}(q,\ell)=4-2\nu_4,
 \qquad
 \mathrm{Cl}_{6,1}(q,\ell)=2-\nu_6.
 \label{eq:C46-r1-count}
\end{equation}
Under occurrence, the possible values are $2,4$ for $C_4$ and $1,2$ for
$C_6$.

If $\ell=p$, put
\[
 \varepsilon_m=\mathbf 1_{A_m(q)\equiv1\pmod p}.
\]
Then
\begin{equation}
 \mathrm{Cl}_{4,1}(q,p)=2(1-\varepsilon_4),
 \qquad
 \mathrm{Cl}_{6,1}(q,p)=1-\varepsilon_6.
 \label{eq:C46-r1-p-count}
\end{equation}
Thus occurrence yields exactly two $C_4$ classes and one $C_6$ class in the
characteristic-dividing case.
\end{proposition}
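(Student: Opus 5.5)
I will follow the argument used for the $C_3$ row in Proposition~\ref{thm:C3-r1-count}, now with the simpler normalizer data of Corollary~\ref{lem:form-quotient}. For $m=4,6$ one has $W=U^\sharp/L_E=1$, so Proposition~\ref{prop:marked-orbit} gives exactly one map-level marked stratum and $R_c$ trivial. The Burnside formula \eqref{eq:marked-burnside} then reduces to the product $|K_c|\cdot|S_c|$. Here $K_c$ is the set of good rank-one kernels. The shift module $S_c=\Delta/(F_c-1)\Delta$ has order $2$ for $m=4$ and order $1$ for $m=6$, because $\Delta\cong C_2$ or $0$ by \eqref{eq:fixed-counts} and Frobenius fixes the unique nonzero point of $\Delta$ when $m=4$. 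The shift factor is therefore $2$ for $m=4$ and $1$ for $m=6$. It remains to show $|K_c|=2-\nu_m$ when $\ell\ne p$, and $|K_c|=1-\varepsilon_m$ when $\ell=p$.

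\emph{Case $\ell\ne p$.} Since $\ell\equiv1\pmod m$, the linear part $\beta$ is split semisimple on $E[\ell]$, with two distinct eigenlines (eigenvalues $\rho^{\pm1}$, $\rho\ne\rho^{-1}$). Because $q\equiv1\pmod m$, Theorem~\ref{thm:cyclotomic} shows that $\mathcal F_E$ commutes with $\beta$. Hence $\mathcal F_E$ is diagonal on these two lines, and these are the only $\beta$-stable lines, so the only candidate kernels. Corollary~\ref{cor:rank-one-exceptionality} says a line with multiplier $\lambda$ is good exactly when $\lambda^m\ne1$. The multipliers of $\mathcal F_E^m$ on the two lines are the two roots of the characteristic polynomial of $q^m$-Frobenius on $E[\ell]$, counted with multiplicity. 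By Lemma~\ref{lem:CM-twist-killing}, this polynomial is $P_m(T)=T^2-A_m(q)T+q^m$ reduced modulo $\ell$. The number of bad lines is therefore $\operatorname{mult}_{T=1}P_m=\nu_m$; diagonality rules out any Jordan ambiguity. Finally, Corollary~\ref{lem:form-quotient} says the two eigenlines remain distinct classes. This gives $\mathrm{Cl}_{4,1}=2(2-\nu_4)$ and $\mathrm{Cl}_{6,1}=2-\nu_6$.

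\emph{Case $\ell=p$.} The curve is ordinary, since the reduced subgroup $\mathcal N$ exists, and $E[p](\bar k)$ is its unique reduced line. The multiplier of $q^m$-Frobenius on that line is the unit root of $T^2-A_m(q)T+q^m$ modulo $p$. Because $q^m\equiv0$, this root is $A_m(q)\bmod p$, so the line is bad exactly when $\varepsilon_m=1$. This gives \eqref{eq:C46-r1-p-count}.

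\emph{Possible values.} Under occurrence, $\mathfrak B_m$ fails, which means $\nu_m\ne2$. Hence $\nu_m\in\{0,1\}$, giving the values $2,4$ for $C_4$ and $1,2$ for $C_6$.

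The main obstacle is to justify that the marked-form count is exactly one and the residual action exactly trivial in characteristic $3$ for $m=4$, where the automorphism group is enlarged. This rests on Lemma~\ref{lem:char3-C4-normalizer}, together with identifying the $q^m$-Frobenius multipliers with the roots of $P_m$ via Lemma~\ref{lem:CM-twist-killing}. I would check carefully that the twist-killing isomorphism over $\mathbb F_{q^m}$ preserves the marked $\beta$-eigenlines, which holds because it lies in $L_E$.
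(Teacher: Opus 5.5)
Your proposal is correct and follows the paper's own proof essentially step for step. Both use one map-level CM form with no residual action and shift factor $2$ or $1$. Both note that Frobenius is diagonal on the two split $\beta$-eigenlines, so exactly $\nu_m$ lines are bad, and that for $\ell=p$ the unique reduced line has $q^m$-multiplier $A_m(q)\bmod p$.

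Two minor remarks. Your closing worry about eigenlines is unnecessary, because only the characteristic polynomial of $\mathcal F_E^m$ on $E[\ell]$, which is fixed by the trace $A_m(q)$, enters the count. Also, ordinarity when $\ell=p$ should be derived from $p\equiv1\pmod m$ rather than from the existence of $\mathcal N$; otherwise the formula is unjustified in the case where no datum exists.
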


\begin{proof}
For $\ell\ne p$, the two $\beta$-eigenlines are distinct map classes.
Since $\mathcal F_E$ commutes with the split semisimple $\beta$, it is diagonal
on these two lines.  A line is bad exactly when $\mathcal F_E^m$ acts as $1$
on that line, so the number of bad lines is exactly $\nu_m$.  There is one
map-level CM form.  Multiplying the number of good lines by the shift-orbit
number---two for $C_4$ and one for $C_6$---gives
\eqref{eq:C46-r1-count}.  For $\ell=p$ there is only the reduced line, whose
$q^m$-Frobenius multiplier is $A_m(q)$ modulo $p$.
\end{proof}

\begin{proposition}[Rank-two rigidity for $C_4$ and $C_6$]
\label{thm:C46-r2-count}
Whenever the rank-two occurrence criterion holds,
\begin{equation}
 \mathrm{Cl}_{4,2}(q,\ell)=2,
 \qquad
 \mathrm{Cl}_{6,2}(q,\ell)=1.
 \label{eq:C46-r2-count}
\end{equation}
This is valid in both the split and nonsplit rows.
\end{proposition}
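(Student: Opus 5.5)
Apply the marked-orbit principle of Proposition~\ref{prop:marked-orbit} together with the table of Corollary~\ref{lem:form-quotient}. For $m=4,6$ the marked normalizer is the complement, so $U^\sharp=L_E$ and $W=1$. Then there is exactly one map-level marked $k$-form and the twisted stabilizer $R_c$ is trivial. The count collapses to $|K_c|\cdot|S_c|$. In rank two, Theorem~\ref{thm:translation-kernel} forces $\mathcal N=E[\ell](\bar k)$, so the set of candidate kernels has a single element. Under the occurrence criterion of Theorem~\ref{thm:master-occurrence} (cases (6) and (8)) that kernel is good, so $|K_c|=1$. It remains to compute $|S_c|=|\Delta/(F_c-1)\Delta|$ in each row.

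For $m=6$, \eqref{eq:fixed-counts} gives $\Delta=0$, so $|S_c|=1$ and $\mathrm{Cl}_{6,2}=1$; this uses neither the split/nonsplit distinction nor any Frobenius information. For $m=4$, $\Delta=E[1-\beta]\cong C_2$. Any automorphism of a group of order two is trivial, so $F_c$ fixes the nonzero point of $\Delta$, $(F_c-1)\Delta=0$, and $|S_c|=2$. This is the argument already used in the proof of Lemma~\ref{lem:char3-C4-normalizer}, and it applies uniformly in characteristic $3$ and in characteristic $>3$, split or nonsplit. Hence $\mathrm{Cl}_{4,2}=2$.

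The one step needing care is the nonsplit rows $q\equiv-1\pmod m$. Lemma~\ref{lem:CM-twist-killing} was stated only for $q\equiv1\pmod m$, so the twisted-conjugacy stratification must be rechecked there. The plan is to observe that $W=U^\sharp/L_E$ is trivial independently of $q$. Lemma~\ref{lem:char3-C4-normalizer} and the $C_6$ automorphism group in characteristic $>3$ show the normalizer equals $L_E$, so $H^1$ with values in $W$ is a single point even when $\theta$ acts by inversion. One also has to confirm that every good kernel yields a genuine datum for each shift class. This follows from the Lang-theorem construction in the proof of Proposition~\ref{prop:marked-orbit}. It also needs the observation that goodness (irreducibility by Lemma~\ref{lem:F2}, exceptionality by $\ell\nmid q+1$ via \eqref{eq:nonsplit-ground}) is independent of the shift. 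Combining these gives exactly $1\cdot2$ and $1\cdot1$ classes in both the split and nonsplit rows.
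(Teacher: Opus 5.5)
Your proposal is correct and follows the paper's own argument. The kernel is forced to be $E[\ell]$, and $W=U^\sharp/L_E$ is trivial, so there is a single map-level form with trivial residual stabilizer. The count therefore reduces to $|S_c|$, which is $2$ for $C_4$ (since $\Delta\cong C_2$ has no nontrivial automorphism) and $1$ for $C_6$ (since $\Delta=0$). Your additional checks for the nonsplit rows only make explicit what the paper's two-line proof leaves implicit: that $N_U(L_E)=L_E$ holds independently of $q$, and that goodness of the kernel does not depend on the shift.
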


\begin{proof}
The rank-two kernel is uniquely $E[\ell]$.  For $C_4$ and $C_6$ there is one
map-level CM form, so after occurrence the only remaining multiplicity is the
shift orbit: two for $C_4$ and one for $C_6$.
\end{proof}

\subsection{Non-CM enumeration from Waterhouse strata}
\label{sec:C2-count}

The $C_2$ row is qualitatively different because $j(E)$ varies.  Its exact
count is nevertheless finite and explicit once the Waterhouse strata are
refined by endomorphism order.  This is the natural level at which repeated
Frobenius eigenvalues can be distinguished from scalar Frobenius.

For a Waterhouse-admissible trace $t$, put
\[
 \mathcal E_q(t)
 =\{[E]_k:q+1-|E(k)|=t\},
 \qquad
 C_q(t)=|\mathcal E_q(t)|,
 \qquad
 D_t=t^2-4q.
\]
Waterhouse determines the possible endomorphism orders in each isogeny
class \cite[Theorem~4.2]{Waterhouse69}.  For the number of $k$-isomorphism
classes with a fixed commutative endomorphism order, we use Schoof's correction
of Waterhouse's Theorem~4.5 \cite[Theorem~4.5]{Schoof87}.

For $(n,p)=1$ and $a\in\mathbb Z/n\mathbb Z$, define
\begin{equation}
 C_q(t;n,a)
 =|\{[E]\in\mathcal E_q(t):\mathcal F_E|_{E[n]}=[a]\}|.
 \label{eq:scalar-count-def}
\end{equation}

\begin{proposition}[Order-refined scalar-level count]
\label{prop:scalar-level}
Suppose $K_t=\mathbb Q(\pi_t)$ is imaginary quadratic, where $\pi_t$ is a
root of $X^2-tX+q$, and let $\mathcal R_q(t)$ be the finite set of orders that
occur as $k$-endomorphism rings $\operatorname{End}_k(E)$ in the trace-$t$
isogeny class.  For $R\in\mathcal R_q(t)$, let $f_p(R)$ be the residue class degree of $p$
in $R$ in the sense of Schoof's Theorem~4.5; explicitly,
\[
 f_p(R)=
 \begin{cases}
 2,&\left(\dfrac{\operatorname{disc}(R)}p\right)=-1,\\
 1,&\text{otherwise}.
 \end{cases}
\]
Then
\begin{equation}
 C_q(t;n,a)
 =\sum_{\substack{R\in\mathcal R_q(t)\\ \pi_t-a\in nR}}f_p(R)h(R).
 \label{eq:scalar-class-number}
\end{equation}
For ordinary classes, $\mathcal R_q(t)$ consists of the orders containing
$\mathbb Z[\pi_t]$ that are permitted by Waterhouse; in the commutative
supersingular cases one includes the corresponding maximality condition at
$p$.  If $t=\pm2\sqrt q$ so that Frobenius is the integer $[t/2]$, then
\[
 C_q(t;n,a)=
 \begin{cases}
 C_q(t),&a\equiv t/2\pmod n,\\
 0,&\text{otherwise}.
 \end{cases}
\]
\end{proposition}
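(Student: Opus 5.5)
The plan is to reduce the count to Schoof's corrected version of Waterhouse's Theorem~4.5 \cite[Theorem~4.5]{Schoof87}. For a fixed trace $t$ with $K_t=\mathbb Q(\pi_t)$ imaginary quadratic, the $k$-isomorphism classes in $\mathcal E_q(t)$ split according to the order $R=\operatorname{End}_k(E)$. This order ranges over the finite set $\mathcal R_q(t)$ described by Waterhouse \cite[Theorem~4.2]{Waterhouse69}: the orders containing $\mathbb Z[\pi_t]$, with the maximality-at-$p$ condition in the commutative supersingular cases. Schoof's theorem states that each stratum with endomorphism ring $R$ contains exactly $f_p(R)h(R)$ classes. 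The residue-degree factor $f_p(R)$ is the correction needed when $p$ is inert in $R$. In the ordinary case we have $f_p(R)=1$ automatically, since $p$ splits in $K_t$. So it suffices to show that the scalar-level condition $\mathcal F_E|_{E[n]}=[a]$ depends only on $R$, and that within the stratum it holds exactly when $\pi_t-a\in nR$. Summing over strata then gives \eqref{eq:scalar-class-number}.

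The key step is a divisibility lemma, and I will prove it first. Let $E$ have $\operatorname{End}_k(E)=R$ commutative, and let $(n,p)=1$. Then $\mathcal F_E-[a]$ kills $E[n]$ if and only if $\mathcal F_E-[a]\in nR$. For the reverse direction, if $\mathcal F_E-[a]=n\psi$ with $\psi\in R$, then it clearly vanishes on $E[n]$. For the forward direction, the separability of $[n]$ (as $(n,p)=1$) and the quotient universal property \cite[Corollary~III.4.11]{Silverman09} give $\mathcal F_E-[a]=\psi\circ[n]$ for some geometric endomorphism $\psi$. This $\psi$ commutes with the $q$-Frobenius because $\mathcal F_E-[a]$ and $[n]$ do, so $\psi$ is defined over $k$, i.e.\ $\psi\in R$. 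Identifying $\mathcal F_E$ with $\pi_t\in R\subset K_t$ turns the condition into $\pi_t-a\in nR$, which depends only on $(t,R)$ and not on the particular curve. Hence every class in the $R$-stratum contributes uniformly, and the count follows.

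The case $t=\pm2\sqrt q$ must be handled separately, since $\mathbb Q(\pi_t)=\mathbb Q$ there and the hypothesis on $K_t$ fails. In this case $\mathcal F_E=[t/2]$ for every curve in the isogeny class, so on $E[n]$ it acts as the scalar $t/2$. The condition therefore holds for all $C_q(t)$ classes when $a\equiv t/2\pmod n$ and for none otherwise. No Schoof formula is needed for this case.

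I expect the main obstacle to be bookkeeping rather than proof. One must make sure that the stratification by $\operatorname{End}_k(E)$ used in Schoof's count is exactly the one to which the divisibility lemma applies. In the commutative supersingular cases the geometric endomorphism ring is a quaternion order and may be strictly larger than $\operatorname{End}_k(E)$. This is precisely why the argument uses the $k$-rational endomorphism ring and why $\psi$ has to be descended to $k$. I would also check against Schoof's correction \cite{Schoof87} that the permitted orders and the factor $f_p(R)$ are exactly as stated in those supersingular cases.
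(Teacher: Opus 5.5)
Your proposal is correct and follows the paper's proof. Both rewrite the scalar condition as $\mathcal F_E-[a]=\psi\circ[n]$ using separability of $[n]$, identify it with $\pi_t-a\in nR$ for $R=\operatorname{End}_k(E)$, and sum Schoof's stratum counts $f_p(R)h(R)$; your check that $\psi$ commutes with $\mathcal F_E$, and hence lies in $\operatorname{End}_k(E)$ rather than only in the geometric endomorphism ring, and your separate treatment of $t=\pm2\sqrt q$, make explicit steps the paper leaves implicit.
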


\begin{proof}
For an elliptic curve with $k$-endomorphism ring $R=\operatorname{End}_k(E)$
and $(n,p)=1$, Frobenius is
scalar $[a]$ on $E[n]$ exactly when $\mathcal F_E-[a]$ kills $E[n]$.  Since $[n]$ is
separable, this is equivalent to
\[
 \mathcal F_E-[a]=[n]\psi
\]
for some $\psi\in R$, or $\pi_t-a\in nR$.  The condition therefore depends
only on $R$.  By Schoof's correction of Waterhouse's Theorem~4.5, the
number of $k$-isomorphism classes with $k$-endomorphism ring $R$ is
$f_p(R)h(R)$.  Summing over the orders satisfying
$\pi_t-a\in nR$ gives \eqref{eq:scalar-class-number}.
\end{proof}

\begin{remark}[Repeated roots]
If $X^2-tX+q$ has a repeated root modulo $\ell$, trace data alone do not
distinguish a scalar Frobenius from a non-scalar Jordan block.  The latter has
one invariant $\ell$-line, whereas the former has $\ell+1$.  The divisibility
condition $\pi_t-a\in\ell R$ in Proposition~\ref{prop:scalar-level} is exactly
the refinement needed to record this jump.
\end{remark}

\subsubsection{The shift mass}

For $[E]\in\mathcal E_q(t)$ put
\[
 s(E)=\bigl|E[2]/(\mathcal F_E-1)E[2]\bigr|=|E[2](k)|
\]
and define
\[
 \Sigma_2(t)=\sum_{[E]\in\mathcal E_q(t)}s(E).
\]

\begin{lemma}[Exact shift mass]
\label{lem:shift-mass}
One has
\begin{equation}
 \Sigma_2(t)=
 \begin{cases}
 C_q(t),&t\text{ odd},\\[1mm]
 2C_q(t)+2C_q(t;2,1),&t\text{ even}.
 \end{cases}
 \label{eq:shift-mass}
\end{equation}
\end{lemma}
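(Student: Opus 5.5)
The plan is to compute $s(E)=|E[2](k)|$ curve by curve from the Frobenius action on $E[2]$, and then sum over $\mathcal E_q(t)$. Since $\mathcal F_E-1$ is an endomorphism of the finite group $E[2]$, its cokernel and kernel have the same order. Hence
\[
 |E[2]/(\mathcal F_E-1)E[2]|=|\ker(\mathcal F_E-1)|_{E[2]}|=|E[2](k)|,
\]
which justifies the displayed equality in the definition of $s(E)$. We may assume $p\ne2$ here, since the involutive row excludes characteristic $2$. Then $E[2]\cong\mathbb F_2^2$, and $\mathcal F_E|_{E[2]}$ has characteristic polynomial $X^2-tX+q\equiv X^2+tX+1\pmod 2$, because $q$ is odd.

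For odd $t$, this polynomial is $X^2+X+1$, which is irreducible over $\mathbb F_2$. Frobenius therefore has no nonzero fixed vector, so $E[2](k)=0$ and $s(E)=1$. Summing over $\mathcal E_q(t)$ gives $\Sigma_2(t)=C_q(t)$.

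For even $t$, the polynomial is $(X+1)^2$, so $\mathcal F_E|_{E[2]}$ is unipotent. There are two possibilities.
\begin{enumerate}
\item If $\mathcal F_E|_{E[2]}=I$, then $E[2](k)=E[2]$ and $s(E)=4$.
\item Otherwise it is a nontrivial Jordan block with a one-dimensional fixed space, so $|E[2](k)|=2$ and $s(E)=2$.
\end{enumerate}
In both cases $s(E)=2+2\cdot\mathbf 1_{\mathcal F_E|_{E[2]}=[1]}$. By the definition \eqref{eq:scalar-count-def} with $n=2$ and $a=1$, the number of curves in $\mathcal E_q(t)$ with $\mathcal F_E|_{E[2]}=[1]$ is exactly $C_q(t;2,1)$. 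Summing then gives $\Sigma_2(t)=2C_q(t)+2C_q(t;2,1)$. Where an explicit value of $C_q(t;2,1)$ is needed, Proposition~\ref{prop:scalar-level} supplies one, but the lemma itself only requires the definition.

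No step is a real obstacle. The only points needing care are the identification of the cokernel size with the fixed-point count, and the fact that for even $t$ the trace and determinant alone cannot separate the identity from a Jordan block. The order-refined quantity $C_q(t;2,1)$ is introduced precisely to record that distinction.
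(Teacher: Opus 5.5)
Your proposal is correct and matches the paper's proof: you reduce the Frobenius polynomial modulo $2$, get $s(E)=1$ for odd $t$, and get $s(E)\in\{2,4\}$ for even $t$, with the value $4$ occurring exactly on the scalar locus $\mathcal F_E|_{E[2]}=[1]$ counted by $C_q(t;2,1)$. Your extra remark that $\mathcal F_E-1$ has kernel and cokernel of equal order on the finite group $E[2]$ justifies the identity $s(E)=|E[2](k)|$, which the paper simply builds into the definition.
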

\begin{proof}
Modulo $2$ the Frobenius polynomial is $X^2+X+1$ when $t$ is odd, so there is
no nonzero rational $2$-torsion and $s(E)=1$.  When $t$ is even, the
polynomial is $(X-1)^2$; the fixed space has dimension one or two, giving
$s(E)=2$ or $4$.  The latter case is precisely the scalar condition
$\mathcal F_E|_{E[2]}=1$, counted by $C_q(t;2,1)$.
\end{proof}
\subsubsection{Exact global masses}
For rank two define
\begin{equation}
 \mathfrak R_2(q,\ell)
 =\sum_{t\in\mathcal T(q)}
 \mathbf 1_{\left(\frac{D_t}{\ell}\right)=-1}\,\Sigma_2(t).
 \label{eq:raw-r2}
\end{equation}
This is the exact raw mass over $k$-isomorphism classes: the kernel is
uniquely $E[\ell]$, and the Legendre condition is exactly irreducibility of
Frobenius on it.
If $e=1$ and $\ell=p$, ordinary curves have one reduced $p$-line and
supersingular curves have none.  Hence
\begin{equation}
 \mathfrak R_1(q,p)
 =\sum_{\substack{t\in\mathcal T(q)\\ p\nmid t\\ t\not\equiv\pm1\pmod p}}
 \Sigma_2(t).
 \label{eq:raw-r1-p}
\end{equation}
Now assume $\ell\ne p$.  Put
\[
 P_t(X)=X^2-tX+q\in\mathbb F_\ell[X]
\]
and
\[
 G_\ell(t)
 =\{\lambda\in\mathbb F_\ell^\times\setminus\{\pm1\}:P_t(\lambda)=0\},
 \qquad
 g_\ell(t)=|G_\ell(t)|.
\]
For $\lambda\in G_\ell(t)$, let $\widetilde\lambda$ be the residue class
modulo $2\ell$ with
\[
 \widetilde\lambda\equiv1\pmod2,
 \qquad
 \widetilde\lambda\equiv\lambda\pmod\ell,
\]
and define
\[
 \Sigma_{\ell,\lambda}(t)=
 \begin{cases}
 C_q(t;\ell,\lambda),&t\text{ odd},\\[1mm]
 2C_q(t;\ell,\lambda)+2C_q(t;2\ell,\widetilde\lambda),&t\text{ even}.
 \end{cases}
\]

\begin{proposition}[Exact raw rank-one mass]
\label{prop:raw-r1}
For $\ell\ne p$,
\begin{equation}
 \mathfrak R_1(q,\ell)
 =\sum_{t\in\mathcal T(q)}
 \left(
 g_\ell(t)\Sigma_2(t)
 +\ell\sum_{\lambda\in G_\ell(t)}\Sigma_{\ell,\lambda}(t)
 \right).
 \label{eq:raw-r1}
\end{equation}
\end{proposition}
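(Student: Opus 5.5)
The plan is to compute $\mathfrak R_1(q,\ell)$ as a raw mass over $k$-isomorphism classes, in the same sense as \eqref{eq:raw-r2}. We count triples $([E]_k,\mathcal N,\text{shift class})$ with $[E]_k\in\mathcal E_q(t)$ for some $t\in\mathcal T(q)$. Here $\mathcal N\subset E[\ell]$ is a good rank-one kernel, and the shift class lies in $E[2]/(\mathcal F_E-1)E[2]$. Since $\beta=-I$, every line is $\beta$-stable and semiregularity is automatic. By Corollary~\ref{cor:rank-one-exceptionality}, a Frobenius-stable line is good exactly when its multiplier $\lambda$ satisfies $\lambda\ne\pm1$. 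The shift module $\Delta=E[2]$ does not depend on the kernel, so each good line contributes exactly $s(E)$ shift classes. Thus
\[
 \mathfrak R_1(q,\ell)=\sum_{t\in\mathcal T(q)}\ \sum_{[E]\in\mathcal E_q(t)} s(E)\,n_\ell(E),
\]
where $n_\ell(E)$ is the number of Frobenius-stable lines in $E[\ell]$ whose multiplier lies outside $\{\pm1\}$.

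The first step is to compute $n_\ell(E)$ from linear algebra on $E[\ell]$. Since $\ell\ne p$, the characteristic polynomial of $\mathcal F_E$ on $E[\ell]$ is $P_t$. Every multiplier of a stable line is a root of $P_t$.
\begin{enumerate}[label=\textup{(\alph*)}]
\item If $\mathcal F_E|_{E[\ell]}$ is not scalar, each root $\lambda\in\mathbb F_\ell$ has a one-dimensional eigenspace. This holds both for distinct roots and for a repeated root with a Jordan block. Hence $n_\ell(E)=g_\ell(t)$.
\item If $\mathcal F_E|_{E[\ell]}=[\lambda]$, then $P_t=(X-\lambda)^2$ and all $\ell+1$ lines have multiplier $\lambda$. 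In that case $g_\ell(t)=\mathbf 1_{\lambda\in G_\ell(t)}$.
\end{enumerate}
Both cases are captured uniformly by
\[
 n_\ell(E)=g_\ell(t)+\ell\sum_{\lambda\in G_\ell(t)}\mathbf 1_{\mathcal F_E|_{E[\ell]}=[\lambda]}.
\]
The sum runs only over $G_\ell(t)$, because a scalar action on $E[\ell]$ must be by a root of $P_t$, and the scalars $\pm1$ give no good lines. This is the Jordan-versus-scalar distinction flagged in the Remark on repeated roots.

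Substituting this expression, the $g_\ell(t)$ term sums to $g_\ell(t)\Sigma_2(t)$ by the definition of $\Sigma_2$. It remains to evaluate
\[
 \sum_{[E]\in\mathcal E_q(t)}s(E)\,\mathbf 1_{\mathcal F_E|_{E[\ell]}=[\lambda]}.
\]
\begin{enumerate}[label=\textup{(\alph*)}]
\item If $t$ is odd, the proof of Lemma~\ref{lem:shift-mass} gives $s(E)=1$, so the sum is $C_q(t;\ell,\lambda)$ by \eqref{eq:scalar-count-def}.
\item If $t$ is even, the same lemma gives $s(E)=2+2\cdot\mathbf 1_{\mathcal F_E|_{E[2]}=1}$. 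The joint condition "$\mathcal F_E=[\lambda]$ on $E[\ell]$ and $\mathcal F_E=1$ on $E[2]$" is equivalent, by $E[2\ell]=E[2]\oplus E[\ell]$ and the Chinese remainder theorem, to $\mathcal F_E|_{E[2\ell]}=[\widetilde\lambda]$. Hence the sum is $2C_q(t;\ell,\lambda)+2C_q(t;2\ell,\widetilde\lambda)$.
\end{enumerate}
In both parities this is $\Sigma_{\ell,\lambda}(t)$, and collecting the terms gives \eqref{eq:raw-r1}.

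The main point requiring care is the claim that $n_\ell(E)$ depends only on the scalar-versus-nonscalar dichotomy, including the degenerate traces $t=\pm2\sqrt q$. In those traces Frobenius is an integer and is automatically scalar. I would handle them through the last clause of Proposition~\ref{prop:scalar-level}, and check that the counts $C_q(t;n,a)$ used here are taken over all of $\mathcal E_q(t)$, so that no stratum is omitted. A second point is that the good-kernel conditions and the shift count are intrinsic to the $k$-isomorphism class. This ensures the raw mass is well defined, before the later one-half conversion to map-level forms.
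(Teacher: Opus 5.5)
Your proposal is correct and follows essentially the same route as the paper. You count good lines per curve as $g_\ell(t)+\ell\sum_{\lambda\in G_\ell(t)}\mathbf 1_{\mathcal F_E|_{E[\ell]}=[\lambda]}$ using the Jordan-versus-scalar dichotomy, weight this by $s(E)$, and for even $t$ combine the two scalar conditions on $E[2\ell]$ via the Chinese remainder theorem before invoking Proposition~\ref{prop:scalar-level}. Your remarks on the degenerate traces $t=\pm2\sqrt q$ and on the well-definedness of the raw mass are sensible additions that the paper leaves implicit.
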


\begin{proof}
For a fixed curve of trace $t$, each good root $\lambda\in G_\ell(t)$ gives
at least one invariant line.  Distinct roots give one line each, while a
repeated good root gives one line in the Jordan case and $\ell+1$ lines in
the scalar case.  Thus
\[
 |K_{\mathrm{good}}(E)|
 =g_\ell(t)
 +\ell\sum_{\lambda\in G_\ell(t)}
 \mathbf 1_{\mathcal F_E|E[\ell]=[\lambda]}.
\]
Multiplying by $s(E)$ and summing gives the first term of
\eqref{eq:raw-r1}.  When $t$ is even, the extra factor from full rational
$2$-torsion occurs simultaneously with scalar $\ell$-Frobenius exactly when
Frobenius is scalar $[\widetilde\lambda]$ on $E[2\ell]$, by the Chinese
remainder theorem.  Proposition~\ref{prop:scalar-level} gives the stated
formula.
\end{proof}

\subsubsection{Generic half and special-\texorpdfstring{$j$}{j} corrections}

The special-$j$ correction data used below are elementary consequences of
the actions of the enlarged automorphism groups on $E[2]$.

\begin{lemma}[Special-$j$ form and shift data]
\label{lem:C2-special-j-data}
In the $C_2$ row the following hold.
\begin{enumerate}[label=\textup{(\roman*)}]
\item If $j=1728$, then $U=C_4$, $L_E=C_2$, and there are two map-level
form classes.  They may be represented by $c_0,c_1$ so that their shift
modules have cardinalities $4$ and $2$, respectively.  Each map-level form
contains
\[
 m_4(q)=
 \begin{cases}
 2,&q\equiv1\pmod4,\\
 1,&q\equiv3\pmod4
 \end{cases}
\]
elliptic $k$-isomorphism classes in the raw mass.  The residual group
$U/L_E\cong C_2$ acts on the shift module of $c_0$ with exactly two fixed
classes and acts trivially on the two-element shift module of $c_1$.
\item If $j=0$ and $q\equiv1\pmod6$, then $U=C_6$, $L_E=C_2$, and there
are three map-level form classes $c_0,c_1,c_2$, with shift-module
cardinalities $4,1,1$.  Each contains two elliptic $k$-isomorphism classes in
the raw mass.  The residual group $U/L_E\cong C_3$ fixes exactly one
shift class for $c_0$ and, necessarily, the unique shift class for $c_1,c_2$.
\item If $j=0$ and $q\equiv5\pmod6$, there is one map-level form class.  Its
shift module has cardinality $2$, it contains two elliptic
$k$-isomorphism classes in the raw mass, and the stabilizer of the chosen
form modulo $L_E$ is trivial.
\end{enumerate}
\end{lemma}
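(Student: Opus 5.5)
The plan is to apply the marked twisted-orbit principle of Proposition~\ref{prop:marked-orbit} with $L_E=\langle[-1]\rangle$ and $U=\operatorname{Aut}(E_{\bar k},\mathrm O_E)$. For $j=1728$ (resp.\ $j=0$) in characteristic $\ne2,3$, $U$ is cyclic of order $4$ (resp.\ $6$) and central, so $U^\sharp=U$ and $W=U/L_E$ is $C_2$ (resp.\ $C_3$). The twist $\theta$ is conjugation by Frobenius, which acts on $U$ by $u\mapsto u^q$ (Theorem~\ref{thm:cyclotomic}). The map-level forms are the $\theta$-twisted classes in $W$.

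First I count these classes. For $j=1728$, $q$ is odd, so $\theta$ is trivial on $W=C_2$, giving two classes $c_0,c_1$. For $j=0$ with $q\equiv1\pmod 6$, $\theta$ is trivial on $W=C_3$, giving three classes. With $q\equiv5\pmod6$, $\theta$ inverts $W=C_3$; the twisted classes are $W/\{u\theta(u)^{-1}\}=W/W^2=1$, so there is one class.

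The number of elliptic $k$-isomorphism classes per map-level form is the fibre of $H^1(k,U)\to H^1(k,W)$. Twisting by $L_E=\{\pm1\}$, this fibre is the image of $H^1(k,L_E)=C_2$, namely the two quadratic twists, modulo the action of $W^{\theta}$ coming from the long exact sequence. I will compute it via $H^1(k,U)=U/(\theta-1)U$.
\begin{itemize}
\item For $U=C_4$ and $q\equiv1\pmod4$: $H^1=C_4\to C_2$ has fibres of size $2$.
\item For $U=C_4$ and $q\equiv3\pmod4$: $H^1=U/U^{2}=C_2$, so each fibre has size $1$. This gives $m_4(q)$.
\item For $U=C_6$ and $q\equiv1\pmod 6$: $H^1=C_6\to C_3$ has fibres of size $2$.
\item For $U=C_6$ and $q\equiv5\pmod 6$: $H^1=C_6/C_6^{2}=C_2\to 1$ has a fibre of size $2$.
\end{itemize}

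The shift modules come from $|\Delta/(F_c-1)\Delta|=|E_c[2](k)|$ with $\Delta=E[2]$.
\begin{itemize}
\item For $j=1728$ with the representative $c_0=1$ twisted as $y^2=x^3-x$, all of $E[2]$ is $k$-rational, so the module has size $4$. For $c_1$, Frobenius is twisted by $\zeta_4$, whose action on $E[2]$ is the transposition fixing $(0,0)$ and swapping $(\pm1,0)$. The fixed part then has size $2$, and hence so does the cokernel.
\item For $j=0$: $\zeta_3$ permutes the three nonzero $2$-torsion points cyclically. For $q\equiv1\pmod6$, $c_0$ is represented by $y^2=x^3-1$, which has full rational $2$-torsion and gives $4$, while $c_{1},c_2$ act by a $3$-cycle with fixed part $0$ and give $1$. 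For $q\equiv5\pmod 6$, Frobenius on $x$ is an involution (cube roots of $1$ are swapped), so exactly one nonzero point is rational, giving $2$.
\end{itemize}
I must check that the choice of representative within $L_E$ does not matter; it does not, because $[-1]$ acts trivially on $E[2]$.

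The residual action is given by $R_c$, the twisted stabilizer acting on $S_c$.
\begin{itemize}
\item For $j=1728$, $R_c=W=C_2$ acts through $\zeta_4$. On the Klein module of $c_0$ this action fixes $0$ and $(0,0)$ and swaps the other two, so exactly two classes are fixed. On $c_1$'s two-element module, $\zeta_4$ commutes with $F_c$ and fixes the image of $(0,0)$, so it acts trivially.
\item For $j=0$ with $q\equiv1\pmod 6$, the $3$-cycle fixes only $0$ in $E[2]$.
\item For $q\equiv5\pmod6$, $R_c=\{u\in W:u=\theta(u)\}=\{u:u^2=1\}$ in $C_3$, which is trivial.
\end{itemize}

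The main obstacle is the bookkeeping for the forms-per-stratum count, i.e.\ the exactness argument identifying the fibre of $H^1(k,U)\to H^1(k,U/L_E)$. The sign twists inside $L_E$ can collapse under the $W^\theta$-action, and this is where the distinction between $q\equiv1$ and $q\equiv3\pmod4$ arises. I would verify this directly via the explicit quotients $U/(\theta-1)U$ above.
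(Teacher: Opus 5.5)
Your proposal is correct and follows essentially the paper's route: the marked twisted-orbit principle with $W=U/L_E$, comparison of $|U/U^{q-1}|$ with $|U/(L_EU^{q-1})|$ for the raw multiplicities, the action of $U$ on $E[2]$ for the shift modules, and the twisted stabilizers $R_c$ for the residual actions. The paper argues abstractly where you use the explicit models $y^2=x^3-x$ and $y^2=x^3-1$. One small slip: in the $c_1$ shift module the image of $(0,0)$ is zero, because $(0,0)=(\zeta_4-1)(1,0)$. The trivial residual action there should instead be justified by noting that $\zeta_4$ agrees with $F_{c_1}$ on $E[2]$, or simply that a two-element group has no nontrivial automorphisms.
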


\begin{proof}
For $j=1728$, an order-four unit $\iota$ acts on the two-dimensional
$\mathbb F_2$-space $E[2]$ as the unique nontrivial element of order two:
it fixes one nonzero point and interchanges the other two.  Frobenius
normalizes $\langle\iota\rangle$ and, on $E[2]$, commutes with this action
because $\iota^{-1}$ and $\iota$ have the same reduction modulo $2$.
Hence the Frobenius action on $E[2]$ is either $1$ or $\iota$.  Multiplying
the descent cocycle by $\iota$ interchanges these two cases, giving shift
quotients of sizes $4$ and $2$.  The map-level form quotient is
$U/(L_E U^{q-1})$, of order two.  The ordinary twist set
$U/U^{q-1}$ has order $4$ when $q\equiv1\pmod4$ and order $2$ when
$q\equiv3\pmod4$, which gives the stated raw multiplicity.  Finally,
$\iota$ fixes two elements of $E[2]$ in the first shift module, while on
the quotient by $(\iota-1)E[2]$ its induced action is trivial.

For $j=0$, the action of $U=C_6$ on $E[2]$ factors through
$U/\{\pm1\}\cong C_3$; an order-three unit $\zeta$ cycles the three nonzero
points of $E[2]$.  If $q\equiv1\pmod6$, Frobenius commutes with $\zeta$.
The three map-level forms correspond on $E[2]$ to Frobenius actions
$1,\zeta,\zeta^2$, so their shift quotients have sizes $4,1,1$.
The twist set $U/U^{q-1}$ has six elements, hence two raw elliptic forms over
each of the three map-level classes.  Burnside uses the residual $C_3$;
$\zeta$ fixes only the zero shift in the four-element module, and the other
two modules are singletons.

If $q\equiv5\pmod6$, Frobenius conjugates $\zeta$ to $\zeta^{-1}$ and acts
on $E[2]$ as a transposition, so the shift quotient has size $2$.
Now $U/(L_E U^{q-1})$ is trivial, whereas $U/U^{q-1}$ has two elements.
Moreover, an element $u\in U$ stabilizes the chosen map-level cocycle modulo
$L_E$ only if $u^{q-1}\in L_E$; for $q-1\equiv4\pmod6$ this forces
$u\in L_E$.  Thus no residual order-three action remains.
\end{proof}

The passage from raw elliptic-form mass to two-sided $C_2$ map classes follows
a uniform rule.  If $j(E)\ne0,1728$, then
\[
 \operatorname{Aut}(E_{\bar k},\mathrm O_E)=\{\pm1\}=L_E.
\]
The two quadratic twists of a fixed geometric curve define the same two-sided
map form and have the same good-kernel count, while $-1$ acts trivially on
both projective kernel lines and $E[2]$.  Hence a generic stratum contributes
exactly one half of its raw elliptic-form mass.  A stratum with enlarged
automorphism group must instead be removed with its full raw multiplicity and
recomputed by the exact Burnside orbit count on its good-kernel and shift
data.

For a special map-level form $c$, let $K_{c,e}$ be its set of good rank-$e$
kernels, and put
\[
 \kappa_{c,e}=|K_{c,e}|.
\]
For rank two, $\kappa_{c,2}\in\{0,1\}$.  For rank one with $\ell\ne p$,
\[
 \kappa_{c,1}=g_\ell(t_c)+\ell\sigma_c,
\]
where $\sigma_c=1$ exactly when a repeated good root is scalar on $E[\ell]$;
for $\ell=p$, $\kappa_{c,1}$ is one precisely when the reduced line exists and
has multiplier different from $\pm1$.

At $j=1728$, write $U=C_4=\langle\iota\rangle$ and $L_E=C_2$.
There are two map-level forms $c_0,c_1$, labelled so that their shift-module
sizes are $4$ and $2$.  Let
\[
 \kappa_{4,r,e}^{\iota}=|K_{c_r,e}^{\iota}|
 \qquad(r=0,1).
\]
Burnside's lemma gives
\begin{equation}
 J_{4,e}
 =2\kappa_{4,0,e}+\kappa_{4,1,e}
 +\kappa_{4,0,e}^{\iota}+\kappa_{4,1,e}^{\iota}.
 \label{eq:J4}
\end{equation}
The raw contribution of these forms is
\begin{equation}
 B_{4,e}=m_4(q)
 \bigl(4\kappa_{4,0,e}+2\kappa_{4,1,e}\bigr),
 \qquad
 m_4(q)=
 \begin{cases}
 2,&q\equiv1\pmod4,\\
 1,&q\equiv3\pmod4.
 \end{cases}
 \label{eq:B4}
\end{equation}
For rank two,
$J_{4,2}=3\kappa_{4,0,2}+2\kappa_{4,1,2}$.

At $j=0$, write $U=C_6$ and $L_E=C_2$.  If $q\equiv1\pmod6$, there are
three map-level forms $c_0,c_1,c_2$, with shift-module sizes $4,1,1$.
Let $\zeta$ be an order-three unit and put
$\kappa_{6,r,e}^{\zeta}=|K_{c_r,e}^{\zeta}|$.  Then
\begin{equation}
 J_{6,e}
 =\frac{4\kappa_{6,0,e}+2\kappa_{6,0,e}^{\zeta}}3
 +\sum_{r=1}^2
 \frac{\kappa_{6,r,e}+2\kappa_{6,r,e}^{\zeta}}3,
 \label{eq:J6split}
\end{equation}
and the raw special mass is
\begin{equation}
 B_{6,e}
 =2\bigl(4\kappa_{6,0,e}+\kappa_{6,1,e}+\kappa_{6,2,e}\bigr).
 \label{eq:B6split}
\end{equation}
For rank two,
$J_{6,2}=2\kappa_{6,0,2}+\kappa_{6,1,2}+\kappa_{6,2,2}$.
If $q\equiv5\pmod6$, there is one map-level $j=0$ form; write
$\kappa_{6,e}$ for its good-kernel count.  Then
\begin{equation}
 J_{6,e}=2\kappa_{6,e},
 \qquad
 B_{6,e}=4\kappa_{6,e};
 \label{eq:J6nonsplit}
\end{equation}
this stratum therefore contributes no correction to the generic one-half
rule.

\begin{theorem}[Global exact $C_2$ class count]
\label{thm:global-C2-count}
Let
\[
 \mathfrak R_e(q,\ell)=
 \begin{cases}
 \mathfrak R_1(q,\ell),&e=1,\\
 \mathfrak R_2(q,\ell),&e=2,
 \end{cases}
\]
with $\mathfrak R_1$ and $\mathfrak R_2$ given above.  Assume
$\operatorname{char}k>3$.  Then, for every odd prime $\ell$ and
$e\in\{1,2\}$ with $\ell^e\geqslant5$,
\begin{equation}
 \mathrm{Cl}_{2,e}(q,\ell)
 =\frac12\bigl(\mathfrak R_e(q,\ell)-B_{4,e}-B_{6,e}\bigr)
 +J_{4,e}+J_{6,e}.
 \label{eq:global-C2-count}
\end{equation}
Every term on the right is an explicit finite residue-degree-weighted
class-number sum or one of the finite special-CM Burnside terms above.  In
particular,
\eqref{eq:global-C2-count} is an exact prescribed-$(q,\ell)$ class count,
not an asymptotic formula and not a stack-weighted count.
\end{theorem}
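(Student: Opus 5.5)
The plan is to apply the marked twisted-orbit principle (Proposition~\ref{prop:marked-orbit}) stratum by stratum over all geometric marked pairs $(E,L_E)$ with $L_E=\{\pm1\}$, and then reorganize the resulting sum as half the raw elliptic-form mass plus special-$j$ corrections. By Theorem~\ref{thm:intro-forms}(iii), every two-sided class with parameter $(q,\ell,2,e)$ lies over a unique geometric curve $E_{\bar k}$, i.e. a unique $\boldsymbol j$-invariant. The count therefore splits as a sum over $\boldsymbol j\in k$, with each summand equal to $\sum_c|(K_c\times S_c)/R_c|$ over the map-level strata $c$ for that $\boldsymbol j$. We assume $\operatorname{char}k>3$, so $U=\operatorname{Aut}(E_{\bar k},\mathrm O_E)$ is $C_2$, $C_4$ or $C_6$.

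\emph{Generic step.} For $\boldsymbol j\ne0,1728$ we have $U=L_E$, so $W=1$ and there is exactly one map-level stratum. It is the image of the two quadratic twists $E,E^{(d)}$, whose Frobenius maps differ by $[-1]$. Since $[-1]$ preserves every $\ell$-line and acts trivially on $E[2]$, the twists have the same good-kernel set and the same shift module. Moreover $R_c$ is trivial. The stratum's class count is therefore $|K_c|\,|S_c|=\tfrac12\bigl(|K(E)|s(E)+|K(E^{(d)})|s(E^{(d)})\bigr)$. I would then check that the raw masses $\mathfrak R_e$ of \eqref{eq:raw-r2}, \eqref{eq:raw-r1-p} and Proposition~\ref{prop:raw-r1} are exactly $\sum_{[E]_k}|K_{\mathrm{good}}(E)|\,s(E)$ over all elliptic $k$-isomorphism classes. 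For rank two, the good condition is the Legendre irreducibility of Proposition~\ref{thm:C2-occurrence}. For rank one with $\ell\ne p$, it is the line count $g_\ell(t)+\ell\cdot\mathbf 1_{\text{scalar}}$ weighted by $s(E)$ through the CRT refinement. For $\ell=p$, it is the ordinary reduced line with multiplier $\ne\pm1$. Granting this identification, the generic strata contribute $\tfrac12(\mathfrak R_e-B_{4,e}-B_{6,e})$, provided $B_{4,e},B_{6,e}$ are exactly the raw masses of the $\boldsymbol j=1728$ and $\boldsymbol j=0$ classes.

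\emph{Special step.} For $\boldsymbol j=1728$ and $\boldsymbol j=0$, I would use Lemma~\ref{lem:C2-special-j-data} to list the strata $c_r$, their shift modules, raw multiplicities and residual groups $R_c$. Burnside \eqref{eq:marked-burnside} then gives the contribution. For $\boldsymbol j=1728$ and each $c_r$, $R_c=W=C_2=\langle\bar\iota\rangle$, the identity term is $|K|\,|S|$, and the $\iota$ term is $|K^\iota|\,|S^\iota|$ with $|S^\iota|=2$ in both modules. Averaging these terms yields \eqref{eq:J4}. The raw multiplicity $m_4(q)$ times $|S|$ yields \eqref{eq:B4}. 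For $\boldsymbol j=0$ with $q\equiv1\pmod 6$, the residual group is $C_3$ and has the fixed counts of the lemma, which gives \eqref{eq:J6split} and \eqref{eq:B6split}. For $q\equiv5\pmod6$, the stabilizer is trivial, so $J_{6,e}=\tfrac12B_{6,e}$ and no correction arises, consistent with \eqref{eq:J6nonsplit}. In rank two $K^u=K$, because $E[\ell]$ is the only kernel, which gives the simplified formulas.

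\emph{Main obstacle.} The hardest point is verifying that the residual actions on kernels are well defined and that each special stratum's raw mass is correctly attributed. We need each map-level form at $\boldsymbol j=1728$ or $0$ to contain exactly the stated number of elliptic $k$-classes, each with the same $|K|\,|S|$, so that subtracting $B$ exactly removes the special curves from $\mathfrak R_e$. I would verify this by noting that twisting by an element of $U^{q-1}$-cosets inside a fixed $L_E$-coset multiplies Frobenius by an element of $\langle\pm1\rangle$ up to $\mathcal F$-conjugacy, which leaves the good kernels and $s(E)$ unchanged. A secondary check is that $\ell\ne2,3$ and $\ell^e\geqslant5$ keep the scalar-level counts $C_q(t;n,a)$ applicable with $(n,p)=1$ throughout.
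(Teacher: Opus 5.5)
Your proposal is correct and follows the paper's route. You identify $\mathfrak R_e$ with $\sum_{[E]_k}|K_{\mathrm{good}}(E)|\,s(E)$, halve the generic strata (where $W=1$ and the two quadratic twists have the same good kernels and shift module), and subtract the special-$j$ raw masses $B_{4,e},B_{6,e}$ before restoring them through the Burnside terms $J_{4,e},J_{6,e}$ from Lemma~\ref{lem:C2-special-j-data} and Proposition~\ref{prop:marked-orbit}; you check these terms more explicitly than the paper, and your only slip is the side remark excluding $\ell=3$, which is harmless because $\ell=3$ is admissible for $e=2$ and there only $C_q(t;2,1)$ enters (Corollary~\ref{cor:C2-r2-simplification}).
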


\begin{proof}
The raw masses sum, over all $k$-isomorphism classes of elliptic curves, the
product of the number of good kernels and the number of shift classes.  The
scalar-level
refinement, the exact shift mass, and Proposition~\ref{prop:raw-r1} show that
these are exact Schoof-corrected class-number sums, including the
scalar-versus-Jordan correction in rank one.

By the raw-to-map conversion above, every generic stratum contributes one
half of its raw mass.  The terms $B_{4,e}$ and $B_{6,e}$ remove the two special
geometric $j$-strata with their full raw multiplicities, while the Burnside
terms $J_{4,e}$ and $J_{6,e}$ recompute their exact two-sided orbit counts.
This proves \eqref{eq:global-C2-count}.
\end{proof}

\subsubsection{The characteristic-3 supersingular correction}
\label{subsec:C2-char3}

Assume in this subsection that $q=3^a$.  The generic Waterhouse masses above
remain unchanged.  Thus only the supersingular geometric $j=0$ stratum
requires a new marked-automorphism correction.  Put
\[
 E_0/\mathbb F_3:y^2=x^3-x,\qquad
 U=\operatorname{Aut}(E_{0,\bar k},\mathrm O_{E_0}),\qquad
 Z=\{\pm1\}.
\]

\begin{lemma}[Characteristic-$3$ special-$j$ strata]
\label{lem:char3-C2-special}
The quotient $U/Z$ is $S_3$, and arithmetic $q$-Frobenius acts trivially on
this quotient.  Hence the two-sided $C_2$ map-level forms with geometric
$j=0$ are the three conjugacy types
\[
 c_0=1,\qquad c_3=\text{a $3$-cycle},\qquad
 c_2=\text{a transposition}.
\]
Let $V=E_0[2]\cong\mathbb F_2^2$ and
$S_c=V/(c-1)V$.  The shift modules and residual shift-orbit counts are
\[
\begin{array}{c|c|c|c}
 c&|S_c|&C_{S_3}(c)&|S_c/C_{S_3}(c)|\\ \hline
 c_0&4&S_3&2,\\
 c_3&1&C_3&1,\\
 c_2&2&C_2&2.
\end{array}
\]
Moreover, the numbers of elliptic $k$-isomorphism classes lying over
$(c_0,c_3,c_2)$ are respectively
\[
 (1,2,1)\quad\text{if $a$ is odd},
 \qquad
 (2,2,2)\quad\text{if $a$ is even}.
\]
\end{lemma}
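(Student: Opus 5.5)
The plan has four steps: identify $U$ and $U/Z\cong S_3$; show Frobenius acts trivially on $U/Z$; compute the three shift modules and residual orbit counts; then count twists in each fibre.

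\textbf{The group $U/Z$.} By \cite[Proposition~2.1]{KST17}, as used in Lemma~\ref{lem:char3-C4-normalizer}, $U$ consists of the twelve maps $\Phi_{u,r}(x,y)=(u^2x+r,u^3y)$ with $u^4=1$ and $r\in\mathbb F_3$. Modulo $Z=\{\pm1\}$ these act on the three roots of $x^3-x$ as $\operatorname{AGL}_1(\mathbb F_3)\cong S_3$. The point set $E_0[2]=\{\mathrm O,(0,0),(\pm1,0)\}$ corresponds to those roots, so $U/Z$ acts on $V=E_0[2]$ through the faithful permutation representation of $S_3$ on the three nonzero points, i.e.\ $S_3\cong\operatorname{GL}_2(\mathbb F_2)$.

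\textbf{Frobenius on $U/Z$.} For any $q=3^a$, $q$-Frobenius sends $\Phi_{u,r}$ to $\Phi_{u^q,r}$ because $r\in\mathbb F_3$. Since $u^q=u^{\pm1}$, this agrees with $\Phi_{u,r}$ modulo $Z$: changing $u$ to $u^{-1}$ fixes $u^2$ and changes $u^3$ by a sign, which is the effect of $[-1]$. So Frobenius acts trivially on $U/Z$.

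The map-level forms are the $\theta$-twisted conjugacy classes of $W=U/L_E$ from Proposition~\ref{prop:marked-orbit}. Here $L_E=Z$, so $W=S_3$ and $\theta$ is trivial. Twisted conjugacy is therefore ordinary conjugacy in $S_3$, giving the three classes $c_0,c_3,c_2$.

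\textbf{Shift modules and residual orbits.} Since $\beta=-1$, we have $\Delta=E_0[2]=V$, and the shift module is $S_c=V/(c-1)V$, with Frobenius acting on $V$ through the image of $c$. This gives $|S_c|=4$ for $c_0$. For a transposition $c_2$, $c-1$ has rank one, so $|S_c|=2$. For a $3$-cycle $c_3$, $c$ has no nonzero fixed vector, so $c-1$ is invertible and $|S_c|=1$.

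The twisted stabilizer $R_c$ is the centralizer $C_{S_3}(c)$. We compute its orbits on $S_c$:
\begin{itemize}
\item $c_0$: $S_3$ acting on $V$ has orbits $\{0\}$ and the nonzero vectors, so there are $2$ orbits.
\item $c_3$: $S_c$ is a single point, so there is $1$ orbit.
\item $c_2$: $C_2=\langle c\rangle$ acts trivially on $V/(c-1)V$, since $(c-1)v\in(c-1)V$, so there are $2$ orbits.
\end{itemize}

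\textbf{Elliptic classes over each stratum.} This is the main obstacle. The elliptic $k$-forms over the geometric class are the Frobenius-twisted conjugacy classes of the full group $U$, which here are ordinary conjugacy classes, twisted by the cocycle action of Frobenius on $Z$. I would fix a base model, note that Frobenius acts on $U$ either trivially ($a$ even) or by $u\mapsto u^{-1}$ on the $C_4$ parts ($a$ odd), and count the twisted classes of $U$ lying over each conjugacy class of $S_3$.
\begin{itemize}
\item \textbf{$a$ even.} The action is trivial, so we count conjugacy classes of the order-$12$ group $U\cong C_3\rtimes C_4$. It has six classes: $\{1\}$, $\{-1\}$, the two classes of order-$3$ and order-$6$ elements, and two classes of order-$4$ elements. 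These map in pairs to $c_0,c_3,c_2$, giving $(2,2,2)$.
\item \textbf{$a$ odd.} Twisted conjugacy identifies $g$ with $ug\theta(u)^{-1}$. Over $c_0$ and $c_2$, the two lifts $\pm g$ merge, because $u\theta(u)^{-1}=-1$ for $u$ of order $4$ commuting with $g$. Over $c_3$, the centralizer of an order-$3$ lift is $C_6$, on which $\theta$ is trivial, so the two lifts stay distinct. This gives $(1,2,1)$.
\end{itemize}
As a check, I would compare these counts against the twist set $H^1(\Gamma_k,U)$ computed through the exact sequence $1\to Z\to U\to S_3\to1$.
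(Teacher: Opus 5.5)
Your proposal is correct. The identification $U/Z\cong S_3$, the triviality of Frobenius on $U/Z$, the reduction of map-level forms to conjugacy classes of $S_3$, and the shift-module and residual-orbit table all follow the paper's argument. The paper also notes explicitly that base Frobenius is the identity on $V$ because $E_0[2]$ is $\mathbb F_3$-rational; your listing of the $2$-torsion points makes this visible.

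The real difference is the last step. The paper reads the twist counts off the list of Frobenius-conjugacy classes in Kronberg--Soomro--Top, Proposition~2.1, and simply asserts how they project to $S_3$. You instead compute $H^1(\Gamma_k,U)$ directly as $\theta$-twisted conjugacy classes of $U\cong C_3\rtimes C_4$. Here $\theta$ is trivial for $a$ even, and $\theta(g)=\chi(g)g$ for $a$ odd, where $\chi:U\to Z$ is the sign character of $U/Z\cong S_3$.

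Your computation is right. For $a$ odd, an order-four $u$ commuting with $g$ gives $ug\theta(u)^{-1}=-g$, which merges the two lifts over $c_0$ and over $c_2$. Over $c_3$, the only elements that can move the lift set $\{g,-g\}$ form the preimage of $C_{S_3}(\bar g)$. That preimage is the abelian group $\langle g,-1\rangle\cong C_6$, on which $\theta$ is trivial, so the two lifts stay distinct. This preimage is the group you should name in principle; here it coincides with the centralizer $C_U(g)$ you used, so nothing breaks.

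What each route buys: yours is self-contained, explains why the counts are $(1,2,1)$ versus $(2,2,2)$, and recovers the totals $4$ and $6$ that the paper imports. The paper's route is shorter, but it relies on matching the external twist list to the $S_3$-projection, which it states without showing.
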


\begin{proof}
Kronberg--Soomro--Top, Proposition~2.1, gives
$U=\{\Phi_{u,r}:u^4=1,\ r\in\mathbb F_3\}$ and lists all Frobenius-conjugacy
classes over $\mathbb F_{3^a}$ \cite{KST17}.  Modulo $Z$, the automorphism
$\Phi_{u,r}$ acts on the three roots of $x^3-x$ as
$x\mapsto u^2x+r$, giving
$U/Z\cong\operatorname{AGL}_1(\mathbb F_3)\cong S_3$.  Frobenius fixes $r$
and sends $u$ either to $u$ or to $u^{-1}$; these differ by an element of
$Z$, so the induced action on $U/Z$ is trivial.  Nonabelian $H^1$ for the
procyclic finite-field Galois group therefore reduces here to conjugacy
classes in $S_3$, giving the three map-level strata.

All four points of $E_0[2]$ are rational over $\mathbb F_3$, so base
Frobenius is the identity on $V$.  Twisting by $c$ changes this action to
$c$.  The displayed shift modules and orbit counts are the elementary actions
of $S_3\cong\operatorname{GL}_2(\mathbb F_2)$ on $V$.  Finally,
Proposition~2.1 of \cite{KST17} gives four elliptic twists when $a$ is odd,
projecting as one identity, two $3$-cycle, and one transposition class; when
$a$ is even it gives six twists, two in each projected class.
\end{proof}

For $c\in\{c_0,c_3,c_2\}$ and $e\in\{1,2\}$, let $K_{c,e}$ be the set of good
translation kernels on a representative of the corresponding map-level
stratum: for $e=1$ these are Frobenius-stable lines in $E_0[\ell]$ with
multiplier different from $\pm1$, while for $e=2$ the set is
$\{E_0[\ell]\}$ if Frobenius is irreducible and is empty otherwise.  This
set is well defined up to the residual centralizer action: conjugating the
descent datum transports good kernels, while multiplying Frobenius by the
central involution $[-1]$ preserves both invariant lines and the condition
that the multiplier avoid $\{\pm1\}$.  Put
\[
 \kappa_{c,e}=|K_{c,e}|.
\]
For the residual centralizers define
\[
 \kappa_{0,e}^{(2)},\ \kappa_{0,e}^{(3)},\
 \kappa_{3,e}^{(3)},\ \kappa_{2,e}^{(2)}
\]
to be the numbers of good kernels fixed by a representative transposition or
$3$-cycle, as indicated by the superscript.

\begin{theorem}[Global exact $C_2$ count in characteristic $3$]
\label{thm:char3-C2-count}
Let $\mathfrak R_e(q,\ell)$ be the literal raw mass defined by the formulas of
Section~\ref{sec:C2-count}, now using the full characteristic-$3$ trace set.
After the special $j=0$ contribution below is removed, the remaining terms
are evaluated by the same Schoof-corrected order-refined formulas as in that
section.  Define
\begin{equation}
\begin{split}
 J^{(3)}_e={}&
 \frac{4\kappa_{0,e}+6\kappa_{0,e}^{(2)}+2\kappa_{0,e}^{(3)}}6
 +\frac{\kappa_{3,e}+2\kappa_{3,e}^{(3)}}3\\
 &\qquad+\kappa_{2,e}+\kappa_{2,e}^{(2)},
\end{split}
\label{eq:char3-J}
\end{equation}
and
\begin{equation}
 B^{(3)}_e=
 \begin{cases}
 4\kappa_{0,e}+2\kappa_{3,e}+2\kappa_{2,e},&a\text{ odd},\\[1mm]
 8\kappa_{0,e}+2\kappa_{3,e}+4\kappa_{2,e},&a\text{ even}.
 \end{cases}
\label{eq:char3-B}
\end{equation}
Then
\begin{equation}
 \mathrm{Cl}^{(3)}_{2,e}(q,\ell)
 =\frac12\bigl(\mathfrak R_e(q,\ell)-B^{(3)}_e\bigr)+J^{(3)}_e.
\label{eq:char3-C2-count}
\end{equation}
Every term is an explicit finite Schoof-corrected class-number quantity or one
of the finite good-kernel fixed-set counts in the three supersingular strata.
\end{theorem}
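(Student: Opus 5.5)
The proof will follow the template of Theorem~\ref{thm:global-C2-count}: split the two-sided count into a generic part and the single exceptional stratum. In characteristic $3$ the only geometric $j$-invariant with automorphisms beyond $\{\pm1\}=L_E$ is $j=0=1728$, the supersingular curve $E_0$ with $|U|=12$. For every other elliptic $k$-isomorphism class we reuse the raw-to-map conversion. The two quadratic twists of a fixed geometric curve give the same map-level form and the same good-kernel and shift data, and $-1$ acts trivially on kernel lines and on $E[2]$. Hence each generic stratum contributes exactly half its raw mass. The raw mass $\mathfrak R_e(q,\ell)$ remains the literal sum over all $k$-isomorphism classes of $(\text{good kernels})\times s(E)$, computed from the full characteristic-$3$ Waterhouse trace set. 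The Schoof-corrected order-refined formulas (Proposition~\ref{prop:scalar-level}, Lemma~\ref{lem:shift-mass}, Proposition~\ref{prop:raw-r1}) still evaluate it, because they only use $(n,p)=1$, which holds for $n=2,\ell,2\ell$ with $\ell\ne3$ in rank two. In rank one with $\ell=p=3$ the degree bound $\ell^e\geqslant5$ excludes the case.

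It remains to verify two things. First, $B^{(3)}_e$ is the full raw contribution of the $j=0$ stratum. Second, $J^{(3)}_e$ is its exact two-sided orbit count. For $B^{(3)}_e$, use Lemma~\ref{lem:char3-C2-special}. The elliptic $k$-classes over $(c_0,c_3,c_2)$ number $(1,2,1)$ for $a$ odd and $(2,2,2)$ for $a$ even. Each elliptic class over $c$ has shift mass $s(E)=|S_c|\in\{4,1,2\}$ and $\kappa_{c,e}$ good kernels. Here one checks that the central $[-1]$-twist relating different elliptic classes over the same $c$ changes neither the kernel set nor the shift module, so $\kappa_{c,e}$ is well defined. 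Summing $(\#\text{classes})\cdot|S_c|\cdot\kappa_{c,e}$ gives exactly $4\kappa_0+2\kappa_3+2\kappa_2$, respectively $8\kappa_0+2\kappa_3+4\kappa_2$.

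For $J^{(3)}_e$, apply the marked twisted-orbit principle, Proposition~\ref{prop:marked-orbit}, with $W=U/Z\cong S_3$ and trivial Frobenius action $\theta$. The strata are then the ordinary conjugacy classes, and $R_c=C_{S_3}(c)$. For each stratum we evaluate Burnside's formula \eqref{eq:marked-burnside}, $|R_c|^{-1}\sum_{u\in R_c}|K_c^u||S_c^u|$.

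\begin{itemize}
\item For $c_0$, $R=S_3$ acts on $S_{c_0}=V$. The identity fixes $4$ points, each of the $3$ transpositions fixes $2$, and each of the $2$ three-cycles fixes $1$. This gives $\bigl(4\kappa_0+3\cdot2\kappa_0^{(2)}+2\cdot1\cdot\kappa_0^{(3)}\bigr)/6$, using that conjugate elements have equal fixed-kernel counts.
\item For $c_3$, $R=C_3$ and $|S|=1$, which gives $\bigl(\kappa_3+2\kappa_3^{(3)}\bigr)/3$.
\item For $c_2$, $R=C_2$ and $S=V/(c-1)V$ has two elements, both fixed by $c$ (by the lemma the orbit count is $2$). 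This gives $\bigl(2\kappa_2+2\kappa_2^{(2)}\bigr)/2$.
\end{itemize}

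These three terms reproduce \eqref{eq:char3-J}. Subtracting half of $B^{(3)}_e$ and adding $J^{(3)}_e$ then yields \eqref{eq:char3-C2-count}.

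The main obstacle is justifying that the $R_c$-action on $K_{c,e}$ is the one Burnside should use. The residual element $u$ must act simultaneously on kernels and on shifts, compatibly with conditions (a) and (d) of Theorem~\ref{thm:intro-forms}(iii). It must also not require the larger group $U$ (of order $12$) instead of $U/Z$. I would handle this as in the proof of Proposition~\ref{prop:marked-orbit}. Lifts of $u$ differ by $Z=L_E$, and $Z$ fixes $\Delta=E[2]$ pointwise and preserves every $L_E$-stable kernel, so the action factors through $C_{S_3}(c)$. Finally, I would check that the transposition and three-cycle fixed counts on $V\cong\mathbb F_2^2$ match the identification $S_3\cong\operatorname{GL}_2(\mathbb F_2)$ stated in Lemma~\ref{lem:char3-C2-special}.
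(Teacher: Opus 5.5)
Your proposal is correct and follows essentially the same route as the paper. It applies the generic one-half rule for $j\neq0$, where $\operatorname{Aut}(E_{\bar k},\mathrm O_E)=\{\pm1\}=L_E$, removes the supersingular $j=0$ raw mass using the multiplicities $(1,2,1)$ or $(2,2,2)$ and the shift sizes $4,1,2$ from Lemma~\ref{lem:char3-C2-special}, and restores the three strata by Burnside on $K_{c,e}\times S_c$ over $C_{S_3}(c)$. Your extra checks make explicit what the paper's brief proof leaves implicit, namely that the central $[-1]$-twist preserves $\kappa_{c,e}$ and $|S_c|$ and that the residual action factors through $U/Z$.
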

\begin{proof}
For $j\ne0$, Kronberg--Soomro--Top show that
$\operatorname{Aut}(E_{\bar k},\mathrm O_E)=\{\pm1\}$ in characteristic $3$
\cite[\S2]{KST17}.  Hence the generic two quadratic elliptic twists
form one two-sided $C_2$ map form and carry equal raw weight, exactly as in the
one-half rule of Section~\ref{sec:C2-count}.  The term $B^{(3)}_e$ removes the
raw contribution of the supersingular $j=0$ elliptic twists, using the
multiplicities and shift-module sizes from
Lemma~\ref{lem:char3-C2-special}.
It remains to restore the three special map strata after quotienting by their
residual centralizers.  For $c_0$, Burnside's lemma on
$S_{c_0}\times K_{c_0,e}$ gives
\[
 \frac{4\kappa_{0,e}+3\cdot2\kappa_{0,e}^{(2)}
       +2\cdot1\kappa_{0,e}^{(3)}}6.
\]
For $c_3$, the shift module is trivial and its centralizer is $C_3$, giving
$(\kappa_{3,e}+2\kappa_{3,e}^{(3)})/3$.  For $c_2$, the residual $C_2$ acts
trivially on the two-element shift module, giving
$\kappa_{2,e}+\kappa_{2,e}^{(2)}$.  Their sum is $J^{(3)}_e$, and the stated
formula follows.
\end{proof}

\begin{corollary}[Rank-two $C_2$ simplification]
\label{cor:C2-r2-simplification}
In rank two no scalar-level correction at $\ell$ is needed.  The generic raw
mass is
\[
 \sum_{t\in\mathcal T(q)}
 \mathbf 1_{\left(\frac{D_t}{\ell}\right)=-1}
 \begin{cases}
 C_q(t),&t\text{ odd},\\
 2C_q(t)+2C_q(t;2,1),&t\text{ even},
 \end{cases}
\]
followed by the explicit special-$j$ correction appropriate to the
characteristic.  Thus the entire rank-two count depends only on Waterhouse trace strata,
order-refined rational $2$-torsion masses, and a finite marked-automorphism
correction.
\end{corollary}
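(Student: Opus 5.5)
Corollary~\ref{cor:C2-r2-simplification} will follow by specializing the rank-two case of Theorems~\ref{thm:global-C2-count} and~\ref{thm:char3-C2-count}. The plan is to take the raw rank-two mass $\mathfrak R_2(q,\ell)$ of \eqref{eq:raw-r2} and substitute the exact shift mass of Lemma~\ref{lem:shift-mass}. The special-$j$ terms are then left untouched: $B_{4,2},B_{6,2},J_{4,2},J_{6,2}$ when $\operatorname{char}k>3$, and $B^{(3)}_2,J^{(3)}_2$ in characteristic~$3$.

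The first step is to explain why no scalar-level correction at $\ell$ enters, in contrast with Proposition~\ref{prop:raw-r1}. Theorem~\ref{thm:translation-kernel} shows that in rank two the kernel is forced to be $E[\ell]$ with $\ell\ne p$, so each curve carries at most one candidate kernel. We then check that this kernel is good exactly when the Frobenius polynomial is irreducible modulo~$\ell$. Indecomposability is irreducibility of $\mathcal F_{\mathcal N}$, since $\beta=-I$ here (Theorem~\ref{thm:exceptionality}). Exceptionality is then automatic, because an irreducible operator on $\mathbb F_\ell^2$ has no eigenvalue $\pm1$. Irreducibility is equivalent to the Legendre condition $\bigl(\frac{D_t}{\ell}\bigr)=-1$, which depends only on the trace~$t$. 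A scalar or Jordan Frobenius on $E[\ell]$ has repeated roots, so $D_t\equiv0$, and such curves are simply excluded. Hence the good-kernel count per curve is the indicator $\mathbf 1_{(D_t/\ell)=-1}$, a function of $t$ alone, and no quantity $C_q(t;\ell,a)$ is needed.

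The second step is to multiply this indicator by the shift number $s(E)$ and sum over $\mathcal E_q(t)$. This gives $\mathbf 1_{(D_t/\ell)=-1}\Sigma_2(t)$, and Lemma~\ref{lem:shift-mass} evaluates $\Sigma_2(t)$ as $C_q(t)$ for odd $t$ and as $2C_q(t)+2C_q(t;2,1)$ for even $t$. The term $C_q(t;2,1)$ is the order-refined rational $2$-torsion mass of Proposition~\ref{prop:scalar-level}. Summing over $t\in\mathcal T(q)$ yields the displayed generic raw mass.

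Finally, the halving and the special-$j$ corrections apply unchanged, using the rank-two simplifications $\kappa_{c,2}\in\{0,1\}$ and the closed forms for $J_{4,2},J_{6,2}$ already recorded. The main point to watch is the first step, namely confirming that irreducibility rules out all repeated-root cases, so that no Jordan/scalar ambiguity remains; the rest is bookkeeping.
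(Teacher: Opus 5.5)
Your proposal is correct and follows essentially the same route as the paper. The paper gives no separate proof: the corollary follows from the justification attached to the definition \eqref{eq:raw-r2} of $\mathfrak R_2(q,\ell)$ (the kernel is uniquely $E[\ell]$, the Legendre condition is irreducibility of Frobenius, and exceptionality is automatic as in Proposition~\ref{thm:C2-occurrence}) combined with Lemma~\ref{lem:shift-mass}, and your remark that scalar or Jordan cases force $D_t\equiv0\pmod\ell$ is exactly why no $C_q(t;\ell,a)$ term appears.
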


\begin{proof}[Proof of Theorem~\ref{thm:master-enumeration}]
If the prescribed occurrence condition fails, there is nothing to count and,
by convention, $\mathrm{Cl}_{m,e}(q,\ell)=0$.  Assume occurrence.  For the CM
signatures,
Corollary~\ref{lem:form-quotient} reduces the count to the marked form, the
good translation kernel, and the shift orbit.  The resulting formulas are
Propositions~\ref{thm:C3-r1-count},~\ref{thm:C3-r2-count},~\ref{thm:C46-r1-count},
and~\ref{thm:C46-r2-count}; these also give the
rank-two rigidities and the nonsplit cubic count stated above.

For the involutive signature, Proposition~\ref{prop:scalar-level},
Lemma~\ref{lem:shift-mass}, and Proposition~\ref{prop:raw-r1} express the
generic contribution as exact finite Waterhouse trace/order data and
Schoof-corrected class-number sums.
The enlarged special-$j$ automorphism strata are then restored by the
finite Burnside corrections, giving Theorem~\ref{thm:global-C2-count} in
characteristic greater than $3$ and Theorem~\ref{thm:char3-C2-count} in
characteristic $3$.  Finally, in characteristic $2$, only the cubic signature survives, and
Corollary~\ref{thm:char2-occurrence-count} gives the explicit specialization
of the same marked $C_3$ counting formulas.  This exhausts the characteristic
range in Corollary~\ref{cor:intro-characteristic-range}.
\end{proof}

\section{Outlook}
\label{sec:outlook}

The prescribed-field form of a tame exceptional cover with genus-one
geometric Galois closure is controlled by a Frobenius-stable affine elliptic
quotient datum.  Its elliptic quotient, cyclic linear action, affine shift,
and Frobenius action on the translation kernel separate three layers of the
problem: fixed-field forms and branch arithmetic; monodromy, constant fields,
and exact finite-extension support; and occurrence and enumeration.  The affine sector
criterion strengthens this arithmetic readout: over every finite extension,
permutation is equivalent to exceptionality, so the two supports coincide
without accidental small-degree exceptions.  Thus the same datum that
resolves the descent ambiguity also organizes the full arithmetic closure of
the classification.

This description is complete for every tame genus-one signature.  In
characteristic greater than $3$, tameness is forced by the genus-one
hypothesis, while the surviving cases in characteristics $2$ and $3$ are
included through the marked-normalizer analysis.  The fixed-field
classification itself has no further obstruction.  The cubic signature does, however, exhibit a sharper arithmetic phenomenon:
in the nonsplit rank-two, nonzero-shift case, the existence of a same-$C_3$ self-endomorphism
realization is governed by an exact $3$-adic condition.  The degree-$49$
example over $\mathbb F_{17}$ shows that this stronger realization boundary is
genuine without creating an exceptional subcase in the classification.

Two natural extensions remain.  The first is the genuinely wild genus-one
theory in characteristics $2$ and $3$.  The second is the fixed-field
classification of genus-one monodromy configurations beyond the exceptional
affine setting.  In both directions, the central question is whether an
appropriate affine quotient datum continues to separate the geometric form of
the cover from the arithmetic carried by Frobenius relative to the marked
affine symmetry.

\newcommand{\etalchar}[1]{$^{#1}$}

\end{document}